\title[Distinguished basis in the wrapped Floer cohomology]{Distinguished bases in the wrapped Floer
cohomology of tropical Lagrangian surfaces}
\author{Jaewon Chang }
\theoremstyle{definition}
\newtheorem{theorem}{Theorem}[section]
\newtheorem{lemma}[theorem]{Lemma}
\newtheorem{definition}[theorem]{Definition}
\newtheorem{lemmadefinition}[theorem]{Lemma-Definition}
\newtheorem{corollary}[theorem]{Corollary}
\newtheorem{proposition}[theorem]{Proposition}
\newtheorem{remark}[theorem]{Remark}
\newtheorem{assumption}[theorem]{Assumption}
\newtheorem{notation}[theorem]{Notation}
\newtheorem*{assumption*}{Assumption}
\newtheorem*{acknowledgment*}{Acknowledgment}
\newcommand{\round}{{\partial}}
\newcommand{\norm}[1]{\bigl\lVert#1\bigr\rVert}
\newcommand{\hclim}[1]{\underset{#1}{\textrm{hocolim}}\,}
\newcommand{\colim}[1]{\underset{#1}{\textrm{colim}}\,}
\newcommand{\Groth}[1]{\underset{#1}{\textrm{Groth}}\,}
\newcommand{\bC}{{\mathbb{C}}}
\newcommand{\bR}{{\mathbb{R}}}
\newcommand{\bZ}{{\mathbb{Z}}}
\newcommand{\bQ}{{\mathbb{Q}}}
\newcommand{\bK}{{\mathbb{K}}}
\newcommand{\PoP}{\text{pants}}
\DeclareMathOperator{\Ima}{Im}
\DeclareMathOperator{\Rea}{Re}
\DeclareMathOperator{\Fuk}{Fuk}
\DeclareMathOperator{\Coh}{Coh}
\DeclareMathOperator{\Hom}{Hom}
\DeclareMathOperator{\Log}{log}
\DeclareMathOperator{\val}{val}
\DeclareMathOperator{\hull}{hull}
\DeclarePairedDelimiter\abs{\lvert}{\rvert}%
\newcommand{\bump}[1]{\chi_{\scriptscriptstyle #1}}
\numberwithin{equation}{section}
\begin{document}

\begin{abstract}
    We introduce a new construction for tropical Lagrangian surfaces in $(\bC^*)^2$. This construction makes the surfaces special Lagrangian, which gives a strong control over the asymptotic behavior of holomorphic disks near each cylindrical end. As a result, we find a distinguished basis of the wrapped Floer cohomology ring $HW^\bullet(L_f, L_f)$ for several different examples and find the corresponding distinguished bases of functions on the mirror curves.
\end{abstract}

\maketitle

\section{Introduction}

Beginning from the observation made by physicists that Calabi-Yau manifolds come in pairs of ``mirror dual" families, the mirror symmetry conjectures were proposed as a way to formulate the intricate connection between symplectic geometry on a space $X$ and complex geometry on its dual $\Check{X}$. One version of this is the homological mirror symmetry conjecture, which claims that the Fukaya category $\Fuk(X)$ and the derived category of coherent sheaves $D^b Coh(\Check{X})$ are quasi-isomorphic $A_\infty$-categories. In particular, this implies a cohomology-level isomorphism between $\Hom$ spaces in $\Fuk(X)$, namely Lagrangian Floer chain complexes, and the corresponding $\Hom$ spaces in $D^b Coh(\Check{X})$.

A particularly remarkable feature of mirror symmetry is that, in some circumstances, it predicts the existence of distinguished ``canonical" bases of the ring of functions (or the homogeneous coordinate ring) on the algebraic space $\Check{X}$. Indeed, global functions on $\Check{X}$ (respectively, sections of the powers $O(k)$ of an ample line bundle $O(1)$) correspond under mirror symmetry to the Floer cohomology group of a certain distinguished Lagrangian submanifold $L_0$ of $X$ (respectively, of a pair of Lagrangians $L_0$ and $L_k$), the mirror of the structure sheaf of $\Check{X}$. Lagrangian Floer complexes have preferred bases consisting of intersection points between the given Lagrangians (possibly after a suitable Hamiltonian perturbation), so when the Floer differential vanishes, this gives a preferred basis for the Floer cohomology.

For example, Polishchuk and Zaslow \cite{PZ1} studied the Floer cohomology groups of a collection of simple closed curves $L_k$ on the 2-torus which correspond under mirror symmetry to powers of a fixed degree 1 line bundle on an elliptic curve; remarkably, they found that the generators of $HF(L_0, L_k)$ (the intersection points) correspond under mirror symmetry to level $k$ theta functions -- an unexpected connection between symplectic geometry and classical complex analysis. Subsequently, Abouzaid \cite{Abo1} showed that when $\check{X}$ is a projective toric variety, the bases given by Lagrangian intersections on $X$ and toric monomials on $\check{X}$ match under mirror symmetry. This led Gross, Siebert, Hacking, and Keel to conjecture that canonical (or ``theta") bases of functions exist in a much greater generality \cite{GS1}\cite{GHK1}. This is especially interesting in the context of cluster varieties, where there were preexisting representation-theoretic and combinatorial notions of ``canonical bases", which Gross-Hacking-Keel-Kontsevich \cite{GHKK1} reformulated in the language of mirror symmetry.

The above results are obtained in the setting of mirror symmetry for log Calabi-Yau spaces, i.e. they concern spaces $\check{X}$ whose first Chern class is represented by an effective divisor. The goal of this paper is to determine the extent to which mirror symmetry continues to yield canonical bases of regular functions outside of the log Calabi-Yau setting.

However, on the symplectic geometry side, finding a distinguished basis is difficult even for the simplest cases. For example, Hicks' work on tropical Lagrangians~\cite{Hic2} constructs a Lagrangian $L_q$ mirror to the structure sheaf of the divisor
$$ D_{q} = \{ 1 - z_1 - z_2 + q^{-1}z_1 z_2 = 0 \} \subseteq (\bC^*)^2$$
for each $q \in \bC^*$, where each $L_q$ is a genus zero surface with four cylindrical ends (or punctures). In particular the Floer cohomology of $L_{q}$ is isomorphic to the ring of algebraic functions on $D_{q}$, which is the quotient ring
\begin{equation}\label{eqn:intro-quotient-ring}
    \bK[z_1^{\pm}, z_2^{\pm}] / (1-z_1-z_2+q^{-1}z_1 z_2)
\end{equation}
where $\bK$ is the coefficient field. Apparently, we don't have a distinguished basis for this ring because it is presented as a quotient. The Floer cohomology calculation done by Abouzaid and Auroux \cite{AA1} for mirrors of general hypersurfaces in $(\bC^*)^n$ uses different Lagrangian submanifolds, but it also fails to give a basis of functions on $D_{q}$, as the degree $0$ Floer complex is again $\bK[x^{\pm}, y^{\pm}]$ and the cohomology is its quotient by the ideal generated by $1-x-y+q^{-1}xy$.

In this paper, we construct a Lagrangian $L_f$ from a Laurent polynomial $f$ and find a distinguished basis for its Floer cohomology. This is related to the works \cite{Mat1}\cite{Hic2} to construct a tropical Lagrangian $L(\phi)$ whose image under the torus fibration, which is the map $\Log: (\bC^*)^2 \to \bR^2$ in the case of $(\bC^*)^2$, is sufficiently close to a collection of lines called the tropical hypersurface $V(\phi)$. However, instead of constructing tropical Lagrangian directly from the tropical structure, we start from a complex hypersurface of $(\bC^*)^2$ with the same tropical structure and construct the Lagrangian by performing a HyperKähler rotation.

In Section 2, we construct a tropical Lagrangian $L_f$ and a family of Hamiltonian perturbations for wrapped Floer theory for every Laurent polynomial $f$ with a tropical smoothness condition. The analysis on the cylindrical region near each pucture is crucial; in particular, all the degree 0 generators of the wrapped Floer complex are indexed by the pair $(Z_{\alpha,r}^+, j)$ of a cylindrical end $Z_{\alpha,r}^+$ of $L_f$ and a nonnegative (or positive) integer $j$. Here, $\alpha \in \bZ^2$ is the asymptotic direction of the cylindrical end $Z_{\alpha,r}^+$ on the base space $\bR^2$, and we denote the corresponding generators by $x_\alpha^j$ (In the actual statements, however, we have an auxiliary term $x_\alpha^e$ to distinguish them from degree 1 generators). Using these definitions and observations, we construct a wrapped Floer cohomology of these Lagrangians in Section 4. 

The model example we will consider is the Lagrangian $L_q$ defined above. In Section 5, we find a distinguished basis of the wrapped Floer cohomology of $L_q$ and the corresponding elements in (\ref{eqn:intro-quotient-ring}) up to additive constants as follows. Note that, in general, we are required to work over a non-archimedean field called the Novikov field for non-exact Lagrangians $L_q$.
\begin{theorem}[Theorem~\ref{thm:HMS-for-Lq}]
    For $q \in \bC \setminus \bR$, there exist constants $a_i$, $b_i$ for $i \in \bZ$ and $A_i$, $B_i$ for $i \in \bZ_{>0}$ such that there exists a basis of $HW^0 (L_q, L_q)$ consists of the following elements, together with the corresponding elements in the analogous version of (\ref{eqn:intro-quotient-ring}) over the Novikov field:
    \begin{itemize}
        \item $x^i + b_i y^0$,\quad for $i>0$,\quad corresponding to $z_1^i + B_i$.
        \item $x^i + b_i y^0$,\quad for $i<0$,\quad corresponding to $z_1^i$.
        \item $y^i + a_i x^0$,\quad for $i>0$,\quad corresponding to $z_2^i + A_i$.
        \item $y^i + a_i x^0$,\quad for $i<0$,\quad corresponding to $z_2^i$.
        \item $x^0 + y^0$,\quad corresponding to the unit $1$.
    \end{itemize}
\end{theorem}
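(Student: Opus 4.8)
The plan is to compute the ring $HW^\bullet(L_q,L_q)$, recognize its degree-$0$ part as the coordinate ring of $D_q$ over the Novikov field, and then evaluate this identification on the distinguished generators $x_\alpha^j$. The four cylindrical ends of $L_q$ have asymptotic directions $\alpha\in\{(\pm1,0),(0,\pm1)\}$, the four unbounded rays of the tropical curve dual to the unit-square Newton polygon of $1-z_1-z_2+q^{-1}z_1z_2$; mirror to these are the four punctures $p_1,\dots,p_4$ of $D_q\cong\mathbb{P}^1\setminus\{p_1,\dots,p_4\}$, at which $z_1\to\infty$, $z_1\to0$, $z_2\to\infty$, $z_2\to0$ respectively. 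On the mirror side the basis statement is elementary: a regular function on $D_q$ decomposes uniquely into a constant plus its principal parts at $p_1,\dots,p_4$, and the monomials $z_1^{\pm k},z_2^{\pm k}$ ($k\ge 1$) realize all such principal parts, so $\{1\}\cup\{z_1^i\}_{i\ne0}\cup\{z_2^i\}_{i\ne0}$ is a basis of \eqref{eqn:intro-quotient-ring} and of its Novikov analogue. Everything then comes down to showing this basis pulls back, up to additive constants, to the listed Floer classes.

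On the symplectic side I would begin with the differential $\mu^1$ on the degree-$0$ complex, whose generators are the $x_\alpha^j$ ($j\ge1$) together with the auxiliary classes $x_\alpha^e$. This is where the special-Lagrangian construction pays off: the asymptotics of holomorphic disks at each cylindrical end are rigidly pinned down (Section~2), and together with positivity of energy and transversality this computes $\mu^1$ and shows that $HW^0$ is spanned by the classes of the $x_\alpha^j$ ($j\ge1$) together with $x^0,y^0$ (the classes of the horizontal and vertical $x_\alpha^e$), which in $HW^0$ are proportional, with $x^0+y^0$ representing the unit. Writing $x^i$ for the wrapping-$|i|$ class at the horizontal end $\alpha=\pm(1,0)$ and $y^i$ at the vertical ends, I would next compute the Floer product $\mu^2$ on the low generators $x^{\pm1},y^{\pm1},x^0,y^0$. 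The same asymptotic control identifies the contributing pseudoholomorphic triangles and gives $x^{\pm1}\cdot x^j=x^{j\pm1}$ and $y^{\pm1}\cdot y^j=y^{j\pm1}$ up to lower-action terms, while the products of $x$- with $y$-generators are governed by triangles whose symplectic areas and phases involve $q$, producing a single relation that matches $1-z_1-z_2+q^{-1}z_1z_2=0$. Altogether $HW^0(L_q,L_q)$ is presented by $x^{\pm1},y^{\pm1}$ and that relation, yielding a ring isomorphism $\Psi$ onto (the Novikov form of) \eqref{eqn:intro-quotient-ring}. The hypothesis $q\in\bC\setminus\bR$ is a genericity condition ensuring that $L_q$ and its Hamiltonian perturbations are well defined (Section~2) and that the disk counts producing this relation are nondegenerate.

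To finish I would compute $\Psi$ on the higher generators $x_\alpha^j$ ($j\ge2$) directly from the disk counts. Near the end over $p_1$, the special-Lagrangian control of disk asymptotics shows that $\Psi(x^i)$ for $i>0$ is computed by a single leading disk contributing $z_1^i$, while every other contributing disk must have strictly larger area and must exit the cylindrical region near $p_1$, so it contributes only a correction of strictly lower action; carrying that correction through $\Psi$ gives $\Psi(x^i+b_iy^0)=z_1^i+B_i$ for suitable constants $b_i,B_i$. The case $i<0$ is handled the same way after the substitution $z_1\leftrightarrow z_1^{-1}$; here the monomial $z_1^i$ already vanishes at the ends where a constant correction would otherwise enter, so the correction is purely a multiple of $y^0$ and $\Psi(x^i+b_iy^0)=z_1^i$. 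The $y$-families follow from the $z_1\leftrightarrow z_2$ symmetry of $f$ and of the construction. Together with $\Psi(x^0+y^0)=1$ from the previous step and the basis statement on the mirror side, this shows the listed elements form a basis of $HW^0(L_q,L_q)$ corresponding to the named functions.

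The main obstacle is the $\mu^2$ computation of the second paragraph: enumerating the rigid pseudoholomorphic triangles, showing that the compact part of $L_q$ contributes nothing beyond the expected corrections, and controlling the Novikov weights and the $q$-dependent coefficients precisely enough to extract a single defining relation. The special-Lagrangian geometry and the $\Log$-projection estimates of Sections~2 and~4 are exactly what make this tractable, but the underlying degeneration and gluing analysis carries essentially all the weight of the proof.
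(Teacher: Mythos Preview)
Your approach is genuinely different from the paper's, and the difference matters.

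The paper does \emph{not} establish the ring isomorphism $HW^0(L_q,L_q)\cong\mathbb K[z_1^\pm,z_2^\pm]/f_Q$ by computing $\mu^2$ on low generators and extracting the defining relation. Instead it imports this isomorphism as a black box from prior work on monomially admissible Fukaya categories (Hanlon, Hicks); see Theorem~\ref{thm:HW0-cohomology-group}. Knowing the dimension of $HF^0(\widetilde L_{q,R}^k,\widetilde L_{q,R}^0)$ a priori, a rank count shows $\mu^1$ has rank exactly one on $CF^0$, which together with the argument constraints (Theorem~\ref{thm:disk-bound-arg-strip}) forces the basis of Proposition~\ref{prop:distinguished-basis-of-Hw0}. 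To identify the corresponding functions the paper again avoids direct $\mu^2$ computation: it uses the \emph{module structure} over fiber tori $F_\rho$ (equation~\eqref{eqn:module-structure-fib-torus}) and a further argument-constraint theorem for triangles with one side on $F_\rho$ (Theorem~\ref{thm:disk-bound-arg-fibertorus}) to read off the pole orders of each generator at each puncture. Your strategy of computing $\mu^2$ on $x^{\pm1},y^{\pm1}$ to recover the relation is exactly the step the paper engineers around; you correctly flag it as the main obstacle, but nothing in Sections~2 or~4 gives you a complete triangle enumeration --- the argument constraints only bound outputs, they do not count.

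Two more specific gaps. First, your reason for $q\in\bC\setminus\bR$ is off: it is not a transversality condition on perturbations but the hypothesis needed for unobstructedness of $L_q$ and the finiteness Assumption~\ref{ass:top-bound-for-boundary} (Lemma~\ref{lem:ass-top-bound-for-boundary}), via the auxiliary divisors $\Sigma_1,\Sigma_2$ disjoint from $L_q$ exactly when $q\notin\bR$. Second, your argument that $B_i=0$ for $i<0$ (``$z_1^i$ already vanishes at the ends where a constant correction would otherwise enter'') does not hold up: $z_1^i$ for $i<0$ has a pole, not a zero, at $z_1=0$, and the pole analysis alone leaves an undetermined constant. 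The paper eliminates it by an algebraic trick --- squaring $x^i x^e + b_i y^0 y^e$ and using the product-structure constraints of Corollary~\ref{cor:prod-structure-basic} to see that no $z_1^i$ term can survive on the right, forcing $B_i=0$.
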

We also discuss the case of the pair of pants in Section~\ref{subsect:pair-of-pants}, where there is a cylindrical end in a diagonal direction (unlike the ends of $L_q$). Theorem~\ref{thm:HMS-for-LPoP-diag} describes the generators of the Floer cohomology and the corresponding functions.

\subsection*{acknowledgment}
This is the extended version of the author's Ph.D. thesis at Harvard University. I would like to thank Denis Auroux for helpful discussions. This paper was partially supported by NSF grant DMS-2202984.

\section{Tropical Lagrangians in \texorpdfstring{$(\bC^*)^2$}{(C*)²}}

\subsection{Tropical Structure of Riemann Surfaces}

We start from a Laurent polynomial on $(\bC^*)^2$ of the form
$f = \sum_{\alpha \in A} c_\alpha z^\alpha$,
where $A$ is a finite subset of $\bZ^2$, and $z^\alpha = z_1^{\alpha_1}z_2^{\alpha_2}$ for $\alpha = (\alpha_1 , \alpha_2 ) \in \bZ^2.$

\begin{definition}\label{def:trop-poly-of-f}
    The \textit{tropical polynomial} of $f$ is given by
    $$ \phi(f)(x_1, x_2) = \bigoplus_{\alpha \in A} \log|c_\alpha| \odot x^{\odot \alpha}. $$
\end{definition}

If $\alpha = (\alpha_1 , \alpha_2 )$, then
$$x^{\odot \alpha} = x_1^{\odot \alpha_1} \odot x_1^{\odot \alpha_2} = \alpha_1 x_1 + \alpha_2 x_2,$$
so the tropical polynomial can also be written by 
$$ \phi(f)(x_1, x_2) = \max_{\alpha \in A}\, \left( \log\abs{c_\alpha}+ (\alpha_1 x_1 + \alpha_2 x_2) \right). $$

The term inside the $\max$ function is the log norm of the monomial $c_\alpha z^\alpha$ with $x_1 = \log \abs{z_1}$ and $x_2 = \log \abs{z_2}$. This means that the value of the tropical polynomial tells which monomial in $f$ is maximal. 

$\phi(f)$ is a piecewise linear function, and its set of singularity is a $1$-dimensional skeleton inside $\bR^2$, called the $\textit{tropical skeleton}$ $V(f)$ of $f$. Each connected component of its complement is where one of the monomial $c_\alpha z^\alpha$ has a strictly bigger norm than other terms, and we call this component $C_\alpha$. 

Each unbounded component $C_\alpha$ comes from the most ``extreme" terms of $f$. Specifically, if we consider the convex hull of $A$ in $\bR^2$, each unbounded component $C_\alpha$ comes from the boundary of the hull, as stated by the following proposition.

\begin{proposition}\label{prop:c-alpha-hull}
    Let $\hull(A)$ be the convex hull of $A$.
    \begin{enumerate}
        \item For each vertex $\alpha$ of the polygon $\hull(A)$, $C_\alpha$ is unbounded.
        \item If $C_\alpha$ is unbounded for $\alpha \in A$, then $\alpha$ lies in the boundary of $\hull(A)$.
    \end{enumerate}
\end{proposition}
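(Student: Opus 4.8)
The plan is to work directly with the description of $C_\alpha$ as the locus where the $\alpha$-term of $\phi(f)$ strictly dominates every other term:
$$C_\alpha = \bigl\{\, x \in \bR^2 : \langle \alpha - \beta, x\rangle > \log\abs{c_\beta} - \log\abs{c_\alpha} \ \text{ for all } \beta \in A \setminus \{\alpha\} \,\bigr\},$$
a (possibly empty) open convex polygon cut out by finitely many open half-planes. Both parts then reduce to elementary convex geometry in $\bR^2$.

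For part (1): since $\alpha$ is a vertex of $\hull(A)$, it does not lie in the compact convex set $\hull(A \setminus \{\alpha\})$, so strict separation gives a direction $v \in \bR^2$ with $\langle \alpha, v\rangle > \langle \beta, v\rangle$ for every $\beta \in A \setminus \{\alpha\}$. Substituting the ray $x = tv$ into the defining inequalities, the $\beta$-inequality becomes $t\,(\langle \alpha, v\rangle - \langle \beta, v\rangle) > \log\abs{c_\beta} - \log\abs{c_\alpha}$; the coefficient of $t$ is strictly positive, and there are only finitely many $\beta$, so all of these hold simultaneously once $t$ is large. Hence $C_\alpha$ contains a ray and is unbounded.

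For part (2), I will prove the contrapositive: if $\alpha$ lies in the interior of $\hull(A)$, then $C_\alpha$ is bounded. An interior point is not extreme, so the extreme points of $\hull(A)$ all lie in $A \setminus \{\alpha\}$, whence $\hull(A\setminus\{\alpha\}) = \hull(A)$ and $\alpha$ is an interior point of it. Thus for small $\epsilon > 0$ each of the four points $\alpha \pm \epsilon e_1$, $\alpha \pm \epsilon e_2$ is a convex combination $\sum_{\beta} \lambda_\beta \beta$ of points of $A \setminus \{\alpha\}$. Forming the same convex combination of the defining inequalities of $C_\alpha$ — multiply the $\beta$-inequality by $\lambda_\beta$ and sum — the $\langle \alpha, x\rangle$ terms cancel against $\langle \sum_\beta \lambda_\beta \beta, x\rangle = \langle \alpha \pm \epsilon e_i, x\rangle$, and what survives is an upper bound (for the $+$ sign) or a lower bound (for the $-$ sign) on $\epsilon x_i$ by a constant built from the numbers $\log\abs{c_\beta}$. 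The four such inequalities bound $x_1$ and $x_2$ from both sides, so $C_\alpha$ lies in a rectangle and is bounded; if that rectangle is empty, then $C_\alpha = \emptyset$, which is still bounded.

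I do not expect a genuine obstacle here: the only non-formal ingredients are the two standard facts about polytopes used above — that a vertex of $\hull(A)$ is strictly separated from the convex hull of the remaining points, and that an interior point stays interior when deleted from the generating set — and everything else is bookkeeping with the linear inequalities defining $C_\alpha$. The one point to stay careful about is the degenerate case where $C_\alpha$ is empty, in which both assertions hold vacuously.
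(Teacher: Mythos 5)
Your proof is correct. Part (1) is essentially the paper's argument: strictly separate the vertex $\alpha$ from $\hull(A\setminus\{\alpha\})$ to get a direction $v$ along which the $\alpha$-term dominates, and follow that ray to infinity. For part (2) both you and the paper prove the contrapositive (interior implies bounded), but the mechanics differ. The paper normalizes $\alpha=(0,0)$ and, for each ray from the origin, bounds the linear form $\langle \beta, x\rangle$ at the point $\beta$ where the ray crosses the boundary of $\hull(A)$, using that this value is at most the maximum over the endpoints of the crossed edge; this traps $C_\alpha$ inside the dilate $\frac{M}{d}\hull(A)$. You instead write $\alpha\pm\epsilon e_i$ as convex combinations of points of $A\setminus\{\alpha\}$ (using that $\hull(A\setminus\{\alpha\})=\hull(A)$ when $\alpha$ is interior) and take the same convex combinations of the defining inequalities of $C_\alpha$, which directly yields two-sided bounds on $x_1$ and $x_2$. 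This is the dual form of the same convexity fact; it is somewhat tidier bookkeeping and sidesteps the paper's slightly loose phrasing identifying ``the distance between $O$ and $\beta$'' with the value of a linear form, while the paper's version has the minor advantage of producing an explicit bounding region in terms of $M$ and the distance $d$ to the boundary. Both arguments handle the empty case vacuously, as you note.
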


\begin{proof}
    If $\alpha$ is a vertex of $\hull(A)$, then there is a linear function $l:\bR^2 \to \bR$ such that $\alpha$ is the strict maximum of $l$ on $\hull(A)$. If $l(x_1, x_2) = k_1 x_1 + k_2 x_2$ is one such function, then by the maximality, the term $c_\alpha z^\alpha$ is strictly bigger than the other terms in $f$ when $z_1 = R^{k_1}$ and $z_2 = R^{k_2}$ for sufficiently large $R$. Thus the ray $\{(k_1 x, k_2 x ) \,|\, x\ge 0 \}$ should lie in $C_\alpha$ outside of a compact region.

    Next, we show that if $\alpha$ lies in the interior of $\hull(A)$, then $C_\alpha$ is bounded (or empty). After dividing all the terms by $z^\alpha$, we may assume that $\alpha = (0,0)$. In this case, any ray in $\bR^2$ starting from the origin intersects with one of the boundary edges (or its endpoint) of $\hull(A)$. If the endpoints of the boundary edge are $\beta_1, \beta_2 \in A$, then the intersection point $\beta$ is equal to $t\beta_1 + (1-t)\beta_2$ for some $0 \le t \le 1$, and the distance between $O$ and $\beta$ is exactly the log norm of $z^{\beta}$. Also, we have that 
    $$\log\abs*{z^\beta } = t \log\abs*{z^{\beta_1}} + (1-t)\log\abs*{z^{\beta_2}} \le \max\left( \log\abs*{z^{\beta_1}}, \log\abs*{z^{\beta_2}} \right). $$
    Hence, if we let $d$ be the distance between $\alpha$ and the boundary of $\hull(A)$, and set 
    $$M = \max_{\beta_1, \beta_2 \in A} \log\abs*{{c_{\beta_1}}c_{\beta_2}^{-1} }, $$
    then $C_\alpha$ is contained in the bounded set $\frac{M}{d} \hull(A)$.
\end{proof}

Consider the divisor $D_f \subseteq (\bC^*)^2$. We use the tropical skeleton of $f$ because it tells us about the approximate behavior of $D_f$ at infinity. Indeed, when the log norm of $z$ is in one of $C_\alpha$ and is large enough, then $f(z)$ cannot be zero because the term $c_\alpha z^\alpha$ in $f$ would be bigger than the sum of the others. Hence, $D_f$ gets close to the ray in the tropical skeleton. To simplify the description, we only consider Laurent polynomials $f = \sum_{\alpha \in A} c_\alpha z^\alpha$ satisfying the following.

\begin{assumption}{\label{ass:trop-ftn}}
    The convex hull of $A$ has no non-vertex lattice points on its boundary.
\end{assumption}

This assumption implies that for each ray $l_{\alpha\beta}$ between $C_\alpha$ and $C_\beta$, the terms $c_\alpha z^\alpha$ and $c_\beta z^\beta$ get significantly bigger than the others. Therefore, at infinity, $D_f$ approaches to the complex submanifold $\{c_\alpha z^\alpha = c_\beta z^\beta \}$.

Also, the non-existence of the boundary lattice points implies that $\alpha - \beta$ is a pair of coprime integers, and in this case, the complex curve $\{c_\alpha z^\alpha = c_\beta z^\beta \}$ is topologically isomorphic to a cylinder. Indeed, if we set $\log z_j = p_j + i \theta_j$ for $j=1,2$, then the defining equation is equivalent to

\begin{equation*}
    \begin{cases}
        (\alpha_1 - \beta_1)p_1 + (\alpha_2 - \beta_2)p_2 = \log\abs*{c_\beta} - \log\abs*{c_\alpha},\\
        (\alpha_1 - \beta_1)\theta_1 + (\alpha_2 - \beta_2)\theta_2 = \arg{c_\beta} - \arg{c_\alpha}.
    \end{cases}
\end{equation*}

Thus, when $\alpha_1 - \beta_1$ and $\alpha_2 - \beta_2$ are coprime to each other, this set is a product of a line in the $(p_1, p_2)$-plane and an $S^1$ in the $(\theta_1, \theta_2)$-torus.

Also, we consider the following coordinate system to describe the points near the cylinder. Let $\Xi_{\alpha,r}$ be the cylinder $\{ z^\alpha = r \} \subseteq (\bC^*)^2$, where $\alpha = (\alpha_1, \alpha_2)$ is a pair of coprime integers and $r$ is a nonzero complex number. If we use the same cylindrical coordinate system $(p_1, p_2, \theta_1, \theta_2)$ as above, $Z$ is a product of the line $\alpha_1 p_1 + \alpha_2 p_2 = \log |r|$ and a circle $\alpha_1 \theta_1 + \alpha_2 \theta_2 = \arg{r}.$ Then, we consider the new coordinate system given by
\begin{align*}
    p_\alpha^\perp &= \alpha_1 p_1 + \alpha_2 p_2,&p_\alpha = \frac{1}{\abs{\alpha}^2}(\alpha_2 p_1 - \alpha_1 p_2),\\
    \vartheta_\alpha^\perp &= \alpha_1 \theta_1 + \alpha_2 \theta_2,&\vartheta_\alpha = \frac{1}{\abs{\alpha}^2}(\alpha_2 \theta_1 - \alpha_1 \theta_2).
\end{align*}
(This notation is for complex curves, and we will use the coordinates $p_\alpha$ and $\theta_\alpha$ and cylinder $Z_{\alpha, r}$ for the Lagrangian case.)
The cylinder $\Xi_{\alpha, r}$ can be described as $\{p_\alpha^\perp = \log|r|,\,\, \vartheta_\alpha^\perp \equiv \arg{r} \pmod{2\pi} \}$, and the points on $Z$ can be uniquely described by $p_\alpha$ and $\vartheta_\alpha$ modulo $2\pi$. The $(\theta_1, \theta_2)$-torus cannot be expressed as a direct product of two $S^1$ formed by $\vartheta_\alpha$ and $\vartheta_\alpha^\perp$ unless $\alpha$ is $(\pm 1, 0)$ or $(0, \pm 1)$. However, every point on the complement of $\{\vartheta_\alpha^\perp \equiv \arg{r} + \pi \pmod{2\pi} \}$ can be uniquely expressed by $p_\alpha$, $p_\alpha^\perp$, $\vartheta_\alpha$ modulo $2\pi$ and $\vartheta_\alpha^\perp \in (\arg r - \pi, \arg r + \pi).$


Since we will only need half of this cylinder, we orient the cylinder as follows. (Topologically, this is not an orientation of a cylinder, but rather an orientation of the line in the $(p_1, p_2)$-plane.)

\begin{definition}
    Consider the cylinder $\Xi_{\alpha, r} = \{ z^\alpha = r \}$, and suppose that the coordinate system $(p_\alpha, p_\alpha^\perp, \vartheta_\alpha, \vartheta_\alpha^\perp)$ is defined as above.
    \begin{itemize}
        \item The \textit{positive half} of $\Xi_{\alpha, r}$ is defined by 
        $$\Xi_{\alpha, r}^+ := \{ p_\alpha^\perp = \log|r|,\,\, \vartheta_\alpha^\perp \equiv \arg{r} \pmod{2\pi},\,\, p_\alpha > 0 \}.$$
        \item The vector $(\alpha_1, \alpha_2)$ is called the \textit{normal direction} of the cylinder $\Xi_{\alpha, r}^+$.
    \end{itemize}
\end{definition}

The half-infinite cylinder $\Xi_{\alpha, r}^+$ is the product of a ray in the $(p_1, p_2)$-plane and a circle in the $(\theta_1, \theta_2)$-plane, and the normal direction of the cylinder is orthogonal to the ray in the $(p_1, p_2)$-plane.

Now, we describe the behavior of the divisor $D_f = \{f = 0 \}$ using this definition.

\begin{definition}\label{defn:trop-ends-C}
    Let $f$ be a Laurent polynomial satisfying Assumption~\ref{ass:trop-ftn}, and denote the unbounded components of the complement of $V(f)$ by $C_{\beta_1}, C_{\beta_2}, \cdots, C_{\beta_k}$ in a counterclockwise order. Then, the \textit{holomorphic tropical cylindrical ends} of $f$ are the collection $\{\Xi_{\alpha_1, r_1}^+,\Xi_{\alpha_2, r_2}^+, \cdots, \Xi_{\alpha_m, r_m}^+\}$ of half-infinite cylinders in $(\bC^*)^2$, where $\alpha_j = \beta_{j+1} - \beta_j$ and $r_j = c_{\beta_j} c_{\beta_{j+1}}^{-1}.$ (Here, $\beta_{k+1} = \beta_1$)
\end{definition}

Note that $\beta_1, \beta_2, \cdots, \beta_k$ is ordered so that $Z_j$ are in the outward direction. In this case, we can show that $D_f$ is expressed as a union of graphs of functions on each $Z_{\alpha_m, r_m}^+$ at infinity.

\begin{proposition}\label{prop:trop-approx-C}
    Let $f = \sum_{\beta \in A}c_\beta z^\beta$ be a Laurent polynomial satisfying Assumption~\ref{ass:trop-ftn}, and set 
    $$M = \max_{\beta_1, \beta_2 \in A} \log\abs*{c_{\beta_1}c_{\beta_2}^{-1}}.$$ 
    Suppose that the tropical cylindrical ends of $f$ are $\{\Xi_{\alpha_1, r_1}^+, \Xi_{\alpha_2, r_2}^+, \cdots, \Xi_{\alpha_m, r_m}^+\}$. Then, there exist constants $b_1, b_2, b_3 > 0$ depending only on $A$ that satisfy the following.

    \begin{enumerate}
        \item The curve $D_f$ lies inside the union
        $$ \{|\log|z_1||, |\log|z_2|| \le b_1 M \} \quad \cup \quad  \bigsqcup_{j=1}^k \,\{ p_{\alpha_j} > b_2 M, \,\,\abs{p_{\alpha_j}^\perp - \log\abs{r_j}} < 1 \}.$$
        \item For $j=1,2,\cdots,k$, there exists a holomorphic function $\rho_j:\{\Rea z > b_2 M \} \to \bC$ such that the set $D_f \cap \{ p_{\alpha_j} > b_2 M, \,\,\abs{p_{\alpha_j}^\perp  - \log\abs{r_j}} < 1 \}$ is equal to the graph
        $$\{(p_{\alpha_j}, p_{\alpha_j}^\perp, \vartheta_{\alpha_j}, \vartheta_{\alpha_j}^\perp) \,|\, p_{\alpha_j}^\perp + i\vartheta_{\alpha_j}^\perp = \log{r_j} + \rho_j(p_{\alpha_j} + i\vartheta_{\alpha_j}) \}.  $$
        \item The function $\rho_j$ has the upper bound
        $$ \abs*{\rho_j(p_{\alpha_j} + i\vartheta_{\alpha_j})} \le M\exp(-b_3 p_{\alpha_j}). $$
    \end{enumerate}

\end{proposition}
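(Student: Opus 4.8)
The plan is to localize near each cylindrical end, where Assumption~\ref{ass:trop-ftn} forces only two monomials of $f$ to survive, and to reduce the statement to a contraction‑mapping argument controlling an exponentially small error; part~(1) then follows by gluing this local picture to the standard fact that the amoeba of $D_f$ lies in a neighborhood of the tropical skeleton $V(f)$. First I would set up the local analysis near a fixed end $\Xi_{\alpha_j,r_j}^+$, $\alpha_j=\beta_{j+1}-\beta_j$. In the adapted coordinates, the log‑norm of a monomial is
$$\log\abs{c_\gamma z^\gamma}=\log\abs{c_\gamma}+\frac{\langle\gamma,\alpha_j\rangle}{\abs{\alpha_j}^2}\,p_{\alpha_j}^\perp+(\gamma\wedge\alpha_j)\,p_{\alpha_j},$$
where $\gamma\wedge\alpha_j$ is the standard area form on $\bR^2$. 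Since $[\beta_j,\beta_{j+1}]$ is an edge of $\hull(A)$, the functional $\gamma\mapsto\gamma\wedge\alpha_j$ is maximized over $A$ precisely on that edge, and by Assumption~\ref{ass:trop-ftn} its only points of $A$ are $\beta_j$ and $\beta_{j+1}$; as the values are integers, every other $\gamma\in A$ satisfies $\gamma\wedge\alpha_j\le\beta_j\wedge\alpha_j-1$. Hence on the ``fat finger'' $F_j:=\{p_{\alpha_j}>b_2M,\ \abs{p_{\alpha_j}^\perp-\log\abs{r_j}}\le\log\#A\}$ each of $c_{\beta_j}z^{\beta_j}$ and $c_{\beta_{j+1}}z^{\beta_{j+1}}$ exceeds every remaining $c_\gamma z^\gamma$ by a factor growing exponentially in $p_{\alpha_j}$, once $b_2$ is chosen large enough, depending only on $A$.

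From this I would deduce (2) and (3). On $F_j$, dividing $f$ by $c_{\beta_j}z^{\beta_j}$ rewrites $f=0$ as $z^{\alpha_j}=-r_j\bigl(1+E(z)\bigr)$, with $E(z)=\sum_{\gamma\ne\beta_j,\beta_{j+1}}(c_\gamma/c_{\beta_j})\,z^{\gamma-\beta_j}$; the previous step gives $\abs{E(z)}\le M\exp(-b_3 p_{\alpha_j})$ for a suitable $b_3=b_3(A)>0$, and the same for the derivative of $E$ in the $\log z^{\alpha_j}$ direction. In the holomorphic coordinates $w=\log z^{\alpha_j}=p_{\alpha_j}^\perp+i\vartheta_{\alpha_j}^\perp$ and $\zeta=p_{\alpha_j}+i\vartheta_{\alpha_j}$ on the half‑cylinder, the equation becomes $w=\log(-r_j)+\log(1+E)$; since $\partial_w E$ is exponentially small, this is a contraction in $w$ on a disc of fixed radius about $\log(-r_j)$, uniformly for $\Rea\zeta>b_2M$, so it has a unique solution $w=\log(-r_j)+\rho_j(\zeta)$. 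The function $\rho_j$ is holomorphic by the analytic implicit function theorem, periodic in $\Ima\zeta$ (hence descends to the half‑cylinder), and obeys $\abs{\rho_j(\zeta)}\le2\abs{E}\le M\exp(-b_3\Rea\zeta)$; in particular $\abs{p_{\alpha_j}^\perp-\log\abs{r_j}}=\abs{\Rea\rho_j}<1$ there automatically. Conversely, any point of $D_f$ in the thin finger $\{p_{\alpha_j}>b_2M,\ \abs{p_{\alpha_j}^\perp-\log\abs{r_j}}<1\}$ still lies in $F_j$, hence solves the same fixed‑point equation, so $D_f$ meets the thin finger exactly in the graph of $\rho_j$ over $\{\Rea\zeta>b_2M\}$. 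This is (2)--(3), with $\log r_j$ in place of $\log(-r_j)$ after absorbing the constant $i\pi$ into the branch convention.

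For (1): if some monomial $c_\beta z^\beta$ exceeds every other $c_\gamma z^\gamma$ in log‑norm by more than $\log\#A$, then $\abs{c_\beta z^\beta}>\sum_{\gamma\ne\beta}\abs{c_\gamma z^\gamma}$ and $f(z)\ne0$; hence $D_f$ lies in the locus $\{g\le\log\#A\}$, where $g$ is the difference of the two largest values of $\log\abs{c_\gamma z^\gamma}$. This $g$ is piecewise linear, nonnegative, vanishes exactly on $V(f)$, and has all gradients of length $\ge1$, so $g(p)\ge\mathrm{dist}(p,V(f))$ and $\{g\le\log\#A\}$ lies in the $\log\#A$‑neighborhood of $V(f)$. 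The vertices and bounded edges of $V(f)$ are cut out by linear systems with data bounded by $M$, hence have diameter at most $cM$ for some $c=c(A)$, so their $\log\#A$‑neighborhood sits inside a box $\{\abs{\log\abs{z_1}},\abs{\log\abs{z_2}}\le b_1M\}$; the unbounded part of the neighborhood is the union of the fat fingers $F_j$ along the unbounded rays of $V(f)$. By the previous paragraph, $D_f$ meets each $F_j$ with $p_{\alpha_j}>b_2M$ only inside the thin finger, and the compact slab $\{p_{\alpha_j}\le b_2M\}$ of each $F_j$ is absorbed into the box by enlarging $b_1$. This gives (1).

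The step doing the real work---and where Assumption~\ref{ass:trop-ftn} is essential---is the two‑monomial reduction: exactly two monomials dominate at each end, with an integer gap of at least one over the rest, and this single fact produces both the exponential decay of (3) and the width~$<1$ of (1)--(2). The rest is bookkeeping to keep $b_1,b_2,b_3$ (and the contraction radius) functions of $A$ alone; once the two‑monomial picture is secured, (2)--(3) are a routine contraction/implicit‑function argument and (1) a triangle‑inequality estimate on the amoeba.
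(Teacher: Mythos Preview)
The paper states this proposition without proof (it is later cited as input for Proposition~\ref{prop:trop-approx}), so there is no reference argument to compare against; your outline is the natural proof and is essentially correct. The decisive observation---that Assumption~\ref{ass:trop-ftn} forces an integer gap $\gamma\wedge\alpha_j\le\beta_j\wedge\alpha_j-1$ for every $\gamma\in A\setminus\{\beta_j,\beta_{j+1}\}$, hence exponential suppression of all but two monomials along each end---is exactly the mechanism behind both the graph representation (2) and the decay estimate (3), and your contraction/implicit-function step is the right way to extract $\rho_j$ from the equation $w=\log(-r_j)+\log(1+E(w,\zeta))$. You also correctly flag the $\log r_j$ versus $\log(-r_j)$ discrepancy, which reflects a sign convention in the paper's Definition~\ref{defn:trop-ends-C} rather than an error in your argument.

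One small imprecision: the inequality $g(p)\ge\mathrm{dist}(p,V(f))$ does not follow merely from $\lvert\nabla g\rvert\ge1$; the gradient direction need not point toward $V(f)$, and easy examples (e.g.\ $g(x,y)=\max(\lvert x\rvert,\lvert y\rvert)$ near the origin) show one only gets $g(p)\ge c(A)\,\mathrm{dist}(p,V(f))$ for some constant $c(A)>0$. This weaker bound is all you need, and it follows directly: if $g(p)\le C$ then some pair $\beta\ne\beta'$ in $A$ has $\lvert\ell_\beta(p)-\ell_{\beta'}(p)\rvert\le C$, so $p$ lies within distance $C/\lvert\beta-\beta'\rvert\le C$ of the line $\{\ell_\beta=\ell_{\beta'}\}$; for $\lvert p\rvert$ large this line meets $V(f)$ in the ray you want, and the bounded remainder is absorbed into the box by enlarging $b_1$. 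With this adjustment the argument for (1) goes through.
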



\subsection{HyperKähler Rotation of a Holomorphic Curve}

Throughout this section, we continue using the cylindrical coordinates on $(\bC^*)^2$ defined by $\log z_j = p_j + i\theta_j$, $j=1,2$. We equip $(\bC^*)^2$ with the standard symplectic form $\omega = dp_1 \wedge d\theta_1 + dp_2 \wedge d\theta_2$. Also, there is a complex-valued $2$-form called \textit{the holomorphic volume form}, which is defined by
\begin{align*}
    \Omega &= d\log z_1 \wedge d\log z_2\\
    &= (dp_1 + i d\theta_1) \wedge (dp_2 + i d\theta_2).
\end{align*}
One property of $\Omega$ is that it vanishes on any complex curve in $(\bC^*)^2$ because $z_1$ and $z_2$ are locally related by a holomorphic function. Considering its real and imaginary parts, we have three $2$-forms
\begin{align}
    \omega &= dp_1 \wedge d\theta_1 + dp_2 \wedge d\theta_2,\\
    \Rea\Omega &= dp_1 \wedge dp_2 - d\theta_1 \wedge d\theta_2,\\
    \Ima\Omega &= dp_1 \wedge d\theta_2 - dp_2 \wedge d\theta_1.
\end{align}


We consider the mapping $\Psi$ called \textit{the Hyperkähler rotation} defined by
\begin{align*}
    \Psi: (\bC^*)^2 &\to (\bC^*)^2\\
     (p_1, p_2, \theta_1, \theta_2) &\mapsto (p_1, p_2, \theta_2, -\theta_1)
\end{align*}
Under this automorphism on $(\bC^*)^2$, the $2$-forms $\omega$, $\Rea\Omega$, and $\Ima\Omega$ are related to each other:
\begin{equation}
    \Psi^*\omega = -\Ima\Omega,\quad \Psi^*\Rea\Omega = \Rea\Omega,\quad \Psi^*\Ima\Omega = \omega.
\end{equation}
In particular, both $\omega$ and $\Rea\Omega$ vanish on the Hyperkähler rotation of a holomorphic curve. Recall the following definition.

\begin{definition}
    A Lagrangian submanifold $L$ of $(\bC^*)^2$ is called a \textit{special Lagrangian} if $\Rea\Omega\mid_{L} = 0.$
\end{definition}

Now, for any Laurent polynomial $f$ satisfying the assumption~\ref{ass:trop-ftn}, the Hyperkähler rotation $L_f$ of the divisor $D_f$ is a special Lagrangian. Also, Proposition~\ref{prop:trop-approx} implies that $L_f$ is exponentially close to a cylinder on each end. To describe this, we use the version of the definition for cylindrical ends by taking a pullback.

\begin{definition}[Definition~\ref{defn:trop-ends-C}, A-side]\label{defn:trop-ends}
    For a Laurent polynomial $f$ satisfying Assumption~\ref{ass:trop-ftn}, denote the unbounded components of the complement of $V(f)$ by $C_{\beta_1}, C_{\beta_2}, \cdots, C_{\beta_k}$ in a counterclockwise order. Then, the \textit{(Lagrangian) tropical cylindrical ends} of $f$ are the collection $\{Z_{\alpha_1, r_1}^+, Z_{\alpha_2, r_2}^+, \cdots, Z_{\alpha_m, r_m}^+\}$ where $Z_{\alpha_j, r_j}^+ = \Psi \Xi_{\alpha_j, r_j}^+.$
\end{definition}

We need the ``rotated" version of the coordinate system $(p_\alpha, p_\alpha^\perp, \vartheta_\alpha, \vartheta_\alpha^\perp)$ to describe the approximate behavior of the Lagrangian $L_f$. It is given by
\begin{align*}
    p_\alpha^\perp &= \alpha_1 p_1 + \alpha_2 p_2, & p_\alpha &= \frac{1}{\abs{\alpha}^2}(\alpha_2 p_1 - \alpha_1 p_2),\\
    \theta_\alpha^\perp &= \frac{1}{\abs{\alpha}^2}(\alpha_1 \theta_1 + \alpha_2 \theta_2), & \theta_\alpha &= -\alpha_2 \theta_1 + \alpha_1 \theta_2.
\end{align*}

Note that $(p_\alpha, \theta_\alpha^\perp)$ is a coordinate system for a cylindrical end $Z_{\alpha, r}$. The existence of the coefficient $\frac{1}{\abs{\alpha}^2}$ makes the system incompatible with the standard complex structure, but the two $2$-forms we have interest in can be expressed in the usual way (with some sign difference):
\begin{align}
    \omega &= -dp_\alpha \wedge d\theta_\alpha + dp_\alpha^\perp \wedge d\theta_\alpha^\perp,\label{eqn:alpha-sympl}\\
    \Rea\Omega &= dp_\alpha \wedge dp_\alpha^\perp + d\theta_\alpha \wedge d\theta_\alpha^\perp.\label{eqn:alpha-Reomega}
\end{align}
The former identity implies that the neighborhood of the cylinder $Z_{\alpha, r}$ can be identified as a neighborhood of the zero section of the cotangent bundle $T^* Z_{\alpha, r}$, where $p_\alpha^\perp$ and $\theta_\alpha$ represent the cotangent directions dual to $\theta_\alpha^\perp$ and $p_\alpha$, respectively. In particular, for a smooth function $g: Z_{\alpha, r} \to \bR$, the graph of $dg$ is expressed by the local coordinates
\begin{equation}\label{eqn:cyl-cotangent}
    \theta_\alpha - \arg r = \frac{\round g}{\round p_\alpha},\quad p_\alpha^\perp - \log|r| = \frac{\round g}{\round \theta_\alpha^\perp}.
\end{equation}
On the other hand, the 2-form $\Rea\Omega$ will play a key role because it vanishes on both special Lagrangians and holomorphic disks. Hence, we consider the following primitives of $\omega$ and $\Rea\Omega$.

\begin{definition}\label{defn:canonical-angular-1-form}
    For a pair $\alpha = (\alpha_1, \alpha_2) \in \bZ^2$ of coprime integers and $r \in \bC^*$,
    \begin{itemize}
        \item The \textit{canonical 1-form of the (Lagrangian) cylindrical end} $Z_{\alpha, r}$ is the multi-valued 1-form on $(\bC^*)^2$ defined by
    $$ \lambda_\alpha = ({\theta_\alpha - \arg r} )dp_\alpha + (p_\alpha^\perp - \log |r|) d\theta_\alpha^\perp. $$
        \item The \textit{angular 1-form of the (Lagrangian) cylindrical end} $Z_{\alpha, r}$ is the multi-valued 1-form on $(\bC^*)^2$ defined by
    $$ \eta_\alpha = -(p_\alpha^\perp - \log |r|)dp_\alpha + ({\theta_\alpha - \arg r} ) d\theta_\alpha^\perp. $$
    \end{itemize}
\end{definition}

The only multi-valued part of $\lambda_\alpha$ and $\eta_\alpha$ is $\theta_\alpha - \arg r$, whose value is uniquely determined modulo $2\pi$.

\begin{proposition}[Proposition~\ref{prop:trop-approx-C}, A-side]\label{prop:trop-approx}
    Let $f = \sum_{\beta \in A}c_\beta z^\beta$ be a Laurent polynomial satisfying Assumption~\ref{ass:trop-ftn}, and set 
    $$M = \max_{\beta_1, \beta_2 \in A} \log\abs*{c_{\beta_1}c_{\beta_2}^{-1}}.$$ 
    Suppose that the tropical cylindrical ends of $f$ is $\{Z_{\alpha_1, r_1}^+, Z_{\alpha_2, r_2}^+, \cdots, Z_{\alpha_m, r_m}^+\}$. Then, there exist constants $b_1, b_2, b_3 > 0$ depending only on $A$ that satisfy the following.

    \begin{enumerate}
        \item The Lagrangian $L_f$ lies inside the union
        $$ \{|\log|z_1||, |\log|z_2|| \le b_1 M \} \quad \cup \quad  \bigsqcup_{j=1}^k \,\{ p_{\alpha_j} > b_2 M, \,\,\abs{p_{\alpha_j}^\perp - \log\abs{r_j}} < 1 \}.$$
        \item For $j=1,2,\cdots,k$, there exists a harmonic function $g_j:Z_{\alpha_j, r_j}^+ \to \bR$ such that the set $L_f \cap \{ p_{\alpha_j} > b_2 M, \,\,\abs{p_{\alpha_j}^\perp - \log\abs{r_j}} < 1 \}$ is equal to the graph of $dg_j$ in $T^* Z_{\alpha_j, r_j}^+$ under the identification (\ref{eqn:cyl-cotangent}).
        \item The function $g_j$ has the upper bound
        $$ \norm{d g_j(p_{\alpha_j}, \theta_{\alpha_j}^\perp)} \le M\exp(-b_3 p_{\alpha_j}). $$
    \end{enumerate}

\end{proposition}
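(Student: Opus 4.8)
The plan is to deduce the entire statement from its complex-side counterpart, Proposition~\ref{prop:trop-approx-C}, by pushing everything forward along the HyperKähler rotation $\Psi$; recall that $L_f=\Psi(D_f)$ by definition and that $\Psi$ fixes $p_1,p_2$, hence also $p_{\alpha_j}$ and $p_{\alpha_j}^\perp$.

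\emph{Part (1)} is then immediate: every inequality cutting out the region in Proposition~\ref{prop:trop-approx-C}(1) involves only $\log|z_1|,\log|z_2|,p_{\alpha_j},p_{\alpha_j}^\perp$, all fixed by $\Psi$, so $\Psi$ carries that region to the identically described one and $L_f=\Psi(D_f)$ lies inside it.

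For \emph{Part (2)} I would first record the action of $\Psi$ on the angular coordinates: using $\Psi(\theta_1,\theta_2)=(\theta_2,-\theta_1)$ and comparing the definitions of the two coordinate systems, $\Psi$ identifies $\vartheta_{\alpha_j}$ with $-\theta_{\alpha_j}^\perp$ and $\vartheta_{\alpha_j}^\perp$ with $-\theta_{\alpha_j}$ (up to the additive constants $\arg r_j$ absorbed into the normalization of $Z_{\alpha_j,r_j}$ in Definition~\ref{defn:trop-ends}), while fixing $p_{\alpha_j},p_{\alpha_j}^\perp$. By Proposition~\ref{prop:trop-approx-C}(2), on the relevant slab $D_f$ is the locus $p_{\alpha_j}^\perp-\log|r_j|=\Rea\rho_j$, $\vartheta_{\alpha_j}^\perp-\arg r_j=\Ima\rho_j$ with $\rho_j=\rho_j(p_{\alpha_j}+i\vartheta_{\alpha_j})$ holomorphic. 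Transporting by $\Psi$, $L_f$ becomes, on the corresponding slab, the locus
\[
p_{\alpha_j}^\perp-\log|r_j|=u_j,\qquad \theta_{\alpha_j}-\arg r_j=-v_j,
\]
where $u_j+iv_j$ is $\rho_j$ regarded, after the substitution, as a function of $(p_{\alpha_j},\theta_{\alpha_j}^\perp)$ on $Z_{\alpha_j,r_j}^+$. Matching with the cotangent normal form \eqref{eqn:cyl-cotangent} shows that $L_f$ is the graph of $dg_j$ for a potential $g_j$ with $\partial g_j/\partial\theta_{\alpha_j}^\perp=u_j$ and $\partial g_j/\partial p_{\alpha_j}=-v_j$, \emph{provided one checks such a $g_j$ exists}. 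For that I would introduce the explicit $1$-form $\beta_j:=u_j\,d\theta_{\alpha_j}^\perp-v_j\,dp_{\alpha_j}$; it is closed because the transported pair $(u_j,v_j)$ still satisfies the Cauchy--Riemann equations, and its only possible period, $\oint_{\{p_{\alpha_j}=c\}}u_j\,d\theta_{\alpha_j}^\perp$, is independent of $c$ yet bounded by $(\text{length})\cdot\sup\norm{\beta_j}\to 0$ as $c\to\infty$ (alternatively: $\rho_j$ decays at the puncture, so its Laurent expansion on the cylinder has vanishing circle-average), hence vanishes. Thus $\beta_j=dg_j$ for a single-valued $g_j$, unique up to an additive constant, and $g_j$ is harmonic — either directly, $\Delta g_j=\partial_{\theta_{\alpha_j}^\perp}u_j-\partial_{p_{\alpha_j}}v_j=0$ by Cauchy--Riemann, or intrinsically because $\Rea\Omega$ vanishes on the special Lagrangian $L_f$, which via \eqref{eqn:alpha-Reomega} and \eqref{eqn:cyl-cotangent} is exactly the Laplace equation for $g_j$.

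\emph{Part (3)} will then follow with no new estimate: under \eqref{eqn:cyl-cotangent} we have $dg_j=u_j\,d\theta_{\alpha_j}^\perp-v_j\,dp_{\alpha_j}$, and in the flat ambient metric $\sum(dp_i^2+d\theta_i^2)$ (which $\Psi$ preserves) the restrictions of $dp_{\alpha_j}$ and $d\theta_{\alpha_j}^\perp$ to $Z_{\alpha_j,r_j}^+$ are orthogonal of length $1/|\alpha_j|\le 1$, so $\norm{dg_j}\le\sqrt{u_j^2+v_j^2}=\abs{\rho_j(p_{\alpha_j}+i\vartheta_{\alpha_j})}\le M\exp(-b_3 p_{\alpha_j})$ by Proposition~\ref{prop:trop-approx-C}(3); one may take $b_1,b_2,b_3$ to be the constants furnished there.

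The main obstacle is the bookkeeping in Part (2): getting the coordinate substitution and all its signs right so that the push-forward of the holomorphic graph lands exactly in the normal form \eqref{eqn:cyl-cotangent}, and verifying that the associated closed $1$-form is exact, so that the harmonic potential $g_j$ is genuinely single-valued. Once that is in place, the region containment, harmonicity, and the exponential bound are all formal.
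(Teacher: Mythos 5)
Your proposal is correct and follows essentially the same route as the paper, which simply declares the statement a direct corollary of Proposition~\ref{prop:trop-approx-C} with $g_j$ the primitive of the rotated $\rho_j$ and harmonicity coming from the Cauchy--Riemann equations. Your write-up merely supplies the details the paper leaves implicit — the sign bookkeeping under $\Psi$ and, usefully, the vanishing of the period of $u_j\,d\theta_{\alpha_j}^\perp$ needed for $g_j$ to be single-valued.
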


\begin{proof}
    This is the direct corollary of the Proposition~\ref{prop:trop-approx-C}. The function $g_j$ is the primitive of the ``rotated" function $\rho_j$ in the Proposition~\ref{prop:trop-approx-C}, and the Cauchy-Riemann equation for $\rho_j$ implies that $g_j$ should be harmonic.
\end{proof}

\subsection{Cylindrical Lagrangians and Wrapping Hamiltonians}\label{subsect:Lagn-and-Ham-def}



Consider a Laurent polynomial $f$ and constants $M, b_1, b_2, b_3 > 0$ defined in Proposition~\ref{prop:trop-approx}. For the constructions of Lagrangians and Hamiltonians throughout this section, we fix a smooth function $\chi: \bR \to [0,1]$ satisfying $\chi(p) = 0$ for $p \le 0$, $\chi(p) = 1$ for $p \ge 1$, and $0 < \chi'(p) \le 2$ for $0<p<1$. Also, for $0 < R_1 < R_2$, set
\begin{equation}
    \bump{R_1, R_2}(p) = \chi\left(\frac{p-R_1}{R_2-R_1} \right).
\end{equation}
so that $\bump{R_1, R_2}(p)$ is 0 for $p \le R_1$ and 1 for $p \ge R_2$.


\begin{definition}\label{def:Lf-tilde-cyl}
    For $R > b_2 M$, the Lagrangian $\widetilde{L}_{f, R}^{\text{cyl}}$ is defined as follows.
    \begin{itemize}
        \item $\widetilde{L}_{f, R}^{\text{cyl}}$ is equal to $L_f$ outside the cylindrical ends.
        \item In the region $\{ p_{\alpha_j} > b_2 M, \,\,\abs{p_{\alpha_j}^\perp - \log\abs{r_j}} < 1 \}$ where $L_f$ is expressed by the graph of $d g_j$, the Lagrangian $\widetilde{L}_{f,R}^{\text{cyl}}$ is equal to the graph of $d((1-\bump{R,2R}) g_j)$ under the identification~(\ref{eqn:cyl-cotangent}).
    \end{itemize}
\end{definition}

\begin{remark}
    Typically, we use the Hamiltonian on $p_1$ and $p_2$, which is either quadratic or eventually linear. For example, if we consider the Hamiltonian
    $$ H(p_1, p_2) = \pi p_1^2 + \pi p_2^2,$$
    then its Hamiltonian flow is given by
    $$\phi_H^t: (p_1, \theta_1, p_2, \theta_2) \mapsto (p_1, \theta_1 + 2t\pi p_1, p_2, \theta_2 + 2t\pi p_2).$$
    Thus, for the zero section $L_0$ of the torus fibration given by the projection onto the $(p_1, p_2)$-plane, $L_0$ and its perturbation $\phi_H^1 L_0$ intersect at the integer points. 
    For the cylindrical Lagrangian $Z = \{p_2 = \theta_1 = 0\}$, however, $Z$ and its perturbation $\phi_H^1 Z$ do not intersect transversely, and they rather meet at infinitely many circles. 
    In our setting, we avoid this issue by using the perturbed Lagrangian $\widetilde{L}_{f, R}$ rather than $\widetilde{L}_{f, R}^{\text{cyl}}$, which is cylindrical at infinity.
\end{remark}
To define the Hamiltonian, we first set 
\begin{equation}\label{eqn:mu-phi}
    \mu_\varphi(\theta) := -\cos(\theta-\varphi).
\end{equation}
This defines a family of Morse function $\{ \mu_\varphi: S^1 \to \bR \}_{\varphi \in S^1}$ such that
\begin{itemize}
    \item $\mu_\varphi$ has only one minimum at $\varphi$ and one maximum at $\varphi + \pi$.
    \item $\abs{\mu_\varphi(\theta)} \le 1$ and $\abs{\mu'_\varphi(\theta)} \le 1$ for every $\theta \in S^1$.
\end{itemize}
These functions will be used to control the $\theta$-coordinate of the generators at infinity.


For a Laurent polynomial $f = \sum_{\alpha \in A} c_\alpha z^\alpha$ satisfying Assumption~\ref{ass:trop-ftn} and with tropical cylindrical ends $\{Z_{\alpha_1, r_1}, \cdots, Z_{\alpha_m, r_m}^+\}$, we fix the following two data depending on $A$.

\begin{itemize}
    \item A collection $\{\varphi_{\alpha_j} \}_{1 \le j \le m} \subseteq \bR \setminus \bQ$ of rationally independent irrational numbers. These will be the $\theta_\alpha^\perp$-coordinates of generators on each cylindrical end.
    \item A unit vector $(a_1, a_2)$ in $\bR^2$ not parallel or orthogonal to any $\alpha_j$.
\end{itemize}

Let $K$ be the compact subset $L_f$ consisting of all regions not expressed as a graph over any cylindrical end.

\begin{definition}\label{def:Lf-tilde}
    For a choice of collection $\{\varphi_{\alpha_j} \}_{1 \le j \le m}$ and a unit vector $(a_1, a_2)$ as above, define $\widetilde{L}_{f,R}$ as a Lagrangian obtained by perturbing each cylindrical end of $\widetilde{L}_{f,R}^{\text{cyl}}$ in the following way.

    \begin{itemize}
        \item On the compact region containing $K \cup \bigsqcup_{j}\{p_{\alpha_j} \le 2R,\,\,\abs{p_{\alpha_j}^\perp - \log\abs{r_j}} < 1 \}$, set $$\widetilde{L}_{f,R} = \widetilde{L}_{f,R}^{\text{cyl}}.$$
        \item On each region $\{p_{\alpha} > 2R,\,\, \abs{p_{\alpha}^\perp - \log\abs{r}} < 1 \}$ where $\widetilde{L}_{f,R}^{\text{cyl}}$ is a cylinder, set $\widetilde{L}_{f,R}$ as a graph of the differential of the function
        \begin{equation}
            \begin{split}
                H_\theta &= R^{-3} \bump{2R, 3R}(p_\alpha) \mu_{\varphi_\alpha} (|\alpha|^2 \theta_\alpha^\perp)\\
                &= R^{-3} \bump{2R, 3R}(p_\alpha) \mu_{\varphi_\alpha} (\alpha_1 \theta_1 + \alpha_2 \theta_2).
            \end{split}
        \end{equation}
    \end{itemize}
\end{definition}

\begin{definition}\label{def:Hf-tilde}
    For a choice of collection $\{\varphi_{\alpha_j} \}_{1 \le j \le m}$ and a unit vector $(a_1, a_2)$ as above, define the Hamiltonian $H_{f, R}$ as the following function on $(p_1, p_2)$ (or $(p_\alpha, p_\alpha^\perp)$ after a change of coordinates).
    \begin{itemize}
        \item On the compact region containing $K \cup \bigsqcup_{j}\{p_{\alpha_j} \le 2R,\,\, \abs{p_{\alpha_j}^\perp - \log\abs{r_j}} < 1 \}$,
        \begin{align}
            H_{f, R} &= R^{-1}(a_1 p_1 + a_2 p_2).
        \end{align}
        \item On each cylindrical end $\{p_\alpha > 2R,\,\, \abs{p_{\alpha}^\perp - \log\abs{r}} < 1 \}$, if $a_\alpha$ and $a_\alpha^\perp$ are the constants satisfying $a_1 p_1 + a_2 p_2 = a_\alpha p_\alpha + a_\alpha^\perp p_\alpha^\perp,$
        \begin{align}\label{eqn:HfR-formula-cyl}
            H_{f, R} &= (2\pi + 2 R^{-1} \abs{a_\alpha} ) \bump{R^2 + 3R,R^2 + 4R}(p_\alpha) p_\alpha + R^{-1} (a_\alpha p_\alpha + a_\alpha^\perp p_\alpha^\perp).
        \end{align}
    \end{itemize}
\end{definition}

\begin{remark}
    The definition above defines the Hamiltonian only near each of the cylindrical regions, and one needs to extend this to a function in $\bR^2$ to define a Floer theory for these Lagrangians. One way to achieve this is to use a twisting Hamiltonian for monomially admissible Lagrangians \cite{Han1}. Namely, we divide $\bR^2$ into several regions so that each region contains one of the cylindrical ends and extend the Hamiltonian to be linear outside of a compact set on each region, smoothly connecting them in between. See Section~\ref{subsect:distinguished-basis} for the details. However, we will only consider the Hamiltonian near the cylindrical ends since every Lagrangian and disk we consider lies in this region.
\end{remark}

We consider only sufficiently large of $R$ after fixing the values of all the other variables, including the unit vector $(a_1, a_2)$ and the integer $k$. This guarantees that all the terms of the Hamiltonian $k H_{f,R}$ on the cylindrical region $\{p_\alpha > 2R\}$ (or, $k$ times the equation (\ref{eqn:HfR-formula-cyl})) are sufficiently small except for the ``wrapping'' part $2k\pi\chi(p_\alpha) p_\alpha$. For such $R$, we divide $\widetilde{L}_{f, R}$ and its Hamiltonian perturbation by $kH_{f, R}$ into several parts.

\vspace{0.3em}
\begin{figure}[ht]
    \centering
    \includegraphics[width=0.7\textwidth]{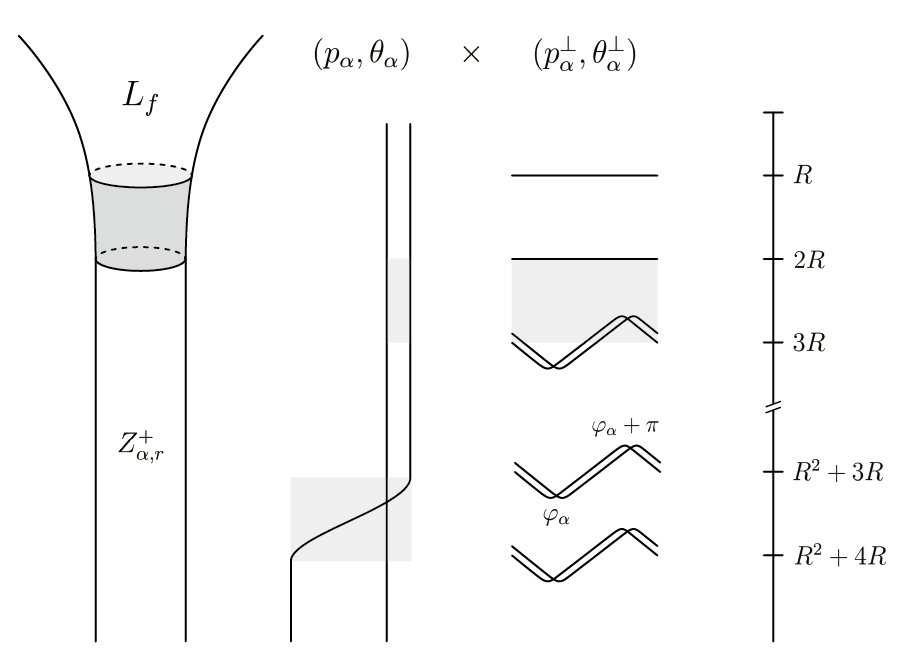}
    \caption{The Lagrangian $\widetilde{L}_{f,R}$ and its Hamiltonian perturbation. The rightmost axis shows the value of $p_\alpha$. The transitions happen in shaded regions.}
    \label{fig:lagn-ham-desc}
\end{figure}
\vspace{0.3em}

\begin{itemize}
    \item When $p_\alpha \le R$ (including the interior region), $\widetilde{L}_{f, R}$ is equal to $L_f$ and the Hamiltonian perturbation is equal to a translation in the $(\theta_1, \theta_2)$-coordinate plane. The Lagrangian $\widetilde{L}_{f, R}$ and its perturbation are both Hyperkähler rotations of complex curves in $(\bC^*)^2$, so we have a finite number of isolated simple intersection points between them when $R$ is sufficiently large.

    \item when $R \le p_\alpha \le 2R$, each end of the Lagrangian $\widetilde{L}_{f, R}$ deforms to $Z_{\alpha,r}^+$. The Hamiltonian perturbation is still a translation proportional to $R^{-1}$, and $L_f$ is exponentially close to each $Z_{\alpha,r}^+$ (Proposition~\ref{prop:trop-approx}, (c)), so there are no intersection points in this region if we choose $R$ large enough.

    \item when $2R \le p_\alpha \le 3R$, the Lagranigan $\widetilde{L}_{f, R}$ is cylindrical, and $H_\theta$ changes from $0$ to the Morse function $R^{-3} \mu_{\varphi_\alpha}(|\alpha|^2 \theta_\alpha^\perp)$. The Hamiltonian flow still gives a translation by a value proportional to $R^{-1}$, so there are no intersection points in this region if we choose $R$ large enough.

    \item The part $3R \le p_\alpha \le R^2+3R$ is the ``stretching" region. There are no intersection points because the projection of the Lagrangians onto the $(p_\alpha, \theta_\alpha)$-plane are parallel lines.

    \item when $R^2 + 3R \le p_\alpha$, the Lagrangian $\widetilde{L}_{f, R}$ is cylindrical, and the Hamiltonian ``wraps around" the $\bR_{>0}$ part of $Z_{\alpha, r}^+$.
\end{itemize}

Let's look at the $p_\alpha \ge R^2 + 3R$ part in detail. On this region, the Hamiltonian is the sum of a function on $p_\alpha$ and the linear function $R^{-1}a_\alpha^\perp p_\alpha^\perp$ on $p_\alpha^\perp$. From the equation (\ref{eqn:alpha-sympl}), the Hamiltonian flow of this Hamiltonian can be expressed by
\begin{equation}\label{eqn:flow-of-HfR}
    \phi_{H}^t: (p_\alpha, \theta_\alpha, p_\alpha^\perp, \theta_\alpha^\perp) \mapsto \left(p_\alpha,\,\,\theta_\alpha + \frac{\round H_{f, R}}{\round p_\alpha} t,\,\, p_\alpha^\perp,\,\,  \theta_\alpha^\perp + R^{-1}a_\alpha^\perp \right),
\end{equation}
where the partial derivative $h := \frac{\round H_{f, R}}{\round p_\alpha}$ is a function on $p_\alpha$ and increases from $R^{-1}a_\alpha$ to a number between $2k\pi$ and $2(k+1)\pi$. 

On the other hand, the Lagrangian $\widetilde{L}_{f, R}$ can be viewed as a product of a line $\theta_\alpha = \arg{r}$ in the $(p_\alpha, \theta_\alpha)$-plane and the graph of $$d\mu_{\varphi_\alpha} = |\alpha|^2\sin(|\alpha|^2\theta_\alpha^\perp-\varphi_\alpha) d\theta_\alpha^\perp$$ 
in the $(p_\alpha^\perp, \theta_\alpha^\perp)$-plane. For the latter part, the Hamiltonian flow translates the domain of $\mu_{\varphi_\alpha}$ by $R^{-3} a_\alpha^\perp$, so that it is a graph of 
$$d\mu_{\varphi_\alpha + R^{-1} a_\alpha^\perp} = |\alpha|^2\sin(|\alpha|^2\theta_\alpha^\perp-\varphi_\alpha - R^{-1} a_\alpha^\perp) d\theta_\alpha^\perp.$$ 
Also, $a_\alpha^\perp$ is nonzero since $(a_1, a_2)$ is not orthogonal to $\alpha$. Therefore, if $R$ is large enough, these two graphs intersect at the points where $|\alpha|^2\theta_\alpha^\perp = \alpha_1 \theta_1 + \alpha_2 \theta_2$ is equal to $\varphi_\alpha + \frac{1}{2} R^{-1} a_\alpha^\perp$ or $\varphi_\alpha + \frac{1}{2} R^{-1} a_\alpha^\perp +\pi$ modulo $2 \pi$. There are $2|\alpha|^2$ such intersection points in general, but for simplicity, we will assume that $|\alpha| = 1$ throughout the paper. This happens when the cylindrical end $Z_{\alpha,r}^+$ is either horizontal or vertical in the $(p_1, p_2)$-plane, and in this case, there are two intersection points where $\theta_\alpha^\perp = \varphi_\alpha + \frac{1}{2} R^{-1} a_\alpha^\perp$ and where $\theta_\alpha^\perp = \varphi_\alpha + \frac{1}{2} R^{-1} a_\alpha^\perp + \pi$. For the case $\abs{\alpha} > 1$, see Remark~\ref{rem:alpha-bigger} and Section~\ref{subsect:pair-of-pants}.

In conclusion, when $|\alpha|=1$, the Lagrangian $\widetilde{L}_{f,R}$ and its Hamiltonian perturbation $\phi_H^1 \widetilde{L}_{f,R}$ intersect at the points where
\begin{itemize}
    \item $\theta_\alpha^\perp$ is either $\varphi_\alpha + \frac{1}{2}R^{-1} a_\alpha^\perp$ or $\varphi_\alpha + \pi + \frac{1}{2} R^{-1} a_\alpha^\perp$, and
    \item $h (p_\alpha)$ is $2\pi$ times an integer.
\end{itemize}
Since $h$ increases from $R^{-1} a_\alpha$ to a number between $2k\pi$ and $2(k+1)\pi$, the number of possible values for $p_\alpha$ is either $k$ or $k+1$, depending on the sign of $a_\alpha$. By definition, the sign of $a_\alpha$ is equal to the sign of $a_1 \alpha_2 - a_2\alpha_1$, which is nonzero since the vector $(a_1, a_2)$ is chosen to be non-parallel to $\alpha$.

\begin{notation}\label{not:def-xjxe}
    When $|\alpha|=1$, we denote the intersection points in $\{p_\alpha \ge R^2 + 3R\}$ by:
    \begin{itemize}
        \item $x_\alpha^{j}x_\alpha^{e}$ when $h(p_\alpha) = 2j\pi$ and $\theta_\alpha^\perp = \varphi_\alpha + \frac{1}{2} R^{-1} a_\alpha^\perp, \quad j = (0,)1,2,\cdots,k$.
        \item $x_\alpha^{j}x_\alpha^{f}$ when $h(p_\alpha) = 2j\pi$ and $\theta_\alpha^\perp = \varphi_\alpha+\pi+ \frac{1}{2} R^{-1} a_\alpha^\perp, \quad j = (0,)1,2,\cdots,k$.
    \end{itemize}
    The index $j$ is from $1$ to $k$ if $a_1 \alpha_2 - a_2\alpha_1 >0$, and from $0$ to $k$ if $a_1 \alpha_2 - a_2\alpha_1 < 0$.
\end{notation}

\begin{remark}
    The product notation comes from the identification of a Lagrangian cylinder $Z = \{p_2 = \theta_1 = 0\}$ with the product of two Lagrangians $\bR_{>0} \subseteq \bC^*$ and $S^1 \subseteq \bC^*$. The wrapped Floer cohomology of $\bR_{>0} \subseteq \bC^*$ is isomorphic to a ring of Laurent polynomials $k[x^\pm]$ while the Floer cohomology of $S^1$ is isomorphic to a cohomology ring $H^\bullet S^1$, which is generated by a degree $0$ element $e$ and a degree $1$ element $f$. However, the same identification works for the cylinder $Z_{\alpha, r}$ only when $|\alpha|=1$.
\end{remark}

\begin{remark}\label{rem:alpha-bigger}
    When $\abs{\alpha} > 1$, the identification between $(\theta_1 , \theta_2)$ and $(\theta_\alpha, \theta_\alpha^\perp)$ is not one-to-one, but rather $\abs{\alpha}^2$-to-one (in either direction) in its nature. Namely, when the value of $\theta_\alpha = -\alpha_2 \theta_1 + \alpha_1 \theta_2$ and $ \abs{\alpha}^2\theta_\alpha^\perp = \alpha_1 \theta_1 + \alpha_2 \theta_2$ are determined modulo $2 \pi$, there are $\abs{\alpha}^2$ possiblities for such pair $(\theta_1 , \theta_2)$ modulo $2 \pi$ since the pair
    $$\left( \theta_1 + \frac{\alpha_1}{\abs{\alpha}^2} \cdot 2k\pi , \quad \theta_2 + \frac{\alpha_2}{\abs{\alpha}^2} \cdot 2k\pi \right)$$
    for any integer $k$ also satisfies the condition. On the other hand, the value of $\theta_\alpha$ and $\abs{\alpha}^2 \theta_\alpha^\perp$ modulo $2 \pi \abs{\alpha}^2$ uniquely determines the value of $\theta_1$ and $\theta_2$ by 
    $$ \theta_1 = - \frac{\alpha_2}{\abs{\alpha}^2} \theta_\alpha + \frac{\alpha_1}{\abs{\alpha}^2} \abs{\alpha}^2 \theta_\alpha^\perp, \quad \theta_2 = \frac{\alpha_1}{\abs{\alpha}^2} \theta_\alpha + \frac{\alpha_2}{\abs{\alpha}^2} \abs{\alpha}^2 \theta_\alpha^\perp, $$
    and there are $\abs{\alpha}^2$ different pairs $(\theta_\alpha, \theta_\alpha^\perp)$ giving the same $\theta_1$ and $\theta_2$. Therefore, if we use Notation~\ref{not:def-xjxe} to describe the intersection points, there are $\abs{\alpha}^2$ different points in $(\bC^*)^2$ denoted as $x_\alpha^j x_\alpha^e$ (and $x_\alpha^j x_\alpha^f$, respectively) for each $j$, namely the points satisfying
    $$\theta_\alpha = \arg{r} \quad \text{and} \quad \abs{\alpha}^2 \theta_\alpha^\perp = \varphi_\alpha + \frac{1}{2} R^{-1} a_\alpha^\perp$$
    modulo $2 \pi$. For a detailed discussion, see Section~\ref{subsect:pair-of-pants}.
\end{remark}

\begin{remark}\label{rem:cyl-end-desc-for-perturbation}
    Near each cylindrical end $Z_{\alpha,r}^+$, the Lagrangians $L_f$, $\widetilde{L}_{f,R}^{\text{cyl}}$ and $\widetilde{L}_{f,R}$ are expressed as a graph of $dg$ for some function $g$ on $Z_{\alpha,r}^+ \cap \{p_\alpha \ge R\}$. Moreover, even though the Hamiltonian $H_{f, R}$ is not a function on $Z_{\alpha,r}^+ \cap \{p_\alpha \ge R\}$, the corresponding Hamiltonian perturbation of $\widetilde{L}_{f, R}$ is still a graph of some function. Indeed, on the region $\{p_\alpha \ge 3R\}$, the Lagrangian $\widetilde{L}_{f,R}$ is the graph of the differential of
    $$g_0(p_\alpha, \theta_\alpha^\perp) = R^{-3}\mu_{\varphi_\alpha}(\theta_\alpha^\perp),$$
    and the perturbation $\phi_{kH}^1 \widetilde{L}_{f,R}$ is the graph of the differential of
    $$g_k(p_\alpha, \theta_\alpha^\perp) = 2k\pi \bump{R^2 + 3R, R^2 +4R}(p_\alpha) p_\alpha + k R^{-1} a_\alpha p_\alpha + R^{-3}\mu_{\varphi_\alpha + kR^{-1}a_\alpha^\perp}(\theta_\alpha^\perp),$$
    where the first two terms are the $p_\alpha$ part of $H_{f,R}$ and the translation on the third term comes from the $p_\alpha^\perp$ part of $H_{f,R}$.
\end{remark}

When $R$ is large enough, the Floer theory of $\widetilde{L}_{f, R}$ with the Hamiltonian perturbation $kH_{f, R}$ in the interior region can be viewed as a Morse theory on $L_f$ with the Morse function 
$$(p_1, p_2, \theta_1, \theta_2) \mapsto a_1p_1 + a_2p_2.$$
The Morse function is harmonic due to the properties of special Lagrangians, so its critical points all have index $1$.
    
Moreover, the condition that $a_1 \alpha_2 - a_2\alpha_1$ is positive or negative is equivalent to that the corresponding cylindrical end $Z_{\alpha,r}^+$ points upward or downward for this Morse function. The generators $x_\alpha^0 x_\alpha^e$ and $x_\alpha^0 x_\alpha^f$ on each downward cylindrical end can be added to the Morse generators. Then, the definition of $H_{f, R}$ makes the Morse function bounded below by ``bending" the downward ends. This adds the same number of generators in degrees 0 and 1, so the number of interior generators equals the Euler characteristic of $L_f$. We can summarize these results as follows.

\vspace{0.3em}
\begin{figure}[ht]
    \centering
    \includegraphics[width=0.5\textwidth]{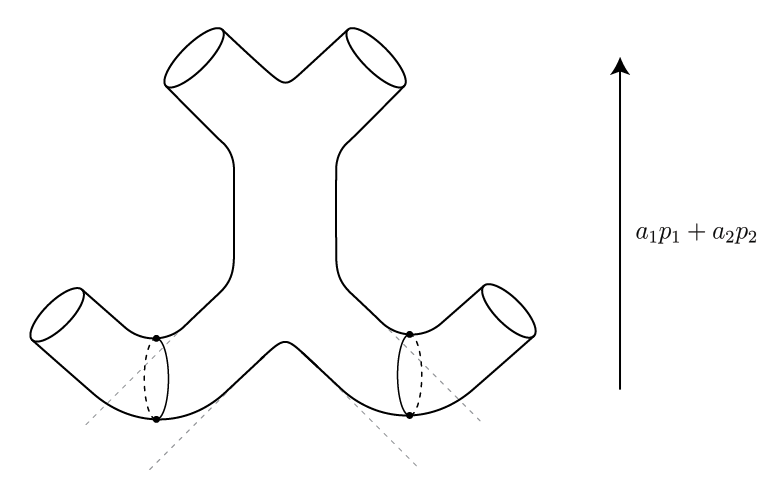}
    \caption{Morse function and bending the lower cylindrical ends.}
    \label{fig:bending}
\end{figure}
\vspace{0.3em}

\begin{lemma}\label{lem:Riemann-translation-intersection}
    For large enough $R>0$, The Lagrangian $L_f$ and its translation $L_f + R^{-1}(a_1 \theta_1 + a_2 \theta_2)$ have exactly $2g(L_f) + b(L_f) - 2$ intersection points, where $g(L_f)$ is the genus and $b(L_f)$ is the number of cylindrical ends, and all intersection points have cohomological degree $1$.
\end{lemma}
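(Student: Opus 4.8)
The plan is to reduce the count of intersection points to a Morse-theoretic computation on the special Lagrangian $L_f$, exploiting the fact that the Hamiltonian perturbation $R^{-1}(a_1\theta_1 + a_2\theta_2)$ is, up to the compactly supported bending of Definition~\ref{def:Hf-tilde}, the time-one flow of the harmonic function $a_1 p_1 + a_2 p_2$ restricted to $L_f$. First I would observe that $L_f$ is a genus $g(L_f)$ surface with $b(L_f)$ cylindrical ends, so after compactifying each end to a point it becomes a closed surface of Euler characteristic $\chi = 2 - 2g(L_f) - b(L_f)$; the open surface itself has $\chi(L_f) = 2 - 2g(L_f) - b(L_f)$ as well since capping adds $b(L_f)$ points but the $0$-cells and the topology work out, and I will just quote $\chi(L_f) = 2 - 2g(L_f) - b(L_f)$ for a genus-$g$ surface with $b$ punctures. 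The function $\phi := a_1 p_1 + a_2 p_2$ on $L_f$ is harmonic by Proposition~\ref{prop:trop-approx}(b) (its cylindrical ends are graphs of $dg_j$ with $g_j$ harmonic), so all its critical points are saddles of Morse index $1$; hence, if $\phi$ were a proper Morse function on a closed surface, the number of critical points would be $-\chi$, but here $\phi$ is neither proper nor bounded on the noncompact $L_f$.

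The key step is therefore to analyze the behavior of $\phi$ on the cylindrical ends and correct for noncompactness. On each end $Z_{\alpha_j,r_j}^+$, by Proposition~\ref{prop:trop-approx}(c) the Lagrangian $L_f$ is exponentially $C^1$-close to the model cylinder, so $\phi$ restricted to the end is exponentially close to the linear function $a_\alpha p_\alpha + a_\alpha^\perp p_\alpha^\perp$; since $(a_1,a_2)$ is non-parallel and non-orthogonal to $\alpha_j$, both $a_\alpha$ and $a_\alpha^\perp$ are nonzero, and $\phi$ has no critical points far out on any end and is monotone in $p_\alpha$ there. The sign of $a_\alpha = |\alpha|^{-2}(a_1\alpha_2 - a_2\alpha_1)$ (with $|\alpha|=1$) determines whether the end is ``upward'' ($\phi \to +\infty$) or ``downward'' ($\phi \to -\infty$) as $p_\alpha \to \infty$. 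Now I bring in the bending: the term $2k\pi\,\bump{R^2+3R,R^2+4R}(p_\alpha)\,p_\alpha$ in $H_{f,R}$ turns each downward end into an upward one for the effective Morse function $g_k$ of Remark~\ref{rem:cyl-end-desc-for-perturbation} — that is the point of the phrase ``bending the downward ends'' and Figure~\ref{fig:bending}. So for the \emph{perturbed} problem, every end is eventually outgoing, and the intersection points of $\widetilde{L}_{f,R}$ with $\phi_{kH}^1\widetilde{L}_{f,R}$ in the interior region $\{p_\alpha \le R\}$ are exactly the critical points of a Morse function on $L_f$ all of whose ends point ``up''.

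To count these, I would use the standard fact that a Morse function on a genus-$g$ surface with $b$ ends, all ends outgoing (function $\to +\infty$ at every end) and with a single minimum added per... — more cleanly: cap each end with a disk on which $\phi$ extends to have exactly one local maximum; this produces a closed genus-$g$ surface with a Morse function having $b$ maxima (one per cap), some number $s$ of saddles (all of index $1$, coming from the harmonicity), and $m_0$ minima, satisfying $m_0 - s + b = \chi = 2 - 2g$. Since $\phi$ on $L_f$ itself is harmonic and proper-to-below on the bent picture with no local maxima in the interior (harmonic functions have no interior maxima), all interior critical points are saddles, and one checks there is exactly one interior minimum region per... Actually the cleanest bookkeeping, and the one matching Lemma~\ref{lem:Riemann-translation-intersection}'s stated count $2g + b - 2 = -\chi$, is: the number of interior intersection points equals the number of index-$1$ critical points of the bent Morse function, and by $\sum (-1)^{\mathrm{ind}} = \chi$ together with the absence of index-$2$ critical points (harmonicity) we get $\#\{\mathrm{ind}\,0\} - \#\{\mathrm{ind}\,1\} = \chi$; a separate argument using the all-ends-outgoing structure shows $\#\{\mathrm{ind}\,0\} = 1$ (there is a unique ``bottom'' once every end has been bent upward — equivalently the sublevel sets are connected), giving $\#\{\mathrm{ind}\,1\} = 1 - \chi = 2g + b - 2$. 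I would then cite the earlier discussion (the bullet-point case analysis preceding Notation~\ref{not:def-xjxe}) to confirm that there are no intersection points in the transition and wrapping regions $\{p_\alpha > R\}$ when $R$ is large, and that in $\{p_\alpha \le R\}$ the Floer intersections are precisely the Morse critical points; finally, each critical point of the harmonic $\phi$ has cohomological degree $1$ because it has Morse index $1$ (and the degree conventions identify Floer degree with Morse index here), completing the proof.

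The main obstacle I anticipate is the noncompactness bookkeeping — specifically, justifying cleanly that after bending all the downward ends upward the resulting Morse function has a \emph{unique} interior index-$0$ critical point (equivalently, that its sublevel sets are connected), so that $\#\{\mathrm{ind}\,1\} = -\chi$ rather than $-\chi + (\text{number of extra minima})$. This is really a statement about the global topology of the bent $L_f$ and the fact that $L_f$ is connected; one has to rule out, for instance, a configuration where two ends that both got bent upward create a ``valley'' with two separate minima. I would handle it by noting that the unbent $\phi$ is harmonic hence has connected superlevel sets $\{\phi > c\}$ near each end and by the maximum principle no interior local maxima, so the only critical points are saddles and minima; the bending only modifies $\phi$ near the ends in the region $p_\alpha \gg R$, replacing ``$\phi \to -\infty$'' by ``$\phi \to +\infty$'' on those ends without introducing new interior critical points, and a connectedness/Reeb-graph argument on $L_f$ then pins down the single minimum. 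Everything else — harmonicity forcing index $1$, exponential closeness killing critical points at infinity, $a_\alpha^\perp \ne 0$ giving transversality — is already supplied by the results and discussion above.
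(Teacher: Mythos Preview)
Your overall strategy---identify the intersection points with critical points of the harmonic function $\phi = a_1 p_1 + a_2 p_2$ on $L_f$, note that harmonicity forces all critical points to be saddles, and then compute via an Euler-characteristic argument after making the function proper---is exactly the argument the paper sketches in the paragraph preceding the lemma. The gap is in your bookkeeping for the bending, and it produces the wrong answer.

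First, harmonicity rules out interior minima just as it rules out interior maxima; the unbent $\phi$ has \emph{only} index-$1$ critical points. So your hunt for a unique interior index-$0$ point is already off: there are none before bending. Second, bending a single downward end does not simply ``turn $-\infty$ into $+\infty$ without new critical points''---it creates a local minimum in the $p_\alpha$-direction, and on the full cylinder that minimum pairs with the Morse function on the $S^1$ factor to produce \emph{two} new critical points: one of index $0$ (this is $x_\alpha^0 x_\alpha^e$) and one of index $1$ (this is $x_\alpha^0 x_\alpha^f$). The paper says exactly this: ``This adds the same number of generators in degrees $0$ and $1$.'' If $d$ is the number of downward ends, the bent Morse function has $d$ index-$0$ points and $(\text{\# interior saddles}) + d$ index-$1$ points, so
\[
d - \bigl((\text{\# interior saddles}) + d\bigr) = \chi(L_f) = 2 - 2g - b,
\]
giving $\text{\# interior saddles} = 2g + b - 2$. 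Your version, with one minimum and no extra saddles, gives $1 - \chi = 2g + b - 1$, which is off by one (your equation ``$1 - \chi = 2g + b - 2$'' is an arithmetic slip masking the discrepancy). The ``unique bottom / connected sublevel sets'' claim is false in general: distinct bent downward ends sit at the bottoms of distinct valleys. Once you fix the bending count to one index-$0$/index-$1$ pair per downward end, no connectedness argument is needed and the computation closes immediately.
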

$\hfill\square$

\begin{remark}\label{rem:innermost-gen-morse}
    From the identification with the Morse theory, we can expect that the sum of $x_\alpha^0 x_\alpha^e$ would be the unit of the cohomology ring of $L_f$. In fact, this sum turns out to be a unit of the wrapped Floer cohomology ring of $L_f$. We will prove that in Section 5.
\end{remark}

Now we have a complete description of the Floer cochain complex $CF^\bullet(\widetilde{L}_{f, R}; -kH_{f, R})$ when $R$ is large enough. Note that we made the following additional assumption, and the main example in Section~\ref{sect:main-example} will also satisfy this.

\begin{assumption}\label{ass:cyl-end-is-not-diagonal}
    All the cylindrical ends $Z_{\alpha,r}^+$ of $L_f$ are in the direction $\alpha$ with $|\alpha| = 1$.
\end{assumption}

Under this assumption, we can summarize the above results as follows.

\begin{proposition}\label{prop:self-intersection}
    Let $f = \sum_{\beta \in A}c_\beta z^\beta$ be a Laurent polynomial satisfying Assumption~\ref{ass:trop-ftn} and Assumption~\ref{ass:cyl-end-is-not-diagonal}. Also, set $M > 0$ as Proposition~\ref{prop:trop-approx}. Then there exists a number $b > 0$ depending only on $A$ such that for any choice of $\{\varphi_{\alpha_j}\}$ and $(a_1, a_2)$, and for every integer $k > 0$ and a real number $R > bkM$, the Lagrangian $\widetilde{L} = \widetilde{L}_{f,R}$ and its Hamiltonian perturbation by $H = -k H_{f, R}$ intersect transversely to each other. In addition, $CF^\bullet(\widetilde{L}; H)$ is generated by the following two types of elements.
    \begin{itemize}
        \item On the interior region: degree 1 generators $v_1, v_2, \cdots, v_{l}$, where $l = 2g(L_f) + b(L_f) - 2.$
        \item On each cylindrical end $Z_{\alpha, r}^+$: $k$ degree $0$ generators $x_\alpha^j x_\alpha^e$ with $\theta_\alpha^\perp = \varphi_\alpha$ and $k$ degree $1$ generators $x_\alpha^j x_\alpha^f$ with $\theta_\alpha^\perp = \varphi_\alpha+\pi$. The index $j$ is from $1$ to $k$ if $a_1 \alpha_2 - a_2\alpha_1 >0$, and from $0$ to $k-1$ if $a_1 \alpha_2 - a_2\alpha_1 < 0$. 
    \end{itemize}
    In particular, $CF^0(\widetilde{L}; H)$ is generated by the elements of the form $x_\alpha^j x_\alpha^e$.
\end{proposition}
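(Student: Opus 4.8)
The statement collects the band-by-band picture developed just above it, so the proof is essentially organizational: I would fix all the auxiliary data, choose $R$ large, partition $\widetilde L=\widetilde L_{f,R}$ and its $(-kH_{f,R})$-perturbation into the five $p_\alpha$-bands already discussed, locate the intersection points in each band, and read off the cohomological degrees. The only non-formal inputs are the exponential decay of Proposition~\ref{prop:trop-approx}(c), the interior count of Lemma~\ref{lem:Riemann-translation-intersection}, and the graph description of Remark~\ref{rem:cyl-end-desc-for-perturbation}. For the choice of $R$: after fixing a profile $\chi$ as in Section~\ref{subsect:Lagn-and-Ham-def}, fixing generic $\{\varphi_{\alpha_j}\}$ and a unit vector $(a_1,a_2)$ in general position (in particular non-parallel and non-orthogonal to every $\alpha_j$, and such that $(a_1p_1+a_2p_2)|_{L_f}$ is Morse), and fixing $k>0$, I would take $b>0$ so large that for all $R>bkM$ and on every end $Z_{\alpha,r}^+$ the following hold: (i) by Proposition~\ref{prop:trop-approx}(c), $\widetilde L$ and its perturbation stay within $Me^{-b_3p_\alpha}<\tfrac12 kR^{-1}|a_\alpha|$ of the flat cylinder on $\{p_\alpha\ge R\}$; (ii) on $\{p_\alpha>2R\}$ every term of $-kH_{f,R}$ except the wrapping term has modulus $<2\pi$, and the wrapping increment $w(p_\alpha)$ of $\theta_\alpha$ under $\phi_{-kH_{f,R}}^{1}$ is monotone and sweeps an interval meeting $2\pi\bZ$ in exactly $k$ or $k+1$ points; (iii) the $O(R^{-3})$ perturbations $dH_\theta$ and the $\theta_\alpha^\perp$-translation by $-kR^{-1}a_\alpha^\perp$ are too small to create spurious intersections. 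That one $b$, depending on $A$ and the fixed data only, achieves (i)--(iii) uses that $\log R$ is dominated by $R$ and that $b_1,b_2,b_3$ depend on $A$ alone.

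Next I would check that the three intermediate bands carry no intersection points. On $\{R\le p_\alpha\le 2R\}$ both Lagrangians are graphs over the flat cylinder of $1$-forms of size $O(Me^{-b_3p_\alpha})$ while the perturbation shifts $\theta_\alpha$ by the fixed nonzero amount $-kR^{-1}a_\alpha$, so by (i) they are disjoint; on $\{2R\le p_\alpha\le 3R\}$ the Lagrangian is the flat cylinder plus $dH_\theta$ of size $O(R^{-3})$ while the perturbation still shifts $\theta_\alpha$ by an amount $\gg R^{-3}$; and on the stretching band $\{3R\le p_\alpha\le R^2+3R\}$ the projections to the $(p_\alpha,\theta_\alpha)$-plane are parallel distinct lines. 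On the interior region $\{p_\alpha\le R\text{ on every end}\}$ one has $\widetilde L=L_f$ and $-kH_{f,R}=-kR^{-1}(a_1p_1+a_2p_2)$, which generates a translation in the $\theta$-directions; the argument of Lemma~\ref{lem:Riemann-translation-intersection}, applied to the translation $-kR^{-1}(a_1\theta_1+a_2\theta_2)$ (small for $R>bkM$) and combined with (i) to see that no intersection of $L_f$ with its translate escapes out an end, then gives exactly $2g(L_f)+b(L_f)-2$ transverse intersections, all of cohomological degree $1$ — the generators $v_i$; their degree reflects the fact that $(a_1p_1+a_2p_2)|_{L_f}$ is harmonic, being the real part of a holomorphic function on $D_f$, and hence has only saddle critical points.

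It remains to treat the wrapping band $\{p_\alpha\ge R^2+3R\}$ of a fixed end. By Remark~\ref{rem:cyl-end-desc-for-perturbation}, $\widetilde L$ and its perturbation are there the graphs of $dg_0$ and $dg_k$, and $g_k-g_0$ is the sum of an explicit function of $p_\alpha$ (a wrapping profile plus a linear term, with derivative $w(p_\alpha)$) and $R^{-3}$ times a difference $\mu_{\varphi_\alpha-kR^{-1}a_\alpha^\perp}-\mu_{\varphi_\alpha}$ of Morse functions of $\theta_\alpha^\perp$. As the two summands depend on disjoint coordinates, the critical points of $g_k-g_0$ — hence the intersection points — are exactly the products of a critical point of the $p_\alpha$-summand with one of the $\theta_\alpha^\perp$-summand, and each such product is a nondegenerate (transverse) intersection precisely when each factor is nondegenerate. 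By the monotonicity in (ii), the $p_\alpha$-summand has $k$ or $k+1$ nondegenerate critical points according to the sign of $a_1\alpha_2-a_2\alpha_1$, all of one Morse index; using Assumption~\ref{ass:cyl-end-is-not-diagonal} ($|\alpha|=1$), the $\theta_\alpha^\perp$-summand has exactly two, a minimum and a maximum. With the grading normalized as in the construction, these contributions assemble so that the minimum gives degree $0$ and the maximum degree $1$, yielding the degree-$0$ generators $x_\alpha^j x_\alpha^e$ and the degree-$1$ generators $x_\alpha^j x_\alpha^f$ of Notation~\ref{not:def-xjxe} with $j$ in the stated range. Since every interior generator lies in degree $1$, the degree-$0$ part of the complex is spanned by the $x_\alpha^j x_\alpha^e$, which is the final assertion.

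The delicate point is not any single computation but the uniform choice of $R$: the various ``no intersections'' estimates come from three different mechanisms — the exponential approach of $L_f$ to its flat cylinders, the explicit shape of the Hamiltonian profile, and the smallness of the $O(R^{-3})$ perturbations — and they must hold simultaneously for one threshold $bkM$, which is exactly where the uniformity of the constants in Proposition~\ref{prop:trop-approx} is used. The only substantial geometric input, the interior count, has already been isolated as Lemma~\ref{lem:Riemann-translation-intersection}, whose proof is the Morse theory of the harmonic function $(a_1p_1+a_2p_2)|_{L_f}$ on the punctured surface $L_f$; I would invoke it rather than redo it here.
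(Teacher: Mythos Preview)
Your proposal is correct and mirrors the paper's own treatment: the paper presents this proposition as a summary of the preceding band-by-band analysis and simply marks it with $\hfill\square$, invoking exactly the ingredients you name (the exponential decay from Proposition~\ref{prop:trop-approx}, the five $p_\alpha$-bands with their separate mechanisms, the graph description of Remark~\ref{rem:cyl-end-desc-for-perturbation} on the wrapping band, and Lemma~\ref{lem:Riemann-translation-intersection} for the interior count). Your write-up is in fact more explicit than the paper about the uniform choice of $b$ and the product structure of critical points on the wrapping band, but the logical architecture is identical.
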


$\hfill \square$

We end this section with the following definition. These generators will play a critical role when describing a canonical basis, as explained in Remark~\ref{rem:innermost-gen-morse}.
\begin{definition}
    The generators of the form $x_\alpha^0 x_\alpha^e$ in Proposition~\ref{prop:self-intersection} are called the \textit{innermost generators}.
\end{definition}

\section{Holomorphic Disks Bounded by Tropical Lagrangians}

In this section, we observe holomorphic disks bounded by the Lagrangians constructed in the previous section. This includes computing the area of these disks and the ``argument constraints", which implies the non-existence of several different types of holomorphic disks that are counted in the construction of wrapped Floer cohomology. 

We consider the following holomorphic maps throughout this section, which are the building blocks of the Floer cohomology groups defined in the next section.
\begin{notation}\label{not:disk-counted}
    Suppose that $S_d$ is any disk with $d+1$ boundary points $\zeta_0, \zeta_1, \cdots, \zeta_d$, in counterclockwise order. Also, let $u: S_d \to (\bC^*)^2$ be a holomorphic map satisfying the following.
    \begin{itemize}
        \item The boundary component between $\zeta_j$ and $\zeta_{j+1}$ maps to $L_{f,R}(-k_jH_{f,R})$.
        \item The boundary component between $\zeta_0$ and $\zeta_{d}$ maps to $L_{f,R}(-k_dH_{f,R})$.
        \item $k_0 < k_1 < \cdots < k_d$.
    \end{itemize}
\end{notation}

\subsection{Energy of Holomorphic Disks}\label{subsect:energy}

Throughout this section, we will only consider holomorphic disks $u: S_d \to (\bC^*)^2$ whose vertices lie on the same cylindrical end $Z_{\alpha,r}^+$. Namely, for a fixed $\alpha$, we assume that $u(\zeta_0), \cdots, u(\zeta_d)$ are either of the form $x_\alpha^j x_\alpha^e$ or $x_\alpha^j x_\alpha^f$ for some nonnegative integer $j$.

Recall that for a cylindrical end $Z_{\alpha,r}^+$ of the Lagrangian $L_f$, its canonical 1-form
$$\lambda_\alpha = ({\theta_\alpha - \arg r} )dp_\alpha + (p_\alpha^\perp - \log r) d \theta_\alpha^\perp$$
is a multi-valued 1-form satisfying $\omega = d \lambda_\alpha$. 

For a holomorphic disk $D \subseteq (\bC^*)^2$ with boundary on a Lagrangian $L$, its (topological) energy is defined as the integral
\begin{equation}
    E(D) = \int_{D} \omega.
\end{equation}
By Stokes' Theorem, this integral is equal to the line integral
\begin{equation} \label{eqn:energy-lambda}
    \int_{\round D} \lambda_\alpha,
\end{equation}
which can be computed whenever we fix a lift of $D \subseteq (\bC^*)^2$ to the universal cover of $(\bC^*)^2$, i.e. real lifts of the $\theta_1$ and $\theta_2$ coordinates along $D$.

By Proposition~\ref{prop:trop-approx}, $L_f$ is the graph of a function $g_\alpha: Z_{\alpha,r}^+ \to \bR$ in the region $\{p_\alpha \ge bM\}$ for some constant $bM$ depending on $f$, under the identification with the neighborhood of the zero section in $T^*Z_{\alpha,r}^+$. In this case, the line integral along a curve in this graph is determined by the value of $g$ at the endpoints. Indeed, when a curve $\gamma: [0,1] \to T^*Z_{\alpha,r}^+$ lies inside a graph $dg$ of some function $g: Z_{\alpha,r}^+ \to \bR$, then under the identification~(\ref{eqn:cyl-cotangent}) we have
\begin{equation}\label{eqn:graph-integral}
    \int_{\gamma} \lambda_\alpha = \int_{\gamma} \frac{\round g}{\round p_\alpha} dp_\alpha + \frac{\round g}{\round \theta_\alpha^\perp} d\theta_\alpha^\perp = \int_{\gamma} dg = g(\gamma(1)) - g(\gamma(0)).
\end{equation}
Moreover, as discussed in Remark~\ref{rem:cyl-end-desc-for-perturbation}, all the Lagrangians we use and their Hamiltonian perturbations have similar expressions. Therefore, if a disk $D$ lies in the cylindrical region and is bounded by Lagrangians of the form $L_f(-kH_{f, R})$, then the energy $E(D)$ is determined by the value of the corresponding functions at the vertices.

In general, the function $g$ cannot be extended to a well-defined function $\widetilde{g}$ on the whole Lagrangian because the Lagrangian may not be exact. However, we can find the energy of the disk using the homology class of the boundary.

\begin{definition} For a cylinder $Z_{\alpha,r}^+$,
    \begin{enumerate}
        \item We say that $Z_{\alpha,r}^+$ is a \textit{cylindrical end} of a Lagrangian $L$ if for some $R>0$, the intersection $L \cap \{p_\alpha \ge R, \abs{p_\alpha^\perp - \log{\abs{r}}} \le 1\}$ can be represented as a graph of $dg$ for some function $g$ on $Z_{\alpha,r}^+ \cap \{p_\alpha \ge R\}$ under the identification (\ref{eqn:cyl-cotangent}).
        \item Suppose that $Z_{\alpha,r}^+$ is a cylindrical end of a Lagrangian $L$, and let $S_\alpha^1$ be the loop in $L$ wrapping around $Z_{\alpha,r}^+$ once. Then, for a curve $\gamma$ on $L$ with its endpoints in the region $\{p_\alpha \ge R, \abs{p_\alpha^\perp- \log{\abs{r}}} \le 1\}$, define the \textit{homology class of $\gamma$ modulo the cylindrical end} $Z_{\alpha,r}^+$ be the relative homology class
        \begin{align*}
            [\gamma]_\alpha &\in H_1(L,\{p_\alpha \ge R, \abs{p_\alpha^\perp - \log{\abs{r}}} \le 1\})\\
            &\cong H_1(L)/[S_\alpha^1],
        \end{align*}
        where the latter isomorphism comes from the long exact sequence for the pair $(L, L \cap \{p_\alpha \ge R, \abs{p_\alpha^\perp  - \log{\abs{r}}} \le 1\})$.
    \end{enumerate}
\end{definition}

We can also construct a homology class $[\gamma]_\alpha$ in $H_1(L)/[S_\alpha^1]$ directly. Namely, if we fix a point $z_0 \in L \cap \{p_\alpha \ge R, \abs{p_\alpha^\perp - \log{\abs{r}}} \le 1\}$ and construct a loop $\gamma_\circ$ by connecting each endpoint to $z_0$ along a curve in $L \cap \{p_\alpha \ge R, \abs{p_\alpha^\perp - \log{\abs{r}}} \le 1\}$, then it defines a homology class $[\gamma_\circ] \in H_1(L)$ whose quotient in $H_1(L)/[S_\alpha^1]$ is equal to $[\gamma]_\alpha$.

Now, the following lemmas are proved by simple calculation.

\begin{lemmadefinition}\label{def:wrapping-number-alpha}
    Consider a class $[\gamma]_\alpha \in H_1(L)/[S_\alpha^1]$ represented by a loop $\gamma_\circ:[0,1] \to L$. Then, there exists an integer $w$ depending only on the homology class $[\gamma]_\alpha$ that satisfies the following:

    For a lift $\widetilde{\gamma_\circ}: [0,1] \to (\widetilde{\bC^*})^2$ of $\gamma_\circ$ to the universal cover of $(\bC^*)^2$, and the map evaluating the  $\theta_\alpha$ coordinate
    \begin{align*}
        \pi_{\theta_\alpha}: (\widetilde{\bC^*})^2 &\to \bR\\
        (p_1, \theta_1, p_2, \theta_2) &\mapsto -\alpha_2 \theta_1 + \alpha_1 \theta_2,
    \end{align*}
    The integer $w$ satisfies
    $$\pi_{\theta_\alpha} \circ \widetilde{\gamma_\circ}(1) - \pi_{\theta_\alpha} \circ \widetilde{\gamma_\circ}(0) = 2\pi w.$$
    We call $w = w_\alpha(\gamma)$ the \textit{wrapping number of $\gamma$ with respect to the cylindrical end} $Z_{\alpha,r}^+$.
\end{lemmadefinition}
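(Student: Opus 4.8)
The plan is to realize the correspondence $\gamma_\circ \mapsto w$ as the homomorphism induced on first homology by a suitable circle-valued map, so that $w$ is automatically a homology invariant; the assertion that it descends to $H_1(L)/[S_\alpha^1]$ then reduces to checking that this homomorphism kills the single class $[S_\alpha^1]$, which follows from the explicit shape of the cylindrical end. Everything here is topological — Stokes and energy play no role.

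First I would observe that the linear functional $(p_1,\theta_1,p_2,\theta_2)\mapsto -\alpha_2\theta_1+\alpha_1\theta_2$ defining $\pi_{\theta_\alpha}$ descends from $(\widetilde{\bC^*})^2$ to a smooth map $\Theta_\alpha\colon(\bC^*)^2\to S^1=\bR/2\pi\bZ$: the deck group $\bZ^2$ acts on $(\widetilde{\bC^*})^2$ by $2\pi$-shifts of $(\theta_1,\theta_2)$, and under the shift by $2\pi(m_1,m_2)$ the value of $\pi_{\theta_\alpha}$ changes by $2\pi(-\alpha_2 m_1+\alpha_1 m_2)\in 2\pi\bZ$ because $\alpha_1,\alpha_2\in\bZ$. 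By construction $\pi_{\theta_\alpha}$ is a lift of $\Theta_\alpha$ along the covering $\bR\to S^1$. Given a loop $\gamma_\circ\colon[0,1]\to L$, the composite $\Theta_\alpha\circ\gamma_\circ$ is a loop in $S^1$ and $\pi_{\theta_\alpha}\circ\widetilde{\gamma_\circ}$ is a lift of it to $\bR$, so $\pi_{\theta_\alpha}\circ\widetilde{\gamma_\circ}(1)-\pi_{\theta_\alpha}\circ\widetilde{\gamma_\circ}(0)\in 2\pi\bZ$; write it as $2\pi w$. Standard covering-space theory identifies $w$ with the degree of $\Theta_\alpha\circ\gamma_\circ$, equivalently with the image of $[\gamma_\circ]$ under $H_1(L)\xrightarrow{(\Theta_\alpha|_L)_*}H_1(S^1)\cong\bZ$. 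Two consequences are then immediate: $w$ does not depend on the chosen lift $\widetilde{\gamma_\circ}$ (changing it is a deck transformation, which shifts $\pi_{\theta_\alpha}$ by a constant), and $w$ depends only on the class $[\gamma_\circ]\in H_1(L)$.

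It then remains to verify $(\Theta_\alpha|_L)_*[S_\alpha^1]=0$, and this is the only step that is more than bookkeeping. I would represent $S_\alpha^1$ inside the cylindrical region, where $L$ is the graph of $dg$ over $Z_{\alpha,r}^+\cap\{p_\alpha\ge R\}$, by the slice $\{p_\alpha=R_0\}$ of that graph for some fixed $R_0\ge R$, traversed once in the $\theta_\alpha^\perp$-direction, which is the circle direction of the cylinder (the HyperKähler rotation sends the $S^1$-factor of $\Xi_{\alpha,r}$ to the $\theta_\alpha^\perp$-circle of $Z_{\alpha,r}$). Along this slice $\theta_\alpha=\arg r+\frac{\partial g}{\partial p_\alpha}(R_0,\cdot)$ is a continuous, hence periodic, function of $\theta_\alpha^\perp\in S^1$, so its lift to $\bR$ returns to its initial value; equivalently, after lifting the slice to $(\widetilde{\bC^*})^2$ the coordinates $\theta_1,\theta_2$ increase by $2\pi\alpha_1$ and $2\pi\alpha_2$ (the base circle of the cylinder is the primitive loop $(\theta_1,\theta_2)=(\mathrm{const}+\alpha_1 t,\ \mathrm{const}+\alpha_2 t)$, to which $dg$ contributes only a periodic perturbation), so $\pi_{\theta_\alpha}=-\alpha_2\theta_1+\alpha_1\theta_2$ increases by $2\pi(-\alpha_2\alpha_1+\alpha_1\alpha_2)=0$. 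Hence $w_\alpha(S_\alpha^1)=0$, so $w$ factors through $H_1(L)/[S_\alpha^1]$ and depends only on $[\gamma]_\alpha$, which is the claim. The main (and essentially only) obstacle is this last identification — pinning down $S_\alpha^1$ concretely and seeing that $\theta_\alpha$ has trivial monodromy along it — and it is short once one uses the HyperKähler description of the cylindrical end; the rest is routine covering-space theory.
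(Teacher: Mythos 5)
Your proof is correct, and it is exactly the ``simple calculation'' the paper asserts and omits: $\pi_{\theta_\alpha}$ descends to a circle-valued map so the endpoint difference of any lift is $2\pi$ times a degree, hence a homology invariant, and the monodromy of $\theta_\alpha$ along $S_\alpha^1$ vanishes because $(\theta_1,\theta_2)$ advances by $2\pi(\alpha_1,\alpha_2)$ along the circle factor of $Z_{\alpha,r}^+$, which $-\alpha_2\theta_1+\alpha_1\theta_2$ annihilates. No gaps; the covering-space packaging is a clean way to record why $w$ is independent of the lift and of the representative of $[\gamma]_\alpha$.
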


$\hfill \square$

\begin{lemmadefinition}\label{def:integral-lambda-alpha}
    Suppose that $Z_{\alpha,r}^+$ is a cylindrical end of a Lagrangian $L$, and $[\gamma]_\alpha \in H_1(L)/[S_\alpha^1]$ represented by a loop $\gamma_\circ:[0,1] \to L$. If we denote the $p_\alpha$ value of the endpoints $\gamma_\circ(0) = \gamma_\circ(1)$ by $p_{\alpha,0}$, and let $\widetilde{\gamma_\circ}:[0,1] \to (\widetilde{\bC^*})^2$ be a lift of $\gamma_\circ$ to the universal cover of $(\bC^*)^2$, then the integral
    $$\int_{[\gamma]_\alpha} \lambda_\alpha := \int_{\widetilde{\gamma_\circ}} \lambda_\alpha - 2\pi w_\alpha(\gamma) p_{\alpha, 0} $$
    depends only on the homology class $[\gamma]_\alpha$.
\end{lemmadefinition}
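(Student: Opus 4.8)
The plan is to show that the combination $\int_{\widetilde{\gamma_\circ}} \lambda_\alpha - 2\pi w_\alpha(\gamma) p_{\alpha,0}$ is unchanged when we replace $\gamma_\circ$ by another loop representing the same class $[\gamma]_\alpha \in H_1(L)/[S_\alpha^1]$. First I would reduce to two elementary moves: (i) a homotopy of $\gamma_\circ$ rel basepoint inside $L$ (which does not change $[\gamma_\circ] \in H_1(L)$, hence not $[\gamma]_\alpha$), and (ii) adding or subtracting a copy of the wrapping loop $S_\alpha^1$, located entirely inside the cylindrical region $\{p_\alpha \ge R,\ \abs{p_\alpha^\perp - \log\abs{r}} \le 1\}$. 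Every pair of loops with the same class $[\gamma]_\alpha$ and endpoints in the cylindrical region differs by a sequence of such moves (up to further homotopy), so it suffices to check invariance under each.

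For move (i): on the universal cover $(\widetilde{\bC^*})^2$, a homotopy of $\gamma_\circ$ rel endpoints lifts to a homotopy of $\widetilde{\gamma_\circ}$ rel endpoints. Since $\omega = d\lambda_\alpha$ on $(\widetilde{\bC^*})^2$ (where $\lambda_\alpha$ is single-valued) and $\gamma_\circ$ lies on the Lagrangian $L$ where $\omega$ vanishes, Stokes' theorem gives that $\int_{\widetilde{\gamma_\circ}} \lambda_\alpha$ is unchanged; and $p_{\alpha,0}$ and $w_\alpha(\gamma)$ are manifestly homotopy invariants (the latter by Lemma-Definition~\ref{def:wrapping-number-alpha}). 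Here one should also note the lift $\widetilde{\gamma_\circ}$ itself is only well-defined up to a deck transformation, i.e.\ a translation of the $(\theta_1,\theta_2)$ coordinates by $2\pi\bZ^2$; but such a translation changes neither $\int \lambda_\alpha$ (integrand depends only on differentials $d p_\alpha, d\theta_\alpha^\perp$) nor the other two terms, so the whole expression is genuinely well-defined on the universal cover.

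For move (ii): inserting a loop $S_\alpha^1$ that wraps once around $Z_{\alpha,r}^+$ at fixed $p_\alpha = p_{\alpha,0}$ — parametrized so $\theta_\alpha$ increases by $2\pi$ while $p_\alpha, p_\alpha^\perp, \theta_\alpha^\perp$ stay fixed — contributes $\int_{S_\alpha^1} \lambda_\alpha = \int (\theta_\alpha - \arg r)\, dp_\alpha + (p_\alpha^\perp - \log\abs{r})\, d\theta_\alpha^\perp = 0$ to the first term since $dp_\alpha = d\theta_\alpha^\perp = 0$ along it. Wait — that would make the first term invariant but then the correction term $2\pi w_\alpha p_{\alpha,0}$ would change, breaking invariance. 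The resolution is that the loop $S_\alpha^1$ realizing the wrapping must be taken on $L$ itself, where $L$ is a graph of $dg$ over $Z_{\alpha,r}^+$, so along it $\theta_\alpha - \arg r = \partial g/\partial p_\alpha$ and $p_\alpha^\perp - \log\abs{r} = \partial g/\partial \theta_\alpha^\perp$ need not vanish; more to the point, the natural representative of $[S_\alpha^1]$ increases $\theta_\alpha^\perp$ (not $\theta_\alpha$) by $2\pi$ — rereading the coordinate conventions, wrapping around the cylinder $Z_{\alpha,r}$ corresponds to moving once around $\theta_\alpha^\perp$, along which $dp_\alpha = 0$ but $d\theta_\alpha^\perp \ne 0$, so $\int_{S_\alpha^1}\lambda_\alpha = \int (p_\alpha^\perp - \log\abs{r})\,d\theta_\alpha^\perp$. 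Since $L$ is a graph of $dg$, this equals $\int_{S_\alpha^1} dg = 0$ because the loop is closed in $L$; and such a loop has $w_\alpha = 0$ by definition of $w_\alpha$ (it is the $\theta_\alpha$-winding). Hence adding $S_\alpha^1$ changes neither term. The remaining subtlety — which I expect to be the main obstacle to write cleanly — is to pin down exactly which loop class on $L$ is "$[S_\alpha^1]$", reconcile it with the formula for $w_\alpha$ in terms of $\theta_\alpha$-winding, and confirm that the direct construction of $[\gamma]_\alpha$ given in the paragraph before the statement (connecting endpoints to $z_0$ inside the cylindrical region) is insensitive, modulo $[S_\alpha^1]$, to the choices of connecting arcs; once the bookkeeping of which winding is "$w$" versus "in $[S_\alpha^1]$" is fixed, the two moves above give the result by a one-line Stokes argument each.
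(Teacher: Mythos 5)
The paper prints no argument for this Lemma-Definition (it is filed under ``proved by simple calculation''), so your proposal has to be judged on its own merits. Your overall strategy --- reduce to elementary moves and kill each by a Stokes-type computation --- is the right one, but your list of moves is incomplete in a way that matters. Observe that both of your moves, (i) homotopy rel basepoint and (ii) insertion of $S_\alpha^1$, leave each of the three quantities $\int_{\widetilde{\gamma_\circ}}\lambda_\alpha$, $w_\alpha(\gamma)$ and $p_{\alpha,0}$ unchanged \emph{separately}; so if these two moves generated all of the ambiguity, you would have proved the stronger statement that $\int_{\widetilde{\gamma_\circ}}\lambda_\alpha$ alone depends only on $[\gamma]_\alpha$. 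That stronger statement is false, and Remark~\ref{rem:integral-of-lambda_alpha-is-invariant} says so explicitly. The missing move is the change of basepoint within the cylindrical region: the statement allows $\gamma_\circ$ to be based at any point of $L\cap\{p_\alpha\ge R,\ \abs{p_\alpha^\perp-\log\abs{r}}\le 1\}$, and $p_{\alpha,0}$ varies with that choice. Concretely, if $\delta$ is a path in $L$ from $z_0$ to $z_0'$ inside the cylindrical region and $g$ is the deck transformation carrying $\widetilde{\gamma_\circ}(0)$ to $\widetilde{\gamma_\circ}(1)$, then $g^*\lambda_\alpha=\lambda_\alpha+2\pi w_\alpha(\gamma)\,dp_\alpha$, and lifting the conjugated loop gives
$$\int_{\widetilde{\delta^{-1}\gamma_\circ\delta}}\lambda_\alpha \;=\; \int_{\widetilde{\gamma_\circ}}\lambda_\alpha+\int_{\widetilde{\delta}}\bigl(g^*\lambda_\alpha-\lambda_\alpha\bigr) \;=\; \int_{\widetilde{\gamma_\circ}}\lambda_\alpha+2\pi w_\alpha(\gamma)\bigl(p_\alpha(z_0')-p_\alpha(z_0)\bigr),$$
which is exactly cancelled by the change in the subtracted term $2\pi w_\alpha(\gamma)p_{\alpha,0}$. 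This is the only step in which the correction term does any work, and it is the heart of the lemma; your closing paragraph gestures at a nearby issue (well-definedness of the class under choices of connecting arcs) but does not carry out this computation.

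Two smaller points. First, ``homotopy rel basepoint'' does not exhaust a homology class: homologous loops need not be homotopic. The clean fix is to note that $\gamma\mapsto\int_{\widetilde{\gamma}}\lambda_\alpha$ (with basepoint and its lift fixed) is a homomorphism $\pi_1(L,z_0)\to\bR$, hence factors through $H_1(L)$; the verification again uses $g^*\lambda_\alpha-\lambda_\alpha=2\pi w\,dp_\alpha$ together with $\oint dp_\alpha=0$ for a loop closed in $L$. Second, your justification of lift-independence (``the integrand depends only on differentials'') is not right as stated: the coefficient $\theta_\alpha-\arg r$ of $dp_\alpha$ does change under a deck transformation, by an additive constant; the integral is nevertheless unchanged because $\oint_{\widetilde{\gamma_\circ}}dp_\alpha=0$. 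Your treatment of move (ii) --- after the mid-proof self-correction --- is correct: $S_\alpha^1$ winds in the $\theta_\alpha^\perp$-direction, so $w_\alpha(S_\alpha^1)=0$, and $\int_{S_\alpha^1}\lambda_\alpha=\int dg=0$ since the cylindrical end of $L$ is the graph of $dg$ over $Z_{\alpha,r}^+$ and the loop projects to a closed loop there.
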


$\hfill \square$

\begin{remark}\label{rem:integral-of-lambda_alpha-is-invariant}
    The wrapping number $w_\alpha(\gamma)$ is determined by the homotopy class of the loop $\gamma$ in $L$. Indeed, on the universal cover $(\widetilde{\bC^*})^2$ of $(\bC^*)^2$, lifts of the cylinder $Z_{\alpha,r}$ can be viewed as equally spaced parallel cylinders. The endpoints of $\gamma$ may lie on different lifts, and $w_\alpha(\gamma)$ represents the distance between these cylinders. 
    On the other hand, the integral $\int_{\gamma} \lambda_\alpha$ is not determined by the class of $\gamma$ in the loop space, but rather the class in the based loop space. This is because if we move the endpoints of $\gamma$ by $\Delta p_\alpha$, the integral differs by $w_\alpha(\gamma) \Delta p_{\alpha}$. This also implies that the integral is invariant under the change of base points for the curves $\gamma$ with wrapping number $w_\alpha(\gamma)=0$.
\end{remark}

Combining (\ref{eqn:graph-integral}) and Lemma~\ref{def:integral-lambda-alpha}, we get the following formula for the map $u:S_d \to (\bC^*)^2$.

\begin{theorem}[Energy of a Disk]\label{thm:action-ftnal}
    Suppose that $L_0, L_1, \cdots, L_d$ are Lagrangians having $Z_{\alpha,r}^+$ as a cylindrical end, and let $g_0, g_1, \cdots, g_d$ be functions on $Z_{\alpha,r}^+ \cap \{p_\alpha \ge R\}$ such that $L_s$ is represented as a graph of $d g_s$. Also, fix a lift $\widetilde{L}_s$ of each $L_s$ to the universal cover $(\widetilde{\bC^*})^2$ of $(\bC^*)^2$.
    Then, for a holomorphic curve $u:S_d \to (\bC^*)^2$ satisfying
    \begin{itemize}
        \item the boundary component between $\zeta_s$ and $\zeta_{s+1}$ maps to a curve $\gamma_s$ in $L_s$,
        \item the boundary component between $\zeta_0$ and $\zeta_d$ maps to a curve $\gamma_d$ in $L_d$,
        \item Each $\zeta_j$ maps to a point inside the cylindrical region $\{p_\alpha \ge R, \abs{p_\alpha^\perp - \log{\abs{r}}} \le 1\}$, and its $(p_\alpha, \theta_\alpha^\perp)$ coordinates represent $x_j \in Z_{\alpha,r}^+$.
    \end{itemize}
    the symplectic area of $u$ is equal to
    \begin{align}
        \sum_{s=0}^d\int_{[\gamma_j]_\alpha}\lambda_\alpha + \sum_{j=0}^{d-1} (g_j(x_{j+1}) - g_j (x_j)) + g_d (x_0) - g_d(x_d) + 2\pi \sum_{s=0}^d j_s p_s,
    \end{align}
    where $j_s$ is the integer satisfying the following.
    \begin{itemize}
        \item If we pick a lift of the point $u(\zeta_s)$ on the universal cover $(\widetilde{\bC^*})^2$ so that it lies on $\widetilde{L}_s$, then it lies on the lift $\widetilde{L}_{s+1}$ translated in $\theta_\alpha$-direction by $2j_s\pi$.
    \end{itemize}
    Moreover, the integral is independent of the choice of the lift for the multi-valued 1-form $\lambda_\alpha$. 
\end{theorem}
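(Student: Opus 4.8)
The plan is to compute the symplectic area $E(u)=\int_{S_d}\omega$ by applying Stokes' theorem to convert it into a boundary line integral of the multi-valued primitive $\lambda_\alpha$, and then to split that boundary integral into two kinds of contributions: pieces that lie on a single Lagrangian $L_s$ (which, by the graph formula \eqref{eqn:graph-integral}, telescope into differences of the $g_s$ at the marked points) and correction terms at the punctures that record how the chosen lifts $\widetilde{L}_s$ fail to match up in the $\theta_\alpha$-direction. First I would note that since $u(\zeta_j)$ lies in the cylindrical region $\{p_\alpha\ge R,\ |p_\alpha^\perp-\log|r||\le 1\}$, the primitive $\lambda_\alpha$ is defined (up to the single multi-valued term $\theta_\alpha-\arg r$) on a neighborhood of $\partial u$, so $E(u)=\int_{\partial u}\lambda_\alpha$ makes sense once we fix lifts of the $\theta$-coordinates along $\partial u$.

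Second, I would choose the lifts coherently: for each boundary arc $\gamma_s$ (the component from $\zeta_s$ to $\zeta_{s+1}$, and $\gamma_d$ from $\zeta_0$ to $\zeta_d$), lift it into the universal cover $(\widetilde{\bC^*})^2$ so that it lies on the fixed lift $\widetilde{L}_s$; this is possible because each $\gamma_s$ is connected and maps to $L_s$. At each puncture $\zeta_s$ the lift of $\gamma_{s-1}$ ending there and the lift of $\gamma_s$ starting there need not agree, and by Lemma-Definition~\ref{def:wrapping-number-alpha} (applied to the deck transformation group in the $\theta_\alpha$-direction) they differ by a translation by $2j_s\pi$ in $\theta_\alpha$, where $j_s$ is exactly the integer in the statement. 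Writing $E(u)=\sum_{s=0}^d\int_{\gamma_s}\lambda_\alpha$ using these lifts, each integral $\int_{\gamma_s}\lambda_\alpha$ is then broken up using Lemma-Definition~\ref{def:integral-lambda-alpha}: close $\gamma_s$ up to a loop $\gamma_{s,\circ}$ by connecting its endpoints to a fixed basepoint $z_0$ within the cylindrical collar along paths on $\widetilde{L}_s$, producing the invariant term $\int_{[\gamma_s]_\alpha}\lambda_\alpha$ plus the connector integrals. The connector paths lie in the graph of $dg_s$, so by \eqref{eqn:graph-integral} their integrals are $g_s$ evaluated at the relevant marked points, and (after reindexing, with $x_{d+1}=x_0$ on the last arc) these assemble into $\sum_{j=0}^{d-1}(g_j(x_{j+1})-g_j(x_j))+g_d(x_0)-g_d(x_d)$; the basepoint $z_0$ contributions cancel in pairs. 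Finally, the mismatch at each puncture contributes the term $2\pi j_s p_s$ coming from the $(\theta_\alpha-\arg r)\,dp_\alpha$ part of $\lambda_\alpha$ evaluated at the $p_\alpha$-coordinate $p_s$ of $u(\zeta_s)$ (exactly as the $-2\pi w_\alpha(\gamma)p_{\alpha,0}$ correction appears in Lemma-Definition~\ref{def:integral-lambda-alpha}), giving the claimed formula.

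For independence of the choice of lift of the multi-valued form $\lambda_\alpha$: changing the lift of $\theta_\alpha$ by $2\pi$ over some portion of $\partial u$ changes $\int_{\partial u}(\theta_\alpha-\arg r)\,dp_\alpha$ by $2\pi$ times the net change of $p_\alpha$ over that portion, which is zero since $\partial u$ is a closed loop (the endpoints of each closed-up contribution agree); more structurally, the formula is manifestly written in terms of the lift-independent quantities $\int_{[\gamma_s]_\alpha}\lambda_\alpha$, the values $g_s(x_j)$, and the integers $j_s$, so it cannot depend on any choice made only to carry out the intermediate computation. I expect the main obstacle to be purely bookkeeping: tracking the punctures $\zeta_0,\dots,\zeta_d$ and the two distinguished boundary arcs carefully so that the telescoping of the $g_s$-terms and the cancellation of the basepoint connectors come out with the right signs and indices — in particular making sure the arc from $\zeta_0$ to $\zeta_d$ (which runs "backwards" relative to the others) produces the $+g_d(x_0)-g_d(x_d)$ with the correct orientation, and that each $j_s$ is attached to the correct $p_s=p_\alpha(u(\zeta_s))$. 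Once the combinatorial labeling is fixed, every individual step is the elementary Stokes/telescoping computation already isolated in the two Lemma-Definitions.
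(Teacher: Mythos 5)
Your proposal is correct and follows exactly the route the paper intends: the paper offers no written proof beyond the sentence ``Combining (\ref{eqn:graph-integral}) and Lemma~\ref{def:integral-lambda-alpha}, we get the following formula,'' and your argument (Stokes to reduce to $\int_{\round u}\lambda_\alpha$, closing each boundary arc to a loop via connectors in the graph region, applying (\ref{eqn:graph-integral}) to the connectors, and tracking the $2\pi j_s p_s$ corrections from the lift mismatches in the $\theta_\alpha$-direction) is precisely the elaboration of that one-line justification. The only bookkeeping point worth making explicit is that the basepoint contributions $2\pi w_s\, p_\alpha(z_0)$ cancel because the total $\theta_\alpha$-winding of $\round u$ vanishes (it bounds a disk), rather than cancelling literally in pairs.
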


\begin{remark}
    When all the Lagrangians $L_s$ are of the form $\widetilde{L}_{f,R}(-kH_{f,R})$ with the same $f$ and $R$, then each $u(\zeta_j)$ is of the form $x_\alpha^i x_\alpha^e$ or $x_\alpha^i x_\alpha^f$ following the notation in Proposition~\ref{prop:self-intersection}, and the integer $j_s$ in the previous theorem is equal to the exponent $i$.
\end{remark}

\subsection{Argument Constraints for Holomorphic Strips}\label{subsect:arg-constraint-for-strip}

In this section, we look at holomorphic strips between two Lagrangians of the form 
$$\widetilde{L}_{f,R}^k :=\widetilde{L}_{f,R}(-kH_{f,R})$$
for fixed $f$ and $R$. Most of these strips have ``infinite ends" on one or two cylindrical ends close to a strip with a fixed width. When the strip has only one such end in $Z_{\alpha,r}^+$-direction, then under the suitable boundedness assumption, we can show that there exists an asymptotic value of $\theta_\alpha^\perp$ and it is determined by the topological property of $L_f$. This ``argument constraint" will work as a strong constraint when computing the differential of Floer cochain complexes.

We assume that all the Laurent polynomial $f$ throughout the section satisfy Assumption~\ref{ass:trop-ftn} and Assumption~\ref{ass:cyl-end-is-not-diagonal}, so that the intersection points between their Hamiltonian perturbation can be expressed as in Proposition~\ref{prop:self-intersection}.

Recall that the angular 1-form for the cylinder $Z_{\alpha,r}$ is defined by 
$$ \eta_\alpha = -(p_\alpha^\perp - \log r)dp_\alpha + ({\theta_\alpha - \arg r} ) d\theta_\alpha^\perp. $$
$\eta_\alpha$ is a multi-valued 1-form vanishing on $Z_{\alpha, r}$, and it satisfies $d\eta_\alpha = \Rea\Omega$. Now, for a closed loop $\gamma$ on $L_f$, we can consider the integral
$$\int_\gamma \eta_\alpha \quad\in\,\,\bR/{4\pi^2}$$
after lifting it to the universal cover of $(\bC^*)^2$. The value of the integral is uniquely determined modulo $4\pi^2$, and it only depends on the homology class of $\gamma$ because $d\eta_\alpha$ vanishes on $L_f$.

The main idea is that when there is a holomorphic map $u:(\Sigma, \round\Sigma) \to ((\bC^*)^2, L_f)$ from a Riemann surface with boundary $\Sigma$, it should satisfy
$$\int_{\round\Sigma} u^*\eta_\alpha = \int_{\Sigma} u^*\Rea\Omega = 0.$$
Therefore, if the integral of $\eta_\alpha$ along a closed loop $\gamma$ is not zero, we can conclude that there is no holomorphic map $u$ with $[u(\round\Sigma)] = [\gamma] \in H_1(L_f)$.

In reality, we use a Lagrangian $\widetilde{L}_{f, R}$ and its perturbation, which is distinct from $L_f$ (and fails to be special) outside a compact region. However, the stretching region will essentially divide a map $u: S_d \to (\bC^*)^2$ into several parts, where each part is either in the interior region or in the cylindrical region. To deal with interior parts, we additionally impose the assumption, dealing with holomorphic disks bounded by $L_f$ with an infinite end at one of the cylindrical ends $Z_{\alpha, r}^+$. Namely, we consider disks with boundary on the region $L_f \cup \{p_\alpha \ge R, \abs{p_\alpha^\perp - \log\abs{r}} \le 1 \}$. The boundary of the disk has two $S^1$ components at the cylindrical end, each corresponds to the circle $S_\alpha^1$ wraps around $Z_{\alpha,r}^+$, and the circle $S_{\alpha^\perp}^1$ in the orthogonal direction that determines the width of the strip. The remaining part is captured by the relative homology $H_1 (L_f, \{p_\alpha \ge R, \abs{p_\alpha^\perp - \log\abs{r}} \le 1\})$, which is isomorphic to a quotient of $H_1(L_f)$ by the circle $S_\alpha^1$.

\begin{assumption}\label{ass:top-bound-for-boundary}
    For each cylindrical end $Z_{\alpha,r}^+$ of $L_f$ and sufficiently large $R>0$, the boundary curve of any closed Riemann surface with boundary 
    $$(D,\round D) \subseteq ((\bC^*)^2, L_f \cup \{p_\alpha \ge R, \abs{p_\alpha^\perp - \log\abs{r}} \le 1 \}),$$ 
    with fixed $[S_{\alpha^\perp}^1]$-coefficient, has its homology class modulo $Z_{\alpha,r}^+$ contained in a finite subset of $H_1 (L_f)/[S_\alpha^1]$.
\end{assumption}

We will show that this assumption is true for model examples in Section~\ref{subsect:top-bound-for-bdry}. Assuming this, we have the following theorem that controls the strips whose endpoints $u(\zeta_0)$ and $u(\zeta_1)$ lie on the same cylindrical end.


\begin{theorem}[Argument Constraints, for Strips]\label{thm:disk-bound-arg-strip}
    Let $f$ be a Laurent polynomial satisfying Assumption~\ref{ass:top-bound-for-boundary}. Consider holomorphic maps $u:S_2 \to (\bC^*)^2$ bounded by Lagrangians of the form $\widetilde{L}_{f,R}(-k_s H_{f,R})$, following Notation~\ref{not:disk-counted}. Also, assume that both $u(\zeta_0)$ and $u(\zeta_1)$ lie on the same cylindrical end $Z_{\alpha,r}^+$. Then, for the generic choice of $\{\varphi_\alpha\} \subseteq \bR$, the following is true:
    \begin{changemargin}{}{}
        For any positive integer $k$, there exists $R_0 > 0$ such that any holomorphic map $u:S_2 \to (\bC^*)^2$ bounded by Lagrangians $\widetilde{L}_{f,R}(-k_s H_{f,R})$ with $R > R_0$ and integers $k_s \le k$ should sasisfy either that

        \begin{enumerate}
            \item $u$ is contained in the cylindrical region $\{p_\alpha \ge R^2 + 3R\}$, or
            \item the pair $(u(\zeta_0), u(\zeta_1))$ is of the form $(x_\alpha^i x_\alpha^f, x_\alpha^j x_\alpha^e)$ with $i<j$.
        \end{enumerate}
    \end{changemargin}
    
\end{theorem}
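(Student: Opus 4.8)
The plan is to use the angular 1-form $\eta_\alpha$ as the main tool, exploiting the fact that it vanishes on $L_f$ (and, asymptotically, on the cylindrical parts of all our Lagrangians) and that its differential $\Rea\Omega$ vanishes on holomorphic maps. First I would set up a dichotomy: either the image of $u$ is entirely contained in the innermost cylindrical region $\{p_\alpha \ge R^2 + 3R\}$, in which case we are in case (1), or it reaches into the ``stretching''/interior part, i.e. $u$ enters $\{p_\alpha \le R^2 + 3R\}$ at some point. In the latter case I would decompose the boundary of $u$ using the stretching region $3R \le p_\alpha \le R^2 + 3R$: since there the projections of the two Lagrangians to the $(p_\alpha, \theta_\alpha)$-plane are parallel lines with different $\theta_\alpha$-intercepts (the wrapping having been introduced only on $\{p_\alpha \ge R^2+3R\}$), the boundary of $u$ crosses these regions cleanly, and I can excise the cylindrical tails to obtain a ``capped'' Riemann surface with boundary lying in $L_f \cup \{p_\alpha \ge R, \abs{p_\alpha^\perp - \log\abs r} \le 1\}$, to which Assumption~\ref{ass:top-bound-for-boundary} applies. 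The key accounting is that $0 = \int_{\partial u} u^*\eta_\alpha$ decomposes into (a) contributions from the honest interior boundary arcs, which lie on $L_f$ and hence contribute $0$, (b) contributions from the portions along $\widetilde L_{f,R}(-k_s H_{f,R})$ in the wrapped/perturbed region, which I will compute explicitly using the graph descriptions from Remark~\ref{rem:cyl-end-desc-for-perturbation}, and (c) boundary contributions at the two ends $u(\zeta_0)$, $u(\zeta_1)$, which involve $\theta_\alpha$ and $\theta_\alpha^\perp$ at those endpoints.

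The heart of the argument is then to extract from this vanishing an ``argument constraint'': modulo $4\pi^2$ (and using Assumption~\ref{ass:top-bound-for-boundary} to control the homology class of the interior part of $\partial u$), the quantity $\int_{\partial u}u^*\eta_\alpha$ equals a sum of a term that is an explicit function of $\theta_{\alpha,0}^\perp := \theta_\alpha^\perp(u(\zeta_0))$ and $\theta_{\alpha,1}^\perp := \theta_\alpha^\perp(u(\zeta_1))$ — coming from the $(\theta_\alpha - \arg r)d\theta_\alpha^\perp$ piece of $\eta_\alpha$ evaluated against the graph functions — plus a term drawn from a finite set $S \subseteq \bR/4\pi^2$ determined by $A$, $k$, and the finitely many relative homology classes allowed by Assumption~\ref{ass:top-bound-for-boundary}. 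Since the $\theta_\alpha^\perp$-coordinates of the generators $x_\alpha^i x_\alpha^e$ and $x_\alpha^i x_\alpha^f$ are $\varphi_\alpha + \tfrac12 k_s R^{-1} a_\alpha^\perp$ and $\varphi_\alpha + \pi + \tfrac12 k_s R^{-1} a_\alpha^\perp$ respectively, the endpoint term is, up to $O(R^{-1})$ corrections, a fixed function of whether each endpoint is an $e$-type or $f$-type generator. For generic $\varphi_\alpha$ this fixed value lands in $S$ only for the pairings $(e,e)$, $(f,f)$, or $(f,e)$ — and a finer look (using either the orientation/index count of the disk or a direct area/monotonicity estimate) rules out $(e,e)$ and $(f,f)$ and forces the order $i<j$ in the surviving case $(u(\zeta_0),u(\zeta_1)) = (x_\alpha^i x_\alpha^f, x_\alpha^j x_\alpha^e)$. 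The genericity of $\{\varphi_\alpha\}$ is used precisely to ensure that the endpoint term avoids the finite exceptional set $S$ except in these special configurations; and the ``$R > R_0$'' hypothesis is needed so that the $O(R^{-1})$ errors, together with the exponentially small contributions from the region $bM \le p_\alpha \le R$ where $L_f$ is only exponentially (not exactly) cylindrical (Proposition~\ref{prop:trop-approx}(c)), cannot push the endpoint term across the nearest gap in $S$.

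I expect the main obstacle to be step (b)/(c): carefully tracking the multivalued 1-form $\eta_\alpha$ around $\partial u$ and keeping the bookkeeping of lifts consistent, so that the ``interior homology class'' genuinely contributes a single well-defined element of the finite set from Assumption~\ref{ass:top-bound-for-boundary}, while the endpoint contributions are isolated as the clean $\theta_\alpha^\perp$-dependent term. In particular one must check that the wrapping numbers $w_\alpha$ of the boundary arcs along the perturbed Lagrangians and the integers $j_s$ appearing in Theorem~\ref{thm:action-ftnal} interact correctly with the $\eta_\alpha$-integral (as opposed to the $\lambda_\alpha$-integral used for energy), and that the exponentially small error from the non-exactly-cylindrical region is genuinely negligible. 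Once that decomposition is in hand, the genericity and large-$R$ arguments are routine. A secondary subtlety, which I would handle separately (perhaps deferring to the argument already used to establish Proposition~\ref{prop:self-intersection}), is ruling out the $(e,e)$ and $(f,f)$ possibilities and pinning down $i<j$: this is where the grading/orientation data of $u$, or alternatively a positivity-of-area argument via Theorem~\ref{thm:action-ftnal}, enters.
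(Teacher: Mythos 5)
Your overall skeleton matches the paper's: cut $u$ at the stretching region, use $\int_{\partial u}u^*\eta_\alpha=0$ together with the homological invariance of $\int\eta_\alpha$ on the special Lagrangian $L_f$, invoke Assumption~\ref{ass:top-bound-for-boundary} to reduce to a finite exceptional set, and choose $\{\varphi_\alpha\}$ generically to avoid it. (One small slip: the interior arcs on $L_f$ do \emph{not} contribute $0$ to $\int\eta_\alpha$ --- only $d\eta_\alpha=\Rea\Omega$ vanishes there, so they contribute the homology-class term $\int_{[\gamma]_\alpha}\eta_\alpha$; you do fold this into your finite set $S$ in the next sentence, so this is cosmetic.)

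The genuine gap is in the step where you claim the endpoint term is, ``up to $O(R^{-1})$ corrections, a fixed function of whether each endpoint is an $e$-type or $f$-type generator.'' The quantity that actually enters the constraint is the $\theta_\alpha^\perp$-value \emph{along the cut} $\{p_\alpha=t\}$, and the cut sits in the stretching region $3R\le p_\alpha\le R^2+3R$, far from the generators at $p_\alpha\ge R^2+3R$. A priori the boundary of $u$ can wander all over the $S^1$-factor of the cylinder between the cut and the asymptotic generator (the Lagrangian there is a full circle in the $(p_\alpha^\perp,\theta_\alpha^\perp)$-plane), so nothing in your accounting forces the cut's $\theta_\alpha^\perp$-value to be near $\varphi_\alpha$. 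The paper supplies exactly this missing mechanism in two steps. First (Lemmas~\ref{lem:u2-strip-is-small}--\ref{lem:existence-cuts}): the area of the $\bC_{\alpha^\perp}$-projection of $u$ is bounded independently of $R$, while the stretching region has height $\sim R^2$; a Cauchy--Schwarz/mean-value argument over the slices $\{p_\alpha=t\}$ then produces a cut whose \emph{entire image} in $\bC_{\alpha^\perp}$ has length $<\varepsilon$, so $\theta_\alpha^\perp$ is essentially constant, equal to some $\varphi'$, along it. Second, for the pairings $(e,e)$, $(e,f)$, $(f,f)$ the cylindrical piece of $u$ beyond the cut must contain a half-neighborhood of the intersection point at $\theta_\alpha^\perp=\varphi_\alpha+\tfrac12 k'R^{-1}a_\alpha^\perp$ in $\bC_{\alpha^\perp}$, which is what pins $\varphi'$ to $\varphi_\alpha$ and lets genericity finish the proof (the surviving pairing $(f,e)$, with $i<j$ forced by the positivity of the strip widths, is precisely case (2)). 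Without these two steps the argument constraint cannot be closed; the reverse isoperimetric inequality (Lemma~\ref{lem:reverse-isoperimetric}) is then also needed, as you anticipate, to show the non-special perturbed region $\{p_\alpha\ge R\}$ contributes only $O(R^{-1})$ to $\int\eta_\alpha$.
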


One can easily find every $u$ contained in the cylindrical region, since it can be expressed as a product of two holomorphic strips on the complex plane, each corresponding to the $\bR_{>0}$-part and the $S^1$-part of the cylindrical end. See Remark~\ref{rem:strips-in-cyl} below.

To prove the theorem, we first lift all the configurations to the universal cover of $(\bC^*)^2$. The universal cover $(\widetilde{\bC^*})^2$ of $(\bC^*)^2$ is a Cartesian product of two complex planes $\bC_\alpha$ and $\bC_{\alpha^\perp}$, each with coordinates $(p_\alpha, \theta_\alpha)$ and $(p_\alpha^\perp,  \theta_\alpha^\perp)$, and the lift $\widetilde{u}: S_2 \to (\widetilde{\bC^*})^2$ can also be expressed as a product of two holomorphic maps $u_\alpha: S_2 \to \bC_\alpha$ and $u_\alpha^\perp: S_2 \to \bC_{\alpha^\perp}$. Note that under the identification $\bC_\alpha \times \bC_{\alpha^\perp} \cong (\widetilde{\bC^*})^2$, the symplectic forms of these complex planes are
\begin{align*}
    \omega_\alpha = d\theta_\alpha \wedge d p_\alpha \quad \text{and} \quad  \omega_{\alpha^\perp} = d p_\alpha^\perp \wedge d\theta_\alpha^\perp
\end{align*}
and the complex structure $J:T_* (\widetilde{\bC^*})^2 \to T_* (\widetilde{\bC^*})^2 $ satisfies
\begin{align*}
    J \left(\frac{\round}{\round \theta_\alpha} \right) = \abs{\alpha}^2 \frac{\round}{\round p_\alpha} \quad \text{and} \quad J \left(\frac{\round}{\round p_\alpha^\perp} \right) = \abs{\alpha}^2 \frac{\round}{\round \theta_\alpha^\perp}.
\end{align*}
In other words, $\bC_\alpha$ and $\bC_{\alpha^\perp}$ are isomorphic to $\bC$ with the standard complex coordinates $({\abs{\alpha}}^{-1} \theta_\alpha, \abs{\alpha}p_\alpha )$ and $({\abs{\alpha}}^{-1}p_\alpha^\perp,  \abs{\alpha}\theta_\alpha^\perp)$, respectively.

This leads to a clearer description of the image $u(S_2)$. First, on the plane $\bC_\alpha$, each lift of the cylindrical end $Z_{\alpha,r}^+$ is described as a straight line, separated from each other by $2\pi$. The lifts of $\widetilde{L}_{f, R} (-k H_{f, R})$ have a similar configuration, except that each line is initially translated in $\theta$ direction by $k R^{-1}a_\alpha$, and then translates again by $2k \pi$ on the region $p_\alpha \ge R^2 + 3R$. If we pick $R$ sufficiently large so that $kR^{-1}\abs{a_\alpha}$ is less than $\pi$, and if we denote the lift of $\widetilde{L}_{f, R} (-k H_{f, R})$ starting at $(2s-1)\pi < \theta_\alpha < (2s+1)\pi$ by $l_{s,k}$, then the generators $x_\alpha^j x_\alpha^e$ and $x_\alpha^j x_\alpha^f$ in $CF(\widetilde{L}_{f,R}(-k_0 H_{f,R}), \widetilde{L}_{f,R}(-k_1 H_{f,R}))$ lie in the intersection of the lifts $l_{s,k_0}$ and $l_{s+j, k_1}$. In particular, if the map $u$ in Theorem~\ref{thm:disk-bound-arg-strip} has input $x_\alpha^{j_0} x_\alpha^e$ and output $x_\alpha^{j_1} x_\alpha^e$, then the sum of the width of the strips in the region $\{R \le p_\alpha \le R^2 + 3R\}$ is exactly $2(j_1 - j_0)\pi$.

\vspace{0.3em}
\begin{figure}[ht]
    \centering
    \includegraphics[width=0.7\textwidth]{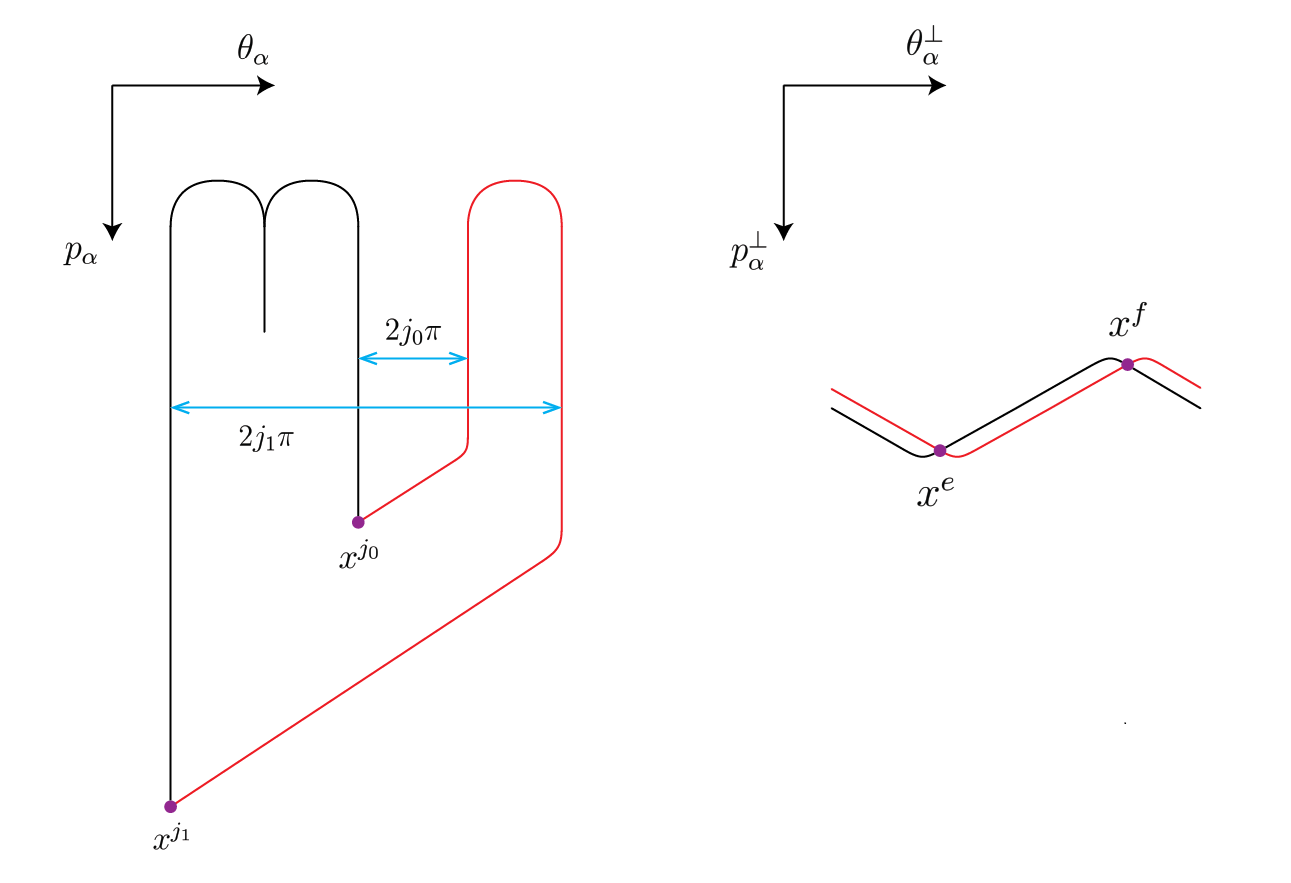}
    \caption{Example configuration for strips}
    \label{fig:configuration-for-strips}
\end{figure}
\vspace{0.3em}

\begin{remark}\label{rem:strips-in-cyl}
    If $u$ is contained in the cylindrical region $\{p_\alpha \ge R^2 +3R\}$, then due to the shape of Lagrangians in $\bC_\alpha$, the map $u_\alpha$ should be a constant map. For every $j$ we have two such $u$ with $(u(\zeta_0), u(\zeta_1)) = (x_\alpha^j x_\alpha^f, x_\alpha^j x_\alpha^e)$, each corresponding to a strip between two $S^1$ on the right side of Figure~\ref{fig:configuration-for-strips}.
\end{remark}

The main idea to prove this theorem is to divide the image of $u$ into the parts either in the special Lagrangian $L_f$ or the cylindrical end. Namely, we ``cut" $u$ using several line segments in $\theta$-direction on the $\bC_\alpha$ plane. Then, we show that for any $\varepsilon > 0 $, there exists $R>0$ and a suitable choice of cuts satisfying the following.
\begin{itemize}
    \item Along each cut, the $\theta_\alpha^\perp$ value does not vary more than $\varepsilon$. 
    \item For each part $u_j:\Sigma \to (\widetilde{\bC^*})^2$ in the interior region, the boundary $\round \Sigma$ contains exactly one cut, with $\theta_\alpha^\perp$-value contained in the interval $[\varphi'_j, \varphi'_j+\varepsilon]$, and if $[\gamma]$ is the homology class of $\round\Sigma$ in $L_f$, we have
    \begin{equation}
        \abs*{ \,2\pi k'\varphi'_j + \int_{[\gamma]} \eta_\alpha\,} < \varepsilon
    \end{equation}
    for some integer $k' \le k$.
    \item At least one of $\varphi'_j$ should be $\varepsilon$-close to $\varphi_\alpha$.
\end{itemize}

Since the assumption~\ref{ass:top-bound-for-boundary} implies that there are only finitely many possible values for $\int_{[\gamma]} \eta_\alpha $, we can prove the non-existence of $u$ for all but finitely many values (modulo $2\pi$) of $\varphi_\alpha$.

To prove the existence of such cuts, we first consider the energy of $u$. Since both Lagrangians have $Z_{\alpha,r}^+$ as a cylindrical end (Remark~\ref{rem:cyl-end-desc-for-perturbation}), the energy depends on the endpoints and the homology class of the boundary. In particular, on the region $\{p_\alpha \le R^2 + 3R\}$ (from the interior to the stretching region, excluding the wrapping part), the energy of $u$ from Theorem~\ref{thm:action-ftnal} is divided into three different parts:
\begin{itemize}
    \item The line integral $\int_{[\round u(S_2)]_\alpha}\lambda_\alpha$. By assumption, this value has upper and lower bounds independent of $R$.
    \item The difference between Morse functions, which is of the form $R^{-3}(\mu_\varphi(\theta_1) - \mu_\varphi(\theta_2))$ for some $\varphi, \theta_1$ and $\theta_2$. There are at most $k_1 - k_0$ such terms, and their value can be arbitrarily small if we choose $R$ large enough.
    \item The term $2(j_1 - j_0)\pi(R^2 + 3R)$. Note that this term corresponds to the area of the strip-like part on $\bC_\alpha$.
\end{itemize}
In particular, the following lemma shows that the energy of the map $u_2: S_2 \to \bC_{\alpha^\perp}$ has an upper bound independent of $R$.

\begin{lemma}\label{lem:u2-strip-is-small}
    The area of $u_\alpha(S_2) \cap \{p_\alpha \le R^2 + 3R\}$ is at least 
    $$(R^2 +3R - b_2 M) 2(j_1 - j_0) \pi - 3k R^{-2},$$
    where $b_2 M$ is from Proposition~\ref{prop:trop-approx} and independent of $k$ and $R$.
\end{lemma}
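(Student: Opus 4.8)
The plan is to compute the $\bC_\alpha$–area of $u$ over the slab $\{p_\alpha\le c\}$, $c:=R^2+3R$, by Stokes' theorem, isolate its leading ``winding'' term, and bound what remains (the bound is trivial unless $j_1>j_0$, which we therefore assume). Since $u_\alpha$ is holomorphic the integrand $u_\alpha^*\omega_\alpha$ is $\ge 0$, so it suffices to bound below $\int_{u_\alpha^{-1}(\{b_2M\le p_\alpha\le c\})}u_\alpha^*\omega_\alpha$; the part over $\{p_\alpha<b_2M\}$ is a further nonnegative contribution. Writing $\omega_\alpha=d\theta_\alpha\wedge dp_\alpha=d(\theta_\alpha\,dp_\alpha)$ and applying Stokes to $\{b_2M\le p_\alpha\le c\}$, the two artificial slices $\{p_\alpha\circ u=b_2M\}$ and $\{p_\alpha\circ u=c\}$ contribute nothing (there $d(p_\alpha\circ u)=0$), so the area equals $\int_{\partial S_2\cap\{b_2M\le p_\alpha\circ u\le c\}}(\theta_\alpha\circ u)\,d(p_\alpha\circ u)$. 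This is the computation behind Theorem~\ref{thm:action-ftnal}, localized to a sublevel set, and it splits into an $\arg r$–term, the graph primitives of the boundary Lagrangians, and an integer term recording how the lifts $l_{s,k_i}$ of the boundary arcs into $\bC_\alpha$ are stacked. Equivalently, via the coarea formula the area is $\int_{b_2M}^{c}W(p)\,dp$, where $W(p)$ is the total variation of $\theta_\alpha\circ u$ along $\{p_\alpha\circ u=p\}$; by Cauchy--Riemann each component of this level set is an embedded arc along which $\theta_\alpha\circ u$ is monotone, with both endpoints on $\partial S_2$, so $W(p)$ is the sum of the $\abs{\Delta\theta_\alpha}$'s over those arcs, hence at least $2\pi$ times the absolute ``net number of lifts crossed.''

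On the stretching range $[3R,c]$ each boundary Lagrangian $\widetilde L_{f,R}(-k_iH_{f,R})$ is exactly cylindrical, appearing in $\bC_\alpha$ as the horizontal line $\{\theta_\alpha=\arg r+2\pi s+k_iR^{-1}a_\alpha\}$; tracking the lifts $l_{s,k_i}$ as in the discussion preceding the lemma and using that the ends $u(\zeta_0)=x_\alpha^{j_0}x_\alpha^{e}$ and $u(\zeta_1)=x_\alpha^{j_1}x_\alpha^{e}$ sit at the intersections $l_{s,k_0}\cap l_{s+j_0,k_1}$ and $l_{s',k_0}\cap l_{s'+j_1,k_1}$, the net count is $j_1-j_0$, so $W(p)\ge 2\pi(j_1-j_0)$ there (extra closed components only raise this), contributing at least $2\pi(j_1-j_0)(c-3R)=2\pi(j_1-j_0)R^2$. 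On $[b_2M,3R]$ — the interior part together with the gluing zones, where $\widetilde L_{f,R}(-k_iH_{f,R})$ is the perturbed graph of $d((1-\bump{R,2R})g_i)$, of $dH_\theta$, or of the interior primitive — the same accounting gives $2\pi(j_1-j_0)(3R-b_2M)$ up to an error: by Proposition~\ref{prop:trop-approx}(c) and the bounds $\abs{\chi'}\le 2$, $R^{-3}\abs{\mu_{\varphi_\alpha}},\,R^{-3}\abs{\mu'_{\varphi_\alpha}}\le R^{-3}$, the $\theta_\alpha$–coordinate of each such Lagrangian in $\bC_\alpha$ differs from its horizontal value by at most $Me^{-b_3 p_\alpha}+O(R^{-4})$, while the $p_\alpha$–independent translations $k_iR^{-1}a_\alpha$ and the $\arg r$–contributions telescope in the winding count. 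Integrating these bounds over $[b_2M,3R]$ and over the finitely many gluing intervals, one checks the total residual is at most $3kR^{-2}$ once $R>bkM$. Adding the two ranges and discarding the nonnegative contribution over $\{p_\alpha<b_2M\}$ yields the asserted estimate.

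The step I expect to be the main obstacle is the control of the slab $\{b_2M\le p_\alpha\le 3R\}$: there $\widetilde L_{f,R}(-k_iH_{f,R})$ is not a product in the $\bC_\alpha\times\bC_{\alpha^\perp}$ splitting and its shadow in $\bC_\alpha$ is a thickened line rather than a line, so one must show that the holomorphic map cannot accumulate error faster than the exponential decay of Proposition~\ref{prop:trop-approx}(c) absorbs it — in particular that the number of level-set arcs at each $p\in[b_2M,3R]$ stays under control, and that the cutoff perturbations do not disturb the ``net winding $=j_1-j_0$'' accounting. Genericity of $\{\varphi_\alpha\}$ and the hypothesis $R>bkM$ are used precisely here.
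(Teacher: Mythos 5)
Your coarea formulation is the same argument as the paper's, just integrated in the other order: your $W(p)$ is exactly $\int \mathrm{ord}_{u_\alpha}(p,\theta_\alpha)\,d\theta_\alpha$ for the multiplicity function $\mathrm{ord}_{u_\alpha}$ that the paper's proof introduces, and ``$\mathrm{ord}_{u_\alpha}$ is locally constant off the boundary lifts and $\ge 1$ on a region of total width $2(j_1-j_0)\pi$'' is the fiberwise version of your ``$W(p)\ge 2\pi(j_1-j_0)$.'' The paper's printed proof in fact stops after establishing local constancy of $\mathrm{ord}_{u_\alpha}$, so your write-up carries the estimate further; the Stokes/level-set packaging is sound (a.e.\ $p$ is a regular value, each level arc has monotone $\theta_\alpha\circ u$, and extra components only increase $W(p)$).

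Two steps need to be firmed up. First, the claim that the translations $k_iR^{-1}a_\alpha$ and the $\arg r$ shifts ``telescope'' requires a parity argument, not just bookkeeping: writing $W(p)\ge\abs{\sum_i\epsilon_i\theta_i}$ over the crossings of the level $\{p_\alpha=p\}$ with $u_\alpha(\round S_2)$, one has $\sum\epsilon_i=0$ separately for the crossings lying on the $L_0$-arc and on the $L_1$-arc, because each arc begins and ends at $u(\zeta_0)$, $u(\zeta_1)$, both of which sit above any level $p<R^2+3R$; only then do the constants $\arg r - k_iR^{-1}a_\alpha$ cancel exactly and leave $2\pi$ times an integer. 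Without this, each of the two boundary Lagrangians contributes an uncancelled $O(kR^{-1})$ offset per level, i.e.\ $O(kR)$ to the area, which would destroy the bound. Second, the residual estimate $3kR^{-2}$ is asserted (``one checks'') but not computed, and your own accounting does not give it: on $[b_2M,2R]$ the deviation of the boundary Lagrangians from the horizontal lifts is controlled only by $M\exp(-b_3p_\alpha)$ via Proposition~\ref{prop:trop-approx}, and $\int_{b_2M}^{2R}M\exp(-b_3p)\,dp$ is of order $Mb_3^{-1}\exp(-b_3b_2M)$, a constant independent of $R$, not $O(R^{-2})$. This is arguably a defect of the stated error term rather than of your strategy --- the only downstream use, Lemma~\ref{lem:existence-cuts}, needs the deficit merely to be bounded independently of $R$ --- but as written your proof claims more than it establishes at exactly this step.
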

\begin{proof}
    Consider the function $\text{ord}_{u_\alpha}: \bC_\alpha \to \bZ _{\ge 0}$ that assigns each point $(p_\alpha, \theta_\alpha)$ the number of elements in the inverse image $u_\alpha^{-1}(p_\alpha, \theta_\alpha) \setminus \round S_2$ counted with order. Since any non-constant holomorphic map is an open mapping, this function is constant on every connected component of $\bC_\alpha \setminus u_\alpha (\round S_2)$. Now, since any two distinct lifts of the same Lagrangian are disjoint in the cylindrical region and $u(\zeta_0), u(\zeta_1)$ are only intersection points of two different Lagrangians on the boundary, the function $\text{ord}_{u_\alpha}$ is constant on each connected component of the cylindrical region $\{p_\alpha > b_2 M\}$ minus the four lifts passing through $u(\zeta_0)$ and $u(\zeta_1)$, possibly except for the boundary $u(\round S_2)$.
\end{proof}

\begin{remark}\label{rem:slits}
    The boundary of $u_\alpha$ in the cylindrical region may pass through other lifts, even though they cannot make new connected components. In this case, the boundary curve turns back in the middle of the lift, creating a slit as in the left part of Figure~\ref{fig:configuration-for-strips}.
\end{remark}

The above lemma also shows that the map $u_\alpha$ in the stretching region $\{3R < p_\alpha < R^2 +3R\}$ can be divided into several holomorphic embeddings. To achieve that, we divide its rectangular image using horizontal lines $\{p_\alpha = t\}$ passing through singular points of $u_\alpha$, and vertical lines $\{\theta_\alpha = \varphi\}$ representing the lift of one of the Lagrangians. We have finitely many horizontal and vertical lines because singular points on a non-constant holomorphic map are isolated. Therefore, this will divide the image into finitely many rectangular regions $\Sigma$ such that $u_\alpha: u_\alpha^{-1}(\Sigma) \to \Sigma$ is a homeomorphism on each connected component of $u_\alpha^{-1}(\Sigma)$. In conclusion, we get the following collection of maps. 
\begin{lemma}\label{lem:existence-cuts-rect}
    There exists a collection $\{S_{i,j}\}_{1 \le i \le m, 1 \le j \le n}$ of pairwise disjoint open subsets of $S_2$ and real numbers
    $$3R = t_1 < t_2 < \cdots < t_m = R^2 +3R, \quad \varphi_{j,1} < \varphi_{j,2} $$
    that satisfy the following.
    \begin{enumerate}
        \item The restriction $u_\alpha \mid_{S_{i,j}}$ is a holomorphic embedding onto the rectangle
        $$\Sigma_{i,j} = \{t_{i} < p_\alpha < t_{i+1}, \,\, \varphi_{j,1} < \theta_\alpha < \varphi_{j,2} \} $$
        \item The sum of the length of the intervals $\sum_{j=1}^n (\varphi_{j,2} - \varphi_{j,1})$ is equal to $2(k_1 - k_0)\pi$.
    \end{enumerate}
\end{lemma}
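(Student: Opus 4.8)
The plan is to carry out the cutting procedure sketched above, with the two families of cut loci being (i) the horizontal lines $\{p_\alpha = t\}$ through the critical points of $u_\alpha$ (and the turn-back points of its boundary, see Remark~\ref{rem:slits}) and (ii) the vertical lines $\{\theta_\alpha = \varphi\}$ that carry the Lagrangian boundary arcs over the stretching band, and then to verify that over each resulting open rectangle $u_\alpha$ restricts to a biholomorphism on every connected component of its preimage.

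First I would dispose of the degenerate case: by Remark~\ref{rem:strips-in-cyl}, $u_\alpha$ is constant precisely when $u$ lies in the cylindrical region $\{p_\alpha \ge R^2+3R\}$, which is alternative (1) of Theorem~\ref{thm:disk-bound-arg-strip}; outside that alternative $u_\alpha$ is a non-constant holomorphic map to $\bC_\alpha \cong \bC$. Next, set $t_1 = 3R$, $t_m = R^2+3R$ and let $K = u_\alpha^{-1}(\{3R \le p_\alpha \le R^2+3R\})$. Since every generator $x_\alpha^j x_\alpha^e$, $x_\alpha^j x_\alpha^f$ sits at $p_\alpha \ge R^2+3R$ (Notation~\ref{not:def-xjxe}), a punctured neighbourhood of each boundary marked point of $S_2$ has $p_\alpha > R^2+3R$ along $u$, so $K$ is bounded away from the marked points and is compact. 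A non-constant holomorphic map has isolated critical points, and over the band the boundary $\round S_2$ is mapped into finitely many vertical lines (the lifts of $\widetilde L_{f,R}(-k_sH_{f,R})$ project to lines $\theta_\alpha = \mathrm{const}$ there by Remark~\ref{rem:cyl-end-desc-for-perturbation}), so $u_\alpha$ has only finitely many critical points in $K$ and $u_\alpha|_{\round S_2}$ only finitely many turn-back points. Let $t_2 < \cdots < t_{m-1}$ be the $p_\alpha$-coordinates of all these points, and let $\varphi_1 < \cdots < \varphi_{n+1}$ be the $\theta_\alpha$-coordinates of the vertical wall-lines met by $u_\alpha(\round S_2)$ over the band; set $\varphi_{j,1} = \varphi_j$, $\varphi_{j,2} = \varphi_{j+1}$ and $\Sigma_{i,j} = \{t_i < p_\alpha < t_{i+1},\ \varphi_{j,1} < \theta_\alpha < \varphi_{j,2}\}$.

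The core step is then to show that each connected component of $u_\alpha^{-1}(\Sigma_{i,j})$ maps biholomorphically onto $\Sigma_{i,j}$. By construction $\Sigma_{i,j}$ contains no critical value of $u_\alpha$ and is disjoint from $u_\alpha(\round S_2)$ (which over the band lies in the walls $\{\theta_\alpha = \varphi_\ell\}$), so $u_\alpha^{-1}(\Sigma_{i,j})$ is an open subset of the interior of $S_2$ on which $u_\alpha$ is a local biholomorphism. I would then check that $u_\alpha$ is \emph{proper} over $\Sigma_{i,j}$: for compact $C \subseteq \Sigma_{i,j}$, the set $u_\alpha^{-1}(C)$ is closed in $S_2$, meets neither the marked points (there $p_\alpha > R^2+3R$, while $C$ has $p_\alpha < R^2+3R$) nor $\round S_2$ (since $C$ has positive distance from the walls and $u_\alpha$ is continuous up to $\round S_2$), hence is compact. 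A proper local homeomorphism onto a connected base is a finite covering, which over the simply connected $\Sigma_{i,j}$ is trivial, so each component of $u_\alpha^{-1}(\Sigma_{i,j})$ maps homeomorphically — hence biholomorphically — onto $\Sigma_{i,j}$; taking $\{S_{i,j}\}$ to be the family of all such components proves (1). For (2), $\sum_{j}(\varphi_{j,2}-\varphi_{j,1}) = \varphi_{n+1}-\varphi_1$ is the total $\theta_\alpha$-extent of $u_\alpha(S_2)$ over the band, whose value is read off from the positions of the lifts $l_{s,k}$ exactly as in the discussion preceding Lemma~\ref{lem:u2-strip-is-small}.

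The main obstacle is the innocuous-looking properness step: one must rule out that a component of $u_\alpha^{-1}(\Sigma_{i,j})$ escapes to $\round S_2$ or to a marked point and therefore covers only part of $\Sigma_{i,j}$, or covers it with branching. Everything rests on the two facts that the marked points sit at $p_\alpha \ge R^2+3R$, outside the band, and that over the stretching band $\round S_2$ is mapped into the finite union of vertical wall-lines used as cut loci; these make $u_\alpha$ proper over each open rectangle and force the covering there to be trivial on each component. The role of the stretching region — where the Lagrangians genuinely project to straight lines in $\bC_\alpha$ — is precisely to guarantee the second fact.
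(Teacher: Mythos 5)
Your proposal follows the same strategy as the paper's (sketched) argument: cut the stretching band by horizontal lines through the critical points of $u_\alpha$ (and the turn-back points of its boundary) and by the vertical lines carrying the Lagrangian lifts, then show $u_\alpha$ restricts to a homeomorphism onto each open rectangle on every preimage component. Your properness-plus-covering-space justification of that last step, using that the marked points sit at $p_\alpha \ge R^2+3R$ and that $\round S_2$ maps into the wall lines over the band, correctly fills in the detail the paper leaves implicit.
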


$\hfill \square$

Now, each holomorphic embedding $u_\alpha \mid_{S_{i,j}}$ defines a map $u_{\alpha^\perp} \circ \left(u_\alpha \mid_{S_{i,j}}\right)^{-1} : \Sigma_{i,j} \to \bC_{\alpha^\perp}$. We showed that the area of $u_{\alpha^\perp}$ has an upper bound independent of $R$, while the total height (the length in $p_\alpha$-direction) of $\Sigma_{i,j}$ is proportional to $R^2$. Hence, if we choose $R$ large enough, then the image of a line segment in $\theta_\alpha$-direction in $\Sigma_{i,j}$ under the map $u_{\alpha^\perp} \circ \left(u_\alpha \mid_{S_{i,j}}\right)^{-1}$ will become small, which shows the existence of the desired cut.

\begin{lemma}\label{lem:existence-cuts}
    For any positive integer $k$ and real number $\varepsilon > 0$, there exists sufficiently large $R > 0$ such that for any $u:S_2 \to (\bC^*)^2$ satisfying the assumption in Theorem~\ref{thm:disk-bound-arg-strip}, there exists a number $t \in [3R,  R^2+3R ]$ such that 
    \begin{itemize}
        \item $u_\alpha^{-1} (\{p_\alpha = t\})$ is a union of piecewise smooth curves in $S_2$, and
        \item The length of $u_{\alpha^\perp} \circ u_\alpha^{-1} (\{p_\alpha = t\})$ is less than $\varepsilon$.
    \end{itemize}
\end{lemma}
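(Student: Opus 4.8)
The plan is to exploit the tension between area and height: by Lemma~\ref{lem:u2-strip-is-small}, the area of $u_\alpha(S_2) \cap \{p_\alpha \le R^2 + 3R\}$ is at least $(R^2 + 3R - b_2 M)\,2(j_1 - j_0)\pi - 3kR^{-2}$, while the total area of $u_{\alpha^\perp}(S_2)$ (equivalently, the energy of $u_{\alpha^\perp}$) is bounded above by a constant $C = C(k)$ independent of $R$, as extracted from Theorem~\ref{thm:action-ftnal} after discarding the $2(j_1 - j_0)\pi(R^2 + 3R)$ term that is accounted for on the $\bC_\alpha$ side. Using the decomposition of Lemma~\ref{lem:existence-cuts-rect} into rectangles $\Sigma_{i,j} = \{t_i < p_\alpha < t_{i+1},\ \varphi_{j,1} < \theta_\alpha < \varphi_{j,2}\}$ on which $u_\alpha$ is an embedding, I regard $F_{i,j} := u_{\alpha^\perp} \circ (u_\alpha|_{S_{i,j}})^{-1}$ as a holomorphic map $\Sigma_{i,j} \to \bC_{\alpha^\perp}$, and the total area $\sum_{i,j} \mathrm{Area}(F_{i,j}(\Sigma_{i,j}))$ is at most $C$.

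First I would set up an averaging (co-area / Fubini) argument on the $p_\alpha$-coordinate. For a holomorphic embedding $F: \Sigma_{i,j} \to \bC_{\alpha^\perp}$, write $F = (P, \Theta)$ in coordinates $(p_\alpha^\perp, \theta_\alpha^\perp)$; then $\mathrm{Area}(F(\Sigma_{i,j})) = \int_{\Sigma_{i,j}} |\partial_{p_\alpha} F|\,|\partial_{\theta_\alpha} F|\,dp_\alpha\,d\theta_\alpha$ (using the Cauchy--Riemann equations for $F$, which is holomorphic up to the constant conformal factor $|\alpha|^2 = 1$ under Assumption~\ref{ass:cyl-end-is-not-diagonal}). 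Meanwhile the length of the image of the horizontal slice $\{p_\alpha = t\} \cap \Sigma_{i,j}$ under $F$ is $\ell_{i,j}(t) := \int_{\varphi_{j,1}}^{\varphi_{j,2}} |\partial_{\theta_\alpha} F(t, \theta_\alpha)|\,d\theta_\alpha$. The desired cut at level $t$ has total length $\ell(t) = \sum_{i(t),j} \ell_{i(t),j}(t)$, summing over the (at most $n$) rectangles meeting $\{p_\alpha = t\}$, plus a contribution from the parts of $u_\alpha^{-1}(\{p_\alpha=t\})$ that fall inside the finitely many vertical seam-lines $\{\theta_\alpha = \varphi_{j,\bullet}\}$ — but on those seams $u_\alpha$ has image of measure zero in the $\theta_\alpha$-direction, so they contribute boundedly-many points, hence nothing to length. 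Isoperimetrically bounding $\ell_{i,j}(t)^2 \lesssim \mathrm{Area}(F_{i,j})$ is too crude; instead I integrate: $\int_{3R}^{R^2+3R} \ell(t)\,dt \le \sum_{i,j}\int_{t_i}^{t_{i+1}} \ell_{i,j}(t)\,dt$, and by Cauchy--Schwarz in $\theta_\alpha$, $\ell_{i,j}(t)^2 \le (\varphi_{j,2}-\varphi_{j,1})\int_{\varphi_{j,1}}^{\varphi_{j,2}}|\partial_{\theta_\alpha}F|^2\,d\theta_\alpha \le 2(k_1-k_0)\pi \cdot (\text{energy density in }p_\alpha)$, so that $\int_{3R}^{R^2+3R}\ell(t)^2\,dt \le 2(k_1-k_0)\pi \cdot 2\,\mathrm{Energy}(u_{\alpha^\perp}) \le 4(k_1-k_0)\pi C$. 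Since the interval $[3R, R^2+3R]$ has length $R^2$, a second Cauchy--Schwarz gives $\int_{3R}^{R^2+3R}\ell(t)\,dt \le R\sqrt{4(k_1-k_0)\pi C}$, hence by the mean value theorem there exists $t$ with $\ell(t) \le R^{-1}\sqrt{4(k_1-k_0)\pi C}$. Choosing $R$ large enough that $R^{-1}\sqrt{4(k_1-k_0)\pi C} < \varepsilon$ — which is possible since $C$ and $k_1 - k_0 \le k$ are independent of $R$ — produces the required level $t$, and $u_\alpha^{-1}(\{p_\alpha = t\})$ is a finite union of piecewise smooth curves because $t$ can be chosen to avoid the critical values of the coordinate function $p_\alpha \circ u_\alpha$ (discarding a measure-zero set of bad $t$ does not affect the averaging estimate).

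The main obstacle I anticipate is bookkeeping near the seams and the boundary: I must make sure the ``cut'' $u_\alpha^{-1}(\{p_\alpha=t\})$ is genuinely a union of piecewise smooth arcs (not something pathological at $\partial S_2$ or where the boundary curve of $u_\alpha$ doubles back to create slits, cf.\ Remark~\ref{rem:slits}) and that its $u_{\alpha^\perp}$-image length is really controlled by the rectangle-wise estimates — slits mean a single level set can re-enter the same rectangle, but since each rectangle carries an embedding this only multiplies the count by a bounded factor and the area bound is unaffected. The other delicate point is extracting the $R$-independent energy bound $C$ for $u_{\alpha^\perp}$ cleanly from Theorem~\ref{thm:action-ftnal}: one must verify that the only $R$-growing term is the $2(j_1-j_0)\pi(R^2+3R)$ contribution living purely on $\bC_\alpha$, while the line integral $\int_{[\partial u(S_2)]_\alpha}\lambda_\alpha$ is bounded by Assumption~\ref{ass:top-bound-for-boundary} and the finitely many Morse-function differences are $O(kR^{-3})$; granting that, the rest is the averaging argument above.
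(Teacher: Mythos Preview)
Your proposal is correct and takes essentially the same approach as the paper: both extract an $R$-independent bound on the $\bC_{\alpha^\perp}$-energy from Lemma~\ref{lem:u2-strip-is-small} and Theorem~\ref{thm:action-ftnal}, pass to the rectangle decomposition of Lemma~\ref{lem:existence-cuts-rect}, and then average over the $p_\alpha$-interval of length $R^2$ combined with Cauchy--Schwarz in $\theta_\alpha$. The only cosmetic difference is that the paper applies pigeonhole to $\int |f'_{i,j}|^2\,d\theta_\alpha$ first and Cauchy--Schwarz once, whereas you bound $\ell(t)^2$ by Cauchy--Schwarz, integrate, and then apply a second Cauchy--Schwarz before the mean-value step; the resulting estimate $\ell(t)\lesssim R^{-1}\sqrt{(k_1-k_0)\,C}$ is identical.
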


\begin{proof}
    We start from the holomorphic maps $f_{i,j} := u_{\alpha^\perp} \circ \left(u_\alpha \mid_{S_{i,j}}\right)^{-1} : \Sigma_{i,j} \to \bC_{\alpha^\perp}$. By Lemma~\ref{lem:u2-strip-is-small}, there exists a number $M_{\alpha^\perp} >0$ independent of $R > 0$ such that the image of $u_{\alpha^\perp}$ has area less than $M_{\alpha^\perp}$. This implies that
    $$\sum_{i,j} \int_{\Sigma_{i,j}} \abs{f'_{i,j}}^2 \text{dvol}_{\bC_\alpha} < M_{\alpha^\perp}. $$
    Therefore, there exists $t \in [3R,  R^2+3R ]$ such that
    $$\sum_j \int_{\varphi_{j,1}}^{\varphi_{j,2}} \abs{f'_{i,j}}^2 d\theta_\alpha < M_{\alpha^\perp} R^{-2},$$
    where each integral is taken along the slice $\Sigma_{i, j} \cap \{p_\alpha = t\}$. This implies that
    \begin{align*}
        \sum_j \int_{\varphi_{j,1}}^{\varphi_{j,2}} \abs{f'_{i,j}} d\theta_\alpha & \le \left( \sum_j \int_{\varphi_{j,1}}^{\varphi_{j,2}} \abs{f'_{i,j}}^2 d\theta_\alpha \right)^{1/2} \cdot (2(k_1 - k_0) \pi)^{1/2}\\
        & \le M_{\alpha^\perp}^{1/2} R^{-1} \cdot (2(k_1 - k_0) \pi)^{1/2}.
    \end{align*}
    This proves the lemma since the left-hand side is the length of $u_{\alpha^\perp} \circ u_\alpha^{-1} (\{p_\alpha = t\})$.
\end{proof}

Now we fix $\{p_\alpha = t\}$ from the lemma, and consider one of the map $\bar{u}_{\text{int}}:\bar{S} \to (\bC^*)^2$ cut by this real hyperplane, lying in the component $\{p_\alpha \le t\}$ containing the interior region of $L_f$. The boundary curve of this map consists of
\begin{itemize}
    \item $\gamma_{\text{int}}$, the part of the boundary of $u$ in the interior region $\{p_\alpha \le t\}$.
    \item the curve $\gamma_{\text{cut}}$ along $\{p_\alpha = t\}$
\end{itemize}
By the definition of the cut, the $\theta_\alpha^\perp$ values along $\gamma_{\text{cut}}$ lie on an interval with length less than $\varepsilon$. After fixing a lift, suppose the interval is $[\varphi', \varphi' +\varepsilon]$. Fixing a value of $\theta_\alpha^\perp$ also fixes a lift of $\eta_\alpha = -(p_\alpha^\perp - \log |r|)dp_\alpha - ({\theta_\alpha - \arg r} ) d\theta_\alpha^\perp.$ 

Since $\Bar{u}$ is a holomorphic map, we have
\begin{equation}\label{eqn:angular-1-form-for-cut}
    \int_{\gamma_{\text{cut}}}\eta_\alpha + \int_{\gamma_{\text{int}}}\eta_\alpha = 0.
\end{equation}
The former integral can become arbitrarily small by shrinking $\varepsilon$ in Lemma~\ref{lem:existence-cuts} since $dp_\alpha$ is zero along $\gamma_{\text{cut}}$, and the integral of $\abs{d\theta_\alpha^\perp}$ is less than the length of $u_{\alpha^\perp} \circ u_\alpha^{-1} (\{p_\alpha = t\})$. Also, the latter integral can be computed similarly to $\lambda_\alpha$ in the section~\ref{subsect:energy}.

\begin{lemma}\label{def:integral-eta-alpha}
    Suppose that $Z_{\alpha,r}^+$ is a cylindrical end of a special Lagrangian $L$ with $\abs{\alpha}=1$, and $[\gamma]_\alpha \in H_1(L)/[S_\alpha^1]$ represented by a loop $\gamma_\circ:[0,1] \to L$. If we denote the $\theta_\alpha^\perp$ value of the endpoints $\gamma_\circ(0) = \gamma_\circ(1)$ by $\theta_{\alpha,0}^\perp$, and let $\widetilde{\gamma_\circ}:[0,1] \to (\widetilde{\bC^*})^2$ be a lift of $\gamma_\circ$ to the universal cover of $(\bC^*)^2$, then the integral
    $$\int_{[\gamma]_\alpha} \eta_\alpha := \int_{\widetilde{\gamma_\circ}} \eta_\alpha - 2\pi w_\alpha(\gamma_\circ) \theta_{\alpha, 0}^\perp \,\,\in\,\, \bR/4\pi^2$$
    depends only on the homology class $[\gamma]_\alpha$.
\end{lemma}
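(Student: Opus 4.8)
The plan is to mirror, almost verbatim, the reasoning behind Lemma-Definition~\ref{def:integral-lambda-alpha}, making three substitutions: the identity $\omega\mid_L=0$ (valid for any Lagrangian) is replaced by $\Rea\Omega\mid_L=0$ — which is \emph{precisely} why the present statement requires $L$ to be a special Lagrangian — the coordinate $p_\alpha$ that pairs with the multi-valued angle $\theta_\alpha$ in $\lambda_\alpha$ is replaced by $\theta_\alpha^\perp$, and, because $\theta_\alpha^\perp$ is itself $2\pi$-periodic, the corrected integral can be canonical only modulo $4\pi^2$ rather than as an honest real number.

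First I would pin down the ambiguities in $\int_{\widetilde{\gamma_\circ}}\eta_\alpha-2\pi w_\alpha(\gamma_\circ)\theta_{\alpha,0}^\perp$ coming from the choices other than $\gamma_\circ$. The only multi-valued term of $\eta_\alpha$ is the coefficient $\theta_\alpha-\arg r$, determined modulo $2\pi$, so a choice of branch is a choice of additive constant for the $\bR$-valued lift of the function $\theta_\alpha$; two branches differ globally by $2\pi$ and hence differ $\eta_\alpha$ by $2\pi\,d\theta_\alpha^\perp$. Since $\gamma_\circ$ is a genuine loop in $L\subseteq(\bC^*)^2$ and — using $\abs{\alpha}=1$, so the change of angular coordinates is unimodular — $\theta_\alpha^\perp$ is a well-defined $\bR/2\pi\bZ$-valued function on $(\bC^*)^2$, one gets $\int_{\gamma_\circ}2\pi\,d\theta_\alpha^\perp\in 4\pi^2\bZ$, while $w_\alpha(\gamma_\circ)$ is untouched by a global shift of $\theta_\alpha$. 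A different lift $\widetilde{\gamma_\circ}$ differs by a deck transformation, which (again using $\abs{\alpha}=1$) translates $(\theta_\alpha,\theta_\alpha^\perp)$ by some $(2\pi a,2\pi b)\in 2\pi\bZ^2$: the $\theta_\alpha$-part changes $\int\eta_\alpha$ by $2\pi a\int_{\gamma_\circ}d\theta_\alpha^\perp\in 4\pi^2\bZ$, and the $\theta_\alpha^\perp$-part shifts $\theta_{\alpha,0}^\perp$ by $2\pi b$, hence shifts $2\pi w_\alpha(\gamma_\circ)\theta_{\alpha,0}^\perp$ by $4\pi^2 w_\alpha(\gamma_\circ)b\in 4\pi^2\bZ$; the $p$-coordinates and $w_\alpha(\gamma_\circ)$ are deck-invariant. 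So with $\gamma_\circ$ fixed the quantity is a well-defined element of $\bR/4\pi^2$.

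Next I would check independence of the chosen representative of $[\gamma]_\alpha\in H_1(L)/[S_\alpha^1]$. If $\gamma_\circ$ and $\gamma_\circ'$ share an endpoint and are homologous in $L$ through a $2$-chain $\Sigma$, then $w_\alpha$ and $\theta_{\alpha,0}^\perp$ agree for the two, and — choosing branches exactly as in Lemma-Definition~\ref{def:integral-lambda-alpha} — the difference of corrected integrals equals $\int_\Sigma d\eta_\alpha=\int_\Sigma\Rea\Omega\mid_L$, which vanishes because $L$ is special Lagrangian; this is the one genuinely substantive point. Changing the base point, or the arcs used to close a relative cycle into a loop, alters $\gamma_\circ$ only by loops supported in the cylindrical region, i.e.\ by integer multiples of $[S_\alpha^1]$ together with a $\theta_\alpha^\perp$-translation of its endpoint; the latter cancels between $\int_{\widetilde{\gamma_\circ}}\eta_\alpha$ and $2\pi w_\alpha\theta_{\alpha,0}^\perp$ exactly as in Remark~\ref{rem:integral-of-lambda_alpha-is-invariant} with $p_\alpha$ replaced by $\theta_\alpha^\perp$, while $\int_{[S_\alpha^1]_\alpha}\eta_\alpha$ is $0$ in $\bR/4\pi^2$: along $S_\alpha^1$ one has $dp_\alpha=0$ and $w_\alpha(S_\alpha^1)=0$, and $\int_{S_\alpha^1}(\theta_\alpha-\arg r)\,d\theta_\alpha^\perp$ is the circle-average of $\round g/\round p_\alpha$ for the asymptotically harmonic graphing function $g$ of Proposition~\ref{prop:trop-approx}, which is the $p_\alpha$-derivative of a function affine in $p_\alpha$ and hence vanishes by the decay estimate in Proposition~\ref{prop:trop-approx}(c). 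Thus the corrected integral descends to $H_1(L)/[S_\alpha^1]$, which is the claim. The main obstacle here is conceptual rather than computational — recognizing that the special Lagrangian hypothesis is exactly what closes $\eta_\alpha\mid_L$ — together with the mildly fussy bookkeeping needed to confirm that every lifting and branch ambiguity lands inside $4\pi^2\bZ$.
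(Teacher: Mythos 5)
Your proof is correct and is essentially the argument the paper intends: the paper states this lemma without proof (as a ``simple calculation'' parallel to Lemma-Definitions~\ref{def:wrapping-number-alpha} and~\ref{def:integral-lambda-alpha}), and you supply exactly that calculation — closedness of $\eta_\alpha\mid_L$ from $\Rea\Omega\mid_L=0$, unimodularity of $(\theta_1,\theta_2)\mapsto(\theta_\alpha,\theta_\alpha^\perp)$ for $\abs{\alpha}=1$ to confine every branch and lift ambiguity to $4\pi^2\bZ$, cancellation of the basepoint dependence against the $2\pi w_\alpha\theta_{\alpha,0}^\perp$ correction, and the harmonicity-plus-decay argument killing $\int_{S_\alpha^1}\eta_\alpha$. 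The one step I would phrase more carefully is the Stokes argument over the $2$-chain $\Sigma$: since $\eta_\alpha$ is multi-valued, the cleanest formulation is that $\gamma\mapsto\int_{\widetilde{\gamma}}\eta_\alpha$ is additive on $\pi_1(L,x_0)$ only up to the defect $4\pi^2\,w_\alpha(\gamma)\,n(\gamma')$ (with $n$ the $\theta_\alpha^\perp$-winding), so it is a homomorphism to $\bR/4\pi^2$ and hence descends to $H_1(L)$ — this is equivalent to what you wrote and pinpoints exactly where the reduction mod $4\pi^2$ is forced.
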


\begin{remark}\label{rem:integral-eta-alpha-bigger}
    Similar result holds for the case $\abs{\alpha} > 1$, but one needs a careful choice of the path $\gamma_\circ$ and its $\theta_\alpha^\perp$-value. Namely, the endpoints of $\widetilde{\gamma_\circ}$ should have the same $\theta_\alpha$-value while their $\theta_\alpha^\perp$-value differ by $2\pi w_\alpha(\gamma_\circ)$. Such path is not a loop when the wrapping number is not a multiple of $\abs{\alpha}^2$. In addition, instead of using the $\theta_\alpha^\perp$-value of $\widetilde{\gamma_\circ}(0)$, we fix a point $X$ on $Z_{\alpha, r}^+$ and set $\theta_{\alpha, 0}^\perp$ to be the difference of the $\theta_\alpha^\perp$-value between $\widetilde{\gamma_\circ}(0)$ and the lift of $X$ on the same lift of $Z_{\alpha, r}^+$.
\end{remark}

One additional ingredient we would like to use is that the length of the boundary curve is bounded by a constant times its area. This \textit{reverse isoperimetric inequality} is proved when the domain of $u$ has no corner \cite{GrSo1}. The constant is determined by the geometry of the symplectic manifold and the curvature of the Lagrangian. Since the part we are interested in is the cylindrical region, we can modify the proof to show the following.

\begin{lemma}\label{lem:reverse-isoperimetric}
    There exists a fixed constant $N$ independent of $f$ and $R$ above such that for any cylindrical end $Z_{\alpha,r}^+$ of $L_f$ and a map $u:S_2 \to (\bC^*)^2$ in Theorem~\ref{thm:disk-bound-arg-strip},
    $$(\text{length of }u(\round S_d) \cap \{p_\alpha \ge R\}) \le N \cdot (\text{area of }u(S_d)  \cap \{p_\alpha \ge R\} ). $$
\end{lemma}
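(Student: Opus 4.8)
The plan is to adapt the reverse isoperimetric inequality of Groves--Solomon \cite{GrSo1} to the cylindrical region, exploiting the fact that on each cylindrical end $Z_{\alpha,r}^+$ the ambient geometry is, up to a bounded distortion, that of a flat cylinder and the Lagrangian pieces $\widetilde{L}_{f,R}(-k_sH_{f,R})$ have uniformly bounded geometry there (their second fundamental form is controlled by the $C^2$-norm of the generating functions $g_j$, $g_0$, $g_k$ from Proposition~\ref{prop:trop-approx} and Remark~\ref{rem:cyl-end-desc-for-perturbation}, which are bounded independently of $R$ and $k$ in the relevant range $R > bkM$). Concretely, I would work in the universal cover $(\widetilde{\bC^*})^2 \cong \bC_\alpha \times \bC_{\alpha^\perp}$, where the metric is the flat one up to the factor $|\alpha|^2 = 1$, so that the ambient geometry on $\{p_\alpha \ge R\}$ is literally Euclidean; the only nontrivial input is the curvature bound on the Lagrangian boundary pieces.

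The key steps, in order: (i) Recall that the original reverse isoperimetric inequality is stated for holomorphic maps from a domain with \emph{smooth} boundary, i.e.\ no corners, whereas our $S_2$ has three boundary punctures at $u(\zeta_0), u(\zeta_1)$ and one extra corner where consecutive Lagrangians meet; but the two endpoints $u(\zeta_0), u(\zeta_1)$ lie on the same cylindrical end by hypothesis and the third vertex is in the interior region — so within $\{p_\alpha \ge R\}$ the relevant boundary is a union of smooth arcs on the two Lagrangians $\widetilde{L}_{f,R}(-k_0 H_{f,R})$ and $\widetilde{L}_{f,R}(-k_1 H_{f,R})$ together with the two arcs of $u(\partial S_2)$ entering/leaving the region $\{p_\alpha = R\}$. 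The entry/exit arcs contribute length bounded by a constant (the circumference of the cross-sectional circle, which is $2\pi$-bounded), so they can be absorbed. (ii) Apply the monotonicity/mean-value argument underlying \cite{GrSo1}: for a $J$-holomorphic curve with Lagrangian boundary of second fundamental form bounded by $\kappa$ and ambient sectional curvature bounded by $\Lambda$, there is a constant $N = N(\kappa,\Lambda)$ with $\mathrm{length}(\partial u) \le N\cdot\mathrm{area}(u)$ on small enough scales; since on $\{p_\alpha \ge R\}$ we have $\Lambda = 0$ and $\kappa$ uniformly bounded over all $f$ (satisfying the standing assumptions) and all admissible $R, k$, the constant $N$ is uniform. (iii) Patch the local estimate into a global one on $\{p_\alpha \ge R\}$ by a covering argument, using that the region is a product of a ray with a circle and that the $\bC_\alpha$-component of $u$ is, by Lemma~\ref{lem:u2-strip-is-small} and Lemma~\ref{lem:existence-cuts-rect}, decomposable into controlled pieces.

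The main obstacle is step (ii)--(iii): making the constant $N$ genuinely \emph{independent of $f$ and $R$}. The reverse isoperimetric constant in \cite{GrSo1} depends on a lower bound for the injectivity radius of the Lagrangian and on its extrinsic curvature; here the Lagrangian pieces in $\{p_\alpha \ge R\}$ vary with $R$ (they get stretched in the $p_\alpha$-direction) and with $f$ (through the functions $g_j$). The point to check carefully is that stretching in the $p_\alpha$-direction only \emph{flattens} the Lagrangian — the wrapping term $2k\pi\chi(p_\alpha)p_\alpha$ and the linear term $kR^{-1}a_\alpha p_\alpha$ contribute nothing to the second fundamental form of a graph over a flat cylinder (their Hessians in the cotangent coordinates are bounded by $C/R$ or occur only in the compact transition $\{R^2+3R \le p_\alpha \le R^2+4R\}$ where $\chi''$ is bounded), and the exponentially small corrections from $g_j$ in Proposition~\ref{prop:trop-approx}(c) have $C^2$-norm $\le M e^{-b_3 p_\alpha}$, hence are uniformly small for $p_\alpha \ge R \ge b_2 M$. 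Once this curvature bound is in hand, the covering/patching argument is routine, and I would present it by subdividing $\{p_\alpha \ge R\}$ into unit-length cylindrical annuli, applying the local inequality on each (with a fixed overlap), and summing, noting that the number of annuli meeting $u(S_2)$ is finite because $u$ has finite area.
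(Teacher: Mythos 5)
Your overall strategy — transplant the reverse isoperimetric inequality of \cite{GrSo1} to the cylindrical region, where the ambient geometry is flat and the boundary Lagrangians are graphs with uniformly controlled generating functions — is exactly the route the paper takes; the paper in fact offers little more than this citation plus one remark, so your write-up is more detailed than the source on the corner issue and on why the extrinsic curvature of the graphs stays bounded as $R \to \infty$.

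However, there is one point where your argument does not yet deliver a constant $N$ independent of $R$, and it is precisely the point the paper singles out. The constant in \cite{GrSo1} degenerates not only when the curvature of the Lagrangian blows up but also when the boundary condition fails to be uniformly embedded at a fixed scale; for a strip whose two boundary components lie on \emph{different} Lagrangians, the relevant quantity is their mutual separation. In the stretching region $\{3R \le p_\alpha \le R^2+3R\}$ the lifts of $\widetilde{L}_{f,R}(-k_0H_{f,R})$ and $\widetilde{L}_{f,R}(-k_1H_{f,R})$ in $\bC_\alpha$ are parallel lines offset by $(k_1-k_0)R^{-1}a_\alpha$ modulo $2\pi$, so two of them can be $O(R^{-1})$ apart while each is perfectly flat; a holomorphic strip trapped between such a pair over a length of order $R^2$ would have boundary length of order $R^2$ but area only of order $R$, so the ratio grows like $R$ and no uniform $N$ exists for it. Your discussion of second fundamental forms cannot rule this out, because flatness of each Lagrangian says nothing about their separation. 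The missing ingredient is the observation (used throughout Section 3, e.g.\ in Lemma~\ref{lem:existence-cuts-rect}) that for the maps $u$ actually appearing in Theorem~\ref{thm:disk-bound-arg-strip}, every strip component in the stretching region has width equal to a positive multiple of $2\pi$ up to $O(R^{-1})$ — i.e.\ its two boundary arcs lie on lifts separated by at least $2\pi - O(R^{-1})$ — so the thin configuration never occurs. Once you add this, your covering argument by unit annuli goes through and the constant is uniform as claimed.
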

The estimates in the lemma rely on the injectivity radius assumption. In particular, it requires that the map $u$ has no `thin' part, i.e. each connected component in $\{3R \le p_\alpha \le R^2 +3R\}$ should not have width $O(R^{-1})$. We don't have this problem in our case since the width of each strip is a multiple of $2\pi$.

Even though $\Rea\Omega$ does not vanish on our Lagrangian in the cylindrical region, this lemma shows that the length of $\gamma_{\text{int}}$ inside the cylindrical region is $O(R^2)$. On the other hand, we have
\begin{equation}
    \eta_\alpha |_{\{p_\alpha \ge R\}} = kR^{-1}a_\alpha d\theta_\alpha^\perp + O(R^{-3})
\end{equation}
from Proposition~\ref{prop:trop-approx} (3) and Remark~\ref{rem:cyl-end-desc-for-perturbation}. Note that the proposition shows that the contribution of the cut-off in the region $[R,2R]$ is bounded by polynomial times $\exp(-b_3 R)$, hence $O(R^{-3})$. Also, the first term on the right side comes from the linear term $R^{-1}a_\alpha p_\alpha$ in the Hamiltonian $H_{f, R}$ (and the corresponding Hamiltonian perturbation in $\theta_\alpha$-direction). The integral of this $1$-form along $\gamma_{\text{int}}$ is also $O(R^{-1})$ since it is bounded by $kR^{-1}a_\alpha$ times the change of $\theta_\alpha$-value along the cut $\{p_\alpha = t\}$. Hence, this shows that
\begin{equation}\label{eqn:eta-alpha-plus-2piphi}
    \abs*{\int_{[\gamma]_\alpha} \eta_\alpha + 2\pi w_\alpha(\gamma) \varphi'} < \varepsilon
\end{equation}
if we choose $R > 0$ large enough.


\begin{proof}[Proof of Theorem~\ref{thm:disk-bound-arg-strip}]

    Choose a number $t \ge 3R$ that gives a cut satisfying the condition in Lemma~\ref{lem:existence-cuts}, and let $\Bar{u}_{\text{cyl}}:\Bar{S} \to (\bC^*)^2$ be the part of $u$ inside the area $\{p_\alpha \ge t\}$. 
    \vspace{0.3em}
    \begin{figure}[ht]
        \centering
        \includegraphics[width=0.7\textwidth]{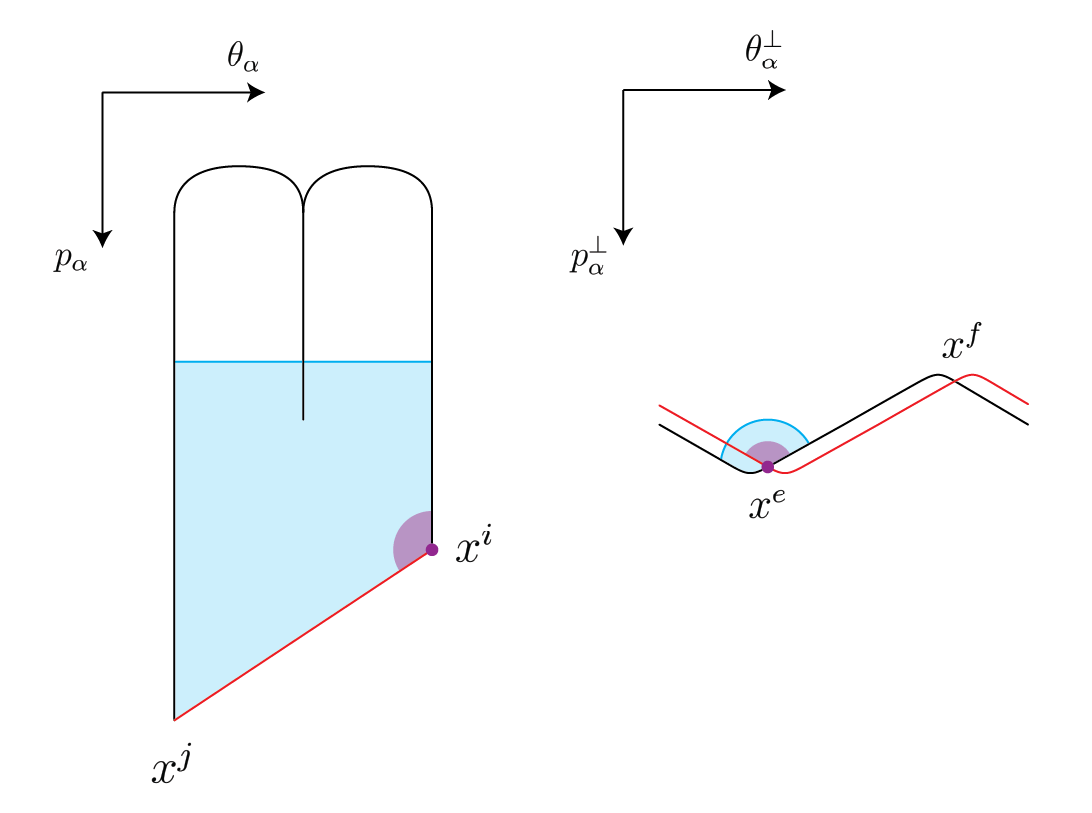}
        \caption{The map $\bar{u}_\text{cyl} : \bar S \to (\bC^*)^2$ in the proof of Theorem~\ref{thm:disk-bound-arg-strip}, when the input and output generators are of the form $x_\alpha^j x_\alpha^e$. The highlighted point and the shaded region represent the image of $u$ near $\zeta_0$.}
        \label{fig:proof-disk-bound-arg-strip}
    \end{figure}
    \vspace{0.3em}
    In the region $\{p_\alpha \ge 3R\}$, the Lagrangian $\widetilde{L}_{f,R}$ and its perturbations by $H_{f,R}$ can be represented as the product of a curve in $\bC_\alpha$ and a loop in $\bC_{\alpha^\perp}$, as in Figure~\ref{fig:proof-disk-bound-arg-strip}. Now, since the $\bC_{\alpha^\perp}$-part of the pair of generators $(u(\zeta_0), u(\zeta_1))$ is either $(x^e, x^e )$, $(x^e, x^f)$, or $(x^f, x^f)$, the image of $\Bar{u}_{\text{cyl}}$ in $\bC_{\alpha^\perp}$ contains a half-neighborhood of the intersection point $\theta_\alpha^\perp = \varphi_\alpha + \frac{1}{2}k'R^{-1}a_\alpha^\perp$. This implies that the $\{p_\alpha = t\}$ part of the boundary of $\Bar{u}_{\text{cyl}}$ should pass through this value of $\theta_\alpha^\perp$.

    Now, the equation~(\ref{eqn:eta-alpha-plus-2piphi}) implies that
    \begin{equation}
        \abs*{\int_{[\gamma]_\alpha} \eta_\alpha + 2\pi w_\alpha(\gamma) \varphi_\alpha} < 2\varepsilon
    \end{equation}
    for some homology class $[\gamma]_\alpha \in H_1(L_f)/[S_\alpha^1]$. Since Assumption~\ref{ass:top-bound-for-boundary} implies that there are finitely many possibilities for $[\gamma]_\alpha$, we can pick the value of $\varphi_\alpha$ avoiding $2\varepsilon$-neighborhood of the integral of $\eta_\alpha$ along these homology classes if we choose $\varepsilon$ small enough.
\end{proof}




    



\section{Floer Theory for Tropical Lagrangians}\label{sect:floer-theory}
    This section aims to define a Fukaya-type category for tropical Lagrangians, mainly by constructing a Floer theory between them. Since we are using a family of cylindrical Lagrangians $\widetilde{L}_{f, R}$ instead of the original special Lagrangian $L_f$, we will first consider the Floer theory for $\widetilde{L}_{f, R}$ and then take a limit of them. Also, because of the technical difficulty of directly implementing the entire $A_\infty$ structure via this limit, we will use the localization approach of Abouzaid-Seidel in \cite{AS1}.

    We continue to assume that all the Laurent polynomial $f$ throughout the section satisfy Assumption~\ref{ass:trop-ftn} and Assumption~\ref{ass:cyl-end-is-not-diagonal}, so that the intersection points between their Hamiltonian perturbation can be expressed as in Proposition~\ref{prop:self-intersection}.

\subsection{Transversality and Bounding Holomorphic Disks}
    The first thing to show is that the Floer theory for the Lagrangians $\widetilde{L}_{f, R}$ is well defined. Due to the choice of Lagrangians, Hamiltonians, and the complex structure, several issues may occur when following the standard way to define the Floer theory.

    For simplicity, we will only consider Lagrangians not bounding a holomorphic disk in this section. Also, the Gromov compactness argument requires holomorphic disks to be contained in a compact set. In our setting, we have the following compactness result.
    
    \begin{theorem}\label{thm:Gromov-compactness}
        For any finite number of Lagrangians $L_j$ of the form $\widetilde{L}_{f, R}(-kH_{f, R})$, there exists a compact subset of $(\bC^*)^2$ that contains the image of any holomorphic disks $u:S_d \to (\bC^*)^2$ bounded by $\cup_{j} L_j$.
    \end{theorem}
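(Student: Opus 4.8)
The plan is to prove a uniform $C^{0}$-bound on the holomorphic disks $u$ bounded by the finite collection $\{L_{j}\}$, after which Gromov compactness follows in the standard way. First I would record a uniform energy bound: each $L_{j}=\widetilde L_{f,R}(-k_{j}H_{f,R})$ has every $Z_{\alpha,r}^{+}$ as a cylindrical end (Remark~\ref{rem:cyl-end-desc-for-perturbation}), Notation~\ref{not:disk-counted} forces $k_{0}<\cdots<k_{d}$, so the number $d+1$ of boundary punctures is bounded by the number of distinct Lagrangians and each corner of $u$ is one of the finitely many generators of Proposition~\ref{prop:self-intersection}; the energy formula of Theorem~\ref{thm:action-ftnal} then bounds $E(u)$ by a constant $E_{0}$ depending only on the collection.

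The geometric input is that $p_{1}=\log|z_{1}|$ and $p_{2}=\log|z_{2}|$, hence every $p_{\alpha}$ and $p_{\alpha}^{\perp}$, are pluriharmonic on $(\bC^{*})^{2}$ (real parts of the holomorphic functions $\log z_{i}$, up to a linear change of coordinates), so $p_{\alpha}\circ u$ and $p_{\alpha}^{\perp}\circ u$ are harmonic on $S_{d}$ and satisfy the maximum principle; since $(\bC^{*})^{2}=\bR^{2}\times(S^{1})^{2}$ is noncompact only in the $(p_{1},p_{2})$-directions, it suffices to bound $p_{1}\circ u$ and $p_{2}\circ u$. By Proposition~\ref{prop:trop-approx} every $L_{j}$ lies in $\{|p_{1}|,|p_{2}|\le b_{1}M\}\cup\bigsqcup_{\alpha}\Sigma_{\alpha}$ with pairwise disjoint slabs $\Sigma_{\alpha}=\{p_{\alpha}>b_{2}M,\ |p_{\alpha}^{\perp}-\log|r||<1\}$, and on $\Sigma_{\alpha}\cap\{p_{\alpha}\ge R^{2}+4R\}$ each lifted $L_{j}$ is a product, in the splitting $(\widetilde{\bC^{*}})^{2}\cong\bC_{\alpha}\times\bC_{\alpha^{\perp}}$ of Section~\ref{subsect:arg-constraint-for-strip}, of a vertical line $\{\theta_{\alpha}=\mathrm{const}\}$ in $\bC_{\alpha}$ with a compact loop in $\bC_{\alpha^{\perp}}$ inside $\{|p_{\alpha}^{\perp}-\log|r||<R^{-3}\}$.

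For the confinement I would lift $u$ and cut it along the preimages under $\widetilde u$ of a large level set $\{p_{1}^{2}+p_{2}^{2}=\Lambda^{2}\}$ and of the slab walls $\{|p_{\alpha}^{\perp}-\log|r||=\tfrac12\}$ (taken at radii where the slices are short), obtaining three kinds of pieces: a piece over the compact set $\{p_{1}^{2}+p_{2}^{2}\le\Lambda^{2}\}\setminus\bigcup_{\alpha}\Sigma_{\alpha}$, which is trivially confined; pieces inside a single slab, where the maximum principle for the pluriharmonic $p_{\alpha}^{\perp}$ keeps $p_{\alpha}^{\perp}$ bounded, and for which the portion with $p_{\alpha}\ge R^{2}+4R$ projects in $\bC_{\alpha}$ to a holomorphic map with boundary on finitely many parallel vertical lines together with short cut curves — harmonicity of $\theta_{\alpha}$ confines $\theta_{\alpha}$, and then Stokes' theorem for $\omega_{\alpha}=d(\theta_{\alpha}\,dp_{\alpha})$ with the bound $E(u)\le E_{0}$, or equivalently the monotonicity lemma with $R$-independent constants (legitimate since the metric and these Lagrangians are translation-invariant in $p_{\alpha}$ there), forces $\sup p_{\alpha}$ to be bounded; and finally pieces over $\{p_{1}^{2}+p_{2}^{2}\ge\Lambda^{2}\}$ lying outside all slabs, whose boundary consists of the $\{p_{1}^{2}+p_{2}^{2}=\Lambda^{2}\}$ cut together with cut curves shared with the already-controlled slab pieces, hence sits at bounded $(p_{1},p_{2})$ and is confined by the maximum principle. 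Assembling these bounds $p_{1}\circ u$ and $p_{2}\circ u$ uniformly, which is the assertion.

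The hard part is exactly this confinement into the ends. Because $\widetilde L_{f,R}$ is not exact in general — the model surfaces $L_{q}$ are genuinely nonexact, which is why one must work over the Novikov field — one cannot simply cite the ``no escape'' lemma for a Liouville domain with exact conical Lagrangian boundary, and must instead localize: near each end every $L_{j}$ is the graph of an exact $1$-form, and the $\bC_{\alpha^{\perp}}$-factor contributes only a $p_{\alpha}^{\perp}$-bounded loop, so all of the noncompactness is pushed into the one-dimensional problem in $\bC_{\alpha}$, where the energy bound finishes it. The attendant bookkeeping — ordering the pieces so the confinement argument is not circular, and keeping the cut curves short so the monotonicity/Stokes estimate sees no ``thin'' contribution (each neck having width a multiple of $2\pi$, which is where the reverse isoperimetric inequality of Lemma~\ref{lem:reverse-isoperimetric} enters) — is routine but needs care.
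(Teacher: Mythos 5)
Your argument has a genuine gap at its very first step: the claimed uniform energy bound $E(u)\le E_0$. Theorem~\ref{thm:action-ftnal} does not bound the energy in terms of the corners alone -- its formula contains the terms $\int_{[\gamma_s]_\alpha}\lambda_\alpha$, which depend on the relative homology class of the boundary of $u$, and these classes range over an a priori infinite set on which the integral of $\lambda_\alpha$ is unbounded. Since the Lagrangians $\widetilde L_{f,R}(-kH_{f,R})$ are not exact (for $L_q$ with $|q|\neq 1$ the class $[D_0]$ has $\omega$-period $2\pi\log|q|$), two holomorphic polygons with the same corners can have energies differing by arbitrarily large amounts, and ruling out large classes is precisely the content of Assumption~\ref{ass:top-bound-for-boundary}, which is \emph{not} hypothesized in Theorem~\ref{thm:Gromov-compactness} and is only verified later for specific examples via the intersection argument of Lemma~\ref{lem:ass-top-bound-for-boundary}. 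Everything downstream in your proposal -- the Stokes/monotonicity estimate that converts area in $\bC_\alpha$ into a bound on $\sup p_\alpha$ -- is load-bearing on $E_0$, so the confinement into the ends is not established. (There is also a secondary soft spot: a component of $u_\alpha$ whose boundary lies on a single vertical line has no definite width, so the ``area $\ge 2\pi\cdot(\text{depth})$'' step does not apply to it; this case needs a separate argument.)

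The paper's proof needs no energy bound at all and is much shorter: for $p_\alpha$ beyond $R_{\max}^2+3R_{\max}$ every Lagrangian in the collection projects to a straight ray $\{\theta_\alpha=\mathrm{const}\}$ in the $(p_\alpha,\theta_\alpha)$-plane, so if $p_\alpha\circ u$ (harmonic, being the real part of $g=\frac{1}{|\alpha|^2}(\alpha_2\log z_1-\alpha_1\log z_2)$) attained its maximum beyond that threshold, the maximum would lie on $\round S_d$ by the interior maximum principle, and Schwarz reflection applied to $\exp(g)\circ u$ across that boundary arc extends the map holomorphically and contradicts maximality of $|\exp(g)\circ u|=\exp(p_\alpha\circ u)$. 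This single reflection step replaces your entire slab analysis (and also disposes of the zero-width ``tongue'' case noted above); the remaining assembly via the maximum principle for linear functions of $(p_1,p_2)$, with the boundary now confined by Proposition~\ref{prop:trop-approx}, is the part of your outline that does survive. If you want to keep an energy-based route, you would have to restrict to settings where Assumption~\ref{ass:top-bound-for-boundary} (or an equivalent bound on boundary homology classes) is already in force, which is strictly weaker than what the theorem asserts.
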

    \begin{proof}
        It is enough to show that the image of $u$ should lie inside $\{p_\alpha \le R_{\max}^2 +3R_{\max}\} $ on the region $\{\abs{p_\alpha^\perp - \log\abs{r}} \le 1\}$, where $R_{\max}$ is the maximum value of $R$. Outside this set, each Lagrangian is a ray in positive $p_\alpha$-direction in the $(p_\alpha, \theta_\alpha)$-plane. Now, suppose that the function $p_\alpha \circ u$ has its maximum bigger than $R_{\max}^2 +3R_{\max}$. By the maximum modulus principle, the maximum should be achieved on one of the boundary points. However, since $p_\alpha$ is the real part of the holomorphic function
        $$g(z) = \frac{1}{\abs{\alpha}^2} (\alpha_2 \log(z_1) - \alpha_1 \log(z_2)), $$ we can apply the Schwarz reflection principle on $\exp (g) \circ u$ to holomorphically extend over the boundary, contradicting the maximality.
    \end{proof}

    In addition, the Lagrangian and the Hamiltonian construction guarantees the following transversality property, which allows us to define the Floer theory and the $A_\infty$ operations between the Lagrangians for large enough $R > 0$.

    \begin{theorem}\label{thm:transversality-main}
        For any two distinct pairs $(f_1, k_1)$ and $(f_2, k_2)$ of a Laurent polynomial and an integer, two Lagrangians
        $$\widetilde{L}_{f_1 , R}(-k_1 H_{f_1, R}), \quad \widetilde{L}_{f_2, R}(-k_2H_{f_2, R})$$
        intersect transversely for large enough $R > 0$ for a generic choice of the translation vector $(a_1, a_2)$.
    \end{theorem}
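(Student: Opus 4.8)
The plan is to verify transversality region by region, using the stratification of $(\bC^*)^2$ by the functions $p_\alpha$ that underlies the construction of $\widetilde{L}_{f,R}$ and $H_{f,R}$ (Definitions~\ref{def:Lf-tilde-cyl}--\ref{def:Hf-tilde}; see the discussion around Figure~\ref{fig:lagn-ham-desc}), and to reduce the one genuinely global piece to a transversality statement for complex curves. Write $L_i:=\widetilde{L}_{f_i,R}(-k_iH_{f_i,R})$. On the interior region $U_0$ where $p_\alpha\le R$ for every cylindrical end $Z_{\alpha,r}^+$ of $f_1$ and of $f_2$ one has $\widetilde{L}_{f_i,R}=L_{f_i}$ by Definition~\ref{def:Lf-tilde-cyl}, and $-k_iH_{f_i,R}$ is generated there by an $R^{-1}$-small function that is a pure $(\theta_1,\theta_2)$-translation near the cylindrical parts. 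On the region $\{p_\alpha>R,\ \abs{p_\alpha^\perp-\log\abs{r}}<1\}$ of a cylindrical end: if only one of $f_1,f_2$ has that end the other Lagrangian misses this region by Proposition~\ref{prop:trop-approx}(1), so there is nothing to check; if both have the same end $(\alpha,r)$---possible only for non-generic coefficients---then by Proposition~\ref{prop:trop-approx}(2) and Remark~\ref{rem:cyl-end-desc-for-perturbation} both $L_i$ are graphs of $dg^{(i)}$ for explicit functions $g^{(i)}$ on $Z_{\alpha,r}^+$. Thus it suffices to treat (a) a shared cylindrical end, and (b) the interior $U_0$.

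For (a), an intersection point over $Z_{\alpha,r}^+$ is a critical point of $G:=g^{(1)}-g^{(2)}$, transverse iff nondegenerate. On $R<p_\alpha<R^2+3R$ the formulas of Remark~\ref{rem:cyl-end-desc-for-perturbation}, corrected by the exponentially small terms controlled by Proposition~\ref{prop:trop-approx}(3), give $\round G/\round p_\alpha=(k_1-k_2)R^{-1}a_\alpha+O(e^{-b_3R})$, which is nonzero once $R$ is large (using $a_\alpha\neq0$, and $k_1\neq k_2$ in the overlapping case), so there are no critical points there. On $p_\alpha\ge R^2+3R$, $G$ splits as a function of $p_\alpha$ plus a function of $\theta_\alpha^\perp$, so its Hessian is diagonal; the $p_\alpha$-part is Morse by the strict monotonicity built into $\bump{R^2+3R,R^2+4R}$, and the $\theta_\alpha^\perp$-part is the sinusoid $R^{-3}(\mu_{\psi_1}-\mu_{\psi_2})$ with $\psi_i=\varphi_\alpha^{(i)}+k_iR^{-1}a_\alpha^\perp$, nonconstant since $a_\alpha^\perp\neq0$ (and since $\varphi_\alpha^{(1)}\neq\varphi_\alpha^{(2)}$ when $f_1\neq f_2$, by genericity of $\{\varphi_\alpha\}$). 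This is essentially the self-intersection computation of Section~\ref{subsect:Lagn-and-Ham-def} and needs only $R$ large.

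For (b): on $U_0$ one has $L_i|_{U_0}$ a $C^1$-small perturbation of the special Lagrangian $\Psi(D_{f_i})$, where $\Psi$ is the HyperKähler rotation; as $\Psi$ is a diffeomorphism, transversality of $L_1,L_2$ on $U_0$ is equivalent to transversality of the correspondingly perturbed copies of $D_{f_1},D_{f_2}$. When $f_1=f_2=:f$ the two perturbations differ by the flow of $-(k_1-k_2)H_{f,R}$, a unitary translation $\tau_R\in(S^1)^2$ by a vector $R^{-1}v_0$ with $v_0$ determined by $(a_1,a_2)$, so transversality reduces to $D_f$ meeting $\tau_RD_f$ transversely. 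By Kleiman's transversality theorem (for the transitive translation action of the torus $(\bC^*)^2$ on itself, valid in characteristic zero), $D_f$ meets $\tau D_f$ transversely for $\tau$ in a nonempty Zariski-open $U\subseteq(\bC^*)^2$; since $(S^1)^2$ is Zariski-dense, $(S^1)^2\setminus U$ is a proper real-algebraic subset, hence near the identity a finite union of points and analytic arcs with finitely many tangent directions. For generic $(a_1,a_2)$ the ray $R\mapsto\tau_R$, which tends to the identity along $v_0$ as $R\to\infty$, is tangent to none of these, so $\tau_R\in U$ for all sufficiently large $R$. When $f_1\neq f_2$ the curves $D_{f_1},D_{f_2}$ are distinct, and one uses the remaining freedom in the perturbation data---the $\{\varphi_\alpha\}$ and the extension of the wrapping Hamiltonian over $\bR^2$ from the twisting-Hamiltonian construction in the Remark after Definition~\ref{def:Hf-tilde}, which need not agree for $f_1$ and $f_2$---to make the finitely many interior intersection points nondegenerate by a standard Sard--Smale argument (if $D_{f_1}$ already meets $D_{f_2}$ transversely, transversality of $L_1,L_2$ on $U_0$ simply persists for large $R$ by openness).

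I expect (b), and specifically the case $f_1=f_2$, to be the main obstacle: there the only transversality parameter surviving the limit $R\to\infty$ is the single direction $(a_1,a_2)$, and the unperturbed configuration is maximally degenerate ($D_f\cap D_f=D_f$, so the identity lies in the bad locus), so one genuinely needs the dimension bound on $(S^1)^2\setminus U$ to know that a generic ray of unitary translations through the identity escapes it. Part (a), by contrast, is a direct Hessian computation, essentially already carried out for self-intersections in Section~\ref{subsect:Lagn-and-Ham-def}.
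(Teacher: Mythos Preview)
Your region-by-region decomposition and your treatment of the cylindrical region (part (a)) match the paper's proof closely; the paper likewise identifies the two Lagrangians as sections of $T^*Z_{\alpha,r}^+$ and checks that the difference has no critical points on $\{R\le p_\alpha\le R^2+3R\}$ and is Morse beyond.

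The interior case $f_1=f_2$ is handled differently. You invoke Kleiman's theorem for the $(\bC^*)^2$-action and then argue that a generic ray through the identity in $(S^1)^2$ escapes the bad locus. The paper instead cites Lemma~\ref{lem:Riemann-translation-intersection}: because $L_f$ is special Lagrangian, the linear function $a_1p_1+a_2p_2$ restricted to $L_f$ is \emph{harmonic}, so all its critical points are nondegenerate of index~1, and these are exactly the intersections with a small $\theta$-translate. This is shorter and uses the special Lagrangian structure that your argument ignores; your Kleiman route is correct in principle but the step ``finitely many tangent directions at the identity'' deserves more care than you give it.

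There is a genuine gap in your interior case $f_1\neq f_2$. You propose to achieve transversality via Sard--Smale using ``the remaining freedom in the perturbation data---the $\{\varphi_\alpha\}$ and the extension of the wrapping Hamiltonian''. But neither of these affects the interior region $U_0$: by Definition~\ref{def:Lf-tilde} the function $H_\theta$ involving $\varphi_\alpha$ is supported on $\{p_\alpha\ge 2R\}$, and by Definition~\ref{def:Hf-tilde} the Hamiltonian on $U_0$ is the \emph{fixed} linear form $R^{-1}(a_1p_1+a_2p_2)$, independent of $f$; the extension mentioned in the Remark is over parts of $\bR^2$ that $L_f$ never visits (Proposition~\ref{prop:trop-approx}(1)). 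So there is no perturbation freedom on $U_0$ beyond $(a_1,a_2)$ itself. The paper instead exploits the holomorphic structure: after HyperK\"ahler rotation, in a local chart both curves are graphs of holomorphic functions $g_1,g_2$, the translation adds an imaginary constant, and non-transverse intersections require $g_1'(z)=g_2'(z)$, which occurs at finitely many points; a generic small translation moves the intersection points off these. Your parenthetical fallback (``if $D_{f_1}$ already meets $D_{f_2}$ transversely\ldots'') covers only the generic case, not the statement as written.
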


    \begin{proof}
        On the interior region where $\widetilde{L}_{f, R}$ is equal to $L_f$, after HyperKähler rotating and translating the picture, it is reduced to the case when both are complex curves and one of them is translated by $R^{-1}(a_1, a_2)$ for a fixed vector $(a_1, a_2)$ in the $(\theta_1, \theta_2)$-plane. The case $f_1 = f_2$ comes from Lemma~\ref{lem:Riemann-translation-intersection}. If $f_1$ and $f_2$ are distinct, there are only finitely many intersection points where we can separate individually. Indeed, in a local chart, both curves can be written as a graph of holomorphic functions $g_1(z)$ and $g_2(z)$, and the translation corresponds to adding an (imaginary) constant on one of these functions. Since the non-transverse intersection points happen when $g_1(z) = g_2(z)$ and $g_1'(z) = g_2'(z)$, we can achieve transversality for large enough $R > 0$ by avoiding zeros of $g_1' - g_2'$.

        Outside the interior region, these Lagrangians do not intersect unless they share the cylindrical ends with the same $(p_1, p_2)$-direction. In this case, we can identify both Lagrangians as sections $\sigma_1, \sigma_2$ of the cotangent bundle $T^*Z_{\alpha,r}^+$. Let $a_{\alpha,j}$ and $a_{\alpha,j}^\perp$ (for $j=1,2$) be the $a_\alpha$ and $a_\alpha^\perp$ components determined by the translation vector $(a_1, a_2)$ of each Lagrangian. Now, on the region $\{R \le p_\alpha \le R^2 + 3R \}$, there exist points $r_1, r_2$ on the fiber such that $\sigma_j = r_j + k_j R^{-1} a_{\alpha,j}^\perp + O(R^{-2})$ for $j=1,2$, and they do not intersect for large $R>0$ if we choose the translation vector $(a_1, a_2)$ so that $k_1 a_{\alpha,1}^\perp \neq k_2 a_{\alpha,2}^\perp$. Similarly, on the region $\{p_\alpha \ge R^2 + 3R\}$ where the wrapping happens, we can achieve the transversality for the case when $k_1 a_{\alpha,1} \neq k_2 a_{\alpha,2}$.
    \end{proof}

\subsection{Floer Theory for Tropical Lagrangians}\label{subsect:FloerThoery-basic}
    Now, we define the Floer theory for the Lagrangians $\widetilde{L}_{f, R}$ with cylindrical ends. This part follows the standard approach, and the reader may find the details in \cite{Sei1}. In this section we only consider the Floer theory between $\widetilde{L}_{{f_1}, R}$ and $\widetilde{L}_{{f_2}, R}$ with the same $R>0$. After taking a limit $R \to \infty$, this will describe the Floer theory for the corresponding special Lagrangians $L_{f_1}$ and $L_{f_2}$.

    Suppose that $L_0$ and $L_1$ are transverse Lagrangians of the form $\widetilde{L}_{f, R}(-kH_{f, R})$, not bounding any holomorphic disks. Let $\mathcal{J}$ be the space of $\omega$-compatible almost complex structures. For each $J \in \mathcal{J}$ and $x_0, x_1 \in L_0 \cap L_1$, let $\widehat{\mathcal{M}}(x_0, x_1; J)$ be the moduli space of $J$-holomorphic strips $u: \bR \times [0,1] \to (\bC^*)^2$ satisfying
    \begin{equation}
        \begin{cases}
            u(\bR \times \{0\} ) \subseteq L_0,\quad u(\bR \times \{1\} ) \subseteq L_1,\\
            \lim_{s \to -\infty} u(s,t)  = x_1, \quad \lim_{s \to \infty} u(s,t)  = x_0.
        \end{cases}
    \end{equation}
    
    Let $\mathcal{M}(x_1, x_0; J)$ be the quotient of $\widehat{\mathcal{M}}(x_1, x_0; J)$ by the $\bR$-action given by the reparametrization of $u$ (translation in $t$-direction). Also, let $\overline{\mathcal{M}}(x_1, x_0; J)$ be the Gromov compactification of $\mathcal{M}(x_1, x_0; J)$. The only added elements on the compactification we are interested in are broken holomorphic strips: disk bubbling does not occur since $L_0$ and $L_1$ do not bound any disk, and sphere bubbling is generically a codimension 2 phenomenon. The standard Gromov compactness argument can be applied to show that $\mathcal{M}(x_1, x_0; J)$ is compact since all disks in $\mathcal{M}(x_1, x_0; J)$ lie in the compact region as shown in Theorem~\ref{thm:Gromov-compactness}.


    A generic $\omega$-compatible almost-complex structure $J$ is regular, which implies that the moduli space $\overline{\mathcal{M}}(x_0, x_1; J)$ is a smooth manifold. In this case, Gromov compactness guarantees that the zero-dimensional part of $\overline{\mathcal{M}}(x_0, x_1; J)$, which is equal to the zero-dimensional part of $\overline{\mathcal{M}}(x_0, x_1; J)$, consists of a finite number of points. 

    Since the Lagrangians we consider are not exact, we are required to work with Floer chain complexes over the \textit{Novikov field} defined by 
    $$\Lambda := \left\{ \sum_{i=0}^\infty a_i T^{\lambda_i} \,\,\mid\,\, \lambda_i \in \bR, \,\,a_i \in \bC,\,\, \lim_{i \to \infty } \lambda_i = +\infty \right\}$$
    Also, each Lagrangian we consider is equipped with a \textit{local systems}, a vector bundle over the Lagrangian with unitary holonomy over the Novikov field. This assigns a unitary element $\text{hol}(\gamma)$ in the coefficient field for each homology class of loops $\gamma$ in the Lagrangian (See, for example, Remark 2.11 in \cite{Aur1}).

    
    Now, we define $CF(L_0, L_1)$ as the vector space over $\Lambda$ generated by the intersection points of $L_0$ and $L_1$. Also, for a given choice of a regular almost-complex structure $J_{L_0, L_1}$, if we let $\mathcal{M} (x_1, x_0; [u], J_{L_0, L_1})$ be the subset of $\mathcal{M} (x_1, x_0; J_{L_0, L_1})$ containing elements with homology class $[u]$, there is a differential $\mu^1$ on $CF(L_0, L_1)$ defined by
    $$\mu^1 (x_1) = \sum_{\substack{[u];\, x_0 \in L_0 \cap L_1}} \#\mathcal{M} (x_1, x_0; [u],J_{L_0, L_1})T^{E([u])} \text{hol}([\round u])  x_0$$
    where $E(u)$ is the geometric energy of $u$ and $\text{hol}(\round u)$ is the holonomy determined by the local system. We only count the zero-dimensional part of the moduli space $\mathcal{M} (x_1, x_0; J_{L_0, L_1})$. The analysis on the $1$-dimensional part of $\mathcal{M} (x_0, x_1; J_{L_0, L_1})$ gives that $(\mu^1)^2 = 0$. We will also rescale each generator to simplify the calculation; this is explained in the last part of this section.

    To extend this notion and obtain a graded complex, we equip each Lagrangian with a \textit{brane structure}. A brane structure consists of two additional data in addition to the Lagrangian $L$. First, to obtain a graded complex, we begin from a phase map $L \to S^1$ on each Lagrangian given by the argument of the holomorphic volume form $\Omega$ on the symplectic manifold. Then, a grading on $L$ is given by the choice of the lift $L \to \bR$ of the phase map. Secondly, as detailed in Section 11j of \cite{Sei1}, a choice of Pin structure for each Lagrangian gives rise to an orientation of the moduli space $\mathcal{M}(x_0, x_1; J_{L_0, L_1})$ which defines Floer complexes over a field over characteristic not equal to 2. We often abuse the notation and refer to a Lagrangian brane as the underlying Lagrangian submanifold.
    
    Also, there are higher structure maps $\mu^d$ on the chain complexes of Lagrangians. For $d \ge 2$, let $\mathcal{R}_{d+1}$ be the space of disks with $d+1$ boundary marked points labeled in $\zeta_0, \cdots, \zeta_d$ in counterclockwise order, and let $\mathcal{S}_{d+1} \to \mathcal{R}_{d+1}$ be the universal family. The Deligne-Mumford-Stasheff compactification $\overline{\mathcal{R}}_{d+1}$ of $\mathcal{R}_{d+1}$ and the corresponding universal family $\overline{\mathcal{S}}_{d+1} \to \overline{\mathcal{R}}_{d+1}$ is obtained by adding the tree-configuration of these disks. Then, we choose a family of strip-like ends for the disks in $\mathcal{R}_{d+1}$, given by the map
    \begin{align*}
        \epsilon_{j}^{d+1}: \bR_{>0} \times [0,1] \times \overline{\mathcal{R}}_{d+1} \to \overline{\mathcal{S}}_{d+1} &\qquad \text{for } j=1,\cdots,d,\\
        \epsilon_{0}^{d+1}: \bR_{<0} \times [0,1] \times \overline{\mathcal{R}}_{d+1} \to \overline{\mathcal{S}}_{d+1} &.
    \end{align*}
    Such maps can be chosen to be compatible with the gluing construction, i.e. with the map
    $$\overline{\mathcal{R}}_{d+1} \times \overline{\mathcal{R}}_{l+1} \times [R,\infty) \to \overline{\mathcal{R}}_{d+l}$$
    given by gluing some $\zeta_j$ in a disk in $\overline{R}_{l+1}$ to $\zeta_0$ in a disk in $\overline{R}_{d+1}$.

    Now, let $L_0, \cdots, L_d$ be mutually transverse Lagrangians of the form $\widetilde{L}_{f, R} (-kH_{f, R})$ not bounding any holomorphic disks. For each previous choice of almost complex structures $J_{L_j, L_{j+1}}$ for $j=0,\cdots,d-1$ and $J_{L_0, L_d}$, we can choose a family of almost complex structures
    $$J_{L_{j_0}, \cdots, L_{j_l}}: \overline{\mathcal{S}}_{l+1} \to \mathcal{J}$$
    for every $0 \le j_0 < j_1 <\cdots<j_l \le d$ to be compatible with the gluing map above. Then, for an input generators $x_j \in L_j \cap L_{j+1}$ with $j=0,\cdots,d-1$ and an output generator $x_0 \in L_0 \cap L_d$, define $\overline{\mathcal{M}}(x_1,\cdots,x_d,x_0;J_{L_0,\cdots,L_d})$ to be the set of pairs $(S,u)$ where $S \in \overline{\mathcal{R}}_{d+1}$ and a $J_{L_0,\cdots,L_d}$-holomorphic map $u: S \to (\bC^*)^2$ that sends each boundary component between $\zeta_j$ and $\zeta_{j+1}$ into $L_j$ in the cyclic order and limit to $x_j$ in the (positive or negative) strip-like ends. These moduli spaces are also compact smooth manifolds with boundary for a generic choice of almost complex structures so that we can define the map
    $$\mu^d: CF^\bullet(L_{d-1}, L_d) \otimes \cdots \otimes CF^\bullet(L_0, L_1) \to CF^\bullet(L_0, L_d)[2-d]$$
    of degree $2-d$ by
    $$\mu^d (x_d, \cdots, x_1) = \sum_{[u];\,x_0 \in L_0 \cap L_d} \#\mathcal{M} (x_1, \cdots,x_d , x_0;[u], J_{L_0, L_1}) T^{E([u])} \text{hol}([\round u]) x_0,$$
    where $\mathcal{M} (x_1, \cdots,x_d , x_0;[u], J_{L_0, L_1})$ consists the elements of $\mathcal{M} (x_1, \cdots,x_d , x_0;J_{L_0, L_1})$ with fixed homology class $[u]$. The maps $\mu^d$ form an $A_\infty$-structure.

    \begin{remark}\label{rem:use-standard-complex-structure}
        The standard continuation map argument shows that the Floer cohomology groups are independent of the choice of almost complex structures. Also, for simplicity, we will assume that the standard complex structure is regular and count holomorphic curves for the remaining sections. In reality, we need to use a sequence of almost complex structures converging to the standard complex structure. This can potentially lead to the existence of $J$-holomorphic disks with an area bigger than the holomorphic curves we consider. It therefore requires more careful analysis involving the notion of filtered $A_\infty$-algebras, where we first work on the Floer cohomology groups modulo $T^{E_n}$ for an increasing sequence $E_n$ and then take a limit $E_n \to \infty$.
    \end{remark} 

    Even though we are required to work in the Novikov field, it is sometimes convenient to rescale the generator and cancel out the energy term $T^{E([u])}$. For example, if the Lagrangians $L_0, L_1$ are exact, then there exists a function $f_j$ on each Lagrangian $L_j$ such that $\lambda |_{L_j } = df_j$ where $\lambda = p_1 d\theta_1 + p_2 d\theta_2$ is the primitive of the symplectic form. Then, rescaling each generator $x \in CF^\bullet(L_0, L_1)$ to $T^{f_1(x) - f_0 (x)} x$ eliminates the term $T^{E([u])}$ on the differential, hence allows to work over the coefficient field $\bC$. The Lagrangians $\widetilde{L}_{f, R} (-kH_{f, R})$ are not exact in general, but we rescale the generators on each cylindrical end to simplify the calculation.

    \begin{notation}\label{not:action-rescale}
        For a generator $x \in CF^\bullet(L_0, L_1)$ on the cylindrical end $Z_{\alpha,r}^+$, we rescale the generator by
        $$ x \,\,\mapsto\,\, T^{g_1 (z_\alpha(x)) - g_2 (z_\alpha(x)) + 2\pi j(x) p_\alpha(x)} x$$
        where $z_\alpha(x) = (p_\alpha(x), \theta_\alpha^\perp(x)) \in Z_{\alpha,r}^+$ is the $(p_\alpha, \theta_\alpha^\perp)$-part of $x$, $j_\alpha(x) \in \bZ$ is the $\theta_\alpha$-distance between $L_0$ and $L_1$ on the universal cover, and $g_0, g_1$ are functions on $Z_{\alpha,r}^+ \cap \{p_\alpha \ge R\}$ such that each $L_i$ is represented as a graph $d g_i$, all of them following the notation in Theorem~\ref{thm:action-ftnal}.
    \end{notation}

    By Theorem~\ref{thm:action-ftnal}, the exponent on the differential after rescaling is equal to $\int_{[\round u]_\alpha} \lambda_\alpha$. In particular, if $u$ is contained in the cylindrical end $Z_{\alpha,r}^+$, then $[\round u]_\alpha = 0$ and the exponent is eliminated.

    \begin{remark}
        We can also use a `global' version rather than separately rescaling the generators on each cylindrical end. To do that, we first pick a finite number of curves $\gamma_1, \cdots, \gamma_t$ on each Lagrangian $L$ so that $L \setminus \bigcup_{j} \gamma_j$ is exact. Then, we can rescale the generators by finding a primitive $f_j$ of $\lambda$ on each Lagrangian. We have additional terms on the exponent, which come from the intersection number of $\round u$ with each $\gamma_j$. However, since the curves $\gamma_j$ pass through one or more cylindrical ends, the exponents might not be eliminated even when $u$ is contained in the cylindrical region.
    \end{remark}

\subsection{Homotopy Method}
    In this section, we elaborate on how to deal with the `stretching parameter' $R>0$. The main ingredient to deal with the parameter is to find a chain map called \textit{continuation map} between the Floer cochain complexes with different $R>0$. To take advantage of the analysis on holomorphic disks done in Theorem~\ref{thm:disk-bound-arg-strip}, we count a series of disks bounded by a fixed Lagrangian rather than counting disks with moving boundary conditions. This construction, called the homotopy method, is elaborated in Section 10e in \cite{Sei1}.
    
    When defining the wrapped Floer theory between the special Lagrangians $L_f$, we would want to take a limit $R \to \infty$ to separate the cylindrical region from the Lagrangian. The description of the Lagrangian $\widetilde{L}_{f, R}$ in Definition~\ref{def:Lf-tilde} and Remark~\ref{rem:cyl-end-desc-for-perturbation} is given by the graph of a differential $dg$ on the cotangent bundle of the cylindrical end $Z_{\alpha,r}^+$, so the fiberwise linear isotopy gives an isotopy between $\widetilde{L}_{f, R}$ with different $R>0$. For each pair of distinct Lagrangians of the form $L_{f,R}^k := \widetilde{L}_{f, R}(-kH_{f, R})$ the transversality is preserved along the isotopy for large enough $R>0$, which implies that the intersection points between them vary smoothly. Following the notation for the generators, we can view each cylindrical generator $x_\alpha^j x_\alpha^e$ and $x_\alpha^j x_\alpha^f$ as a path of intersection points between $\widetilde{L}_{f, R}(-kH_{f, R})$ with varying $R$. Since the interior region stays constant, the interior intersection points correspond to constant paths.

    For a fixed pair of Laurent polynomials $(f_0, f_1)$ and a pair of nonnegative integers $(k_0, k_1)$, set $L_{0,R} := L_{f_0,R}^{k_0}$ and $L_{1,R} := L_{f_1,R}^{k_1}$ for every sufficiently large $R>0$. Also, fix a family $R_s$ of positive numbers over $s \in [0,1]$ which strictly and continuously increases from $R_0$ to $R_1$, and write $CF^\bullet (L_0, L_1)^{[0,1]}$ for the vector space generated by paths $x:[0,1] \to (\bC^*)^2$ where $x(s) \in L_{0,R_s} \cap L_{1,R_s}$. This is isomorphic to any $CF^\bullet (L_{0,R_s}, L_{1,R_s})$ as a vector space.

    We start from the cochain map construction representing the first order map of the $A_\infty$ functor. Given the generators $x_0, x_1$ of $CF^\bullet (L_0, L_1)^{[0,1]}$, recall that the Floer theory between $L_{0, R_s}$ and $L_{1, R_s}$ is obtained by counting index zero elements in the moduli space of strips $\mathcal{M}(x_1, x_0)^s := \mathcal{M}(x_1(s), x_0(s))$. Now, for each $\nu \ge 1$, define $\mathcal{H}^\nu (x_1, x_0)^{[0,1]}$ as a moduli space of $\nu$-tuples $((s_1, u_1), \cdots, (s_\nu, u_\nu ))$ such that $0 \le s_1 \le \cdots \le s_\nu \le 1$ and $u_i \in \mathcal{M}(\tilde{x}_{i}, \tilde{x}_{i-1})^{s_i}$ for a sequence of generators $\tilde{x}_0 = x_0, \tilde{x}_1, \cdots, \tilde{x}_\nu = x_1$. The elements of $\mathcal{H}^\nu (x_1, x_0)^{[0,1]}$ can be visualized as a chain of $\nu$ disks with 2 boundary points as Figure~\ref{fig:htpy-method}. For completeness, we formally define $\mathcal{H}^0 (x_1, x_0)^{[0,1]}$ to be a point if $x_0 = x_1$ and empty otherwise.
    The boundary of the moduli space $\mathcal{H}^\nu (x_1, x_0)^{[0,1]}$ is a union of products
    $$\prod_{i=1}^\nu \mathcal{M}(\tilde{x}_{i}, \tilde{x}_{i-1})^{s_\nu}$$
    cut out by one or more equations of the form $s_{i} = s_{i+1}$.
    
    \vspace{0.3em}
    \begin{figure}[ht]
        \centering
        \includegraphics[width=0.7\textwidth]{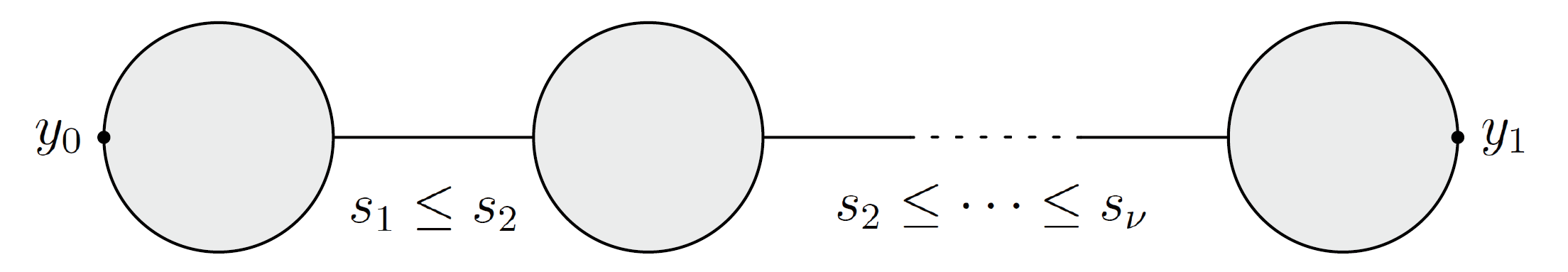}
        \caption{An element in the moduli space $\mathcal{H}^\nu (x_1, x_0)^{[0,1]}$}
        \label{fig:htpy-method}
    \end{figure}
    \vspace{0.3em}
    
    If the Lagrangians are exact, we can find an action functional on the generators and show that each $u_i$ strictly decreases the action functional. The Lagrangians $L_{f, R}^k$ are generally not exact, but we can get a similar result from the analysis on the strips done in Theorem~\ref{thm:disk-bound-arg-strip}. 

    \begin{proposition}\label{prop:htpy-mtd-disk-counting}
        For a choice of $\{\varphi_\alpha\}$, $J$, and sufficiently large $R>0$ satisfying Theorem~\ref{thm:disk-bound-arg-strip}, the moduli space $\mathcal{M}(x_1, x_0)^s$ cannot contain non-constant maps except for the following cases.
        \begin{itemize}
            \item Both generators are in the same cylindrical end $Z_{\alpha,r}^+$, where $x_1$ is of the form $x_\alpha^{j_0} x_\alpha^e$ and $x_0$ if of the form $x_\alpha^{j_1} x_\alpha^f$.
            \item $x_1$ is of the form $x_\alpha^{j} x_\alpha^e$ or $x_\alpha^{j} x_\alpha^f$, and $x_0$ is a generator on the interior region.
            \item Both generators are in the interior region.
        \end{itemize}
    \end{proposition}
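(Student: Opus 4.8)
The plan is a short case analysis driven by two inputs: the degree bookkeeping of Proposition~\ref{prop:self-intersection}, and the cutting / angular-$1$-form technique behind Theorem~\ref{thm:disk-bound-arg-strip}. Recall that $CF^\bullet(L_0,L_1)$ is concentrated in degrees $0$ and $1$ (Proposition~\ref{prop:self-intersection}, together with its evident analogue when $f_0\neq f_1$), with $CF^0$ spanned by the generators $x_\alpha^j x_\alpha^e$ and $CF^1$ spanned by the interior generators together with the $x_\alpha^j x_\alpha^f$. For a regular $J$ the moduli space $\mathcal M(x_1,x_0)^s$ has expected dimension $\deg x_0-\deg x_1-1$, so it is empty unless $\deg x_1=0$ and $\deg x_0=1$; a non-constant strip therefore forces $x_1=x_\alpha^{j_0}x_\alpha^e$ for some cylindrical end $Z_{\alpha,r}^+$, while $x_0$ is either an interior generator --- which is precisely the second listed case, with nothing further to prove --- or of the form $x_\beta^{j_1}x_\beta^f$ for some end $Z_{\beta,r'}^+$. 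When $\beta=\alpha$ this is the first listed case; here Theorem~\ref{thm:disk-bound-arg-strip} (applied with $d=1$, a Floer strip being the simplest instance of Notation~\ref{not:disk-counted}) even refines the conclusion, forcing either containment in $\{p_\alpha\ge R^2+3R\}$ with $j_0=j_1$ by Remark~\ref{rem:strips-in-cyl}, or $j_1<j_0$. Thus the entire remaining content is to exclude the case $\beta\neq\alpha$.

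For that I would re-run the cutting argument of Theorem~\ref{thm:disk-bound-arg-strip}, anchored at the end $Z_{\alpha,r}^+$ carrying the input $x_1$. Both $x_1$ and $x_0$ sit at $p_\bullet\ge R^2+3R$ on their own ends, but since $\alpha$ is not proportional to $\beta$ the point $x_0$ does not lie in the $\alpha$-cylindrical strip $\{p_\alpha\ge R^2+3R,\ \abs{p_\alpha^\perp-\log\abs r}<1\}$, and the boundary of $u$ on $L_0\cup L_1$ must pass through the compact core (where $p_\alpha=O(M)$, far below $R^2$) in order to travel from the $\alpha$-end to the $\beta$-end; so $u$ is not contained in $\{p_\alpha\ge R^2+3R\}$ and $p_\alpha\circ u$ must sweep across the stretching region. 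Lemmas~\ref{lem:u2-strip-is-small}--\ref{lem:existence-cuts} then supply a slice $\{p_\alpha=t\}$, $t\in[3R,R^2+3R]$, along which $\theta_\alpha^\perp$ varies by at most $\varepsilon$; because $x_1$ is of $e$-type the deep piece of $u$ near $x_1$ contains a half-neighborhood of $\theta_\alpha^\perp=\varphi_\alpha$, so the slice runs within $\varepsilon$ of $\varphi_\alpha$. Cutting $u$ there and applying $\int u^*\Rea\Omega=0$ to the piece $\bar u_{\mathrm{int}}$ on the $\{p_\alpha\le t\}$ side --- using that $\eta_\alpha$ is closed on $L_f$, which $\widetilde L_{f,R}(-kH_{f,R})$ approximates away from the $\alpha$-end up to errors that vanish as $R\to\infty$, and that the slice contributes to $\int\eta_\alpha$ only the length of its $\bC_{\alpha^\perp}$-image (hence $<\varepsilon$) --- yields the estimate $\abs{\int_{[\gamma]_\alpha}\eta_\alpha+2\pi w_\alpha(\gamma)\varphi_\alpha}<C\varepsilon$ for the relative class $[\gamma]_\alpha$ of $\partial\bar u_{\mathrm{int}}$, which by Assumption~\ref{ass:top-bound-for-boundary} lies in a finite set. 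For $\varphi_\alpha$ chosen generically this is violated, so no such $u$ exists.

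The delicate point --- and the one where I expect the real work to be --- is the same subtlety that already underlies Theorem~\ref{thm:disk-bound-arg-strip}: the $\varphi_\alpha$-genericity only bites when $w_\alpha(\gamma)\neq0$, or when $\int_{[\gamma]_\alpha}\eta_\alpha$ stays away from $0$ on the finite list, so one must rule out a homologically trivial $\partial\bar u_{\mathrm{int}}$ with zero wrapping. I would close this gap by also bringing in the end $Z_{\beta,r'}^+$: slicing near both ends decomposes $u$ into a cylindrical $\alpha$-piece, a cylindrical $\beta$-piece, and a core piece mapping into the compact core; the reverse-isoperimetric estimate of Lemma~\ref{lem:reverse-isoperimetric}, extended over the core, keeps the core piece's area bounded independently of $R$, whereas each cylindrical piece has area at least of order $R^2$ unless its $\bC_\bullet$-component is constant. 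For $R$ large this forces both cylindrical components to degenerate, so the core piece is a strip in the compact core running between the $\theta$-data of $x_1$ (near $\varphi_\alpha$ on the $\alpha$-end) and of $x_0$ (near $\varphi_\beta+\pi$ on the $\beta$-end); since these lie on different ends, its boundary class in $H_1(L_f)$ is non-trivial, reinstating a non-degenerate angular constraint and the contradiction. Combining with the degree reduction of the first step, the only non-constant strips are those in the three listed configurations.
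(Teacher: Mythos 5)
Your opening index reduction is not valid here, and it actually contradicts the statement you are proving. The spaces $\mathcal{M}(x_1,x_0)^s$ are the fibers of a one-parameter family feeding into the homotopy method: for a fixed $J$ you cannot achieve regularity at every $s\in[0,1]$ simultaneously, and index $-1$ strips do appear at isolated values of $s$ --- that is precisely why the second and third bullets of the proposition allow $x_1$ of degree $1$ (an $x_\alpha^j x_\alpha^f$ or an interior generator) with $x_0$ an interior generator, and why Proposition~\ref{prop:htpy-mtd-q-iso} later has to argue separately that interior-to-interior strips are never isolated. Declaring "empty unless $\deg x_1=0$, $\deg x_0=1$" both erases cases the proposition explicitly permits and, more seriously, silently disposes of configurations that genuinely require a geometric exclusion: a strip from an interior generator, or from a generator on $Z_{\alpha,r}^+$, to a cylindrical generator $x_\beta^{j}x_\beta^f$ on a different end has index $-1$, is not on the allowed list, and cannot be ruled out by dimension counting.

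The exclusion of strips whose output is a cylindrical generator on $Z_{\beta,r'}^+$ but whose input is not on that end is handled in the paper by a two-line maximum-principle argument (the same device as in Theorem~\ref{thm:Gromov-compactness} and Lemma~\ref{lem:arg-constraints-different-cylinders}): $p_\beta\circ u$ is the real part of a holomorphic function, so by Schwarz reflection its maximum is attained at one of the two asymptotic generators; the output generator cannot be a local maximum because the strip fills an obtuse-angled sector between the two Lagrangian rays in the $(p_\beta,\theta_\beta)$-plane; and the input generator does not lie deep in the $\beta$-cylindrical region. Your alternative --- re-running the cutting argument anchored at $Z_{\alpha,r}^+$ --- invokes Lemmas~\ref{lem:u2-strip-is-small}--\ref{lem:existence-cuts} outside their hypotheses (they are proved for strips with \emph{both} endpoints on the same end, and the counting of lifts in Lemma~\ref{lem:u2-strip-is-small} uses this), and the gap you flag yourself is not closed by the proposed patch: wrapped strips legitimately have $\bC_\alpha$-area of order $R^2$ (this is the term $2(j_1-j_0)\pi(R^2+3R)$ in Theorem~\ref{thm:action-ftnal}), so the dichotomy "core area bounded versus cylindrical area $\sim R^2$" does not force the cylindrical components to degenerate, and the contradiction you need never materializes.
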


    \begin{proof}
        This is a direct corollary of Theorem~\ref{thm:disk-bound-arg-strip}, together with the fact that there is no strip from the generator on $Z_{\alpha,r}^+$ to the generator on $Z_{\beta,r'}^+$ with $\alpha \neq \beta$. This is because the maximum of $p_\beta$ on the strip should be achieved on one of the generators, and the output generator is not a local maximum since the strip covers at least one of the obtuse-angled regions between the rays in the $(p_\beta, \theta_\beta)$-plane.
    \end{proof}


    Moreover, Gromov compactness argument can be applied to $\mathcal{H}^\nu (x_0, x_1)^{[0,1]}$, since the strips are contained in a compact subset. We also assume that these moduli spaces are regular when using the standard complex structure; see Remark~\ref{rem:use-standard-complex-structure}. This gives rise to a map $\Gamma: CF^\bullet(L_{0,R_0}, L_{1,R_0}) \to CF^\bullet(L_{0,R_1}, L_{1,R_1})$ defined by
    \begin{equation}\label{eqn:htpy-mtd-gamma}
        \Gamma(x_0(0)) = \sum_{[u_1],\cdots,[u_\nu];\,\nu \ge 0} \# \mathcal{H}^\nu (x_1 ,x_0;[u_1], \cdots,[u_\nu])^{[0,1]} \prod_{i=1}^\nu \left(T^{E([u_i])} \text{hol}([\round u_i]) \right) x_1(1),
    \end{equation}
    where $\mathcal{H}^\nu (x_1, x_0;[u_1], \cdots,[u_\nu])^{[0,1]}$ is a subset of $\mathcal{H}^\nu (x_1, x_0)^{[0,1]}$ with a fixed homology class of each map $u_i$. There are also terms representing the action difference while moving generators along the path between $u_{i-1}$ and $u_i$, but they are eliminated after the rescaling in Notation~\ref{not:action-rescale}. An analysis of the boundary of 1-dimensional components shows that $\Gamma$ is a chain map.

    For the higher order maps $F_{R_0, R_1}^d$, we consider an analogous construction similar to $\mu^{d}$. Namely, we count the tree-configuration of disks bounded by three different Lagrangians, but rather than fixing a time $s$, we count the disk with a moving boundary condition to achieve the regularity of the moduli space. The detailed construction can be found in Section 10e of \cite{Sei1}. The map
    $$F_{R_0, R_1}^d: CF^\bullet(L_{d-1, R_0}, L_{d,R_0}) \otimes \cdots \otimes CF^\bullet(L_{0, R_0}, L_{1, R_0}) \to CF^\bullet(L_{0, R_1}, L_{d, R_1})[1-d],$$
    together with $F_{R_0, R_1}^1 = \Gamma$, form a collection of maps satisfying the conditions to be a $A_\infty$-functor; the $A_\infty$ category will be defined in the next section.

    We conclude this section with the corollary of Proposition~\ref{prop:htpy-mtd-disk-counting} that the map $F_{R_0, R_1}^1$ is an isomorphism between vector spaces for sufficiently large $R_0$. Since $F_{R_0, R_1}^1$ is a chain map, it shows that the differential structure on $ CF^\bullet (L_{0, R}, L_{1, R})$ is identical for sufficiently large $R>0$. 

    \begin{proposition}\label{prop:htpy-mtd-q-iso}
        For a choice of $\{\varphi_\alpha\}$, $J$, and sufficiently large $R>0$ satisfying Theorem~\ref{thm:disk-bound-arg-strip}, the map $\Gamma$ defined by (\ref{eqn:htpy-mtd-gamma}) is a (vector space) isomorphism for $R_1 > R_0 > R$.
    \end{proposition}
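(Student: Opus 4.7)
The plan is to express $\Gamma$ in the natural block decomposition of $CF^\bullet(L_0,L_1)$ coming from the three generator types of Proposition~\ref{prop:self-intersection} and to check that it is block upper-triangular with invertible diagonal blocks over the Novikov field. Because Theorem~\ref{thm:transversality-main} ensures that transversality of $(L_{0,R_s},L_{1,R_s})$ persists for large $R$ along the family $s\in[0,1]$, the intersection points trace out continuous paths $x(s)$, and the $\nu=0$ term in~\eqref{eqn:htpy-mtd-gamma} realizes the induced canonical identification $\iota\colon CF^\bullet(L_{0,R_0},L_{1,R_0})\to CF^\bullet(L_{0,R_1},L_{1,R_1})$ on generators. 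What remains is to show that the $\nu\ge 1$ corrections form a triangular perturbation with invertible diagonal.

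A virtual-dimension count gives
$$ \dim\mathcal H^\nu(x_1,x_0) \;=\; \sum_{i=1}^\nu \dim\mathcal M(\tilde x_i,\tilde x_{i-1}) + \nu \;=\; |x_0|-|x_1|, $$
so $\Gamma$ preserves cohomological degree. For generic regularity data in the parametric family, a $0$-dimensional component of $\mathcal H^\nu$ further forces each $u_i$ to have virtual dimension $-1$ at its $s$-slice, equivalently each arrow $\tilde x_i\to\tilde x_{i-1}$ is itself degree-preserving. Combined with Proposition~\ref{prop:htpy-mtd-disk-counting}, the only arrows that can appear in such a chain are interior-to-interior strips (type~(c), degree $1\to 1$) and the strips $x_\alpha^j x_\alpha^f\to v$ with $v$ interior (the degree-preserving subcase of type~(b)); the degree-raising arrows $x_\alpha^j x_\alpha^e\to x_\alpha^{j'} x_\alpha^f$ and $x_\alpha^j x_\alpha^e\to v$ cannot occur.

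Tracing chains by their terminal vertex $x_0=\tilde x_0$ exhibits the triangular structure. No allowed arrow ends at any $x^e$- or $x^f$-type generator, so $\Gamma(x_\alpha^j x_\alpha^e)=\iota(x_\alpha^j x_\alpha^e)$ and $\Gamma(x_\alpha^j x_\alpha^f)=\iota(x_\alpha^j x_\alpha^f)$. A chain of length $\nu\ge 1$ terminating at an interior generator $v_i$ either consists entirely of type-(c) arrows or begins with a single $x^f\to v$ arrow followed by $\nu-1$ type-(c) arrows, since no allowed arrow continues upward past $x^f$. Ordering the generators as $(x^e, x^f, v)$ and identifying both Floer complexes via $\iota$, this gives
$$ \Gamma \;=\; \begin{pmatrix} I & 0 & 0 \\ 0 & I & D \\ 0 & 0 & I+C \end{pmatrix}, $$
with $D$ collecting the chains beginning at an $x^f$ and $C$ collecting the nontrivial chains of interior type-(c) strips.

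It remains to show $I+C$ is invertible. Each entry of $C$ is a Novikov sum over chains of nonconstant interior holomorphic strips; each such strip has strictly positive symplectic area by positivity of area, and Theorem~\ref{thm:Gromov-compactness} ensures that only finitely many chains contribute below any prescribed total energy. Hence $C$ has entries in the maximal ideal $T^{>0}\Lambda^{\ge 0}\subset\Lambda$, and $(I+C)^{-1}=\sum_{k\ge 0}(-C)^k$ converges $T$-adically, yielding
$$ \Gamma^{-1} \;=\; \begin{pmatrix} I & 0 & 0 \\ 0 & I & -D(I+C)^{-1} \\ 0 & 0 & (I+C)^{-1} \end{pmatrix}. $$
The main obstacle I anticipate is the virtual-dimension step: ruling out contributions to a $0$-dimensional component of $\mathcal H^\nu$ where individual strips carry unbalanced positive/negative virtual dimensions requires a uniform regularity statement for the parametric moduli $\bigcup_s \mathcal M(\cdot,\cdot)^s$ compatible with the strip-by-strip bounds used to invoke Proposition~\ref{prop:htpy-mtd-disk-counting}.
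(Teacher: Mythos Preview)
Your argument is correct and follows a genuinely different route from the paper. Both approaches agree on the triangular shape of $\Gamma$, but differ on the $(v,v)$-block $I+C$: you show $I+C$ is invertible by the $T$-adic argument (positive energy of each nonconstant strip, finitely many chains below any given energy by Gromov compactness), whereas the paper observes that by Definitions~\ref{def:Lf-tilde-cyl}--\ref{def:Lf-tilde} the interior part of $\widetilde L_{f,R}$ is literally independent of $R$, so the parametric moduli of interior-to-interior strips is a product with $[0,1]$ and contains no isolated points. This gives $C=0$ outright, making $\Gamma$ unipotent triangular. Your method is more robust (it would survive a nontrivial interior isotopy), while the paper's is shorter and avoids any convergence discussion.

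Your final concern is also unnecessary. You do not need each $u_i$ to be individually degree-preserving in order to obtain the block form: it already follows from the arrow types in Proposition~\ref{prop:htpy-mtd-disk-counting} together with the fact that $\Gamma$ has total degree~$0$. Since no arrow has target of type $x^e$, any chain with $x_0$ of type $x^e$ is the empty chain. The only arrow with target $x^f$ has source $x^e$, which is itself never a target, so chains with $x_0$ of type $x^f$ have $\nu\le 1$, and the $\nu=1$ case is excluded by the total degree constraint $|x_1|=|x_0|$. Finally, since the only arrow with source $v$ is $v\to v$, any chain with $x_1$ of type $v$ consists entirely of interior arrows; this pins down $C$ as a sum over pure interior chains without any per-strip index hypothesis. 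The step you flagged can simply be dropped.
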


    \begin{proof}
        For degree reasons, we only need to count isolated points in $\mathcal{H}^\nu (x_1, x_0)^[0,1]$ when $x_1$ and $x_0$ are both of the form $x_\alpha^j x_\alpha^e$, or when they are either interior generators or of the form $x_\alpha^j x_\alpha^f$. The only non-constant disks that can contribute to the calculation are strips between interior generators (when $x_1$ and $x_0$ are both interior generators). However, since the Lagrangian is stationary in the interior region, we can freely move $s_i$ around $[0,1]$ so that the configuration is not isolated.
    \end{proof}

\subsection{Homotopy Colimits of \texorpdfstring{$A_\infty$}{A∞}-Categories}
    Throughout this section, we fix a finite collection of Laurent polynomials $f$, together with a choice of a brane structure. Also, choose a generic value of $\{\varphi_\alpha\} \subseteq \bR$ which makes the Theorem~\ref{thm:disk-bound-arg-strip} holds. 
    For $R>0$ and a positive integer $K$, we construct a category $\mathcal{O}_{R,\le n}$, whose objects are Lagrangians of the form $L_{f,R}^k :=\widetilde{L}_{f, R}(-kH_{f, R})$ for $k=0,1,\cdots,n$. By the transversality result in Theorem~\ref{thm:transversality-main}, we can define a Floer cohomology ring for each pair of objects in $\mathcal{O}_{R,\le n}$ for sufficiently large $R>0$. The morphism space is defined as follows.
    \begin{equation*}
        \mathcal{O}_{R, \le n} (L_{{f_0},R}^{k_0}, L_{{f_1},R}^{k_1}) 
    = \begin{cases}
        CF^\bullet (L_{{f_0},R}^{k_0}, L_{{f_1},R}^{k_1}) & \text{if } k_0 < k_1 \le n \\
        \mathbb{K} \cdot \text{id} & \text{if } k_0 = k_1, \text{ and } f_0 = f_1 \\
        0 & \text{otherwise}.
    \end{cases}
    \end{equation*}
    There are $A_\infty$-operators $\mu^d$ as defined in Section~\ref{subsect:FloerThoery-basic}. The identity morphisms $\text{id}$ are added formally to make this category strictly unital. One can check that $\mathcal{O}_{R,\le n}$ form an $A_\infty$-category, and for $d \ge 3$, the map
    $$\mu^d: CF^\bullet(L_{{f_{d-1}}, R}^{k_{d-1}}, L_{f_{d}, R}^{k_{d}}) \otimes \cdots \otimes CF^\bullet(L_{f_{1}, R}^{k_{1}}, L_{f_{0}, R}^{k_{0}}) \to CF^\bullet(L_{f_{0}, R}^{k_{0}}, L_{f_{d}, R}^{k_{d}})[2-d]$$
    can be nonzero only when $k_0 < k_1 < \cdots < k_d$.

    The homotopy method in the previous section gives rise to an $A_\infty$-functor $F_{R, R'}:\mathcal{O}_{R, \le n} \to \mathcal{O}_{R', \le n}$, and it is a quasi-isomorphism for sufficiently large $R>0$.

    \begin{theorem}
        For a fixed positive integer $n$, there exists $R_0>0$ such that the functor $F_{R, R'}:\mathcal{O}_{R, \le n} \to \mathcal{O}_{R', \le n}$ is a quasi-isomorphism for every $R' > R > R_0$.
    \end{theorem}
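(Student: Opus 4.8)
The plan is to reduce the statement to the chain-level isomorphism already established in Proposition~\ref{prop:htpy-mtd-q-iso}, using the standard criterion that a (cohomologically unital) $A_\infty$-functor is a quasi-isomorphism as soon as it is a bijection on objects and its linear term induces isomorphisms on all morphism cohomology groups. The functor $F_{R,R'}$ sends each object $L_{f,R}^k$ to $L_{f,R'}^k$, so it is a bijection on the (identical) object sets of $\mathcal{O}_{R,\le n}$ and $\mathcal{O}_{R',\le n}$; in particular essential surjectivity at the cohomology level is automatic, and the only thing to verify is that for every ordered pair of objects the linear term $F_{R,R'}^1$ is a quasi-isomorphism of the corresponding morphism complexes.

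Since the fixed collection of Laurent polynomials is finite and $k$ ranges over the finite set $\{0,1,\dots,n\}$, there are only finitely many ordered pairs of objects to consider. For a pair with $k_0>k_1$, or with $k_0=k_1$ and $f_0\neq f_1$, both morphism spaces are $0$ and there is nothing to check; for a pair with $k_0=k_1$ and $f_0=f_1$ the morphism space is the formally adjoined $\mathbb{K}\cdot\mathrm{id}$, on which $F_{R,R'}^1$ is the identity by construction, as the homotopy-method functor is strictly unital. For a pair with $k_0<k_1\le n$ the morphism spaces are the Floer complexes $CF^\bullet(L_{f_0,R}^{k_0},L_{f_1,R}^{k_1})$ and $CF^\bullet(L_{f_0,R'}^{k_0},L_{f_1,R'}^{k_1})$, and $F_{R,R'}^1$ is the continuation map $\Gamma$ of equation~(\ref{eqn:htpy-mtd-gamma}). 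By Proposition~\ref{prop:htpy-mtd-q-iso} there is a threshold $R_0(f_0,k_0,f_1,k_1)$ above which $\Gamma$ is a vector-space isomorphism; since it is also a chain map, it is then automatically a quasi-isomorphism on cohomology. Taking $R_0$ to be the maximum of these finitely many thresholds makes the conclusion uniform over all pairs, hence gives the theorem for all $R'>R>R_0$.

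The one point requiring a little care is that these chain-level statements must pass through the filtered setting indicated in Remark~\ref{rem:use-standard-complex-structure}: the continuation functor is filtered, since after the rescaling of Notation~\ref{not:action-rescale} the maps $u_i$ do not increase energy, so $F_{R,R'}^1$ induces isomorphisms on the associated graded pieces $CF^\bullet(-)/T^{E_m}$ for every $m$ and hence on the $T$-adic limit, and the $A_\infty$-relations for $F_{R,R'}$ identify these with isomorphisms of Floer cohomology over $\Lambda$. I expect this argument itself to be essentially bookkeeping; the real content sits upstream, in Proposition~\ref{prop:htpy-mtd-q-iso} and behind it in the argument constraints of Theorem~\ref{thm:disk-bound-arg-strip}, which are what force the only nonconstant index-zero strips contributing to the relevant components of the homotopy moduli spaces $\mathcal{H}^\nu$ to be the interior strips — and those can be slid freely in the parameter $s$, so that $\Gamma$ reduces to the identity in the degrees that matter. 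Given that input, the present theorem is obtained by uniformizing the threshold over the finite object set and invoking the standard $A_\infty$ quasi-equivalence criterion.
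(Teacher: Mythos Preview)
Your proposal is correct and follows essentially the same approach as the paper: reduce to showing $F_{R,R'}^1$ is a chain isomorphism on each morphism space via Proposition~\ref{prop:htpy-mtd-q-iso}, then take a uniform threshold $R_0$ using the finiteness of the object set. The paper's proof is a one-sentence version of yours; your explicit case breakdown and the remark on the filtered setting are reasonable elaborations but add nothing new.
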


    \begin{proof}
        The only thing we need to show is that the map
        $$F_{R, R'}^1(L_{f_1}^{k_1}, L_{f_2}^{k_2}): CF^\bullet (L_{{f_1}, R}^{k_1}, L_{{f_2}, R}^{k_2}) \to CF^\bullet (L_{{f_1}, R'}^{k_1}, L_{{f_2}, R'}^{k_2})$$
        is a chain isomorphism for every $R' > R > R_0$ and pair of objects in the category $\mathcal{O}_{R, \le n}$, which is obvious from Proposition~\ref{prop:htpy-mtd-q-iso} since we have a finite number of Lagrangians in $\mathcal{O}_{R, \le n}$.
    \end{proof}
    From the theorem, we can pick an increasing sequence of numbers $R_n$ for each positive integer $n$ so that $\mathcal{O}_{R_m, \le n}$ is well defined for every $m \ge n$, and the functor $F_{R_{m_1}, R_{m_2}}: \mathcal{O}_{R_{m_1}, \le n} \to \mathcal{O}_{R_{m_2}, \le n}$ is a quasi-isomorphism for every $m_2 \ge m_1 \ge n$.

    On the $n$-direction, the category $\mathcal{O}_{R, \le n+1}$ contains $\mathcal{O}_{R, \le n}$ as a full subcategory, in addition to the objects $L_{f, R}^{n+1}$ that are left-orthogonal to $\mathcal{O}_{R, \le n}$ in the sense that any morphism space from $L_{f, R}^{n+1}$ to an object in $\mathcal{O}_{R, \le n}$ is zero. Combining the embedding $\mathcal{O}_{R, \le n} \hookrightarrow \mathcal{O}_{R, \le n+1}$ with the functor $F_{R,R'}$, we get a diagram of $A_\infty$-categories $\{\mathcal{O}_{R_{n},\le n}\}_{n \in \bZ_{\ge 0}}$ over the nonnegative integers. 
    
    The direct limit of this diagram of $A_\infty$-categories is called the \textit{homotopy colimit} and indicates the limiting behavior when stretching the cylindrical region to infinity. To construct the homotopy colimit $\hclim{n \to \infty} \mathcal{C}_n$ of a diagram $\{\mathcal{C}_n\}_{n \in \bZ_{\ge 0}}$, we first produce a category $\mathcal{G} = \text{Groth}_{n \ge 0} \,\mathcal{C}_n$ from the \textit{Grothendieck construction}, so that the objects are $\bigsqcup_{n\ge0} \mathcal{C}_n$ and the morphism space between an object $L_m$ in $\mathcal{C}_m$ and an object $L_n$ is
    \begin{equation*}
        \mathcal{G} (L_m, L_n) = \begin{cases}
            0 & \text{if } m > n, \\
            \mathcal{C}_n(F_{m,n}(L_m), L_n) & \text{if } m \le n.
        \end{cases}
    \end{equation*}
    Then, the homotopy colimit is defined as the localization of the Grothendieck construction
    $$\hclim{n \to \infty} \mathcal{C}_n := \left(\Groth{n \ge 0} \mathcal{C}_n \right)[A^{-1}]$$
    at the collection $A$ of morphisms $L_m \to F_{m,n} L_m$ corresponding to the identity map in $H^0 \mathcal{C}_n (F_{m,n} L_m, F_{m,n} L_m )$ for $m \le n$ and $L_m \in \mathcal{C}_m$. The localization is done by taking quotients over the cone of elements in $A$, in the sense of \cite{LO1}. The detailed construction can be found in Appendix A of \cite{GPS2}.

    \begin{definition}
        The category $\mathcal{O}$ is the $A_\infty$-category defined by
        $$\mathcal{O} = \hclim{n \to \infty} \mathcal{O}_{R_{n},\le n},$$
        where the functor between $\mathcal{O}_{R_{n},\le n}$ and $\mathcal{O}_{R_{n+1},\le n+1}$ is the composition
        $$\mathcal{O}_{R_{n},\le n} \xrightarrow{F_{R_n, R_{n+1}}} \mathcal{O}_{R_{n+1},\le n} \xhookrightarrow{\quad} \mathcal{O}_{R_{n+1},\le n+1}$$
        of a quasi-isomorphism and an embedding.
    \end{definition}

    In our case, each functor in the diagram is a fully faithful functor, and all the added objects are left-orthogonal to the previously added ones. In this case, the homotopy colimit looks like a `union' of these categories.

    \begin{proposition}[\cite{GPS2}, Proposition A.14]
        Suppose that a diagram $\{\mathcal{C}_n\}_{n \in \bZ_{>0}}$ satisfies the following properties.
        \begin{enumerate}
            \item All functors $\mathcal{C}_m \hookrightarrow \mathcal{C}_n$ for $m \le n$ are fully faithful.
            \item Every $\mathcal{C}_n$ is generated by the images of $\mathcal{C}_m$ for $m<n$ and objects which are left-orthogonal to all these images (call such object \textit{right-new}).
        \end{enumerate}
        Then, the limit $\hclim{n \to \infty}\mathcal{C}_n $ is generated by right-new objects, and the map
        $$\Groth{n \ge 0}\mathcal{C}_n (L_m, -) \to \hclim{n \to \infty} \mathcal{C}_n (L_m, -)$$
        is a quasi-isomorphism between $\mathcal{C}_m$-modules for any right-new object $L_m \in \mathcal{C}_m$.
    \end{proposition}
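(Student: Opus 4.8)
The plan is to work with the model for the homotopy colimit that has already been fixed, namely $\hclim{n\to\infty}\mathcal{C}_n = \bigl(\Groth{n\ge 0}\mathcal{C}_n\bigr)[A^{-1}]$. Writing $\mathcal{G} = \Groth{n\ge 0}\mathcal{C}_n$, I would use the Lyubashenko--Ovsienko description of $A_\infty$-localization (as in Appendix A of \cite{GPS2}), which identifies $\mathcal{G}[A^{-1}]$ with a quotient $\mathcal{G}/\mathcal{A}$, where $\mathcal{A}$ is the full subcategory of the pretriangulated hull $\mathrm{Tw}\,\mathcal{G}$ spanned by the cones $C_a := \mathrm{Cone}(a)$ and their shifts for $a\in A$, and whose morphism complexes are computed by the bar complex
\[
    \mathcal{G}/\mathcal{A}(X,Y)\;\simeq\;\bigoplus_{k\ge 0}\ \bigoplus_{A_1,\dots,A_k\in\mathcal{A}}\mathcal{G}(A_k,Y)\otimes\mathcal{G}(A_{k-1},A_k)\otimes\cdots\otimes\mathcal{G}(X,A_1),
\]
with $k=0$ summand $\mathcal{G}(X,Y)$ and differential induced by the $A_\infty$-structure of $\mathcal{G}$. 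The first step is then to reduce the whole statement to a single vanishing: if $\mathcal{G}(L_m,-)$ is acyclic on every object of $\mathcal{A}$ whenever $L_m$ is right-new, then for $X=L_m$ every $k\ge 1$ summand above is acyclic, the inclusion $\mathcal{G}(L_m,Y)\hookrightarrow\mathcal{G}/\mathcal{A}(L_m,Y)$ is a quasi-isomorphism for all $Y$, and since a map of $A_\infty$-modules is a quasi-isomorphism exactly when it is one objectwise, this is precisely the asserted quasi-isomorphism of $\mathcal{C}_m$-modules.

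Next I would prove that vanishing. Because $\mathcal{G}(L_m,-)$ is an exact functor to chain complexes it kills anything built from the $C_a$ by cones and shifts, so it suffices to check $\mathcal{G}(L_m,C_a)\simeq 0$ for each $a\colon L\to F_{j,n}L$ in $A$ with $L\in\mathcal{C}_j$ and $j\le n$. Apply $\mathcal{G}(L_m,-)$ to the triangle $L\to F_{j,n}L\to C_a\to L[1]$ and split into cases on $j$ versus $m$. When $m\le j$, one has $\mathcal{G}(L_m,L)=\mathcal{C}_j(F_{m,j}L_m,L)$ and $\mathcal{G}(L_m,F_{j,n}L)=\mathcal{C}_n(F_{m,n}L_m,F_{j,n}L)$, and, using the coherence $F_{j,n}\simeq F_{m,n}\circ F_{j,m}$, composition with $a$ is identified with the full-faithfulness quasi-isomorphism of $F_{j,n}$, so the cone is acyclic. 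When $j<m\le n$, the term $\mathcal{G}(L_m,L)$ vanishes because morphism complexes in $\mathcal{G}$ are zero from a higher to a lower index, and $\mathcal{G}(L_m,F_{j,n}L)=\mathcal{C}_n(F_{m,n}L_m,F_{j,n}L)\simeq\mathcal{C}_m(L_m,F_{j,m}L)=0$ because $L_m$, being right-new, is left-orthogonal to the image of $\mathcal{C}_j$; when $m>n$ both terms vanish. In every case $\mathcal{G}(L_m,C_a)\simeq 0$.

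For the generation statement I would argue as follows. In the localization every morphism $a\colon L\to F_{m,n}L$ of $A$ is inverted, so $L$ and its image $F_{m,n}L$ become isomorphic in $\hclim{n\to\infty}\mathcal{C}_n$; hence the image of $\mathcal{C}_n$ in the colimit is generated by the images of the objects of $\mathcal{C}_n$, which by hypothesis (2) are generated by the right-new objects of $\mathcal{C}_n$ together with the images of $\mathcal{C}_m$, $m<n$. An induction on $n$ shows the image of each $\mathcal{C}_n$ is generated by right-new objects, and since $\hclim{n\to\infty}\mathcal{C}_n$ is generated by the union of these images, it is generated by right-new objects.

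The hard part is really the bookkeeping in the first paragraph: setting up the Lyubashenko--Ovsienko quotient model for $A_\infty$-localization precisely in the sense of \cite{LO1} and \cite{GPS2}, and carrying the coherence isomorphisms $F_{j,n}\simeq F_{m,n}\circ F_{j,m}$ through the Grothendieck construction so that ``composition with $a$'' genuinely is the full-faithfulness map rather than merely homotopic to it. Once these structural inputs are in place, the cohomological vanishing that drives the argument is a short case check.
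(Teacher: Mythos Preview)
The paper does not give its own proof of this proposition: it is quoted verbatim from \cite{GPS2}, Proposition~A.14, and closed with a $\square$. So there is no in-paper argument to compare against; your sketch is a reconstruction of the sort of proof one finds in that reference.

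As such a reconstruction, your outline is sound. The reduction to showing that $\mathcal{G}(L_m,-)$ annihilates the cones $C_a$ is exactly the right move given the Lyubashenko--Ovsienko model, and your three-way case split on the relative size of $m$, $j$, $n$ correctly isolates where each hypothesis (full faithfulness, left-orthogonality of right-new objects, directionality of the Grothendieck construction) is used. The generation argument by induction on $n$ is also the expected one. One small point: in the bar-complex description you should be a little more careful about convergence, since the filtration by bar-length is infinite and the differential decreases length; over a field this is harmless because homology commutes with direct sums and each filtration quotient is genuinely acyclic, but it is worth saying. You are right that the real work is the $A_\infty$ bookkeeping you flag at the end --- making ``composition with $a$'' literally equal to the action of the fully faithful functor on hom complexes, rather than just chain-homotopic to it, is exactly the sort of coherence that the Grothendieck construction is designed to encode, and in a full write-up you would need to unwind that.
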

    $\hfill \square$


    \begin{corollary}\label{cor:category-O-generators}
        The $A_\infty$-category $\mathcal{O}$ satisfies the following.
        \begin{enumerate}
            \item $\mathcal{O}$ is generated by the images $L_{f}^n$ of the Lagrangian $L_{f, R_n}^n$ for $n \ge 0$.
            \item For $m \le n$ and $R \ge R_n$, there exists a diagram of quasi-isomorphisms
            $$CF^\bullet (L_{f, R}^m, L_{f,R}^n ) \xleftarrow{\sim} CF^\bullet (L_{f, R_n}^m, L_{f,R_n}^n ) \xrightarrow{\sim} \mathcal{O}(L_{f}^m, L_{f}^n ).$$
        \end{enumerate}
    \end{corollary}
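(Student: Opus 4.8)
The plan is to deduce both parts formally from Proposition~A.14 of \cite{GPS2} quoted above, applied to the tower $\{\mathcal{O}_{R_n,\le n}\}_{n\ge 0}$, together with the homotopy-method comparison isomorphism of Proposition~\ref{prop:htpy-mtd-q-iso}. So the first task is to check that this tower meets the two hypotheses of that proposition.

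For the fully-faithfulness hypothesis, I would note that the transition functor $\mathcal{O}_{R_n,\le n}\to\mathcal{O}_{R_{n+1},\le n+1}$ is, by definition, the composite of $F_{R_n,R_{n+1}}\colon\mathcal{O}_{R_n,\le n}\to\mathcal{O}_{R_{n+1},\le n}$ with the full subcategory inclusion $\mathcal{O}_{R_{n+1},\le n}\hookrightarrow\mathcal{O}_{R_{n+1},\le n+1}$. The first factor is a quasi-isomorphism, hence cohomologically fully faithful, by the quasi-isomorphism theorem preceding this corollary---this is exactly why the $R_n$ were chosen increasing and large enough---and the second factor is fully faithful as a full subcategory inclusion; composites of such functors, indexed over $m\le n$, remain cohomologically fully faithful. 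For the second hypothesis, I would use that in $\mathcal{O}_{R_{n+1},\le n+1}$ the new objects $L_{f,R_{n+1}}^{\,n+1}$ have vanishing morphism space \emph{to} every object $L_{f',R_{n+1}}^{k}$ with $k\le n$ (directly from the definition of the morphism spaces, since there $n+1>k$); hence they are left-orthogonal to the image of $\mathcal{O}_{R_{n+1},\le n}$, and together with that image they tautologically generate $\mathcal{O}_{R_{n+1},\le n+1}$. Thus the right-new objects at level $n+1$ are precisely the $L_{f,R_{n+1}}^{\,n+1}$.

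With the hypotheses in place, Proposition~A.14 of \cite{GPS2} gives part (1) immediately: $\mathcal{O}=\hclim{n\to\infty}\mathcal{O}_{R_n,\le n}$ is generated by the right-new objects, that is, by the images $L_f^n$ of $L_{f,R_n}^{\,n}$. It also supplies, for each right-new $L_f^m$, a quasi-isomorphism of $\mathcal{O}_{R_m,\le m}$-modules
$$\Groth{n\ge 0}\mathcal{O}_{R_n,\le n}\bigl(L_{f,R_m}^{\,m},-\bigr)\;\xrightarrow{\ \sim\ }\;\mathcal{O}\bigl(L_f^m,-\bigr).$$
Evaluating this module map at the object $L_{f,R_n}^{\,n}$, whose image in $\mathcal{O}$ is $L_f^n$, and using that the homotopy-method functors fix the labels $(f,k)$ on objects, so that the Grothendieck morphism space is
$$\Groth{n\ge 0}\mathcal{O}_{R_n,\le n}\bigl(L_{f,R_m}^{\,m},L_{f,R_n}^{\,n}\bigr)=\mathcal{O}_{R_n,\le n}\bigl(L_{f,R_n}^{\,m},L_{f,R_n}^{\,n}\bigr)=CF^\bullet\bigl(L_{f,R_n}^{m},L_{f,R_n}^{n}\bigr),$$
one obtains the right-hand quasi-isomorphism of part (2). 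The left-hand quasi-isomorphism $CF^\bullet(L_{f,R_n}^{m},L_{f,R_n}^{n})\to CF^\bullet(L_{f,R}^{m},L_{f,R}^{n})$, for $R\ge R_n$, is simply the continuation map $F_{R_n,R}^1$, which is a chain isomorphism by Proposition~\ref{prop:htpy-mtd-q-iso} (here $R_n$ is already above the threshold of Theorem~\ref{thm:disk-bound-arg-strip}, and the map is the identity when $R=R_n$).

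The argument is entirely formal once the machinery of the previous sections is in hand; the only delicate point---and the place a hasty argument could slip---is the bookkeeping in the Grothendieck construction: correctly identifying the object of $\mathcal{O}_{R_n,\le n}$ to which the composite transition functor sends $L_{f,R_m}^{\,m}$, invoking ``right-new'' at the correct level of the tower, and reading the fully-faithfulness hypothesis of Proposition~A.14 of \cite{GPS2} cohomologically, which is what is compatible with our transition functors being quasi-isomorphisms rather than strict inclusions.
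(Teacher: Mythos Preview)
Your proposal is correct and follows essentially the same approach as the paper: verify the hypotheses of Proposition~A.14 of \cite{GPS2}, apply it to get part~(1) and the right-hand quasi-isomorphism, and invoke the homotopy-method continuation map (Proposition~\ref{prop:htpy-mtd-q-iso}) for the left-hand arrow. The paper's own proof is a one-liner that only records the key identification of the Grothendieck morphism space $\mathcal{G}(L_{f_1,R_m}^m,L_{f_2,R_n}^n)=CF^\bullet(F_{R_m,R_n}(L_{f_1,R_m}^m),L_{f_2,R_n}^n)=CF^\bullet(L_{f_1,R_n}^m,L_{f_2,R_n}^n)$, leaving the verification of hypotheses and the left arrow implicit; your write-up simply makes these steps explicit.
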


    \begin{proof}
        The morphism space from $L_{f_1, R_m}^m$ to $L_{f_2, R_n}^n$ in the Grothendieck construction is the Floer cochain complex between $F_{R_m, R_n} (L_{f_1, R_m}^m) = L_{f_1, R_n}^m$ and $L_{f_2, R_n}^n$.
    \end{proof}

        

\subsection{Partially Wrapped Fukaya Category}
    The final step is to use the localization process introduced in \cite{ASx} on the category $\mathcal{O}$ to define the desired wrapped Fukaya category as a partially wrapped Fukaya category. We continue to fix a finite collection $B$ of Laurent polynomials $f$.

    We begin from an observation that there exists a distinguished collection of morphisms
    $$e_{f,R}^k \in CF(L_{f,R}^k, L_{f,R}^{k+1}) \cong \mathcal{O}(L_f^k , L_f^{k+1})$$
    for every $f$, $k$, and sufficiently large $R>0$, called \textit{quasi-units}. This element is defined by counting solutions to a Cauchy-Riemann equation with a moving boundary condition. Namely, the domain $S$ is a disk with a single strip-like end, and the boundary condition is given by the isotopy $L_{f, R}^{\kappa(t)} = \widetilde{L}_{f, R}(-\kappa(t)H_{f, R})$ where $\kappa:\round S \to [k,k+1]$ is a smooth map monotonically increasing from $k$ to $k+1$ in the counterclockwise direction which is constant on the strip-like end. The multiplication map
    $$\mu^2 (\cdot, e_{{f_0},R}^{k_0}) : CF^\bullet(L_{f_0,R}^{k_0 +1}, L_{f_1,R}^{k_1}) \to CF^\bullet(L_{f_0,R}^{k_0}, L_{f_1,R}^{k_1})$$
    and
    $$\mu^2 (e_{f_1,R}^{k_1}, \cdot) : CF^\bullet(L_{f_0,R}^{k_0}, L_{f_1,R}^{k_1}) \to CF^\bullet(L_{f_0,R}^{k_0}, L_{f_1,R}^{k_1+1})$$
    can be viewed as a continuation map, since gluing a disk with one boundary marked point with one of the inputs of another disk gives a holomorphic strip with moving boundary condition. In fact, the quasi-unit $e{{f}, R}^{k}$ is the image of the identity morphism in $HF^\bullet(L_{f, R})$ under these continuation maps.

    \begin{proposition}\label{prop:quasi-unit-htpy-mtd}
        The quasi units are sent to quasi-units on the cohomology level through the functor $F_{R,R'}^1:CF^\bullet(L_{f_0, R}^{k_0}, L_{f_1, R}^{k_1}) \to CF^\bullet(L_{f_0, R'}^{k_0}, L_{f_1, R'}^{k_1})$ defined by the homotopy method in (\ref{eqn:htpy-mtd-gamma}).
    \end{proposition}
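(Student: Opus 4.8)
The key point is that \emph{both} the quasi-unit $e_{f,R}^k$ and the homotopy-method map $F_{R,R'}^1$ are defined by counting (possibly broken chains of) holomorphic disks with a \emph{moving} Lagrangian boundary condition, and the statement is the standard compatibility of two such continuation-type constructions. So the plan is to reduce to commuting two continuation maps up to chain homotopy, via a single master moduli problem over a square of parameters. Concretely, write $C_R^k\colon HF^\bullet(L_{f,R}^k,L_{f,R}^k)\to HF^\bullet(L_{f,R}^k,L_{f,R}^{k+1})$ for the continuation map cut out by the moving boundary condition $L_{f,R}^{\kappa(t)}=\widetilde{L}_{f,R}(-\kappa(t)H_{f,R})$ with $\kappa$ monotone from $k$ to $k+1$; by its very definition, $e_{f,R}^k$ is the image under $C_R^k$ of the unit $u_R$ of $HF^\bullet(L_{f,R})$. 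Since $F_{R,R'}$ is an $A_\infty$-functor (and $F_{R,R'}^1$ is a vector-space isomorphism by Proposition~\ref{prop:htpy-mtd-q-iso}) it sends the unit to the unit up to coboundary, so $[F_{R,R'}^1(u_R)]=[u_{R'}]$. Hence it suffices to show that $F_{R,R'}^1\circ C_R^k$ and $C_{R'}^k\circ F_{R,R'}^1$ are chain homotopic as maps $CF^\bullet(L_{f,R}^k,L_{f,R}^k)\to CF^\bullet(L_{f,R'}^k,L_{f,R'}^{k+1})$: evaluating both on $u_R$ then yields $[F_{R,R'}^1(e_{f,R}^k)]=[C_{R'}^k(u_{R'})]=[e_{f,R'}^k]$.

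\textbf{Key steps.} First I would build a two-parameter moduli problem whose domain is a disk with a single (output) strip-like end and whose boundary condition runs over the square $(s,t)\in[0,1]^2$: the $t$-direction interpolates the wrapping parameter from $k$ to $k+1$ as in the definition of the quasi-unit, while the $s$-direction interpolates the stretching parameter $R_s$ from $R$ to $R'$ as in the homotopy method of Section~10e of \cite{Sei1}, so that, exactly as in (\ref{eqn:htpy-mtd-gamma}), the $s$-direction is realised through chains of disks with weakly increasing $R_s$. Counting rigid elements of this space defines a chain homotopy $K$; the codimension-one boundary strata — where either the $s$-chain breaks or the disk degenerates along an edge of the square — recover precisely $F_{R,R'}^1\circ C_R^k$ (degenerating at the $R'$-edge) and $C_{R'}^k\circ F_{R,R'}^1$ (degenerating at the $R$-edge), together with the expected correction terms $F_{R,R'}^d(\,\cdots,e_{f,R}^k,\cdots\,)$ forced by the $A_\infty$-functor equations; these are absorbed into $K$ in the usual way using that $e_{f,R}^k$ is quasi-central. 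Assembling the boundary identity gives $F_{R,R'}^1\circ C_R^k=C_{R'}^k\circ F_{R,R'}^1\pm (\partial K+K\partial)$, which is what is needed.

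\textbf{Main obstacle.} The hard part is the analytic bookkeeping for these two-parameter (and broken) moduli spaces: transversality and gluing at the corners of the square, and a complete enumeration of the boundary strata. The geometric input required to keep these under control is, however, already in place. Compactness follows from Theorem~\ref{thm:Gromov-compactness}; and Theorem~\ref{thm:disk-bound-arg-strip} together with Proposition~\ref{prop:htpy-mtd-disk-counting} restrict the disks that can contribute to the cylindrical product model and the interior (Morse-type) model, exactly as in the proof of Proposition~\ref{prop:htpy-mtd-q-iso}. In the cylindrical region the relevant disks split as products of strips on $\bC_\alpha$ and $\bC_{\alpha^\perp}$, so the boundary count reduces to the elementary one-dimensional picture, while in the interior region the configurations are independent of the stretching parameter; patching these local computations together delivers the homotopy, and hence the proposition.
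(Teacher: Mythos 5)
Your underlying mechanism — a parametrized moduli space whose two distinguished boundary strata compute $F_{R,R'}^1(e_{f,R}^k)$ and $e_{f,R'}^k$, with all remaining strata being Floer coboundaries — is exactly the mechanism of the paper's proof, but you reach it by a more roundabout route. The paper does not factor the quasi-unit through a continuation map $C_R^k$ applied to a unit: it treats $e_{f,R}^k$ directly as the count of disks with a single output strip-like end and moving boundary condition, and then varies a \emph{single} parameter $R_t\in[R,R']$, namely the stretching parameter at which that output-only disk is placed, followed by the homotopy-method chain from $R_t$ to $R'$. The endpoint $R_t=R$ gives $F_{R,R'}^1(e_{f,R}^k)$, the endpoint $R_t=R'$ gives $e_{f,R'}^k$, and the only other boundary components are configurations where a Floer strip breaks off at $R'$, i.e.\ exact terms. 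Because the quasi-unit has no inputs, this one-dimensional cobordism between two zero-dimensional counts is all that is needed; your two-parameter square and the chain homotopy $K$ are comparing two \emph{maps} rather than two \emph{elements}, which is heavier than necessary. One caveat about your reduction step: the domain $CF^\bullet(L_{f,R}^k,L_{f,R}^k)$ of your $C_R^k$, and the geometric unit $u_R$ living in it, are not actually set up in this paper — the self-morphism spaces in $\mathcal{O}_{R,\le n}$ are the formal $\mathbb{K}\cdot\mathrm{id}$, precisely to avoid defining self-Floer theory for these non-exact cylindrical Lagrangians. So as written your first step appeals to an object the framework does not provide; either you would need to construct that complex (Morse--Bott or a further Hamiltonian perturbation), or, more economically, drop the factorization and run your cobordism argument directly on the output-only disks, at which point you recover the paper's one-parameter proof.
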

    \begin{proof}
        This can be proved by considering the moduli space of configurations that start with the quasi-unit from $R_t$ and then continue by homotopy method from it to $R'$, as $R_t$ varies from $R$ to $R'$. 
        This moduli space has two main boundaries, one for $R_t = R$ corresponding to $F_{R,R'}^1(e_{f_0, R}^{k_0})$ and the other for $R_t = R'$ corresponding to $e_{f_1, R}^{k_1}$.
        The only other boundaries of this 1-dimensional moduli space consist of configurations where a Floer strip breaks off at $R'$ at the end of the homotopy method, which does not affect the equality at cohomology level.
    \end{proof}
    
    \vspace{0.3em}
    \begin{figure}[ht]
        \centering
        \includegraphics[width=0.8\textwidth]{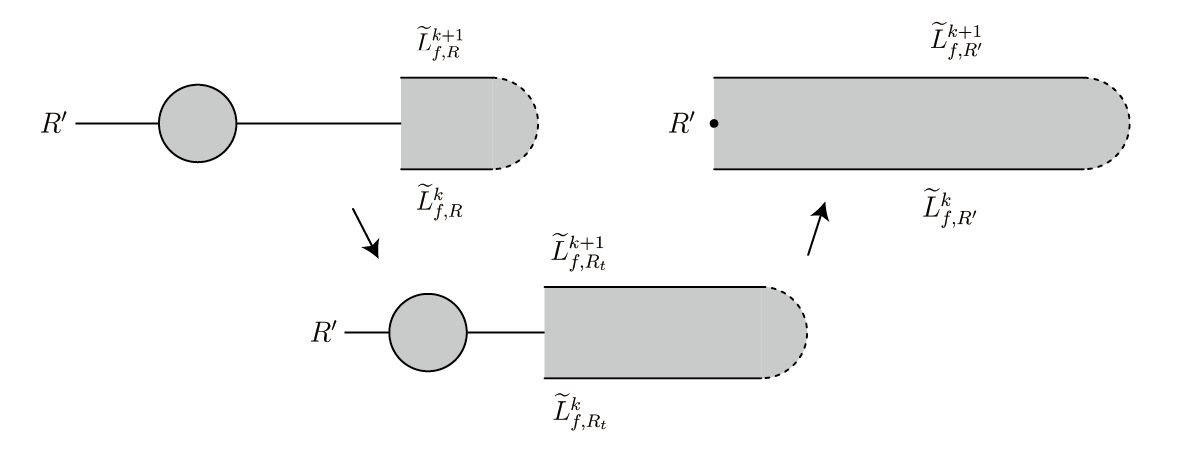}
        \caption{Moduli space in the proof of Proposition~\ref{prop:quasi-unit-htpy-mtd}. The dashed lines denote the moving boundary.}
        \label{fig:quasi-unit-htpy-method}
    \end{figure}
    \vspace{0.3em}

    Now, for a fixed finite collection $B$ of Laurent polynomials $f$, we set $Z$ as the collection of the images of $e_{f,R_k}^k$ inside $\mathcal{O}$ along the quasi-isomorphisms in Corollary~\ref{cor:category-O-generators}, and define the wrapped Fukaya category by the localization
    $$\mathcal{F}_{B} := Z^{-1}\mathcal{O}.$$

    This category has a more explicit description given by a homotopy colimit of $\mathcal{O}$-modules, as detailed in Section 3 of \cite{AA1}. The only additional geometrical ingredient needed is the continuation map
    $$G=G_{f_0, f_1, R}^{k_0, k_1}: CF^\bullet(L_{f_0,R}^{k_0 }, L_{f_1,R}^{k_1}) \to CF^\bullet(L_{f_0,R}^{k_0 +1}, L_{f_1,R}^{k_1+1}) $$
    obtained by counting strips that solve a Cauchy-Riemann equation with a moving boundary condition. The gluing argument shows that the map $G$ fits into the following diagram that commutes on the cohomology level.

    \begin{center}
        \begin{tikzcd}[row sep=huge,column sep=huge]
            CF^\bullet(L_{f_0,R}^{k_0 +1}, L_{f_1,R}^{k_1}) \arrow{r}{\mu^2 (e_{f_1,R}^{k_1}, \cdot)}\arrow{d}{\mu^2 (\cdot, e_{{f_0},R}^{k_0})} & CF^\bullet(L_{f_0,R}^{k_0 +1}, L_{f_1,R}^{k_1+1})\arrow{d}{\mu^2 (\cdot, e_{{f_0},R}^{k_0})} \\
            CF^\bullet(L_{f_0,R}^{k_0 }, L_{f_1,R}^{k_1}) \arrow[swap]{r}{\mu^2 (e_{f_1,R}^{k_1}, \cdot)}\arrow[swap]{ur}{G_{f_0, f_1, R}^{k_0, k_1}} & CF^\bullet(L_{f_0,R}^{k_0 }, L_{f_1,R}^{k_1+1})
        \end{tikzcd}
    \end{center}

    Taking a homotopy colimit of these maps gives an analogous quasi-units and continuation maps in the $\mathcal{O}$The diagram above commutes after taking the limit since $H^\bullet\mathcal{O}(L_{f_0}^{k_0}, L_{f_1}^{k_1})$ is isomorphic to $HF^\bullet (L_{f_0, R}^{k_0}, L_{f_1, R}^{k_1})$ for suffienctly large $R>0$ by Corollary~\ref{cor:category-O-generators}.

    \begin{proposition}\label{prop:HW-is-limit-HfR}
        For each pair of objects $L_{f_0}^{k_0}$, $L_{f_1}$, there is a natural isomorphism
        \begin{equation}
            \colim{k_1 \to \infty} HF^\bullet (L_{f_0, R}^{k_0}, L_{f_1, R}^{k_1}) \to \colim{k_1 \to \infty} H^\bullet \mathcal{O} (L_{f_0}^{k_0}, L_{f_1}^{k_1}) \to H^\bullet \mathcal{F}_{B}(L_{f_0}, L_{f_1})
        \end{equation}
        for sufficiently large $R>0$.
    \end{proposition}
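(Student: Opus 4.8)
The plan is to split the claimed composite into its two arrows and establish each separately. For the first arrow, $\colim{k_1} HF^\bullet(L_{f_0,R}^{k_0},L_{f_1,R}^{k_1}) \to \colim{k_1} H^\bullet\mathcal{O}(L_{f_0}^{k_0},L_{f_1}^{k_1})$, I would identify the two directed systems level by level. Corollary~\ref{cor:category-O-generators}(2) provides, for each $k_1$ and any $R\ge R_{k_1}$, a canonical isomorphism $HF^\bullet(L_{f_0,R}^{k_0},L_{f_1,R}^{k_1})\cong H^\bullet\mathcal{O}(L_{f_0}^{k_0},L_{f_1}^{k_1})$; for a fixed $R$ and large $k_1$ one first passes to the larger parameter $R_{k_1}$ using the chain isomorphisms of Proposition~\ref{prop:htpy-mtd-q-iso}. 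What then needs checking is that these identifications intertwine the transition maps of the two systems: on the $CF$-side the transitions are the continuation maps $\mu^2(e_{f_1,R}^{k_1},-)$, and Proposition~\ref{prop:quasi-unit-htpy-mtd} says precisely that quasi-units are carried to quasi-units by the homotopy-method functors, so the comparison squares commute at the cohomology level. Since homology commutes with sequential colimits, $\colim{k_1}$ can be pulled outside $H^\bullet$ throughout, and the first arrow follows.

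For the second arrow, $\colim{k_1} H^\bullet\mathcal{O}(L_{f_0}^{k_0},L_{f_1}^{k_1}) \to H^\bullet\mathcal{F}_B(L_{f_0},L_{f_1})$, I would invoke the standard presentation of a localization at a directed system of morphisms as a mapping telescope, following \cite{AA1} Section 3 for the wrapped Fukaya category together with the localization formalism of \cite{ASx}, \cite{GPS2}, \cite{LO1}. For each $f\in B$ the quasi-units assemble into a directed system $L_f^{0}\to L_f^{1}\to\cdots$ in $\mathcal{O}$, and inverting $Z$ turns each $e_f^k$ into an isomorphism, identifying all the $L_{f_1}^{k_1}$ with a single object $L_{f_1}$ in $\mathcal{F}_B=Z^{-1}\mathcal{O}$ (and similarly $L_{f_0}^{k_0}$ with $L_{f_0}$). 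The key input is that the localized module $\mathcal{F}_B(L_{f_0}^{k_0},-)$, restricted to the subdiagram $\{L_{f_1}^{k_1}\}_{k_1}$, is quasi-isomorphic to the mapping telescope of $\mathcal{O}(L_{f_0}^{k_0},L_{f_1}^{0})\xrightarrow{\mu^2(e_{f_1}^{0},-)}\mathcal{O}(L_{f_0}^{k_0},L_{f_1}^{1})\to\cdots$, whose cohomology is exactly $\colim{k_1} H^\bullet\mathcal{O}(L_{f_0}^{k_0},L_{f_1}^{k_1})$; composing with the isomorphism $L_{f_0}\cong L_{f_0}^{k_0}$ in $\mathcal{F}_B$ then gives the arrow, and naturality in both objects is inherited from the module-level nature of all the maps. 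To see that a fixed $k_0$ suffices on the left, I would note that $CF^\bullet(L_{f_0,R}^{k_0},L_{f_1,R}^{k_1})$ is invariant under a simultaneous shift of $k_0$ and $k_1$, so $\colim{k_1} HF^\bullet(L_{f_0,R}^{k_0},L_{f_1,R}^{k_1})$ is independent of $k_0$ and no wrapping of $L_{f_0}$ is required; this is the familiar ``wrap only one of the two Lagrangians'' phenomenon.

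I expect the main obstacle to be the telescope presentation of the localization, and in particular verifying that inverting the \emph{entire} set $Z$ (over all $f\in B$ and all levels $k$ at once) still yields the one-sided telescope in the $k_1$-variable, with no extra morphisms coming from zigzags through other objects or through $L_{f_0}^{k}$ with $k>k_0$. This is where the structure of $\mathcal{O}$ built in Section~\ref{sect:floer-theory} enters: all functors in the defining diagram of $\mathcal{O}$ are fully faithful, each newly added $L_f^{k+1}$ is left-orthogonal to the subcategory $\mathcal{O}_{R,\le k}$, and the $e_f^k$ are cofinal among the relevant zigzags, so a cofinality argument as in \cite{AA1} reduces the localization to the telescope. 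Checking these hypotheses for our $\mathcal{O}$ — in particular that, after passing to $H^0$, the images of the $e_f^k$ admit the calculus of fractions needed to compute the localized morphism spaces by zigzags of bounded length — is the delicate point; the remaining steps are bookkeeping with Corollary~\ref{cor:category-O-generators}, Proposition~\ref{prop:htpy-mtd-q-iso}, and Proposition~\ref{prop:quasi-unit-htpy-mtd}.
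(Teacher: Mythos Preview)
Your proposal is correct and follows essentially the same approach as the paper: the paper's proof is extremely brief, citing Corollary~\ref{cor:category-O-generators} for the left arrow and Corollary~3.24 of \cite{AA1} for the right arrow, which is precisely the telescope/localization argument you spell out in detail. The extra verifications you flag (compatibility of transition maps via Proposition~\ref{prop:quasi-unit-htpy-mtd}, cofinality and calculus-of-fractions issues) are exactly what is packaged into those citations, so your expanded version is a faithful unpacking rather than a different route.
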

    \begin{proof}
        The map on the left comes from Corollary~\ref{cor:category-O-generators}. For the map on the right, see Corollary 3.24 in \cite{AA1}.
    \end{proof}

\section{Distinguished Basis of the Floer Cohomology}\label{sect:main-example}
In this section, we will consider the family of complex curves
$$D_q = \{1 -z_1 -z_2 + q^{-1}z_1 z_2\}$$
for $q \in \bC^*$ and their HyperKähler rotation $L_q$ as a model case for computing the Floer theory defined in the previous section. In particular, the main goal is to compute and find a distinguished basis of the Floer cohomology ring $HW^0 (L_q, L_q)$ defined as the following.

\begin{definition}\label{def:wrapped-Floer-coh}
    For a pair of Laurent polynomials $(f_0, f_1)$ satisfying the Assumption~\ref{ass:top-bound-for-boundary}, the \textit{wrapped Floer cohomology} between $L_{f_0}$ and $L_{f_1}$ is defined by
    \begin{equation}
        HW^\bullet(L_{f_0}, L_{f_1}) := H^\bullet \mathcal{F}_{B} (L_{f_0}, L_{f_1}).
    \end{equation}
    for any collection $B$ of Lagrangians containing both $L_0$ and $L_1$.
\end{definition}

By Proposition~\ref{prop:HW-is-limit-HfR}, the computation can be reduced to the one for Floer cohomology ring $HF^\bullet (L_{f_0, R}^{k_0}, L_{f_1, R}^{k_1})$ for large enough $R > 0$ and difference $k_1 - k_0$.

In \cite{Hic1}, the wrapped Floer cohomology for tropical Lagrangians is calculated for a wide variety of $f$, with different constructions for both Lagrangians (which are still Hamiltonian isotopic to ours) and the cohomology ring. The main part of this section would be to relate our construction to those and give a more concrete description of generators by finding a distinguished basis (Section~\ref{subsect:distinguished-basis}) and their corresponding functions on the mirror (Section~\ref{subsect:basis-functions}).

One notable thing in the family $\{L_q\}$ is that these Lagrangians are not exact for $q \in \bC^* \setminus S^1$. Namely, the image of the boundary map $H^2((\bC^*)^2, L_q) \to H^1(L_q)$ on the long exact sequence of the pair $((\bC^*)^2, L_q)$ is generated by a single element, which is the difference of two $S^1$ on two opposite (parallel) cylindrical ends, and the integral of $\lambda = p_1 d\theta_1 + p_2 d \theta_2$ along this curve is $2\pi \log|q|$. This can create a lot of technical difficulties, including the disk bubbling and the absence of the action functional, but most of them can be dealt with in our setting by using those special Lagrangians.

\subsection{Holomorphic Disks bounded by the Tropical Lagrangians}\label{subsect:top-bound-for-bdry}
In this and all the following sections, we will only consider the case when $L_f$ has genus $0$.

Recall that for each cylindrical end $Z_{\alpha,r}^+$ of the Lagrangian $L_f$, the angular 1-form is a multi-valued $1$-form $\eta_\alpha$, which is a primitive of $\Rea\Omega$ (See Definition~\ref{defn:canonical-angular-1-form}). By Lemma~\ref{def:integral-eta-alpha} the path integral of this $1$-form is determined up to $4\pi^2$, and since $\Rea\Omega$ vanishes on a special Lagrangian, the path integral of this $1$-form over $\gamma:[0,1] \to (\bC^*)^2$ depends on the homotopy class of $\gamma$ fixing its endpoints. Moreover, the integral is independent of the endpoints (and only depends on the homotopy class of $\gamma$) when $\gamma$ is nullhomotopic in $(\bC^*)^2$ since the integral is then equal to the integral of $\Rea\Omega$ over the disk bounded by $\gamma$. For the next lemma, we only need a special case of angular 1-form
$$\eta = p_1 dp_2 + \theta_2 d \theta_1$$
when $r=1$ and $\alpha = (1,0)$.

Now, let $f = \sum_{\alpha \in A} c_\alpha z^\alpha$ be a Laurent polynomial satisfying Assumption~\ref{ass:trop-ftn} (which implies the existence of its cylindrical ends).

\begin{lemma}\label{lem:unobstructed-Lf}
    If $L_f$ has genus zero and $\{\arg{c_\alpha}\}_{\alpha \in A} \cup \{2\pi\}$ is a rationally independent subset of $\bR$, then there are no holomorphic disks bounded by the Lagrangian $L_f$.
\end{lemma}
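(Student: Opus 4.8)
The plan is to argue by contradiction: suppose $u\colon(D,\partial D)\to((\bC^*)^2,L_f)$ is a non-constant holomorphic disk, and set $\gamma=u|_{\partial D}$, a loop on $L_f$. Since $\pi_2((\bC^*)^2)=0$ and $D$ is contractible, $\gamma$ is nullhomotopic in $(\bC^*)^2$ and lifts to a closed loop $\widetilde\gamma\subseteq(\widetilde{\bC^*})^2$. Because $L_f$ has genus zero, $H_1(L_f;\bZ)$ is generated by the loops $[S_{\alpha_j}^1]$ around the cylindrical ends $Z_{\alpha_1,r_1}^+,\dots,Z_{\alpha_k,r_k}^+$, subject to the single relation $\sum_j[S_{\alpha_j}^1]=0$; write $[\gamma]=\sum_j n_j[S_{\alpha_j}^1]$. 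Under $\pi_1(L_f)\to\pi_1((\bC^*)^2)=\bZ^2$ one has $[S_{\alpha_j}^1]\mapsto\alpha_j$, and since $\gamma$ is nullhomotopic, $\sum_j n_j\alpha_j=0$ in $\bZ^2$ (consistent with the relation, as $\sum_j\alpha_j=0$).

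The two structural inputs are: a holomorphic curve in $(\bC^*)^2$ is isotropic for $\Omega$, so $\Rea\Omega$ vanishes on $u$; and $L_f$ is special Lagrangian, so $\omega|_{L_f}=\Rea\Omega|_{L_f}=0$. Let $\eta$ be the (multivalued) angular $1$-form of Definition~\ref{defn:canonical-angular-1-form}, a primitive of $\Rea\Omega$. By Stokes,
\begin{equation*}
0=\int_D u^*\Rea\Omega=\int_\gamma\eta .
\end{equation*}
Because $\sum_j n_j\alpha_j=0$, the monodromy of $\eta$ around $\gamma$ cancels, so $\int_\gamma\eta\in\bR/4\pi^2\bZ$ is well-defined and depends only on $[\gamma]$ — this is the content of the wrapping-number formalism of Lemmas~\ref{def:wrapping-number-alpha} and \ref{def:integral-eta-alpha}. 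Evaluating on the generating classes, using Proposition~\ref{prop:trop-approx} (that $L_f$ is exponentially $C^1$-close to $Z_{\alpha_j,r_j}^+$ along each end, with harmonic graphing function $g_j$ of vanishing flux) together with the monodromy bookkeeping, one obtains
\begin{equation*}
0\equiv\int_\gamma\eta\equiv 2\pi\sum_j n_j\arg r_j\pmod{4\pi^2}.
\end{equation*}

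Since $r_j=c_{\beta_j}c_{\beta_{j+1}}^{-1}$ with $\beta_1,\dots,\beta_k$ the vertices of $\hull(A)$ listed cyclically, the sum telescopes to $2\pi\sum_j(n_j-n_{j-1})\arg c_{\beta_j}$, so the congruence becomes
\begin{equation*}
\sum_j(n_j-n_{j-1})\arg c_{\beta_j}=2\pi N\qquad\text{for some }N\in\bZ .
\end{equation*}
This is a $\bZ$-linear relation among $\{\arg c_\alpha\}_{\alpha\in A}\cup\{2\pi\}$, so the rational-independence hypothesis forces $n_j-n_{j-1}=0$ for all $j$ (and $N=0$); hence all $n_j$ agree and $[\gamma]=0$ in $H_1(L_f)$. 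Then $\gamma$ bounds a $2$-chain in $L_f$, so the symplectic area of $u$ is $\int_D u^*\omega=\int_\gamma\lambda=0$ because $\omega|_{L_f}=0$, forcing $u$ to be constant — a contradiction. The main obstacle is the period computation in the previous paragraph: making rigorous that $\int_\gamma\eta$ depends only on $[\gamma]$ and evaluating it as $2\pi\sum_j n_j\arg r_j\bmod 4\pi^2$, which needs the wrapping-number bookkeeping for the multivalued $\eta$ together with the exponential decay of Proposition~\ref{prop:trop-approx} to discard the error coming from $L_f$ not being exactly cylindrical. The remaining input — the telescoping identity and the injectivity of the discrete-derivative map $[\gamma]\mapsto(n_j-n_{j-1})_j$ on $H_1(L_f)=\bZ^k/(1,\dots,1)$ — is elementary.
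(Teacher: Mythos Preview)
Your argument follows the same route as the paper's: apply Stokes to a primitive $\eta$ of $\Rea\Omega$, evaluate the resulting period on the generating cycles $S_{\alpha_j}^1$ by pushing them out to infinity along the cylindrical ends (where $L_f$ is exponentially close to $Z_{\alpha_j,r_j}^+$), and use rational independence of the $\arg c_\alpha$ together with $2\pi$ to force $[\gamma]=0$, whence the symplectic area vanishes.

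The one concrete gap is the period formula itself, which you rightly flag as the main obstacle. You must fix a \emph{single} primitive $\eta$ for the whole computation; the paper takes $\eta=p_1\,dp_2+\theta_2\,d\theta_1$ (the case $\alpha=(1,0)$, $r=1$). Restricting this $\eta$ to the circle $S_{\alpha_j}^1$ and integrating does \emph{not} give $2\pi\arg r_j$: one finds instead a contribution of the form $2\pi\,c(\alpha_j)\arg r_j$ with a coefficient $c(\alpha_j)$ depending on the direction $\alpha_j$, plus a term coming from the multivalued $\theta_2$ (equivalently $\theta_{\alpha_j}^\perp$). The paper absorbs the latter into $4\pi^2\bQ$ by choosing base points with $\theta_2\in 2\pi\bQ$. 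Consequently your clean telescoping identity $\sum_j(n_j-n_{j-1})\arg c_{\beta_j}$ is not what the integral literally produces, and the injectivity step has to be argued with the correct coefficients. The overall strategy and conclusion are the same; only this explicit evaluation needs to be redone with a fixed $\eta$.
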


\begin{proof}
    Suppose there is a nontrivial holomorphic disk $D$ with boundary $\round D$ contained in $L_f$. Since $\Rea\Omega$ vanishes on $D$, we have $$\int_{\round D} \eta = \int_{D} \Rea\Omega = 0.$$
    On the other hand, the integral $\int_{\round D} \eta$ only depends on the homology class $[\round D]$ in $H_1(L_f)$, which is generated by loops $S_\alpha^1$ wrapping around each cylindrical end $Z_{\alpha,r}^+$ once. Note that because of the well-definedness, we only need to look at the curves that are nullhomotopic in $(\bC^*)^2$, i.e. that are in the image of the boundary map $H_2((\bC^*)^2, L_f) \to H_1 (L_f)$ for the long exact sequence of the pair $((\bC^*)^2, L_f)$. Also, by Proposition~\ref{prop:trop-approx}, each cylindrical end of $L_f$ is exponentially close to a HyperKähler rotation of the cylinder $z^{\alpha} = r$ with respect to $p_\alpha = \frac{1}{\abs{\alpha}^2}(\alpha_2 p_1 - \alpha_1 p_2) $, where $r = {c_{\beta_1} / c_{\beta_2}}$ for some distinct $\beta_1, \beta_2 \in A$. Therefore, by pushing $S_\alpha^1$ to infinity along the cylinder, we can conclude that the integral of $\eta$ along $S_\alpha^1$ is equal to the integral along the circle where $p_\alpha, p_\alpha^\perp, \theta_\alpha$ are constants and $\theta_\alpha^\perp$ increases or decreases by $2\pi$. On this circle, $\eta$ is equal to
    $$\left(\alpha_1 \alpha_2 \arg{r} + \frac{\alpha_1^2}{\abs{\alpha}^2}\theta_\alpha^\perp \right) d\theta_\alpha^\perp.$$
    Now, for any linear combination $\sum_{\alpha} n_\alpha [S_\alpha^1] \in H_1(L_f)$ that is nullhomotopic in $(\bC^*)^2$, if we fix points $P \in L_f$ and $P_\alpha \in S_\alpha^1 \subseteq Z_{\alpha,r_\alpha}^+$ whose $\theta_2$-coordinates are rational multiples of $2\pi$, and make a loop with homology class $\sum_{\alpha} n_\alpha [S_\alpha^1]$ by connecting each $P_\alpha$ to $P$, then the integral along the loop is equal to $2 \pi \sum_{\alpha} n_\alpha \alpha_1 \alpha_2 \arg {r_\alpha}$ plus some rational multiple of $4\pi^2$. Therefore, if we assume the rational independence of $\arg c_\alpha$, the integral of $\eta$ is zero only when its homology class is zero. Note that there are one-dimensional possibilities of $n_\alpha $'s that make the sum zero because the sum of $S_\alpha^1$ is zero under suitable orientation. This proves the non-existence of holomorphic disk $D$ because the symplectic area of $D$ should be $\int_{D} \omega = \int_{\round D}\lambda = 0$.
    \end{proof}

    \begin{remark}
        The rational independence assumption is crucial but can be alleviated for the simple cases we would like to consider. First, if $L_f$ has three cylindrical ends (including the case when $f=1+z_1 +z_2$, i.e. $L_f$ is a Lagrangian pair of pants), there are no nontrivial curves that are nullhomotopic in $(\bC^*)^2$, and the unobstructedness immediately follows. Also, for the Lagrangian $L_q$, the only nontrivial curve we need to consider is a sum of circles in two opposite cylindrical ends, and the integral of $\eta$ along this curve is $2\pi \arg q$, which implies that there are no holomorphic disks bounded by $L_q$ when $\arg q \notin 2\pi \bQ$. (In fact, we will show that $L_q$ does not bound a disk when $q \notin \bR$.)
    \end{remark}

    Even though this lemma proves that $L_f$ is unobstructed, it does not imply that the Lagrangian $\widetilde{L}_{f, R}$ does not bound any holomorphic disks. However, for the Lagrangian $L_q$, we have unobstructedness for these Lagrangians as well, as well as a partial control for the boundary of a holomorphic disk with one 'strip-like end' at one of the cylindrical ends. These disks normally exist and are counted in the wrapped Floer cohomology ring.

    \vspace{0.3em}
    \begin{figure}[ht]
        \centering
        \includegraphics[width=0.8\textwidth]{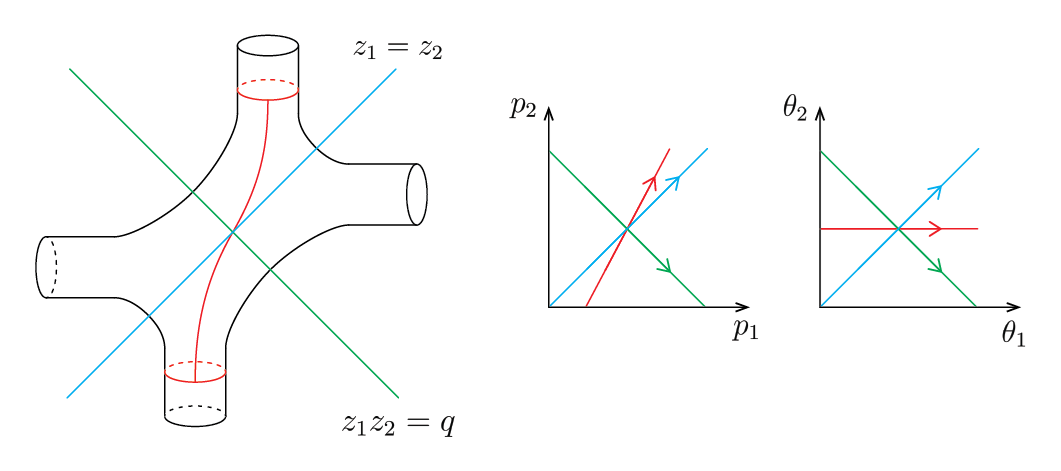}
        \caption{The left picture shows the boundary curve of the disk $D_0$ as well as the hypersurfaces $\Sigma_1$ and $\Sigma_2$ used in Lemma~\ref{lem:ass-top-bound-for-boundary}. The right picture shows their orientation.}
        \label{fig:Lq-ass-top-bdry}
    \end{figure}
    \vspace{0.3em}

    First, we note that the relative homology $H_2((\bC^*)^2, L_q)$ is 2-dimensional, generated by a torus $T^2 \subseteq (\bC^*)^2$ and a disk $(D_0, \round D_0)$ depicted in the Figure~\ref{fig:Lq-ass-top-bdry}. Then, we need two holomorphic curves $\Sigma_1$ and $\Sigma_2$ in $(\bC^*)^2$ satisfying the following.
    \begin{itemize}
        \item $\Sigma_1 \cap L_q = \Sigma_2 \cap L_q = \emptyset.$
        \item $(D_0, \round D_0)$ (with any orientation) intersect positively with one of $\Sigma_1$ and $\Sigma_2$, and negatively with the other.
    \end{itemize}
    The first condition implies that the intersection numbers between $\Sigma_1, \Sigma_2$ with $[D, \round D] \in H^2((\bC^*)^2, L_q )$ are well-defined. In addition, the second condition implies that the $D_0$-component of our holomorphic disks is bounded because of the following lemma.
    \begin{lemma}[\cite{GH1}, Section 4.3]\label{lem:complex-intersection-is-positive}
        For any two complex subvarieties $V, W$ of complementary dimension meeting at point $p$, the intersection multiplicity at $p$ is always positive.
    \end{lemma}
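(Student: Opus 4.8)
The plan is to reduce the statement to the transverse, smooth case and then invoke the elementary fact that two complex subspaces of complementary dimension in $\bC^n$ meeting only at the origin induce, via their complex orientations, exactly the ambient complex orientation. First I would recall that a complex analytic subvariety $V$ of a complex manifold carries a canonical orientation: its singular locus has real codimension $\ge 2$ in $V$, and on the smooth part the complex structure on tangent spaces supplies an orientation, so $V$ determines a fundamental class. For subvarieties $V,W$ of complementary dimension with $p$ an isolated point of $V\cap W$, the local intersection multiplicity $\mathrm{mult}_p(V\cdot W)$ can be described (i) as the local intersection number of these fundamental classes computed in a small ball $B\ni p$ with $V\cap W\cap B=\{p\}$; (ii) as the signed count $\sum_{q\in V\cap W'\cap B}\mathrm{sign}_q$ for a generic small perturbation $W'$ of $W$ transverse to $V$ on $B$; and (iii), in the algebraic setting, as $\dim_{\bC}\mathcal{O}_{p}/(I(V)+I(W))$. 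I would take the equivalence of these descriptions as known; this is exactly the content of \cite{GH1}, Chapter~4.

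The main computational step is the transverse, smooth case. Here $T_pV$ and $T_pW$ are complex subspaces of $T_p\bC^n\cong\bC^n$ with $T_pV\cap T_pW=0$ and $\dim_{\bC}T_pV+\dim_{\bC}T_pW=n$, so $T_p\bC^n=T_pV\oplus T_pW$. Choosing complex bases $u_1,\dots,u_k$ of $T_pV$ and $v_1,\dots,v_{n-k}$ of $T_pW$, the complex orientation of $T_pV$ is represented by $(u_1,iu_1,\dots,u_k,iu_k)$, that of $T_pW$ by $(v_1,iv_1,\dots,v_{n-k},iv_{n-k})$, and their concatenation is precisely the complex orientation of $T_p\bC^n$ attached to the complex basis $(u_1,\dots,u_k,v_1,\dots,v_{n-k})$; hence $\mathrm{sign}_p=+1$. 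Interchanging the two blocks changes the orientation by $(-1)^{(2k)\cdot 2(n-k)}=+1$, so the order in which one lists $V$ and $W$ is immaterial.

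Finally I would assemble the pieces: taking $W'$ a small generic perturbation of $W$ transverse to $V$ on $B$, every point of $V\cap W'\cap B$ is a transverse intersection of complex subvarieties, so by the previous step it contributes $+1$; thus $\mathrm{mult}_p(V\cdot W)=\#\bigl(V\cap W'\cap B\bigr)\ge 0$. This count is in fact strictly positive because, in description (iii), $\mathcal{O}_p/(I(V)+I(W))$ is a nonzero (since $p\in V\cap W$) finite-dimensional (since $p$ is isolated in $V\cap W$) $\bC$-algebra, so $\mathrm{mult}_p(V\cdot W)\ge 1$. I expect the only real obstacle to be the comparison of the three descriptions of $\mathrm{mult}_p$ — in particular the agreement of the algebraic multiplicity with the topological intersection number — which is standard but not entirely trivial; once that identification is granted, positivity is a formal consequence of the orientation computation above, and no cancellation can occur.
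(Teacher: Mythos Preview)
Your argument is correct and is essentially the standard proof from \cite{GH1}; the paper itself does not supply an independent proof of this lemma but simply cites Griffiths--Harris, Section~4.3, as the source. There is nothing to compare: your reduction to the transverse smooth case via a generic perturbation, followed by the orientation computation showing each transverse complex intersection contributes $+1$, is exactly the argument given in that reference.
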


    For simplicity, we abbreviate $\widetilde{L}_{q,R}^k := \widetilde{L}_{f_q , R}(-kH_{f_q , R})$ from now on, where $f_q = 1-z_1 - z_2 + q^{-1}z_1 z_2$ is the Laurent polynomial defining $L_q$.
    \begin{lemma}\label{lem:ass-top-bound-for-boundary}
        If $q \notin \bR$, then for every $R>0$ and a positive integer $k$ the Lagrangian $\widetilde{L}_{q,R}^k$ is unobstructed and satisfies Assumption~\ref{ass:top-bound-for-boundary}.
    \end{lemma}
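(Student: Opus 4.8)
The plan is to use the two auxiliary holomorphic curves $\Sigma_1,\Sigma_2$ to bound the $D_0$-component of any holomorphic disk bounded by $\widetilde{L}_{q,R}^k$, and then run the argument of Lemma~\ref{lem:unobstructed-Lf} in this bounded setting. First I would exhibit explicit candidates for $\Sigma_1$ and $\Sigma_2$. Since the ends of $L_q$ are asymptotic to HyperK\"ahler rotations of the cylinders $\{z^{\alpha_j}=r_j\}$, and the $r_j$ have irrational argument when $q\notin\bR$, one can take $\Sigma_i$ to be HyperK\"ahler rotations of cylinders $\{z^{\alpha}=s_i\}$ (parallel to a pair of opposite ends of $L_q$) with $\arg s_i$ chosen to be a rational multiple of $2\pi$, hence disjoint from $L_q$ near infinity by Proposition~\ref{prop:trop-approx}(c), and from the compact part of $L_q$ by a generic choice of $|s_i|$. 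Because $D_0$ is a disk connecting two opposite cylindrical ends, it crosses each of the two ``latitude'' hypersurfaces once with opposite signs, so $[D_0]$ pairs to $+1$ with one of $\Sigma_1,\Sigma_2$ and $-1$ with the other; the torus generator $T^2$ pairs to $0$ with both. This sets up the two intersection pairings needed.

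Next, given a nonconstant holomorphic disk $u:(D,\partial D)\to((\bC^*)^2,\widetilde{L}_{q,R}^k)$, I would write its relative class in $H_2((\bC^*)^2,\widetilde{L}_{q,R}^k)\cong H_2((\bC^*)^2, L_q)$ (the two are identified since $\widetilde{L}_{q,R}^k$ is Hamiltonian isotopic to a small perturbation of $L_q$, and the relative homology only sees the isotopy class) as $a[D_0]+b[T^2]$. By Lemma~\ref{lem:complex-intersection-is-positive} the intersection numbers $u\cdot\Sigma_i$ are nonnegative (after perturbing so that all intersections are transverse and occur away from where $\widetilde{L}_{q,R}^k$ differs from a genuine holomorphic curve — here one uses that $\Sigma_i$ is disjoint from the Lagrangian, so intersections are interior), and these equal $[u]\cdot\Sigma_i$. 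Since $[D_0]\cdot\Sigma_1=+1$, $[D_0]\cdot\Sigma_2=-1$, and $[T^2]\cdot\Sigma_i=0$, nonnegativity of both forces $a=0$. Thus $[\partial u]=0$ in $H_1(L_q)$ and the area of $u$ is $\int_D\omega=\int_{\partial u}\lambda=0$, so $u$ is constant — this proves unobstructedness of $\widetilde{L}_{q,R}^k$.

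For Assumption~\ref{ass:top-bound-for-boundary}, I would run the same bookkeeping for a Riemann surface with boundary $(D,\partial D)\subseteq((\bC^*)^2, L_q\cup\{p_\alpha\ge R,\ |p_\alpha^\perp-\log|r||\le 1\})$ with fixed $[S_{\alpha^\perp}^1]$-coefficient. Capping off the end turns such a surface into a relative class in $H_2((\bC^*)^2,L_q)$ modulo the contributions of the end, and its homology class modulo $Z_{\alpha,r}^+$ lives in $H_1(L_q)/[S_\alpha^1]$, which for genus-zero $L_q$ with four ends is a finite-rank lattice. The pairings with $\Sigma_1,\Sigma_2$ (now interpreted as intersection numbers with the capped surface, which are again nonnegative by positivity of complex intersections, up to a fixed correction depending only on the $[S_{\alpha^\perp}^1]$-coefficient) pin the $[D_0]$-coefficient to a bounded range; together with an area bound coming from $\int\omega=\int\eta_\alpha$ being nonnegative and the fact that $\int_{[\gamma]_\alpha}\eta_\alpha$ takes values in a fixed finite set modulo $4\pi^2$ (Lemma~\ref{def:integral-eta-alpha}, using genus zero and rational independence of the $\arg r_j$), one concludes that only finitely many classes in $H_1(L_q)/[S_\alpha^1]$ can occur.

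The main obstacle I anticipate is the positivity-of-intersection step in the presence of the perturbation: $\widetilde{L}_{q,R}^k$ is \emph{not} a HyperK\"ahler rotation of a complex curve outside a compact set, so $u$ is an honest $J$-holomorphic disk only for the standard $J$, and one must ensure that all intersection points of $u$ with the $\Sigma_i$ are interior (not boundary) and counted with the correct sign. This is handled by choosing the $\Sigma_i$ to be disjoint from $\widetilde{L}_{q,R}^k$ entirely — disjoint from $L_q$ by the argument-mismatch of Proposition~\ref{prop:trop-approx}(c) and a generic modulus, and disjoint from the perturbed region because the perturbation only moves the Lagrangian within a neighborhood of $L_q$ small compared to the gap to $\Sigma_i$ — so that $[u]\cdot[\Sigma_i]$ is a genuine count of interior intersections, each positive by Lemma~\ref{lem:complex-intersection-is-positive}. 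Making ``small compared to the gap'' quantitative, uniformly in $R$ and $k$, is the one place that requires care, but it follows from the explicit exponential bounds in Proposition~\ref{prop:trop-approx} together with the fact that the Hamiltonian perturbations $-kH_{f,R}$ only translate in the $\theta$-directions by amounts $O(kR^{-1})$ on the relevant compact region.
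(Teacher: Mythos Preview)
Your choice of $\Sigma_i$ does not work. Taking $\Sigma_i$ to be the HyperK\"ahler rotation of a complex cylinder $\{z^{\alpha}=s_i\}$ produces a \emph{Lagrangian} submanifold, not a complex curve, so Lemma~\ref{lem:complex-intersection-is-positive} (positivity of intersections between complex subvarieties) is simply inapplicable and the nonnegativity of $u\cdot\Sigma_i$ is unjustified. Even if one reads your $\Sigma_i$ as the complex cylinders $\{z^{\alpha}=s_i\}$ themselves, two cylinders with the \emph{same} $\alpha$ intersect $D_0$ with the \emph{same} sign (the disk $D_0$ travels monotonically from one end to the opposite end and crosses every level $\{p_\alpha^\perp=\text{const}\}$ in the same direction), so you cannot force the $D_0$-coefficient to vanish from two nonnegativity constraints of the same sign. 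Your disjointness argument is also incomplete: a generic choice of $|s_i|$ makes two surfaces in a $4$-manifold meet in isolated points, not in the empty set.

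The paper's proof hinges on a specific, non-obvious choice: the \emph{diagonal} complex curves $\Sigma_1=\{z_1=z_2\}$ and $\Sigma_2=\{z_1 z_2=q\}$. These are genuinely holomorphic (so positivity applies), they are disjoint from $L_q$ precisely because $q\notin\bR$ (one checks this by HyperK\"ahler rotating back: $D_q$ misses $\{z_1=\bar z_2\}$ and $\{z_1\bar z_2=q\}$), and crucially they meet $D_0$ with intersection numbers $+1$ and $-1$. That sign discrepancy is what pins the $D_0$-coefficient of any holomorphic disk to zero. For Assumption~\ref{ass:top-bound-for-boundary} the paper then argues directly: the extra generator $D_\alpha$ of the relative $H_2$ has some fixed pairings $N_1,N_2$ with $\Sigma_1,\Sigma_2$, and positivity bounds the $D_0$-coefficient by $\max(|N_1|,|N_2|)$ once the $[S_{\alpha^\perp}^1]$-coefficient is fixed. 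Your detour through $\eta_\alpha$-integrals and rational independence is unnecessary here and, as written, does not yield the finiteness claimed.
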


    \begin{proof}
        For $q \notin \bR$, consider the following complex curves $\Sigma_1$ and $\Sigma_2$ in $(\bC^*)^2$.
        \begin{equation}
            \Sigma_1 = \{z_1 = z_2\}, \quad \Sigma_2 = \{z_1z_2 = q\}.
        \end{equation}
        To see that they don't intersect with $L_q$, we note that $\Sigma_1$ and $\Sigma_2$ are HyperKähler rotations of $\{z_1 = \Bar{z}_2\}$ and $\{z_1 \Bar{z}_2 = q\}$, respectively. One can easily check that these two don't intersect with $D_q$ when $q$ is not real. This implies that for $j=1,2$, there exists a well-defined intersection number between $\Sigma_j$ and a relative homology class
        \begin{align*}
            [D, \round D] &\in H_2((\bC^*)^2, L_q)\text{,\quad or}\\
            [D, \round D] &\in H_2 ((\bC^*)^2, L_q \cup \{p_\alpha \ge R, \,\abs{p_\alpha^\perp - \log\abs{r}} \le 1\})
        \end{align*}
        for any cylindrical end $Z_{\alpha,r}^+$ of $L_q$. Finiteness comes from the compactness of $(D, \round D)$. The homology group $H_2((\bC^*)^2, L_q)$ is generated by a torus and $(D_0, \round D_0 )$, and the group $H_2 ((\bC^*)^2, L_q \cup \{p_\alpha \ge R, \abs{p_\alpha^\perp - \log\abs{r}} \le 1\})$ has an additional generator since the homology group $H_1(\{p_\alpha \ge R,\, \abs{p_\alpha^\perp - \log\abs{r}} \le 1\}) \cong H_1(T^2)$ consists of a generator not in the $S^1$ direction of the cylindrical end $Z_{\alpha,r}^+$ (This is what we called $[S_{\alpha^\perp}^1]$ in Assumption~\ref{ass:top-bound-for-boundary}). The intersection number between $\Sigma_j$ and a torus is zero since it can be chosen outside the lines $p_1 = p_2$ and $p_1 + p_2 = \log |q|$. On the other hand, we can choose a representative of $(D_0, \round D_0)$ to be a cylinder connecting two circles in two opposite cylindrical ends (see Figure~\ref{fig:Lq-ass-top-bdry}). One can observe that this cylinder has intersection numbers $1$ and $-1$ with $\Sigma_1$ and $\Sigma_2$, with signs depending on the orientation of the cylinder.

        Now unobstructedness directly comes from Lemma~\ref{lem:complex-intersection-is-positive}, since any holomorphic disks should have nonnegative intersection numbers with both $\Sigma_1$ and $\Sigma_2$, hence only generated by $T^2$. This implies that the area of the holomorphic disk is $0$ because the boundary is trivial. For the second part, let $D_{\alpha}$ be a generator of $H_2 ((\bC^*)^2, L_q \cup \{p_\alpha \ge R, \abs{p_\alpha^\perp - \log\abs{r}} \le 1\})$ other than $T^2$ and $D_0$. If the intersection number between $D_{\alpha}$ and $\Sigma_j$ is $N_j$ for $j=1,2$, then the $D_0$-component cannot exceed $\max\{|N_1 |, |N_2|\}$ because of the positiveness of the intersection number. Since $D_{\alpha}$ is the only generator that has a $[S_{\alpha^\perp}^1]$-component, the result follows.
    \end{proof}


\subsection{Generators of \texorpdfstring{$HW^0$}{HW⁰}}\label{subsect:distinguished-basis}

Throughout the remaining sections, we describe a homological mirror symmetry of tropical Lagrangians $L_q$. In Section~\ref{subsect:FloerThoery-basic}, we observed that the Floer theory is defined over a non-archimedean field $\Lambda$ called the Novikov field. Also, the Lagrangian is equipped with the following unitary rank 1 (Novikov) local system.
\begin{notation}
    We equip $L_q$ with a local system which has a trivial holonomy over the cylindrical ends corresponding to $p_1 \to -\infty$ and $p_2 \to -\infty$, and some unitary holonomy $\xi$ around the circle on the cylindrical ends corresponding to $p_1 \to \infty$ and $p_2 \to \infty$.
\end{notation} 
Finally, there exists a suitably chosen brane structure determined by the signs in the equation of the curve, which will be omitted for the remainder of the section.

The construction of tropical Lagrangians and Hamiltonians in Section~\ref{subsect:Lagn-and-Ham-def} are closely related to the notion of \textit{monomially admissible Lagrangians}, which is used to describe a mirror symmetry for toric varieties. Namely, if a toric variety $\check{X}_\Sigma$ is given by the fan $\Sigma \subseteq \bR^n$, then it defines a Laurent polynomial
$$W_{\Sigma} = \sum_{\alpha \in \Sigma} c_\alpha z^\alpha : (\bC^*)^n \to \bC$$
called the Hori-Vafa superpotential for $\check{X}_{\Sigma}$ \cite{HV1}. On the symplectic side, in $(\bC^*)^n$, we have a notion of a monomial division as a taming condition for this Laurent polynomial, as well as the notion for the monomial admissible Lagrangians as follows.

\begin{definition}[\cite{Han1}]
    A \textit{monomial division} $\Delta_\Sigma$ for $W_{\Sigma} = \sum_{\alpha \in \Sigma} c_\alpha z^\alpha$ is an assignment of a closed set $C_\alpha \subseteq \bR^n$ to each monomial in $W_\Sigma$ such that the following conditions hold.
    \begin{itemize}
        \item $\{C_\alpha\}_{\alpha \in \Sigma}$ cover the complement of a compact subset of $\bR^n$.
        \item There exist constants $k_\alpha \in \bR_{>0}$ such that for all $z \in (\bC^*)^n$ outside a compact subset, if the maximum
        $$\max_{\alpha \in \Sigma} |c_\alpha z^\alpha |^{k_\alpha}$$
        is achieved by $|c_\alpha z^\alpha |^{k_\alpha}$, then $\Log(z) \in C_\alpha$.
        \item $C_\alpha$ is a subset of the open star of the ray $\alpha$ in the fan $\Sigma$.
    \end{itemize}
    A Lagrangian $L \subseteq X$ is \textit{$\Delta_\Sigma$-monomially admissible} if $\arg (c_\alpha z^\alpha )$ is zero on $L \cap \Log^{-1}(C_\alpha)$ outside a compact set.
\end{definition}

One can construct an $A_\infty$-category $\Fuk_{\Delta_\Sigma}(X, W_\Sigma)$ whose objects are $\Delta_\Sigma$-admissible Lagrangians, and there exists a homological mirror statement described as follows.

\begin{theorem}[\cite{Han1}]\label{thm:mono-adm-mirror-symmetry}
    Let $\check{X}_{\Sigma}$ be a smooth complete toric variety with Hori-Vafa superpotential $W_\Sigma$. Also, let $\text{Tw}^\pi \mathcal{P}_{\Delta_\Sigma} (X, W_\Sigma)$ be a category of twisted complexes generated by the full subcategory of $\check{X}_{\Sigma}$ consisting of tropical Lagrangian sections. Then, the $A_\infty$-categories
    \begin{equation}\label{eqn:mono-adm-mirror-symmetry}
        \text{Tw}^\pi \mathcal{P}_{\Delta_\Sigma} (X, W_\Sigma) \simeq D^b \Coh(\check{X}_{\Sigma})
    \end{equation}
    are quasi-equivalent.
\end{theorem}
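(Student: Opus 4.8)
The statement is established in \cite{Han1}; the strategy I would follow, and which underlies that reference, is to prove a quasi-equivalence on generating subcategories and then pass to twisted complexes and split-closures. First I would reduce to the full $A_\infty$-subcategory of $\mathcal{P}_{\Delta_\Sigma}(X, W_\Sigma)$ spanned by tropical Lagrangian sections. On the $B$-side, line bundles $\mathcal{O}(D)$ split-generate $D^b\Coh(\check{X}_\Sigma)$ for a smooth complete toric variety --- a toric resolution of the diagonal built out of line bundles, in the spirit of Beilinson, does this --- so it suffices to match the Lagrangian sections of the Hori--Vafa superpotential with the corresponding line bundles. Dually, one must show that the tropical Lagrangian sections split-generate $\mathcal{P}_{\Delta_\Sigma}(X, W_\Sigma)$; this is where the monomial division does real work, since it lets one localize the generation question to a neighborhood of each maximal cone, where $W_\Sigma$ restricts to a product of coordinates and the local Fukaya--Seidel category is understood.

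The core computation is the combinatorial dictionary. A line bundle $\mathcal{O}(D)$ corresponds to a $\Sigma$-piecewise-linear function $\varphi_D$ on $\bR^n$, and its mirror is the $\Delta_\Sigma$-monomially admissible section $L_D$ of $\Log$ whose asymptotic slope in the open star of each ray $\alpha$ is prescribed by $\varphi_D$. To compute $\Hom$ spaces one perturbs by a monomially admissible Hamiltonian so that $L_{D_1}$ and $L_{D_2}$ become transverse; the intersection points are then indexed by lattice points $m \in \bZ^n$, and the cohomological degree at $m$ records which cones of $\Sigma$ the point $m$ ``sees'' relative to $\varphi_{D_2}-\varphi_{D_1}$. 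For a good choice of perturbation data the Floer differential vanishes, and the resulting bigraded vector space is exactly the combinatorial model for $\bigoplus_i H^i(\check{X}_\Sigma, \mathcal{O}(D_2-D_1))$ (local cohomology of the graded module over the Cox ring, equivalently the Ishida complex). This pins down the cohomology of the $\Hom$ spaces.

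The last step, and the main obstacle, is to upgrade this to an $A_\infty$-equivalence. The higher products $\mu^d$ for $d\ge 3$ count rigid holomorphic polygons with boundary on the perturbed sections; for the standard complex structure on $(\bC^*)^n$ these polygons are governed by tropical/Morse trees on $\bR^n$, and one must show they assemble into precisely the product coming from \v{C}ech cohomology on the toric side --- that is, that the $A_\infty$-endomorphism algebra of the generating collection is formal and matches the $B$-side graded algebra. Equivalently, one may argue by descent: over the affine chart $U_\sigma \cong \bC^n$ of a maximal cone $\sigma$ the statement reduces to the known equivalence $\mathcal{P}(\bC^n, z_1\cdots z_n) \simeq D^b\Coh(\bC^n)$, and one glues these local equivalences using restriction functors for the monomially admissible Fukaya category (compatible with the monomial division) against the Zariski descent of $\Coh$. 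Controlling this gluing --- equivalently, the formality of the relevant $A_\infty$-algebra together with split-generation --- is much harder than the cohomology-level bookkeeping, which is essentially the combinatorics of toric line-bundle cohomology, and is precisely the content of \cite{Han1}. Passing to $\text{Tw}^\pi$ on both sides then yields the stated quasi-equivalence.
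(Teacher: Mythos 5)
The paper does not prove this statement: it is quoted directly from \cite{Han1} as an external input (and the paper even adds Assumption~\ref{ass:mono-adm-non-exact} to extend it beyond its original scope), so there is no internal proof to compare your argument against. Your sketch is a fair outline of the strategy of the cited reference --- match tropical Lagrangian sections with line bundles via $\Sigma$-piecewise-linear support functions, verify that the Floer cohomology of perturbed sections reproduces toric line-bundle cohomology lattice-point by lattice-point, establish split-generation on both sides, and then identify the higher $A_\infty$-products --- but as written it is a proof outline rather than a proof. The two genuinely hard steps you name (split-generation of the monomially admissible category by sections, and the formality/matching of the $\mu^d$ with the \v{C}ech-level products on the $B$-side) are exactly the content of \cite{Han1} and are deferred, not carried out. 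That is acceptable for a result the paper itself treats as a black box, but you should be explicit that you are summarizing the reference rather than supplying an independent argument; in particular, the descent-over-affine-charts route you offer as an alternative is not obviously available for the monomially admissible category without constructing the relevant restriction functors, which is itself nontrivial.
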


The analysis of tropical cylindrical ends in Section 2 implies that the Lagrangians $\widetilde{L}_{q, R}^k$ are monomially admissible with respect to a monomial division for a fan $\Sigma$ whose rays are in the same direction as the cylindrical ends of $L_q$. Moreover, these Lagrangians are Hamiltonian isotopic to the tropical Lagrangians $L(\phi)$ constructed in \cite{Hic1}, where $\phi$ is a tropical polynomial, namely the tropicalization of $f_q$, and the Lagrangian is constructed as a surgery of two Lagrangian sections over $\Log: (\bC^*)^n \to \bR^n$. This construction covers a wider variety of tropical Lagrangians as well as the following partial description of the corresponding objects on the mirror.

\begin{theorem}[\cite{Hic1}]\label{thm:L-phi-is-O-D'}
    Let $D$ be a base-point-free divisor of a toric variety $\check{X}_{\Sigma}$ with tropicalization given by the tropical skeleton $V(\phi)$. Then, the corresponding tropical Lagrangian $L(\phi)$ is homologically mirror to a structure sheaf $\mathcal{O}_{D'}$, where $D$ and $D'$ are rationally equivalent.
\end{theorem}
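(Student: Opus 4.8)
\emph{Proof strategy.} The plan is to exhibit $\mathcal{O}_{D'}$ as a two-term mapping cone of line bundles, carry that exact triangle through the monomially admissible mirror equivalence of Theorem~\ref{thm:mono-adm-mirror-symmetry}, and identify the cone on the symplectic side with the Lagrangian surgery that defines $L(\phi)$. Since $D$ is base-point-free, $\mathcal{O}_{\check{X}_\Sigma}(D)$ is globally generated, so I would begin by choosing a section $s$ whose zero divisor $D'$ lies in $|D|$ --- hence $D'\sim D$ --- and chosen so that $D'$ has the same tropicalization $V(\phi)$ as $D$. Multiplication by $s$ gives the resolution $0\to\mathcal{O}(-D)\xrightarrow{\cdot s}\mathcal{O}\to\mathcal{O}_{D'}\to 0$, so that $\mathcal{O}_{D'}\simeq\operatorname{Cone}\!\big(s\colon\mathcal{O}(-D)\to\mathcal{O}\big)$ in $D^b\Coh(\check{X}_\Sigma)$.

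On the symplectic side, Theorem~\ref{thm:mono-adm-mirror-symmetry} identifies $\mathcal{O}$ and $\mathcal{O}(-D)$ with tropical Lagrangian sections $\sigma_0$ and $\sigma_{-D}$ of $\Log\colon(\bC^*)^n\to\bR^n$, and $\Hom(\mathcal{O}(-D),\mathcal{O})\cong H^0(\mathcal{O}(D))$ is computed by the degree-zero Floer complex $CF^0(\sigma_{-D},\sigma_0)$, whose generators are the fibrewise intersection points of the two sections. After arranging both sections to be $\Delta_\Sigma$-monomially admissible, the difference of their defining potentials is a smoothing of the tropical polynomial $\phi$, whose Newton polytope is the moment polytope $P_D$; consequently the intersection locus of $\sigma_0$ and $\sigma_{-D}$ projects under $\Log$ onto a neighbourhood of $V(\phi)$, with the intersection points indexed by the cells of $P_D$ dual to the strata of $V(\phi)$. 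Under the HMS dictionary the section $s$ then corresponds to a distinguished cycle $\mathfrak{s}\in CF^0(\sigma_{-D},\sigma_0)$, a weighted sum over those intersection points.

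The heart of the argument is to recognize Hicks' Lagrangian $L(\phi)$ --- built by Lagrangian surgery of $\sigma_0$ and $\sigma_{-D}$ along their common intersection locus --- as the geometric mapping cone of $\mathfrak{s}$. Here I would use a Lagrangian cobordism argument in the style of Biran--Cornea, adapted to the monomially admissible category: one constructs a cobordism in $(\bC^*)^n\times\bC$ whose ends are $\sigma_0$, $\sigma_{-D}$ and the surgery, with surgery neck-sizes and local systems matched to the coefficients of $\mathfrak{s}$, and whose associated exact triangle is exactly the mirror of the sequence $0\to\mathcal{O}(-D)\to\mathcal{O}\to\mathcal{O}_{D'}\to 0$. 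It would follow that $L(\phi)$ is quasi-isomorphic in $\operatorname{Tw}^\pi\mathcal{P}_{\Delta_\Sigma}(X,W_\Sigma)$ to $\operatorname{Cone}(\mathfrak{s})$, hence homologically mirror to $\mathcal{O}_{D'}$ with $D'\sim D$.

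The hard part will be this surgery step: making precise that the many-point, globally glued Lagrangian surgery computes the mapping cone of the \emph{specific} morphism $\mathfrak{s}$, in the partially-wrapped rather than the compact monotone setting where this correspondence is classical. This demands (i) unobstructedness of $L(\phi)$ and of the surgery cobordism; (ii) an energy- and degree-bookkeeping that rules out stray holomorphic disks, so that the surgery parameters can be read off as the coefficients of $\mathfrak{s}$; and (iii) a verification that the intersection combinatorics of $\sigma_0$ and $\sigma_{-D}$ reproduces $V(\phi)$ --- this is where base-point-freeness of $D$ enters, ensuring that $P_D$ is a lattice polytope whose normal fan is refined by $\Sigma$ and hence that $V(\phi)$ is a smooth tropical hypersurface. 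The ambiguity that the construction only pins $D'$ down within $|D|$ is immaterial, since linearly equivalent divisors are rationally equivalent.
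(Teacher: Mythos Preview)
The paper does not give its own proof of this statement: it is quoted verbatim as a result of Hicks \cite{Hic1}, with no argument supplied here beyond a pointer to that reference and the caveat (Assumption~\ref{ass:mono-adm-non-exact}) that the monomially admissible Fukaya--Seidel category must be extended to accommodate non-exact Lagrangians. So there is nothing in the present paper to compare your proposal against.

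That said, your sketch is an accurate summary of the strategy in \cite{Hic1}. Hicks indeed builds $L(\phi)$ as a Lagrangian surgery of two tropical sections $\sigma_0$ and $\sigma_{-D}$, produces a Lagrangian cobordism with ends $\sigma_0$, $\sigma_{-D}$, $L(\phi)$, and reads off the exact triangle
\[
\sigma_{-D}\longrightarrow \sigma_0 \longrightarrow L(\phi)\xrightarrow{[1]}
\]
in the monomially admissible category; under Hanlon's equivalence this matches the resolution $0\to\mathcal{O}(-D)\xrightarrow{\cdot s}\mathcal{O}\to\mathcal{O}_{D'}\to 0$. Your identification of the delicate point is also on target: the content lies in showing that the multi-point surgery realises the cone on the \emph{specific} Floer cocycle corresponding to $s$, and Hicks handles this via the Biran--Cornea cobordism machinery together with unobstructedness of $L(\phi)$. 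The only slight imprecision in your write-up is the phrase ``chosen so that $D'$ has the same tropicalization $V(\phi)$ as $D$'': in Hicks' argument the section $s$ is dictated by the surgery data (neck sizes and local system), not chosen in advance, which is exactly why the theorem only determines $D'$ up to rational equivalence rather than pinning it down on the nose.
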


Since the Lagrangians $L(\phi)$ can be non-exact, the theorem is based on the following assumption to fit them into the framework in the monomially admissible Fukaya-Seidel category, which will also be assumed to be true in this paper.
\begin{assumption}\label{ass:mono-adm-non-exact}
    We assume that the monomially admissible Fukaya-Seidel category can be extended to include unobstructed Lagrangian submanifolds and that the appropriate analogues of Theorem~\ref{thm:mono-adm-mirror-symmetry} hold in this setting.
\end{assumption}

The relation between the tropical Lagrangians $L(\phi)$ and $\widetilde{L}_{q, R}$ are summarized as follows.

\begin{enumerate}
    \item If $\phi_q$ is the tropical polynomial of the Laurent polynomial $f_q = 1-z_1- z_2 + q^{-1}z_1 z_2$ (Definition~\ref{def:trop-poly-of-f}), then $L(\phi)$ is Hamiltonian isotopic to $\widetilde{L}_{q,R}$. This is from the fact that both Lagrangians can be expressed as a union of graphs after dividing them into six pieces, one for each cylindrical end and one for each vertex of the tropical skeleton (See Figure~\ref{fig:compare-hicks-me}). On each cylindrical end, we can express both Lagrangians as a graph over the cylinder $Z_{\alpha,r}^+$. On each vertex of the tropical skeleton, both can be viewed as a section over a configuration with two triangles in $(\theta_1, \theta_2)$-coordinate torus, bounded by the edges $\theta_1 = 0$, $\theta_2 = 0$, and $\theta_1 + \theta_2 = \frac{\pi}{2}$. For $\widetilde{L}_{q,R}$, this is from the direct computation; for example, on the region where $q^{-1}z_1 z_2$ is smaller than $1$, the Lagrangian is $C^1$-close to the pair of pants $\{1 - z_1 - z_2 = 0\}$, and each edge of the triangle corresponds to (the HyperKähler rotation of) the limit case $z_1 = 1$, $z_2 = 1$, and $z_1 + z_2 = 0$. For $L(\phi)$, this is an alternate construction of the Lagrangian; see Section 1.2 and Section 3.4 in \cite{Hic2}.

    \vspace{0.3em}
    \begin{figure}[ht]
        \centering
        \includegraphics[width=0.7\textwidth]{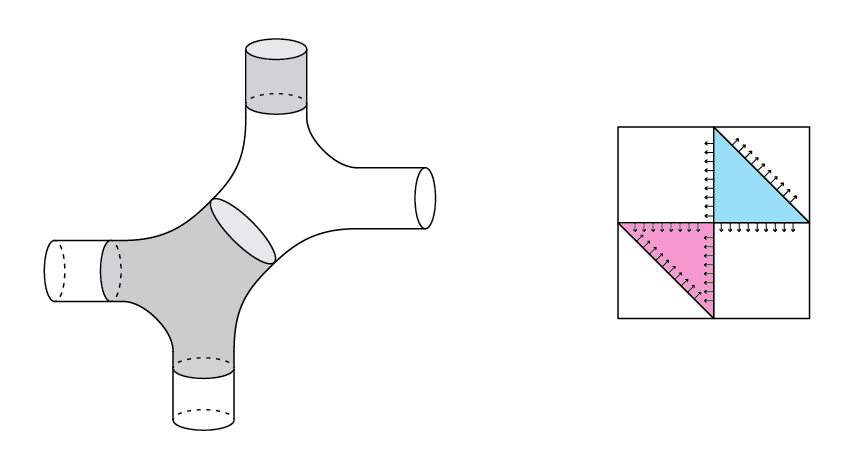}
        \caption{The left picture shows two different types of pieces. The right picture shows the projection of the interior pieces to the base space of the cotangent bundle $(\bC^*)^2 \simeq T^* T^2$. The arrows represent the direction of the ends of the Lagrangian in the fibers of the cotangent bundle.}
        \label{fig:compare-hicks-me}
    \end{figure}
    \vspace{0.3em}

    \item $L(\phi_q)$ and $\widetilde{L}_{q,R}$ are both monomially admissible with respect to the same monomial division $\Delta$, and their Hamiltonian perturbations are Hamiltonian isotopic to each other. In our setting, this is equivalent to say that $\widetilde{L}_{q, R}$ is a straight line on each $(p_\alpha, \theta_\alpha)$-plane outside of the compact region. For $L(\phi)$, see Section 5 in \cite{Hic1}. In the monomially admissible Lagrangian setting, one use a Hamiltonian $K = K(p_1, p_2)$ such that $\nabla K \cdot \alpha = 1$ on each leg, which corresponds to the term $2\pi p_\alpha$ in Definition~\ref{def:Hf-tilde}. Note that the convention is a bit different in these settings; we actually have $\nabla (2\pi p_\alpha) \cdot (\alpha_2, -\alpha_1) = 1$.

    \item Even though we are defining the wrapped Floer cohomology group $HW^\bullet (L_q, L_q)$ directly as a limit of `partially wrapped' Floer theory between $\widetilde{L}_q$ and $\widetilde{L}_q^{k}$, each partially wrapped Floer cohomology group can also be computed in the Fukaya-Seidel type approach for the monomial admissible Lagrangians, although they are now mirror to the coherent sheaves in the compactified toric variety $\check{X}_\Sigma = \mathbb{P}^1 \times \mathbb{P}^1$. Wrapping once from $\widetilde{L}_{q}^{k-1}$ to $\widetilde{L}_{q}^k$ is equivalent to the twisting $- \otimes \mathcal{L}(D)$ with the canonical toric divisor $D$ of the toric variety $\mathbb{P}^1 \times \mathbb{P}^1$, which at the basis level, is equivalent to adding functions on $D_q$ with poles of order $k$ at each of the $4$ points of $\overline{D_q} \setminus D_q$. See section 5 of \cite{Hic1} and Section 4 of \cite{Han1} for the details.
\end{enumerate}

Since we are working over the Novikov field, the Lagrangian $L_q$ is not mirror to the divisor $D_q$ in $(\bC^*)^2$, but to a divisor in $(\Lambda^*)^2$. Note also that Theorem~\ref{thm:L-phi-is-O-D'} determines the divisor up to rational equivalence, which, for example, means that any divisor defined by an equation $A + Bz_1 + Cz_2 +Dz_1 z_2 = 0$ with nonzero coefficients $A, B, C, D$ can be a possible candidate. However, we can further compute the Floer cohomology between $L_q$ and a fiber torus to show that the corresponding divisor is the divisor $D_Q$ in $(\Lambda^*)^2$ defined by
\begin{equation}\label{eqn:fq-novikov-coeff}
    f_Q := 1 - z_1 - z_2 + Q^{-1}z_1 z_2 = 0
\end{equation}
where $Q = T^{2\pi \log \abs{q}} \xi$. This idea is based on the family Floer argument in \cite{Abo2} and is explained in Section~\ref{subsect:basis-functions}, Remark~\ref{rem:fiber-torus-support}.

From these observations, we can obtain a homology-level description of our Floer cohomology groups as follows. We set $\bK:= \Lambda$ for the remainder of the sections.

\begin{theorem}\label{thm:HW0-cohomology-group}
    The following is true for every $q \in \bC \setminus \bR$ and large enough $R > 0$.
    \begin{itemize}
        \item Under the quasi-isomorphism (\ref{eqn:mono-adm-mirror-symmetry}), the Lagrangian $\widetilde{L}_{q,R}$ is mirror to a structure sheaf $\mathcal{O}_{D_Q}$. In particular, there exists a ring isomorphism
        \begin{equation}\label{eqn:HW0-iso-to-qtt}
            HW^0 (L_q, L_q) \cong \mathbb{K}[z_1^{\pm}, z_2^{\pm}] / f_Q.
        \end{equation}
        \item \label{eqn:HW0-iso-to-qtt-k} For any integer $k$, the Floer cohomology group $HF^0 (\widetilde{L}_{q,R}^k , \widetilde{L}_{q,R}^0)$ is isomorphic to the subspace of $\mathbb{K}[z_1^{\pm}, z_2^{\pm}] / f_Q$ generated by the monomials $z_1^{m_1} z_2^{m_2}$ with
        $ \abs{m_1}, \abs{m_2} \le k. $
    \end{itemize}
\end{theorem}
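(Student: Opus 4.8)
The plan is to deduce both bullets from the monomially admissible homological mirror symmetry of Theorems~\ref{thm:mono-adm-mirror-symmetry} and~\ref{thm:L-phi-is-O-D'} (valid in the present non-exact situation by Assumption~\ref{ass:mono-adm-non-exact}), the Hamiltonian isotopy $L(\phi_q)\simeq\widetilde L_{q,R}$ recorded in the three items preceding the theorem, and the unobstructedness of $\widetilde L_{q,R}^k$ from Lemma~\ref{lem:ass-top-bound-for-boundary} (which is where the hypothesis $q\notin\bR$ is used). The only genuinely Floer-theoretic input beyond these is a family Floer computation to pin down the precise Novikov parameter in the divisor.

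I would treat the second bullet first, since it essentially contains the vector-space content of the first. By Proposition~\ref{prop:HW-is-limit-HfR} it suffices to compute $HF^0(\widetilde L_{q,R}^k,\widetilde L_{q,R}^0)$ for large $R$. The first item identifies the object $\widetilde L_{q,R}$ with the tropical Lagrangian $L(\phi_q)$, compatibly with monomial admissibility by the second item, and by the third item wrapping $k$ times corresponds, under mirror symmetry for the compactification $\mathbb{P}^1\times\mathbb{P}^1$, to twisting by $\mathcal{O}(kD)$ with $D$ the toric boundary divisor. Together with Theorems~\ref{thm:mono-adm-mirror-symmetry} and~\ref{thm:L-phi-is-O-D'} this gives $HF^0(\widetilde L_{q,R}^k,\widetilde L_{q,R}^0)\cong H^0\bigl(\overline{D_Q},\mathcal{O}(kD)|_{\overline{D_Q}}\bigr)$, where $\overline{D_Q}\subseteq\mathbb{P}^1\times\mathbb{P}^1$ is the closure of $\{f_Q=0\}$. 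Since $\overline{D_Q}$ is a smooth curve of bidegree $(1,1)$ it is rational, $\overline{D_Q}\cong\mathbb{P}^1$, and $D|_{\overline{D_Q}}$ is the sum of the four points where $\overline{D_Q}$ meets the toric boundary lines; hence $\deg\bigl(kD|_{\overline{D_Q}}\bigr)=4k$ and the group has dimension $4k+1$ for $k\ge0$ and is zero for $k<0$, which already matches the claimed empty span for $k<0$ and the unit for $k=0$.

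It then remains to identify this with the span $S_k\subseteq\bK[z_1^\pm,z_2^\pm]/f_Q$ of the monomials $z_1^{m_1}z_2^{m_2}$ with $|m_1|,|m_2|\le k$. Parametrizing $\overline{D_Q}$ by $z_1=t$, $z_2=(1-t)/(1-Q^{-1}t)$ (solving $f_Q=0$), the four boundary points are $t=0,\infty,1,Q$, and $z_1^{m_1}z_2^{m_2}$ has orders $m_1,-m_1,m_2,-m_2$ there respectively, so $S_k\subseteq H^0\bigl(\mathcal{O}(k([0]+[\infty]+[1]+[Q]))\bigr)$. Conversely the $4k+1$ monomials $\{z_1^i:|i|\le k\}\cup\{z_2^j:1\le|j|\le k\}$ are linearly independent: the second family has poles only at $t=1$ and $t=Q$, and a pole-order argument at those two points kills all $z_2^{\pm j}$-coefficients, after which linear independence of distinct powers of $t$ finishes the claim. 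Hence $\dim S_k=4k+1$ and $S_k$ is the whole group, which is the second bullet. Taking the colimit over $k$ via Proposition~\ref{prop:HW-is-limit-HfR} recovers $HW^0(L_q,L_q)\cong\bK[z_1^\pm,z_2^\pm]/f_Q$ as vector spaces (here the parametrization above realizes this quotient as the ring of regular functions on the affine curve $D_Q$), and the ring structure matches because $\mu^2$ corresponds to composition in $D^b\Coh$, i.e.\ to multiplication of functions — which, once $\widetilde L_{q,R}$ is known to be mirror to $\mathcal{O}_{D_Q}$, gives the first bullet.

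The remaining point, and the one I expect to be the main obstacle, is identifying the mirror object \emph{precisely}. Theorem~\ref{thm:L-phi-is-O-D'} only yields that $\widetilde L_{q,R}$ is mirror to $\mathcal{O}_{D'}$ for some divisor $D'=\{A+Bz_1+Cz_2+Dz_1z_2=0\}$ (the coefficients nonzero because the four cylindrical ends of $L_q$ point in the four coordinate directions, so $D'$ meets each toric boundary line); after rescaling $z_1\mapsto-\tfrac{A}{B}z_1$, $z_2\mapsto-\tfrac{A}{C}z_2$ and dividing by $A$ this becomes $1-z_1-z_2+Q'^{-1}z_1z_2$ with $Q'=BC/(AD)$, which already gives the ring isomorphism of the first bullet with $f_{Q'}$ in place of $f_Q$. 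To obtain $Q'=Q=T^{2\pi\log|q|}\xi$ I would run the family Floer argument sketched in Remark~\ref{rem:fiber-torus-support}: the support of the mirror sheaf is the locus of fiber tori $T_u$ (with rank one unitary local system) for which $HF^\bullet(\widetilde L_{q,R},T_u)\neq0$, and computing these Floer groups from the intersection points together with the non-archimedean areas and holonomies of the connecting strips should exhibit this support as exactly $\{f_Q=0\}$. The delicate part here is precisely this bookkeeping of Novikov exponents and holonomy, and making the family Floer correspondence precise enough that the support computation returns $f_Q$ rather than merely a $(1,1)$-curve; everything else reduces to the elementary dimension counts above once the identification $HF^0(\widetilde L_{q,R}^k,\widetilde L_{q,R}^0)\cong H^0(\mathcal{O}(kD)|_{\overline{D_Q}})$ is granted.
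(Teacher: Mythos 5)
Your overall route is the same as the paper's: the paper gives no separate proof of this theorem beyond the three numbered observations preceding it (identification of $\widetilde{L}_{q,R}$ with Hicks' $L(\phi_q)$ up to Hamiltonian isotopy, compatibility with the monomial division, and the wrapping--twisting correspondence), the black boxes of Theorems~\ref{thm:mono-adm-mirror-symmetry} and~\ref{thm:L-phi-is-O-D'} under Assumption~\ref{ass:mono-adm-non-exact}, and the fiber-torus support computation of Remark~\ref{rem:fiber-torus-support} to pin down $Q$. Your write-up is a faithful reconstruction of that chain of reasoning, and your treatment of the normalization of the $(1,1)$-curve and of the role of the family Floer argument is exactly where the paper places the weight.

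There is, however, a genuine problem with your dimension count, and it is worth flagging because it exposes an inconsistency you should not have passed over silently. You identify $HF^0(\widetilde L_{q,R}^k,\widetilde L_{q,R}^0)$ with $H^0\bigl(\overline{D_Q},\mathcal{O}(kD)|_{\overline{D_Q}}\bigr)$, of dimension $4k+1$, and you correctly show that the literal span $S_k$ of the monomials with $\abs{m_1},\abs{m_2}\le k$ also has dimension $4k+1$ (your pole-order argument is fine). But this cannot be the Floer group: by Proposition~\ref{prop:self-intersection} the complex $CF^0(\widetilde L_{q,R}^k,\widetilde L_{q,R}^0)$ has exactly $k$ degree-zero generators on each of the four cylindrical ends, hence only $4k$ generators in total, so $\dim HF^0\le 4k$. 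The paper in fact works with $\dim HF^0=4k-1$ and the asymmetric basis $z_1^i, z_2^i$ for $-k< i\le k$ (see the discussion before Proposition~\ref{prop:distinguished-basis-of-Hw0}, where the rank-one differential is deduced from this count). The source of the discrepancy is your assumption that $k$-fold wrapping is mirror to twisting by the \emph{symmetric} divisor $kD$ with $D$ the full toric boundary: the choice $a_1<0<a_2$ of the linear part of $H_{f,R}$ makes the generator indices run over $1,\dots,k$ on two legs and $0,\dots,k-1$ on the other two, so the finite-level mirror line bundle is the asymmetric $\mathcal{O}\bigl(k([z_1{=}\infty]+[z_2{=}\infty])+(k-1)([z_1{=}0]+[z_2{=}0])\bigr)|_{\overline{D_Q}}$, of degree $4k-2$, whose space of sections is $(4k-1)$-dimensional and is spanned by $z_1^i,z_2^i$ with $-k<i\le k$. (This also means the second bullet of the theorem, as literally phrased, is in tension with the rest of the paper --- your computation actually detects this --- but a correct proof must land on the $(4k-1)$-dimensional space, since that is what the Floer complex and the colimit in Proposition~\ref{prop:HW-is-limit-HfR} can produce; the colimit over $k$ still yields all of $\bK[z_1^\pm,z_2^\pm]/f_Q$, so the first bullet is unaffected.)
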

$\hfill\square$


Now, on the basis level, we can use the constraint for the strips defining the differential, from Theorem~\ref{thm:disk-bound-arg-strip} and the following observation.

\begin{lemma}\label{lem:arg-constraints-different-cylinders}
    Let $f$ be a Laurent polynomial satisfying Assumption~\ref{ass:trop-ftn}. Let $S$ be a holomorphic disk with $d+1$ boundary points $\zeta_0, \zeta_1 ,\cdots, \zeta_d$ in counterclockwise order, and consider holomorphic maps $u:S \to (\bC^*)^2$ bounded by Lagrangians of the form $\widetilde{L}_{f, R}^{k_s}$, following the conventions in Section~\ref{subsect:FloerThoery-basic}. Then, if $u(\zeta_0)$ lie on a cylindrical end $Z_{\alpha,r}^+$ with \textit{$x_\alpha$-exponent} $i$, i.e. it is either $x_\alpha^i x_\alpha^e$ or $x_\alpha^i x_\alpha^f$, then at least one of $u(\zeta_j)$ with $j>0$ also lie on $Z_{\alpha,r}^+$, with equal or higher $x_\alpha$-exponent if there is only one such $j>0$. 
\end{lemma}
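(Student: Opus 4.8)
I would argue by a maximum principle for $p_\alpha\circ u=\log\abs{(z_1^{\alpha_2}z_2^{-\alpha_1})\circ u}$, which is harmonic on $S$ because $z_1^{\alpha_2}z_2^{-\alpha_1}$ is a single-valued holomorphic function on $(\bC^*)^2$. (Here I take $\abs\alpha=1$; for $\abs\alpha>1$ one first lifts to the $\abs\alpha^2$-fold cover of Remark~\ref{rem:alpha-bigger}.) Since $u(\zeta_0)\in Z_{\alpha,r}^+$ we have $p_\alpha(u(\zeta_0))\ge R^2+3R$, so the maximum $M$ of $p_\alpha\circ u$ over $\bar S$ is at least $R^2+3R$; the aim is to show $M$ is attained at a marked point $\zeta_j$ with $j>0$, that such a $\zeta_j$ lies on $Z_{\alpha,r}^+$, and --- when it is the only such marked point --- that its $x_\alpha$-exponent is at least that of $\zeta_0$.

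\textbf{Where the maximum lives.} On $\{p_\alpha\ge 3R\}$ each Lagrangian $\widetilde L_{f,R}^{k_s}$ is cylindrical: on the universal cover it is a smooth graph $\gamma_{k_s}$ over the $p_\alpha$-axis in $\bC_\alpha$ (the straight line $\theta_\alpha=\arg r+k_sR^{-1}a_\alpha$ outside the narrow wrapping band, a monotone graph inside it) times a graph over the $\theta_\alpha^\perp$-axis in $\bC_{\alpha^\perp}$ that is $O(R^{-3})$-close to the zero section. In particular $dp_\alpha$ restricted to $T\widetilde L_{f,R}^{k_s}$ vanishes only in the $\bC_{\alpha^\perp}$-direction. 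Hence if $p_\alpha\circ u$ had a local maximum at an interior point $z_0$ of a boundary arc lying over $\{p_\alpha\ge 3R\}$, the boundary condition $du(\partial_\tau)\in T\widetilde L_{f,R}^{k_s}$ together with $\partial_\tau(p_\alpha\circ u)(z_0)=0$ would force $du_\alpha(z_0)=0$ (using holomorphicity of $u_\alpha$), hence $\partial_n(p_\alpha\circ u)(z_0)=0$ as well --- contradicting Hopf's lemma, since $p_\alpha\circ u$ is a non-constant harmonic function with a local boundary maximum at a smooth boundary point. By the strong maximum principle it also has no interior local maximum, and on arcs lying over $\{p_\alpha< 3R\}$ its value is below $3R<M$. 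Therefore $M$ is attained only at marked points.

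\textbf{Ruling out $\zeta_0$ and landing on $Z_{\alpha,r}^+$.} At the output $\zeta_0$ the two incident arcs lie over $\gamma_{k_0}$ (least wrapped, $k_0=\min_s k_s$) and $\gamma_{k_d}$ (most wrapped), crossing transversally at $u_\alpha(\zeta_0)$; as in the proofs of Theorem~\ref{thm:Gromov-compactness} and Proposition~\ref{prop:htpy-mtd-disk-counting}, $u_\alpha$ covers near $\zeta_0$ the obtuse (reflex) angular region between the two boundary directions, and this region contains a direction of strictly increasing $p_\alpha$. Hence $p_\alpha\circ u>p_\alpha(u(\zeta_0))$ arbitrarily close to $\zeta_0$, so $M$ is attained at some $\zeta_j$ with $j>0$. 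To see that $u(\zeta_j)$ lies on $Z_{\alpha,r}^+$: at a maximising corner the same sector analysis forces a nonnegative $p_\alpha$-slope along both incident arcs, so if $u(\zeta_j)$ were on another end $Z_\beta$ it would require $\brac{\alpha,\beta}>0$; one then follows a boundary arc out of $\zeta_j$ without leaving $\{p_\alpha\ge 3R\}$ to a further corner where $p_\alpha\circ u\ge M$, and since $u$ has only finitely many marked points with $p_\alpha\ge 3R$ this terminates at a corner on $Z_{\alpha,r}^+$ itself. (In the examples of Section~\ref{sect:main-example} distinct ends satisfy $\brac{\alpha,\beta}\le 0$, so this last step is immediate.) Finally, if $\zeta_j$ is the only marked point besides $\zeta_0$ on $Z_{\alpha,r}^+$, both arcs of $\partial S$ from $\zeta_0$ to $\zeta_j$ leave the collar of $Z_{\alpha,r}^+$, so the cylindrical part of $u$ is a holomorphic disk with collar corners only at $\zeta_0$ and $\zeta_j$; tracking $p_\alpha$ along its single collar boundary arc --- using the correspondence between $x_\alpha$-exponent and $p_\alpha$ through $h(p_\alpha)$ in Notation~\ref{not:def-xjxe}, and the energy bookkeeping of Lemmas~\ref{lem:u2-strip-is-small}--\ref{lem:existence-cuts} applied to that part --- yields that the $x_\alpha$-exponent at $\zeta_j$ is at least that at $\zeta_0$.

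\textbf{Main obstacle.} The delicate step is the angular-sector analysis at the corners: checking that $u_\alpha$ covers the obtuse region at every relevant corner and that this region always contains an increasing-$p_\alpha$ direction, and closing the induction that rules out a maximising corner sitting on an end $Z_\beta$ with $\brac{\alpha,\beta}>0$. This is exactly where one must exploit the product structure of the cylindrical Lagrangians and the combinatorics of the lifts $l_{s,k}$ in $\bC_\alpha$, in the same spirit as the proofs of Theorems~\ref{thm:Gromov-compactness} and~\ref{thm:disk-bound-arg-strip}.
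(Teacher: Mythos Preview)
Your approach is essentially the paper's: apply the maximum principle to the harmonic function $p_\alpha\circ u$, observe that on the cylindrical region the Lagrangians are (tilted) rays so the maximum cannot occur at a smooth boundary point, and rule out $\zeta_0$ because the disk covers an obtuse-angled sector there. The paper compresses your Hopf-lemma step into the single sentence ``each $\widetilde L_{f,R}^{k_s}$ is a (partially tilted) ray on the cylindrical region, so the maximum of $p_\alpha\circ u$ can only be achieved on one of the intersection points $u(\zeta_j)$.''

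Two remarks. First, you are right to flag the case of a maximising corner on another end $Z_\beta^+$ with $\langle\alpha,\beta\rangle>0$: your ``follow a boundary arc and terminate'' sketch does not actually close the argument, but the paper's proof simply does not address this case either --- it tacitly assumes the maximising input sits on $Z_{\alpha,r}^+$. For the two examples the paper treats ($L_q$ and the pair of pants) every pair of ends has $\langle\alpha,\beta\rangle\le 0$, so $p_\alpha$ is bounded on the other legs and the issue is moot; for Newton polygons with more edges the statement as written would need a sharper argument.

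Second, for the exponent inequality when exactly one input lies on $Z_{\alpha,r}^+$, the paper's argument is more direct than your appeal to Lemmas~\ref{lem:u2-strip-is-small}--\ref{lem:existence-cuts}: in the stretching region the lifts bounding the image of $u_\alpha$ near the input are parallel and separated by $2j\pi+O(R^{-1})$ for some nonnegative integer $j$, and comparing with the separation $2i\pi+O(R^{-1})$ at the output gives input exponent $=i+j\ge i$ immediately.
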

\begin{proof}
    The proof is similar to \ref{thm:Gromov-compactness}, with a more careful analysis of the cylindrical generators. First, each $\widetilde{L}_{f, R}^{k_s}$ is a (partially tilted) ray on the cylindrical region, so the maximum of $p_\alpha \circ u$ can only be achieved on one of the intersection points $u(\zeta_j)$. Also, the output intersection point $u(\zeta_0)$ lie between $\widetilde{L}_{f, R}^{k_0}$ and $\widetilde{L}_{f, R}^{k_d}$ with $k_0 < k_d$, which implies that $p_\alpha$ cannot attain its local maximum on $u(\zeta_0)$ since $u$ covers at least one of the obtuse-angled region between the rays. Therefore, the maximum should be achieved on one of the input boundary points $\zeta_j$. 

    The last part comes from the fact that two Lagrangians $\widetilde{L}_{f, R}^{k_s}$ passing through the intersection points with the $x_\alpha$-exponent $j$ are separated by $2j\pi + O(R^{-1})$ in the cylindrical region. If there are only two generators on $Z_{\alpha,r}^+$ then image of $u$ contains a strip on this cylindrical end with width $2j\pi + O(R^{-1})$ for some nonnegative integer $j$, and the $x_\alpha$-exponent of the input generator is exactly $i+j$.
\end{proof}

The strips that are counted in the differential should be from degree 0 generators to degree 1 generators, and there are three such possibilities.
\begin{enumerate}
    \item Strips contained in one of the cylindrical region $\{p_\alpha \ge R^2 + 3R\}$. We can express this strip as a product of two strips in a complex plane after lifting it to the universal cover $(\widetilde{\bC^*})^2$ with the identification $(\widetilde{\bC^*})^2 \cong \bC_\alpha \times \bC_\alpha^\perp$ described in the proof of Theorem~\ref{thm:disk-bound-arg-strip}. There are two strips from $x_\alpha^i x_\alpha^e$ to $x_\alpha^i x_\alpha^f$, corresponding to two strips in $\bC_\alpha^\perp$, which are canceled out with each other and do not contribute to the differential.

    \item Strips from $x_\alpha^i x_\alpha^e$ to $x_\alpha^j x_\alpha^f$ with $i < j$, that passes through the interior region.

    \item Strips from $x_\alpha^i x_\alpha^e$ to an interior generator.
\end{enumerate}

On the chain level, all the degree 0 generators are of the form $x_\alpha^i x_\alpha^e$. If we choose the translation vector $(a_1, a_2)$ for the Hamiltonian to satisfy $a_1 < 0 < a_2$, then the index $i$ of these generators are from $0$ to $k-1$ for two legs on the negative direction (where $\alpha = (0, -1), (-1,0)$) and from $1$ to $k$ for two legs on the positive direction (where $\alpha = (0, 1), (1,0)$) 

\begin{notation}
    For the remainder of this section, we set
    \begin{align*}
        x^i x^e & := x_{(1,0)}^i x_{(1,0)}^e, \quad\quad x^{-i} x^e := x_{(-1,0)}^{i} x_{(-1,0)}^e,\\
        y^i y^e & := x_{(0,1)}^i x_{(0,1)}^e, \quad\quad y^{-i} y^e := x_{(0,-1)}^{i} x_{(0,-1)}^e.
    \end{align*}
    to denote generators in $HF^0 (\widetilde{L}_{q,R}^k , \widetilde{L}_{q,R}^0)$. The range of the index $i$ is from $-k$ to $k$.
\end{notation}

On the other hand, one can show that the subspace in Theorem~\ref{thm:HW0-cohomology-group}(\ref{eqn:HW0-iso-to-qtt-k}) has dimension $4k-1$, with generators $z_1^i$ and $z_2^i$ for $-k < i \le k$ (Note that $z_1^0 = z_2^0 = 1$). Therefore, the differential $\mu^1:CF^0 (\widetilde{L}_{q,R}^k , \widetilde{L}_{q,R}^0) \to CF^1 (\widetilde{L}_{q,R}^k , \widetilde{L}_{q,R}^0)$ has rank 1, which gives the following description on the chain level. Recall that the generators of the form $x_\alpha^0 x_\alpha^e$ are called \textit{innermost generators}.

\begin{proposition}[Distinguished Basis of $HW^0(L_q, L_q)$]\label{prop:distinguished-basis-of-Hw0}
    Let $q \in \bC \setminus \bR$. Then, there exist constants $a_i$ and $b_i$ for $i \in \bZ$ such that the Floer cohomology groups $HF^0 (\widetilde{L}_{q,R}^k , \widetilde{L}_{q,R}^0)$ has a basis consists of the elements
    \begin{itemize}        
        \item $x^ix^e + b_i y^0 y^e$, \qquad for $-k < i \le k$ with $i \neq 0$.
        \item $y^iy^e + a_i x^0 x^e$, \qquad for $-k < i \le k$ with $i \neq 0$.
        \item $x^0 x^e + y^0 y^e$.
    \end{itemize}
    In particular, these generators form a basis for the wrapped cohomology group $HW^0(L_q, L_q)$. Moreover, $x^0 x^e + y^0 y^e$ is the cohomological unit of $HW^0(L_q, L_q)$.
\end{proposition}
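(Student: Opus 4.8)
The plan is to reduce the proposition to a short piece of linear algebra about the single map $\mu^1\colon CF^0(\widetilde{L}_{q,R}^k,\widetilde{L}_{q,R}^0)\to CF^1(\widetilde{L}_{q,R}^k,\widetilde{L}_{q,R}^0)$, fed by two geometric inputs: (A) $x^0x^e+y^0y^e$ is a cocycle representing the cohomological unit, and (B) neither $x^0x^e$ nor $y^0y^e$ is individually a cocycle. Since $L_q$ is a surface there are no generators in negative degree, so $HF^0=\ker\mu^1$; by Theorem~\ref{thm:HW0-cohomology-group} this kernel has dimension $4k-1$ while $CF^0$ is spanned by the $4k$ generators $x^ix^e,y^iy^e$ with $-k<i\le k$, so $\mu^1|_{CF^0}$ has rank one. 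Hence there is a nonzero $\xi\in CF^1$ and a nonzero linear functional $\ell$ on $CF^0$ with $\mu^1(\cdot)=\ell(\cdot)\,\xi$ and $HF^0=\ker\ell$. Theorem~\ref{thm:disk-bound-arg-strip} and Lemma~\ref{lem:arg-constraints-different-cylinders} enter here to show that $\mu^1$ does not mix the four cylindrical legs, that the contributing strips are of the three types listed before the proposition, and that the purely cylindrical ones cancel in pairs; for the innermost generators this leaves only strips running into the interior region.

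Granting (A) and (B): from (B) one has $\ell(x^0x^e)\ne 0$ and $\ell(y^0y^e)\ne 0$, and from (A) one has $\ell(x^0x^e)+\ell(y^0y^e)=0$. For each $i\ne 0$ in range, $\operatorname{im}\mu^1=\mathbb{K}\,\mu^1(y^0y^e)$, so $\mu^1(x^ix^e)=-b_i\,\mu^1(y^0y^e)$ for a unique $b_i\in\mathbb{K}$, whence $x^ix^e+b_iy^0y^e\in\ker\ell$; symmetrically $y^iy^e+a_ix^0x^e\in\ker\ell$ for a unique $a_i$; and $x^0x^e+y^0y^e\in\ker\ell$ by (A). These $4k-1$ vectors are linearly independent, since for $i\ne 0$ the generator $x^ix^e$ (resp.\ $y^iy^e$) occurs with coefficient one in exactly one of them and in none of the others, after which independence of $x^0x^e+y^0y^e$ from the rest is immediate; being $4k-1$ independent vectors in the $(4k-1)$-dimensional space $HF^0$ they form a basis. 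The constants $a_i,b_i$ involve only $\mu^1$, which is independent of $R$ for large $R$ by Proposition~\ref{prop:htpy-mtd-q-iso} and compatible with the inclusions $HF^0(\widetilde{L}_{q,R}^k,\widetilde{L}_{q,R}^0)\hookrightarrow HF^0(\widetilde{L}_{q,R}^{k+1},\widetilde{L}_{q,R}^0)$; passing to the colimit via Proposition~\ref{prop:HW-is-limit-HfR} yields the stated basis of $HW^0(L_q,L_q)$, and the final assertion of the proposition is exactly (A).

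To establish (A) and (B) I would pass to the Morse description of the interior region from Lemma~\ref{lem:Riemann-translation-intersection} and the discussion around it: the interior part of the complex is the Morse complex of the harmonic function $a_1p_1+a_2p_2$ on $L_q$ with its two downward ends bent, in which $x^0x^e$ and $y^0y^e$ are the two index-$0$ critical points created by the bending and $x^0x^f,y^0y^f$ together with the two genuine interior critical points are the four index-$1$ critical points. The differential of a saddle of a surface Morse function has total coefficient zero over the minima, so $d(x^0x^e+y^0y^e)=0$; moreover the two $S^1$-factor flowlines from $x^0x^f$ to $x^0x^e$ cancel, so $\mu^1(x^0x^e)$ and $\mu^1(y^0y^e)$ are in fact supported on the two genuine interior generators only. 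Since $L_q$ is connected, $H^0$ is one-dimensional over the Novikov field, and (with the chosen brane data and the rescaling of Notation~\ref{not:action-rescale}) its generator is represented by the sum $x^0x^e+y^0y^e$ of all minima; this class is carried to the unit of each $HF^0(\widetilde{L}_{q,R}^k,\widetilde{L}_{q,R}^0)$ by the quasi-unit/wrapping maps, using Remark~\ref{rem:innermost-gen-morse}, Proposition~\ref{prop:quasi-unit-htpy-mtd} and the discussion preceding Theorem~\ref{thm:HW0-cohomology-group}, which is (A). For (B): in that Morse complex $\dim C^0=2$ and $\dim H^0=1$, so $\ker(d\colon C^0\to C^1)$ is one-dimensional and cannot contain both of the distinct generators $x^0x^e,y^0y^e$; since $d(x^0x^e)=-d(y^0y^e)$ by (A) it follows that $d(x^0x^e)\ne 0$, and because the relevant strips lie entirely in the interior region the same computation applies for every $k$, giving $\mu^1(x^0x^e)\ne 0$.

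The step I expect to be the main obstacle is (A): making precise the identification of the interior Floer complex with the bent-end Morse complex and checking over the Novikov field, with the rank-one local system and the chosen Pin and grading conventions, that the bent-end minima contribute to the Morse differentials as in the compact-surface model, so that $x^0x^e+y^0y^e$ (with a plus sign and no extra scalar) genuinely represents the cohomological unit and the wrapping continuation maps transport it as claimed. Once (A) is secure, (B) and everything downstream are the soft arguments above, resting on the rank-one statement of Theorem~\ref{thm:HW0-cohomology-group} and the argument constraints of Theorem~\ref{thm:disk-bound-arg-strip}.
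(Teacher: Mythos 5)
Your linear-algebra skeleton --- rank-one differential from the dimension count $4k$ versus $4k-1$, completion of each $x^ix^e$ to a cocycle by adding a multiple of $y^0y^e$, and the passage to $HW^0$ via Propositions~\ref{prop:htpy-mtd-q-iso} and~\ref{prop:HW-is-limit-HfR} --- is exactly the paper's argument, and your insistence on step (B) (that neither innermost generator is individually closed) correctly identifies a point the paper's own proof leaves implicit. The genuine gap is in your proof of the key input (A). The paper does \emph{not} prove that $x^0x^e+y^0y^e$ is the unit by identifying the interior complex with a Morse complex; it proves it in Lemma~\ref{lem:unit-of-Hw0}, in the \emph{following} section, purely from the product structure: Corollary~\ref{cor:prod-structure-basic} (a consequence of the triangle argument constraints, Theorem~\ref{thm:disk-bound-arg-triangle}) shows the innermost generators are orthogonal idempotents, and that the unit can have no non-innermost component, which forces the unit to be $x^0x^e+y^0y^e$ and in particular forces this chain to be closed. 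The Morse picture is flagged in Remark~\ref{rem:innermost-gen-morse} as a heuristic precisely because the step you call the main obstacle fails as you sketch it: $L_q$ is non-exact and carries a nontrivial unitary local system (holonomy $\xi$ around two of the four end circles), so the two strips limiting to the two downward gradient flowlines of a saddle carry \emph{different} Novikov weights $T^{E}$ and \emph{different} parallel transports, and ``the differential of a saddle has total coefficient zero over the minima'' is simply false in this weighted complex. Indeed $H^0(L_q;\mathcal{L})=0$ for $\xi\neq 1$, so the unweighted sum of minima is not closed in the local-coefficient Morse model, and connectedness of $L_q$ gives you nothing. Moreover the Floer differential of $x^0x^e$ also receives contributions from strips of energy bounded away from zero that are invisible in any Morse model. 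The closedness of $x^0x^e+y^0y^e$ is therefore a genuinely Floer-theoretic identity, which the paper only obtains as a consequence of unitality plus the triangle constraints.

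Since your proof of (B) routes through the same Morse complex (``$\dim C^0=2$, $\dim H^0=1$''), it inherits the same gap. Within the paper's framework (B) can be recovered with no Morse input: if both $x^0x^e$ and $y^0y^e$ were cocycles, then by Lemma~\ref{lem:unit-of-Hw0} their classes would be nonzero (nothing in degree $-1$ can make them exact) orthogonal idempotents summing to the unit of $HW^0(L_q,L_q)\cong\bK[z_1^{\pm},z_2^{\pm}]/f_Q$, which is an integral domain since $D_Q$ is irreducible for $Q\neq 1$ --- a contradiction. So the correct repair is to replace your (A)--(B) block by the product-structure arguments of Section~\ref{subsect:product}, after which the rest of your write-up goes through.
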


\begin{proof}
    We will show that $x^0 x^e + y^0 y^e$ is the unit in the next section. Assuming this, the result directly comes from the fact that the differential has rank 1. Also, the image of the differential is contained in the subspace generated by interior generators since the differential of innermost generators cannot contain any generators on the cylindrical region. This fact, together with Proposition~\ref{prop:htpy-mtd-q-iso} and Proposition~\ref{prop:HW-is-limit-HfR}, shows that the coefficients and generators are passed through higher Floer cohomology groups and the wrapped Floer cohomology $HW^0(L_q, L_q)$.
\end{proof}


\subsection{The Product Structure}\label{subsect:product}
In this section, we compute the product structure on the ring $HW^0(L_q, L_q)$. One needs to count holomorphic triangles instead of holomorphic strips for the product structure. We have similar constraints to the one in Theorem~\ref{thm:disk-bound-arg-strip}.
    
\begin{theorem}[Argument Constraints, for Triangles]\label{thm:disk-bound-arg-triangle}
    Let $f$ be a Laurent polynomial satisfying Assumption~\ref{ass:top-bound-for-boundary}. Let $S$ be a holomorphic disk with three boundary points $\zeta_0, \zeta_1, \zeta_2$ in counterclockwise order, and consider holomorphic maps $u:S \to (\bC^*)^2$ bounded by Lagrangians of the form $\widetilde{L}_{f, R}^{k_s}$, following the conventions in Section~\ref{subsect:FloerThoery-basic}, and where $j=0,1,2$. Also, assume that all three $u(\zeta_s)$ lie on the same cylindrical end $Z_{\alpha,r}^+$, and of the form $x_\alpha^{j_s} x_\alpha^e$ for some integer $j_s$. Then, for the generic choice of $\{\varphi_\alpha\} \subseteq \bR$, the following is true:
    \begin{changemargin}{}{}
        For any positive integer $k$, there exists $R_0 > 0$ such that any holomorphic map $u:S \to (\bC^*)^2$ bounded by Lagrangians $\widetilde{L}_{f, R}^{k_s}$ with $R > R_0$ and integers $k_s \le k$ should either satisfy $j_0 \le \max{(j_1, j_2)}$ or be contained in the cylindrical region.
    \end{changemargin}
    Moreover, the equality of the last inequality can be dropped when $Z_{\alpha,r}^+$ contains an innermost generator.
\end{theorem}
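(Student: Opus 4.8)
The plan is to run the proof of Theorem~\ref{thm:disk-bound-arg-strip} essentially verbatim, with the triangle $S$ playing the role of the strip $S_2$. First I would lift $u$ to the universal cover $(\widetilde{\bC^*})^2 \cong \bC_\alpha \times \bC_{\alpha^\perp}$ and write $\widetilde u = (u_\alpha, u_{\alpha^\perp})$, as in the discussion following Theorem~\ref{thm:disk-bound-arg-strip}. Tracking the $\theta_\alpha$-levels of the three lifts of $\widetilde L^{k_0}_{f,R}, \widetilde L^{k_1}_{f,R}, \widetilde L^{k_2}_{f,R}$ carrying $\partial u$ and demanding that they close up at the three vertices forces the combinatorial relation $j_0 = j_1 + j_2$ among the $x_\alpha$-exponents (the same bookkeeping that, in the cylinder-as-product heuristic, realises $x^{j_1}\cdot x^{j_2} = x^{j_1+j_2}$). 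Since every $j_s \ge 0$ on a fixed end, this gives $j_0 \ge \max(j_1, j_2)$ automatically, with equality exactly when one input is an innermost generator; in particular $j_0 > \max(j_1,j_2)$ forces $j_1, j_2 \ge 1$.

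Next I would run the energy estimate. Applying Theorem~\ref{thm:action-ftnal} to the triangle, the symplectic area of $u \cap \{p_\alpha \le R^2 + 3R\}$ splits into the $\lambda_\alpha$-line integral of $\partial u$ (bounded independently of $R$ by Assumption~\ref{ass:top-bound-for-boundary}), the Morse-function differences (which are $O(R^{-3})$), and the wrapping term $2\pi \sum_s j_s p_s$, which is the area of $u_\alpha$. As in Lemma~\ref{lem:u2-strip-is-small} this shows the area of $u_{\alpha^\perp}$ is bounded independently of $R$, while the open-mapping/multiplicity argument — now applied to the six lifts through the three vertices, allowing for slits as in Remark~\ref{rem:slits} — shows that if $u$ is \emph{not} contained in the cylindrical region $\{p_\alpha \ge R^2+3R\}$, then its image in the stretching region carries a genuine strip-like part of total width (counted with multiplicity) at least $2\pi$. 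Here I use that all three vertices lie in the wrapping region, so that if $u$ dips below $p_\alpha = R^2+3R$ then, by the minimum principle for the harmonic function $p_\alpha$, so does $\partial u$; hence $u_\alpha$ really descends through the stretching region rather than hovering above it.

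With this in hand the rest is the cut argument of Theorem~\ref{thm:disk-bound-arg-strip}: decompose the descending part into holomorphic rectangles (Lemma~\ref{lem:existence-cuts-rect}) and, using the $R$-independent area bound on $u_{\alpha^\perp}$ against the $\gtrsim R^2$ height of the stretching region, produce for any $\varepsilon>0$ and $R$ large a level set $\{p_\alpha = t\}$ along which $\theta_\alpha^\perp$ varies by less than $\varepsilon$ (Lemma~\ref{lem:existence-cuts}). Because all three vertices are of type $x_\alpha^{j_s}x_\alpha^e$, i.e. have $\theta_\alpha^\perp \approx \varphi_\alpha$, this cut meets $\theta_\alpha^\perp$ only within $\varepsilon$ of $\varphi_\alpha$. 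Cutting $u$ there, taking the component $\bar u_{\text{int}}$ containing the interior region, and using $\int \bar u_{\text{int}}^* \Rea\Omega = 0$ together with Lemma~\ref{def:integral-eta-alpha} and the reverse isoperimetric inequality (Lemma~\ref{lem:reverse-isoperimetric}) yields
\[
\left\lvert \int_{[\gamma]_\alpha} \eta_\alpha + 2\pi\, w_\alpha(\gamma)\, \varphi_\alpha \right\rvert < 2\varepsilon
\]
for the class $[\gamma]_\alpha \in H_1(L_f)/[S_\alpha^1]$ of $\partial \bar u_{\text{int}}$. By Assumption~\ref{ass:top-bound-for-boundary} there are only finitely many such classes, so a generic $\varphi_\alpha$ forces $w_\alpha(\gamma)=0$ and $\int_{[\gamma]_\alpha}\eta_\alpha = 0$; since $L_f$ has genus zero, an argument in the spirit of Lemma~\ref{lem:unobstructed-Lf} shows $w_\alpha$ and $\eta_\alpha$ jointly detect $H_1(L_f)/[S_\alpha^1]$, hence $[\gamma]_\alpha = 0$, the interior contribution $\int_{[\gamma]_\alpha}\lambda_\alpha$ to the area vanishes, and the width bound of the previous step is contradicted — \emph{unless} $j_0 \le \max(j_1,j_2)$, in which case no $2\pi$-wide descending part is forced and $u$ is free to dip down. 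This is the asserted dichotomy; for the refinement, when $Z_{\alpha,r}^+$ contains an innermost generator the equality $j_0 = \max(j_1,j_2)$ occurs only with one input equal to the innermost generator, and a slightly sharper version of the width count in the second step — noting that the innermost input's boundary arc sits on the outermost lift — still produces a $2\pi$ of width upon reaching the interior, so the equality case is excluded as well and the conclusion becomes simply that $u$ is contained in the cylindrical region.

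The main obstacle I anticipate is precisely the geometry of $u_\alpha$ for a triangle rather than a strip in $\bC_\alpha$: rigorously pinning down the width of the descending part, controlling the multiplicities across the six vertex-lifts and the slits that may appear, and verifying that "$j_0 > \max(j_1,j_2)$" is \emph{exactly} the condition forcing a nonzero wrapping number or $\eta_\alpha$-period for the interior boundary class (and likewise the sharper statement in the innermost case). Once that combinatorial-geometric input is secured, everything else is a direct repackaging of the strip argument.
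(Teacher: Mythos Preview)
Your opening claim that the three lifts closing up at the vertices forces $j_0 = j_1 + j_2$ is false, and this is the essential gap. That relation holds only when each boundary arc enters and exits the stretching region on the \emph{same} lift $l_{s,k}$ of its Lagrangian. But once an arc on $\widetilde L_{f,R}^{k_s}$ descends into the interior region it is free to travel through $L_f$ and re-emerge on a different lift of the same cylindrical end (the preimage of $L_f$ in $(\widetilde{\bC^*})^2$ is connected and has infinitely many ends in the $\alpha$-direction). This is exactly how triangles with output exponent $j_0 \neq j_1+j_2$ arise --- see Corollary~\ref{cor:prod-structure-basic}(1), where the terms $a_l x_\alpha^l x_\alpha^e$ with $l < i+j$ come precisely from such triangles. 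So your bookkeeping argument collapses, and with it the downstream claim that $j_0 > \max(j_1,j_2)$ forces a nonzero wrapping number for $[\gamma]_\alpha$.

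The paper's proof proceeds differently: it does \emph{not} attempt to exclude every descending triangle via the argument constraint. Instead, after cutting at $\{p_\alpha = t\}$ it distinguishes two configurations according to whether each cut segment has its endpoints on the same Lagrangian $\widetilde L_{f,R}^{k_s}$ or on two different ones. In the first case the interior piece looks like the strips of Theorem~\ref{thm:disk-bound-arg-strip} and is excluded for generic $\varphi_\alpha$. In the second case the argument constraint is \emph{not} invoked at all: the $\bC_\alpha$-geometry directly shows that the output vertex sits in a strip whose width in the stretching region is bounded by the separation between the two lifts at the cut, which is at most one of the input exponents, giving $j_0 \le \max(j_1,j_2)$; equality requires a thin strip of width $O(R^{-1})$, which exists only on ends without an innermost generator. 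Your final ``contradiction'' (deducing $[\gamma]_\alpha = 0$ and then claiming this contradicts the $2\pi$ width) does not go through either --- $\int_{[\gamma]_\alpha}\lambda_\alpha = 0$ is perfectly compatible with positive width in the stretching region --- so even granting your cut argument, the dichotomy between the two configuration types is the step you are missing.
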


\begin{proof}
    The proof is essentially the same as Theorem~\ref{thm:disk-bound-arg-strip}. The only difference is that the argument only works when each part of $u$ in the interior region is bounded by one of the three Lagrangians. Figure~\ref{fig:arg-constraints-triangle} shows two types of possible configurations in the $(p_\alpha, \theta_\alpha)$-plane, together with the cut (dashed line). The first configuration, where each cut has its endpoints on the same Lagrangian, can be excluded by the argument constraints in Theorem~\ref{thm:disk-bound-arg-strip}. Note that in the $(p_\alpha^\perp, \theta_\alpha^\perp)$-plane, the image of $u$ should be sufficiently close to the small triangle near  $\varphi_\alpha$.
    
    In the second case, where each cut has its endpoints on different Lagrangians, the exponent of the output generator is bounded by one of the input generators, which implies that $j_0 \le \max{(j_1, j_2)}$. The equality holds when the strip between these generators is `thin', i.e. has width $O(R^{-1})$, which exists only when the cylindrical end does not contain an innermost generator.
    
    \vspace{0.3em}
    \begin{figure}[ht]
        \centering
        \includegraphics[width=0.7\textwidth]{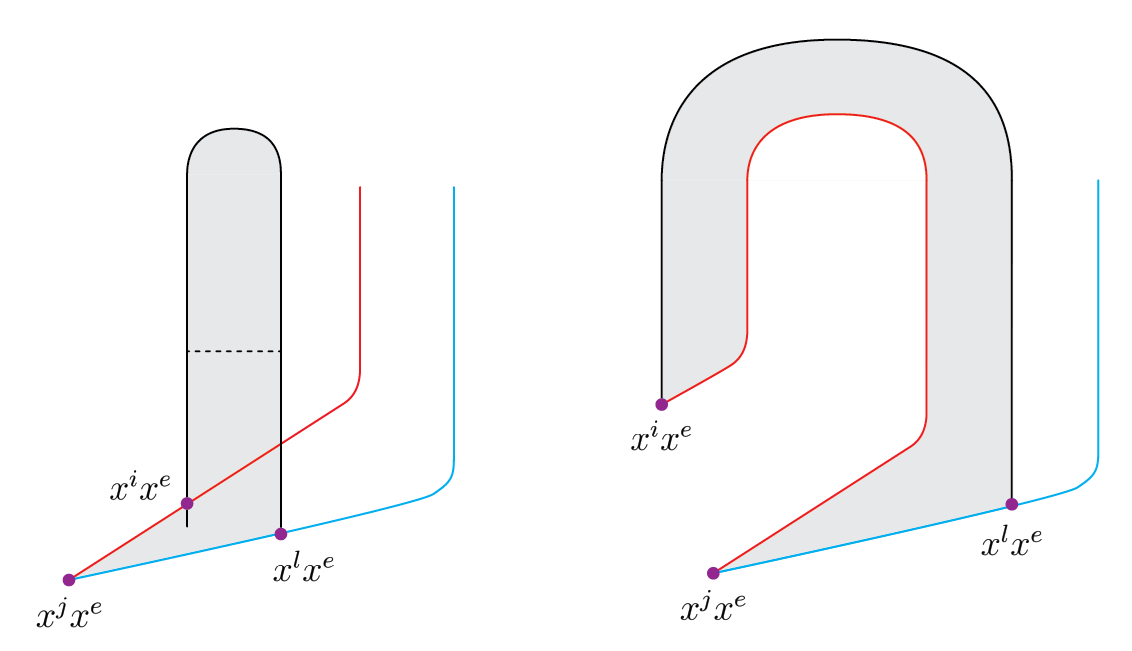}
        \caption{Two types of possible configuration of disks in Theorem~\ref{thm:disk-bound-arg-triangle} not contained in the cylindrical region.}
        \label{fig:arg-constraints-triangle}
    \end{figure}
    \vspace{0.3em}
    


    
\end{proof}



Together with Lemma~\ref{lem:arg-constraints-different-cylinders}, the theorem tells us that we have three possibilities for the holomorphic triangles on the degree 0 level.

\begin{enumerate}
    \item\label{tri:i-plus-j} Holomorphic triangles contained in one of the cylindrical region $\{p_\alpha \ge R^2 + 3R\}$. Similar to the case of strips, we can express this triangle as a product of a triangle in the $(p_\alpha, \theta_\alpha)$-plane and a triangle in the $(p_\alpha^\perp, \theta_\alpha^\perp)$-plane. There is one triangle for each pair $(i,j)$, corresponding to $x_\alpha^i x_\alpha^e \times x_\alpha^j x_\alpha^e = x_\alpha^{i+j} x_\alpha^e$.

    \item\label{tri:alpha-beta} Triangles with inputs $x_\alpha^i x_\alpha^e$ and $x_\beta^j x_\beta^e$, where $\alpha \neq \beta$. By Lemma~\ref{lem:arg-constraints-different-cylinders}, the output is either $x_\alpha^{i'} x_\alpha^e$ with $i' \le i$ or $x_\beta^{j'} x_\beta^e$ with $j' \le j$.

    \item\label{tri:alpha-alpha} Triangles with inputs $x_\alpha^i x_\alpha^e$ and $x_\alpha^j x_\alpha^e$, passing through the interior region. By Theorem~\ref{thm:disk-bound-arg-triangle}, the output is $x_\alpha^{l} x_\alpha^e$ with $l \le \max(i,j)$.
\end{enumerate}
The equalities can be dropped whenever the cylindrical end $Z_{\alpha,r}^+$ contains an innermost generator. Conveying these observations to the product map $\mu^2$, we have the following corollary.

\begin{corollary}\label{cor:prod-structure-basic}
    The following is true for the degree 0 generators $x_\alpha^i x_\alpha^e$ in $HF^\bullet (\widetilde{L}_{f,R}^{k_0} , \widetilde{L}_{f,R}^{k_1})$ for any Laurent polynomial $f$ satisfying Assumption~\ref{ass:top-bound-for-boundary}.
    \begin{enumerate}
        \item For every $\alpha$ and $i,j \ge 0$, there exist scalars $a_l$ such that
        $$\mu^2 (x_\alpha^i x_\alpha^e , x_\alpha^j x_\alpha^e) = x_\alpha^{i+j}x_\alpha^e + \sum_{l \le \max{(i,j)}} a_l x_\alpha^l x_\alpha^e.$$
        \item For every $\alpha \neq \beta$ and $i,j \ge 0$ there exist scalars $a_l$, $b_l$ such that
        $$\mu^2 (x_\alpha^i x_\alpha^e , x_\beta^j x_\beta^e) = \sum_{i' \le i} a_{i'} x_\alpha^{i'} x_\alpha^e \,\,+\,\, \sum_{j' \le j} b_{j'} x_\beta^{j'} x_\beta^e.$$
        \item In the results above, the equality on the index of summation can be dropped whenever the cylindrical end $Z_{\alpha,r}^+$ contains the innermost generator $x_\alpha^0 x_\alpha^e$.
    \end{enumerate}
\end{corollary}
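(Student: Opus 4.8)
The plan is to unwind the definition of $\mu^2$ and feed it through the trichotomy of holomorphic triangles recorded just above. Recall that $\mu^2(x_\alpha^i x_\alpha^e, x_\beta^j x_\beta^e)$ is a weighted count of rigid $J$-holomorphic triangles with the two prescribed input generators and an a priori arbitrary output; since $\mu^2$ has degree $0$ and, by Proposition~\ref{prop:self-intersection}, $CF^0$ is spanned by generators of the form $x_\gamma^l x_\gamma^e$, the output of every contributing triangle has this form and hence lies on a single cylindrical end $Z_{\gamma, r'}^+$. Thus the whole content of the corollary is a constraint on the pair $(\gamma, l)$, which I would extract case by case, matching the three types of triangles allowed by Lemma~\ref{lem:arg-constraints-different-cylinders} and Theorem~\ref{thm:disk-bound-arg-triangle}.

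For (2), with $\alpha \ne \beta$: apply Lemma~\ref{lem:arg-constraints-different-cylinders} to the output $u(\zeta_0) = x_\gamma^l x_\gamma^e$. It forces one of the two inputs to lie on $Z_{\gamma, r'}^+$; since those inputs sit on the distinct ends $Z_{\alpha,r}^+$ and $Z_{\beta,r}^+$, this pins $\gamma$ down to $\alpha$ or $\beta$, with exactly one input on that end. The ``equal or higher $x_\alpha$-exponent when there is only one such boundary puncture'' clause of the lemma then gives $l \le i$ if $\gamma = \alpha$ and $l \le j$ if $\gamma = \beta$, which is precisely the claimed decomposition $\mu^2(x_\alpha^i x_\alpha^e, x_\beta^j x_\beta^e) = \sum_{i' \le i} a_{i'} x_\alpha^{i'} x_\alpha^e + \sum_{j' \le j} b_{j'} x_\beta^{j'} x_\beta^e$.

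For (1), with both inputs on $Z_{\alpha,r}^+$: the same argument forces $\gamma = \alpha$. If $u$ lies entirely in the wrapping region $\{p_\alpha \ge R^2 + 3R\}$, then, exactly as for strips in Remark~\ref{rem:strips-in-cyl}, lifting to $(\widetilde{\bC^*})^2 \cong \bC_\alpha \times \bC_{\alpha^\perp}$ factors $u$ as a product of a triangle in $\bC_\alpha$ and one in $\bC_{\alpha^\perp}$; the former realizes the $\bK[x^{\pm}]$-multiplication $x_\alpha^i x_\alpha^e \cdot x_\alpha^j x_\alpha^e = x_\alpha^{i+j} x_\alpha^e$ and the latter sits at the point $x^e$, producing the leading term $x_\alpha^{i+j} x_\alpha^e$. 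If instead $u$ meets the interior, Theorem~\ref{thm:disk-bound-arg-triangle} gives $l \le \max(i,j)$, accounting for the remaining terms $\sum_{l \le \max(i,j)} a_l x_\alpha^l x_\alpha^e$. Part (3) is then immediate: when $Z_{\alpha,r}^+$ carries the innermost generator $x_\alpha^0 x_\alpha^e$, the ``thin strip'' configurations realizing the borderline cases $l = i$ in (2) and $l = \max(i,j)$ in (1) do not exist, by the final clauses of Lemma~\ref{lem:arg-constraints-different-cylinders} and Theorem~\ref{thm:disk-bound-arg-triangle}, so the inequalities tighten to strict ones.

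The one step that is not pure bookkeeping — and where I expect the real work to be — is verifying that the cylindrical triangles in (1) contribute the leading term with coefficient exactly $1$ and with no lower-order correction of their own. This amounts to checking that, once the rescaling of Notation~\ref{not:action-rescale} kills the energy weight, the moduli space of rigid triangles in $\bC_\alpha$ with the three relevant affine-line boundary conditions is a single transversally cut out point with trivial holonomy and orientation sign — that is, a faithful copy of the elementary product computation for $HW^\bullet(\bR_{>0}\subseteq \bC^*) \cong \bK[x^{\pm}]$. Everything else in the corollary follows formally from Lemma~\ref{lem:arg-constraints-different-cylinders} and Theorem~\ref{thm:disk-bound-arg-triangle}.
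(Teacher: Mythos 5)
Your proposal is correct and follows essentially the same route as the paper, which derives the corollary directly from the three-way classification of triangles (purely cylindrical, mixed-end via Lemma~\ref{lem:arg-constraints-different-cylinders}, same-end through the interior via Theorem~\ref{thm:disk-bound-arg-triangle}) stated immediately before it. You are also right to single out the coefficient-one leading term from the unique rigid cylindrical triangle as the one non-formal ingredient; the paper likewise asserts this via the product decomposition over $\bC_\alpha \times \bC_{\alpha^\perp}$ without further elaboration.
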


The computation of product structures, as well as finding corresponding elements in (\ref{eqn:HW0-iso-to-qtt}), involves a more detailed counting of these triangles. However, these observations still give various properties of the product structure, leading to a partial description of the isomorphism (\ref{eqn:HW0-iso-to-qtt}) on the basis level.

We begin from the following corollary, which is a part of Proposition~\ref{prop:distinguished-basis-of-Hw0}.

\begin{lemma}\label{lem:unit-of-Hw0}
    For $q \in \bC \setminus \bR$, the product of two innermost generators in $CF^0 (\widetilde{L}_{q,R}^k , \widetilde{L}_{q,R}^0)$ is itself when they are equal and $0$ otherwise. Moreover, $x^0 x^e + y^0 y^e$ is the cohomological unit of $HW^0 (L_q, L_q)$.
\end{lemma}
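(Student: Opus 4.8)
The plan is to prove two things: first, that $\mu^2(x_\alpha^0 x_\alpha^e, x_\alpha^0 x_\alpha^e) = x_\alpha^0 x_\alpha^e$ for each of the four innermost generators, while $\mu^2(x_\alpha^0 x_\alpha^e, x_\beta^0 x_\beta^e) = 0$ when $\alpha \neq \beta$; and second, that the sum $e := x^0 x^e + y^0 y^e$ (with $x^0 x^e = x_{(1,0)}^0 x_{(1,0)}^e + x_{(-1,0)}^0 x_{(-1,0)}^e$, etc.) is a two-sided cohomological unit. The first part is essentially a triangle count controlled by Corollary~\ref{cor:prod-structure-basic}(3). Since every cylindrical end appearing here contains an innermost generator (the index ranges from $0$ when $a_1\alpha_2 - a_2\alpha_1 < 0$, and we arranged $a_1 < 0 < a_2$ so that $\alpha = (0,-1),(-1,0)$ have innermost generators; for $\alpha = (0,1),(1,0)$ one uses the mirror-symmetric choice or else argues directly), the strict inequality in the corollary applies: $\mu^2(x_\alpha^0 x_\alpha^e, x_\alpha^0 x_\alpha^e)$ lies in the span of $x_\alpha^l x_\alpha^e$ with $l \le 0$ for the case $(i,j)=(0,0)$ with the equality dropped would force $l < 0$, which is impossible, so the only term is the ``diagonal'' triangle in the cylindrical region contributing $x_\alpha^0 x_\alpha^e$ with coefficient $1$. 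For the cross terms $\alpha \neq \beta$, Corollary~\ref{cor:prod-structure-basic}(2) with $i=j=0$ and the equality dropped forces the output to have strictly negative exponent on one of the two ends, again impossible, so the product vanishes.

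Given the first part, establishing that $e$ is a unit is then a matter of checking $\mu^2(e, -)$ and $\mu^2(-, e)$ act as the identity on the distinguished basis elements of $HF^0(\widetilde L_{q,R}^k, \widetilde L_{q,R}^0)$ described in Proposition~\ref{prop:distinguished-basis-of-Hw0}. I would reduce this to the following: $\mu^2(e, x_\alpha^i x_\alpha^e)$ is computed via triangles with one input $x_\beta^0 x_\beta^e$ (ranging over the four innermost ends) and the other input $x_\alpha^i x_\alpha^e$; by the same argument-constraint dichotomy (Lemma~\ref{lem:arg-constraints-different-cylinders} and Theorem~\ref{thm:disk-bound-arg-triangle}), only the pair with $\beta = \alpha$ can contribute a term on $Z_{\alpha,r}^+$ with exponent $i$, and the contribution of that term is the cylindrical ``product'' triangle $x_\alpha^0 x_\alpha^e \times x_\alpha^i x_\alpha^e = x_\alpha^i x_\alpha^e$; all other contributions have strictly smaller exponent (using the dropped-equality clause, since $Z_{\alpha,r}^+$ contains an innermost generator). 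This shows $\mu^2(e, x_\alpha^i x_\alpha^e) = x_\alpha^i x_\alpha^e + (\text{lower-exponent terms and interior terms})$. The ``lower-order correction'' terms must then be absorbed: since we already know $HW^0$ has a ring structure with \emph{some} unit (Theorem~\ref{thm:HW0-cohomology-group} identifies it with $\bK[z_1^\pm, z_2^\pm]/f_Q$, a unital ring), the class $[e]$ acts by an upper-triangular-plus-identity endomorphism on the associated graded of the exponent filtration, and an element of a finite-dimensional unital algebra that is unipotent in this sense — more precisely, whose action on the filtered pieces is identity — must actually be the unit, because the unit is the unique element acting as identity. Alternatively, and more robustly, one argues that $[e]$ is idempotent (from part one, $\mu^2(e,e) = e$ since cross terms vanish and each diagonal term squares to itself) and that $[e]$ is not a zero divisor modulo the filtration, so in the finite-dimensional algebra $HW^0$ a non-nilpotent idempotent that acts invertibly on the top graded piece is $1$.

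The main obstacle I expect is handling the asymmetry between the ``positive'' ends ($\alpha = (1,0),(0,1)$, whose generators are indexed $1,\ldots,k$ and which therefore carry \emph{no} innermost generator $x_\alpha^0 x_\alpha^e$ on the chain level for a single choice of $(a_1,a_2)$) versus the ``negative'' ends. The definitions as stated tie the existence of the innermost generator to the sign of $a_1\alpha_2 - a_2\alpha_1$, so with $a_1 < 0 < a_2$ only two of the four ends get an $x_\alpha^0 x_\alpha^e$ on the nose; the generator ``$x^0 x^e$'' on the positive $x$-end must be understood as the image under the wrapping/continuation map of a lower generator, or via the symmetry $z_1 \leftrightarrow z_1^{-1}$ exchanging the two horizontal ends. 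The cleanest resolution, which I would pursue, is to invoke the homotopy-method isomorphism (Proposition~\ref{prop:htpy-mtd-q-iso}) and Proposition~\ref{prop:HW-is-limit-HfR} to pass to $HW^0$ where the two opposite ends are genuinely on equal footing: there the class $z_1^0 = 1$ in $\bK[z_1^\pm,z_2^\pm]/f_Q$ has a single well-defined preimage, and the identity $x^0 x^e + y^0 y^e$ is read off from the mirror description where the unit is manifestly $1$. So the rigorous proof threads the chain-level triangle counts through Theorem~\ref{thm:HW0-cohomology-group} rather than attempting a purely combinatorial identification of $\mu^2(e,-)$ with the identity; the triangle counts establish the \emph{compatibility} (upper-triangularity) and the mirror identification pins down that the resulting unipotent-looking element is exactly the unit.
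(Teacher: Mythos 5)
Your treatment of the first claim matches the paper: apply Corollary~\ref{cor:prod-structure-basic} with $i=j=0$, drop the equalities in the summation indices (legitimate because the ends carrying innermost generators are exactly the ones for which part~(3) of that corollary applies), and observe that no generator with negative exponent exists, so only the single cylindrical triangle survives. One correction to your setup, though: there are exactly \emph{two} innermost generators, not four. The notation $x^0x^e$ denotes the single index-$0$ generator on whichever horizontal end has $a_1\alpha_2 - a_2\alpha_1$ of the appropriate sign; it is not a sum $x_{(1,0)}^0x_{(1,0)}^e + x_{(-1,0)}^0x_{(-1,0)}^e$. You notice this asymmetry later, but the ``mirror-symmetric choice'' and continuation-map workarounds you propose are unnecessary --- the lemma only concerns the two generators that actually exist.

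For the second claim your argument runs in the opposite direction from the paper's, and as written it has a gap. You show (or aim to show) that $e = x^0x^e + y^0y^e$ acts as ``identity plus lower-order terms'' and then try to conclude $e$ is the unit. The step from ``unipotent on the associated graded'' to ``is the unit'' is false in general (consider $1+x$ in $\bK[x]/(x^2)$), and your fallback --- $[e]$ is a non-nilpotent idempotent in ``the finite-dimensional algebra $HW^0$'' --- misstates the situation ($HW^0 \cong \bK[z_1^\pm,z_2^\pm]/f_Q$ is infinite-dimensional) and still needs either the absence of nontrivial idempotents in that ring or the fact that $[e]$ is not a zero divisor, neither of which you establish. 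Moreover, your leading-coefficient-$1$ claim for $\mu^2(e, x_\alpha^ix_\alpha^e)$ is not justified on the two ends \emph{without} innermost generators: there the equality in Corollary~\ref{cor:prod-structure-basic}(2) cannot be dropped, so the cross term $\mu^2(x_\beta^0x_\beta^e, x_\alpha^ix_\alpha^e)$ with $\beta\neq\alpha$ may contribute a multiple of $x_\alpha^ix_\alpha^e$ itself and spoil the leading coefficient. The paper avoids all of this by starting from the unit, which exists a priori, and constraining it: if the unit contained a term $x_\alpha^ix_\alpha^e$ with $i>0$, then by Corollary~\ref{cor:prod-structure-basic}(1) its product with $x_\alpha^jx_\alpha^e$ would contain $x_\alpha^{i+j}x_\alpha^e$ with nonzero coefficient (no other term of the unit can reach that exponent), contradicting unitality. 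Hence the unit is a combination of the two innermost generators only, and the first part of the lemma forces both coefficients to be $1$. You should restructure your argument along these lines, or else make the idempotent-plus-non-zero-divisor route precise, which requires more input than you supply.
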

\begin{proof}
    The first part is directly obtained from Corollary~\ref{cor:prod-structure-basic} since all the equality on the summation index can be dropped. For the second part, we observe that the cohomological unit is a linear combination of innermost generators. Indeed, if the highest order term on the cylindrical end $Z_{\alpha,r}^+$ is a multiple of $x_\alpha^i x_\alpha^e$ with $i>0$, then the product of the unit with $x_\alpha^j x_\alpha^e$ contains a multiple of $x_\alpha^{i+j} x_\alpha^e$, which is impossible. Then, the first part implies that the coefficient of every innermost generator is $1$.
\end{proof}





\subsection{HMS for the tropical Lagrangians}\label{subsect:basis-functions}

Now, we look at the non-innermost generators. The argument we use here is based on the idea of family Floer homology that provides a homological mirror symmetry statement for SYZ fibrations \cite{Abo2}. Starting from the observation that each fiber of the SYZ fibration $F_p \subseteq X$ is homologically mirror to a skyscraper sheaf on a point on its mirror, it uses the Floer cohomology $CF^\bullet (L, F_p)$ as a building block to construct a mirror space from a Lagrangian submanifold $L \subseteq X$.

As a part of Assumption~\ref{ass:mono-adm-non-exact}, we further assume the following.
\begin{assumption}\label{ass:fib-torus-is-skyscraper}
    We assume that each fiber torus in $(\bC^*)^2$ is homologically mirror to a skyscraper sheaf on a point on its mirror in the analogous version of Theorem~\ref{thm:mono-adm-mirror-symmetry}.
\end{assumption}
Since we are working with Novikov coefficients, the mirror skyscraper sheaf is of the form $\mathcal{O}_\rho$ for a point $\rho \in (\Lambda^*)^2$. We denote a fiber torus with a local system corresponding to the sheaf $\mathcal{O}_\rho$ by $F_\rho$. Under the assumption, the module structure
\begin{equation}\label{eqn:module-structure-fib-torus}
    HW^\bullet(L_f, L_f) \otimes HF^\bullet(F_\rho, L_f) \to HF^\bullet(F_\rho, L_f)
\end{equation}
computes the evaluation at each point for each element in the wrapped Floer cohomology. To be specific, if $x \in HW^\bullet(L_q, L_q)$ corresponds to a function $g \in \bK[x_1^\pm , x_2^\pm] / f_Q$ on $D_Q$, then 
\begin{equation}\label{eqn:fiber-torus-x*e-is-f(p)e}
    x \cdot e = g(\rho)e
\end{equation}
for any generator $e \in HF^\bullet(F_\rho, L_q)$. Analogous results to simpler cases lead to the following observation.

\begin{proposition}\label{prop:val-is-2pip}
    If $F_{\rho}$ is the fiber torus on $(p_1, p_2) \in \bR^2$, then $\val(\rho) = (-2\pi p_1 , -2\pi p_2)$.
\end{proposition}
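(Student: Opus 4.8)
The plan is to extract the valuation of $\rho$ from the energy (area) filtration on the Floer complex $CF^\bullet(F_\rho, L_q)$, using the fact that the module map (\ref{eqn:fiber-torus-x*e-is-f(p)e}) intertwines the energy grading with the non-archimedean valuation. First I would set up $CF^\bullet(F_\rho, L_q)$ concretely: after a Hamiltonian perturbation the fiber torus $F_\rho$ over $(p_1, p_2)$ meets $L_q$ in finitely many points, and — since $L_q$ is, near the relevant region, a Hyperkähler-rotated complex curve and hence locally a graph over the $(\theta_1,\theta_2)$-base of the fibration $(\bC^*)^2 \simeq T^*T^2$ — the intersection points can be made explicit. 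The key input is that the module action of the innermost generators on this complex is understood: by Lemma~\ref{lem:unit-of-Hw0}, $e_0 := x^0x^e + y^0y^e$ is the cohomological unit of $HW^0(L_q, L_q)$, so its action on any generator $e \in HF^\bullet(F_\rho, L_q)$ is the identity, which (after rescaling as in Notation~\ref{not:action-rescale}) pins down the relative energies of the triangles contributing to the $F_\rho$–$L_q$–$L_q$ product.

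Next I would evaluate (\ref{eqn:fiber-torus-x*e-is-f(p)e}) on a generator corresponding to a monomial, say $z_1 \in \bK[z_1^\pm, z_2^\pm]/f_Q$, which under (\ref{eqn:HW0-iso-to-qtt}) is represented on the chain level by a combination of $x^1x^e$ and an innermost term (Proposition~\ref{prop:distinguished-basis-of-Hw0}). Computing the product $x^1x^e \cdot e$ for $e \in HF^\bullet(F_\rho, L_q)$ amounts to counting holomorphic triangles with one boundary on $F_\rho$ and two boundaries on (perturbations of) $L_q$, with an input at the cylindrical end $Z_{(1,0),r}^+$; the leading such triangle has symplectic area computable by Stokes' theorem, exactly as in Theorem~\ref{thm:action-ftnal}, and the $p_\alpha$-dependent part of that area is $2\pi j p_\alpha$-type, which for the $z_1$-monomial gives a contribution linear in $p_1$. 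Matching the resulting power of $T$ against $\val(z_1(\rho))$, and doing the same with $z_2$, forces $\val(\rho) = (\val(z_1(\rho)), \val(z_2(\rho))) = (-2\pi p_1, -2\pi p_2)$; the sign is fixed by the orientation conventions on $\lambda_\alpha$ and the direction in which the cylindrical ends point relative to the Morse function $a_1p_1 + a_2p_2$.

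The main obstacle is making the triangle count in the module action (\ref{eqn:module-structure-fib-torus}) precise enough to read off the exponent of $T$ unambiguously: one must verify that the lowest-energy triangle contributing to $x^1x^e \cdot e$ actually exists and is counted with nonzero coefficient (so that no cancellation shifts the valuation), and that the higher-energy corrections — the terms with $l \le \max(i,j)$ in Corollary~\ref{cor:prod-structure-basic} — strictly raise the energy and hence do not affect the leading valuation. I would handle this by the same cut-and-estimate technique used in the proof of Theorem~\ref{thm:disk-bound-arg-strip}: divide the triangle along a $\{p_\alpha = t\}$ slice into an interior part (bounded energy, independent of how far into the cylindrical end the generator sits) and a cylindrical strip-like part (area exactly $2\pi p_\alpha$ times the relevant width), so that the $p_\alpha$-linear term dominates. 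A secondary technical point is invoking Assumption~\ref{ass:fib-torus-is-skyscraper} and the family Floer comparison of \cite{Abo2} to guarantee that $HF^\bullet(F_\rho, L_q)$ is one-dimensional in each degree with the expected module structure, so that the scalar $g(\rho)$ in (\ref{eqn:fiber-torus-x*e-is-f(p)e}) is genuinely well-defined; granting that, the valuation computation is the routine part.
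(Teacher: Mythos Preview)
Your approach works in spirit but is substantially more elaborate than the paper's, and it flirts with circularity. The paper does not use $L_q$ at all for this proposition: instead it tests the fiber torus $F_\rho$ against the product cylinder $\{z_2 = c\} \cong \bR \times S^1 \subseteq \bC^* \times \bC^*$, for which the wrapped Floer cohomology is $\bK[z_1^\pm]$ and the identification of generators with monomials is tautological. In that one-dimensional picture the module triangle for the generator corresponding to $z_1$ is a single explicit triangle in $\bC^*$ with weight $T^{-2\pi p_1}$ after rescaling, so $\val(\rho_1) = -2\pi p_1$ drops out immediately; the cylinder $\{z_1 = c\}$ gives $\val(\rho_2) = -2\pi p_2$. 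No argument constraints, no cut-and-estimate, no concern about higher-energy corrections or cancellations.

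Your route through $L_q$ has a dependency problem: you invoke the correspondence $x^1 x^e + b_1 y^0 y^e \leftrightarrow z_1 + B_1$ via (\ref{eqn:HW0-iso-to-qtt}) and Proposition~\ref{prop:distinguished-basis-of-Hw0}, but Proposition~\ref{prop:distinguished-basis-of-Hw0} only gives the \emph{basis}; the identification of which monomial each basis element represents is Theorem~\ref{thm:HMS-for-Lq}, which is proved \emph{using} Proposition~\ref{prop:val-is-2pip} (through Theorem~\ref{thm:disk-bound-arg-fibertorus}). Without already knowing the mirror function of $x^1 x^e$, matching the triangle energy against $\val(z_1(\rho))$ has no content. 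The triangle count you outline is essentially the substance of Theorem~\ref{thm:disk-bound-arg-fibertorus}, and that is indeed where it belongs in the paper; for Proposition~\ref{prop:val-is-2pip} itself one wants a probe Lagrangian whose generator-to-function dictionary is known a priori, and the product cylinders are exactly that.
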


\begin{proof}
    This can be proved by using product cylinders $\bR \times S^1 \subseteq \bC^* \times \bC^*$ intersecting with $F_{\rho}$. We can easily see that the generator that corresponds to the function $z_1$ (on the cylinder $\{z_2 = c\}$) bounds a triangle when computing the module structure (\ref{eqn:module-structure-fib-torus}) whose weight is $T^{-2\pi p_1}$ after rescaling the generators. This shows that $\val(\rho_1) = -2\pi p_1$, and similarly the cylinder $\{z_1 = c\}$ shows that $\val(\rho_2) = -2\pi p_2$.
\end{proof}

The negative sign is essential for the surface on the mirror to have the same tropical skeleton as $L_q$. This is because each ray in the tropical skeleton corresponds to the locus where two terms in the Laurent polynomial attain the lowest valuation. On the other hand, while as a tropical skeleton of a Lagrangian, it corresponds to the region where two terms in the polynomial have the highest magnitude. 

\begin{remark}\label{rem:fiber-torus-support}
    For a Lagrangian $L$ in $(\bC^*)^2$, or in a SYZ fibration in general, the valuation support $\text{Supp}(L)$ of $L$ is the image of the set of $\rho$ with $HF^\bullet (F_\rho, L) \neq 0$ under the valuation map $\val: (\Lambda^*)^2 \to \bR^2$. On the mirror side, we can show that the homology group between the skyscraper sheaf on $\rho$ and a structure sheaf $\mathcal{O}_{D_F}$ for a divisor $D_F$ defined by a function $F$ is nonzero only when $F(\rho)=0$. Therefore, the calculation of the valuation support gives possible candidates for the mirror. 
    
    For the case of $D_q$, we can show that the corresponding divisor on the mirror is defined by the equation (\ref{eqn:fq-novikov-coeff}) by computing the valuation support at infinity. Based on the obvious observation that $HF^\bullet (F_\rho, L)$ should be zero when $F_\rho \cap L = \emptyset$, we can show that the Floer cohomology between $F_\rho$ and $\widetilde{L}_{q, R}^k$ is zero unless $\val(\rho) = (-2\pi p_1, -2\pi p_2)$ lies on a compact subset of $\bR^2$ or one of the cylindrical ends of $L_q$. In particular, since $L_q$ has cylindrical ends corresponding to $p_1 = \log\abs{q},\, p_2 \to \infty$ and $p_2 = \log\abs{q},\, p_1 \to \infty$, the coefficient $Q$ in (\ref{eqn:fq-novikov-coeff}) has the term $T^{-2 \pi \log \abs{q}}$. Also, if $\val(\rho)$ is on the cylindrical end $Z_{\alpha,r}^+$, then the computation of the Floer cohomology is reduced to the case of two $S^1$ in the $(p_\alpha^\perp , \theta_\alpha^\perp)$-plane. There may exist strips other than two small strips between these circles, but they pass through the interior region, and their Novikov valuation diverges to infinity as $p_\alpha \to \infty$. Hence, by comparing the holonomy between two strips (of small area) between these circles, we can conclude that $L_q$ is mirror to $\mathcal{O}_{D_Q}$.
\end{remark}

The computation of the module structure (\ref{eqn:module-structure-fib-torus}) involves the counting of triangles bounded by two Lagrangians of the form $\widetilde{L}_{q, R}^k$ and a fiber torus. However, we have the argument constraints similar to Theorem~\ref{thm:disk-bound-arg-triangle}, which completely determines the poles and their order of corresponding functions.

Let $f$ be a Laurent polynomial satisfying Assumption~\ref{ass:top-bound-for-boundary}, and $Z_{\alpha,r}^+$ be one of the cylindrical end of $L_f$. We consider a fiber torus $F_p$ with $p \in Z_{\alpha, r}^+ \cap \{p_\alpha > R^2 +4R\}$, so that the torus lies outside of all the cylindrical generators. After fixing the collection $\{\varphi_\alpha\} \subseteq \bR$, we perturb the part of the torus in the $(p_\alpha^\perp, \theta_\alpha^\perp)$-plane to be the graph of the Morse function $\mu_{\varphi_\alpha}$ in (\ref{eqn:mu-phi}), so that $\theta_\alpha^\perp$-value of one of the intersection points with $\widetilde{L}_{f, R}^{k_1}$ lie close to $\varphi_\alpha$. We call this generator $e \in HF^\bullet(F_\rho, L_f)$.

We count the following holomorphic maps in the module structure (\ref{eqn:module-structure-fib-torus}).
\begin{notation}\label{not:map-u-fibertorus}
    Let $S$ be a holomorphic disk with three boundary points $\zeta_0, \zeta_1, \zeta_2$ in counterclockwise order, and let $u: S \to (\bC^*)^2$ be a holomorphic map bounded by Lagrangians $\widetilde{L}_{f, R}^{k_0}$, $\widetilde{L}_{f, R}^{k_1}$, and $F_\rho$, where $k_0 < k_1$ and the $p_\alpha$-coordinates of $\val({\rho})$ is bigger than $R^2 + 4R$. Also, assume that the intersection points $u(\zeta_0)$ and $u(\zeta_2)$ correspond to the generator $e$, and $u(\zeta_1)$ is a degree 0 generator $x_\beta^i x_\beta^e$.
\end{notation}

\begin{definition}
    For the map $u: S \to (\bC^*)^2$ in Notation~\ref{not:map-u-fibertorus}, the \textit{$x_\alpha$-exponent} of $u$ is the integer $j$, such that on the lift of $u$ to the universal cover of $(\bC^*)^2$ the distance between the corresponding lifts of $\widetilde{L}_{f, R}^{k_0}$ and  $\widetilde{L}_{f, R}^{k_1}$ is separated in the $\theta_\alpha$-direction by $2j\pi + O(R^{-1})$ on the region $\{p_\alpha < R^2 + 2R\}$. The sign is positive when $\widetilde{L}_{f, R}^{k_1}$ has bigger $\theta_\alpha$-value.
\end{definition}

The only type of map $u$ contained in the cylindrical region is the triangle with the third vertex $x_\alpha^j x_\alpha^e$ for some $j$ (See the left picture in Figure~\ref{fig:arg-constraints-fibertorus}). In this case, the $x_\alpha$-exponent of $u$ is equal to $j$, and after rescaling the generators, the weight of this triangle is equal to $T^{2j\pi p_\alpha} \xi_{\alpha}^j$, where $p_\alpha$ is the $p_\alpha$-component of $\val(\rho)$, and $\xi_\alpha$ is the holonomy of the local system on the fiber torus around the $\theta_\alpha$-circle. In particular, on each cylindrical end of $L_q$, this corresponds to the monomials $z_1^{\pm j}$ and $z_2^{\pm j}$, respectively.

\begin{theorem}[Argument Constraints, with Fiber Torus]\label{thm:disk-bound-arg-fibertorus}
    Let $f$ be a Laurent polynomial satisfying Assumption~\ref{ass:top-bound-for-boundary}, and $Z_{\alpha,r}^+$ be one of the cylindrical end. Then, for the generic choice of $\{\varphi_\alpha\} \subseteq \bR$, The following is true:
    \begin{changemargin}{}{}
        For any positive integer $k$, there exists $R_0 > 0$ such that for any such holomorphic map $u:S \to (\bC^*)^2$ in Notation~\ref{not:map-u-fibertorus} bounded by Lagrangians $\widetilde{L}_{f, R}^{k_s}$ with $R > R_0$ and $k_s \le k$, if the image of $u$ is not contained in the cylindrical region, then the $x_\alpha$-exponent of $u$ is nonpositive.
    \end{changemargin}
\end{theorem}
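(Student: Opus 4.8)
The plan is to mimic the proofs of Theorem~\ref{thm:disk-bound-arg-strip} and Theorem~\ref{thm:disk-bound-arg-triangle}, treating the fiber torus $F_\rho$ as an auxiliary boundary condition that lives entirely in the far cylindrical region and therefore drops out of the interior analysis. First I would lift $u$ to the universal cover $(\widetilde{\bC^*})^2 \cong \bC_\alpha \times \bC_{\alpha^\perp}$ and write $\widetilde u = (u_\alpha, u_{\alpha^\perp})$. In $\bC_\alpha$ the Lagrangians $\widetilde{L}_{f,R}^{k_0}$ and $\widetilde{L}_{f,R}^{k_1}$ appear as the usual tilted–then–wrapped rays, with lifts $l_{s,k_0}$ and $l_{s',k_1}$ separated by $2\pi$; by definition the $x_\alpha$-exponent of $u$ is the integer $j$ for which the two relevant lifts are $l_{s,k_0}$ and $l_{s+j,k_1}$ on the pre-wrapping region $\{p_\alpha < R^2+2R\}$. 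The fiber torus $F_\rho$, being a torus fibre, has $p_\alpha$ constant on it, and by hypothesis this constant exceeds $R^2+4R$; hence in $\bC_\alpha$ it is a vertical line far to the right, and the boundary arc of $u$ on $F_\rho$ never enters $\{p_\alpha \le R^2+4R\}$. As in Theorem~\ref{thm:Gromov-compactness} and Lemma~\ref{lem:arg-constraints-different-cylinders}, the maximum principle for $p_\alpha\circ u$ forces its maximum onto the vertices $u(\zeta_0), u(\zeta_2)$, so all of $u$ reaching beyond a bounded $p_\alpha$-level does so along $Z_{\alpha,r}^+$.

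Next I would run the energy and cut construction essentially verbatim. As in Lemma~\ref{lem:u2-strip-is-small} and Lemma~\ref{lem:existence-cuts}, the symplectic area of $u_{\alpha^\perp}$ is bounded by a constant $M_{\alpha^\perp}$ independent of $R$: the line–integral contribution is controlled by Assumption~\ref{ass:top-bound-for-boundary}, the Morse–function terms are $O(R^{-3})$, and the fiber torus contributes only its own bounded $\bC_{\alpha^\perp}$-loop. Since the stretching region has $p_\alpha$-height of order $R^2$, for any $\varepsilon>0$ and $R$ large there is a level $t\in[3R,\,R^2+3R]$ for which the cut $u_\alpha^{-1}(\{p_\alpha=t\})$ has $u_{\alpha^\perp}$-length $<\varepsilon$. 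Cutting $u$ along $\{p_\alpha=t\}$ gives an interior piece $\bar u_{\text{int}}\subseteq\{p_\alpha\le t\}$ whose boundary consists of arcs on $\widetilde{L}_{f,R}^{k_0}$ and $\widetilde{L}_{f,R}^{k_1}$ (meeting at $u(\zeta_1)=x_\beta^i x_\beta^e$) together with the cut curve $\gamma_{\text{cut}}$; crucially the $F_\rho$-arc lies in the complementary piece $\bar u_{\text{cyl}}$ and is absent here, so $\bar u_{\text{int}}$ is exactly of the type analyzed in Theorem~\ref{thm:disk-bound-arg-strip}. Closing up the $L_f$-part of $\partial\bar u_{\text{int}}$ along the cut yields a loop $\gamma$ whose wrapping number $w_\alpha(\gamma)$ with respect to $Z_{\alpha,r}^+$ equals the $x_\alpha$-exponent of $u$ up to the overall sign fixed by our conventions.

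Then I apply $\int_{\partial\bar u_{\text{int}}}\eta_\alpha=\int_{\bar u_{\text{int}}}\Rea\Omega=0$. The integral along $\gamma_{\text{cut}}$ is $O(\varepsilon)$ since $dp_\alpha$ vanishes on it and its $\theta_\alpha^\perp$-variation is $<\varepsilon$; the integral along the $L_f$-part equals $\int_{[\gamma]_\alpha}\eta_\alpha + 2\pi\, w_\alpha(\gamma)\,\varphi' + O(R^{-1})$, where $\varphi'$ is the $\theta_\alpha^\perp$-value along the cut and the $O(R^{-1})$ comes, exactly as in the derivation of~(\ref{eqn:eta-alpha-plus-2piphi}), from Lemma~\ref{lem:reverse-isoperimetric} bounding the relevant length by $O(R^2)$ together with $\eta_\alpha|_{\{p_\alpha\ge R\}}=kR^{-1}a_\alpha\,d\theta_\alpha^\perp+O(R^{-3})$. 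The decisive point is that $\varphi'$ is $\varepsilon$-close to $\varphi_\alpha$: in $\bC_{\alpha^\perp}$ the piece $\bar u_{\text{cyl}}$ must contain a neighborhood of the intersection point $\theta_\alpha^\perp\approx\varphi_\alpha$ carrying $u(\zeta_0)=u(\zeta_2)=e$, hence the cut passes through $\theta_\alpha^\perp\approx\varphi_\alpha$. Assuming for contradiction that the $x_\alpha$-exponent is a positive integer $j$, we get $|\,\int_{[\gamma]_\alpha}\eta_\alpha \pm 2\pi j\,\varphi_\alpha\,|<C\varepsilon$ for some class $[\gamma]_\alpha\in H_1(L_f)/[S_\alpha^1]$, which by Assumption~\ref{ass:top-bound-for-boundary} (the $[S_{\alpha^\perp}^1]$-coefficient being controlled because the $\theta_\alpha^\perp$-width of the cut is bounded by $M_{\alpha^\perp}$) ranges over a finite set. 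Choosing $\varphi_\alpha$ outside the finite set of bad values arising this way over the finitely many relevant $j$, and then $\varepsilon$ small and $R$ large, yields a contradiction; hence the $x_\alpha$-exponent is nonpositive.

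I expect the main obstacle to be bookkeeping rather than a new idea: one must verify carefully that $\bar u_{\text{int}}$ is bounded only by the two twisted Lagrangians plus a single cut (so the fiber torus genuinely drops out), that the wrapping number of the closed-up loop is precisely the $x_\alpha$-exponent with the correct sign and pre-wrapping normalization, and — when $\beta\ne\alpha$, so that $\bar u_{\text{int}}$ also reaches a different cylindrical end — that the relevant relative homology class still lies in the finite set controlled by Assumption~\ref{ass:top-bound-for-boundary}, using the same maximum-principle and energy estimates as in Lemma~\ref{lem:arg-constraints-different-cylinders} to pin down the $[S_{\beta^\perp}^1]$- and $[S_{\alpha^\perp}^1]$-coefficients of the boundary.
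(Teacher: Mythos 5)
Your proposal is correct and follows essentially the same route as the paper, which simply asserts that the proof is identical to that of Theorem~\ref{thm:disk-bound-arg-triangle} (and hence of Theorem~\ref{thm:disk-bound-arg-strip}): lift and split into $\bC_\alpha\times\bC_{\alpha^\perp}$, note the fiber-torus arc stays in the far cylindrical region, find a low-length cut via the area bound, and apply the $\eta_\alpha$-integral identity plus Assumption~\ref{ass:top-bound-for-boundary} and genericity of $\varphi_\alpha$ to exclude positive $x_\alpha$-exponent. You have in fact supplied more detail than the paper does, and the bookkeeping points you flag (sign/normalization of the wrapping number, the case $\beta\neq\alpha$) are exactly the ones the paper leaves implicit.
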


\begin{proof}
    \vspace{0.3em}
    \begin{figure}[ht]
        \centering
        \includegraphics[width=0.8\textwidth]{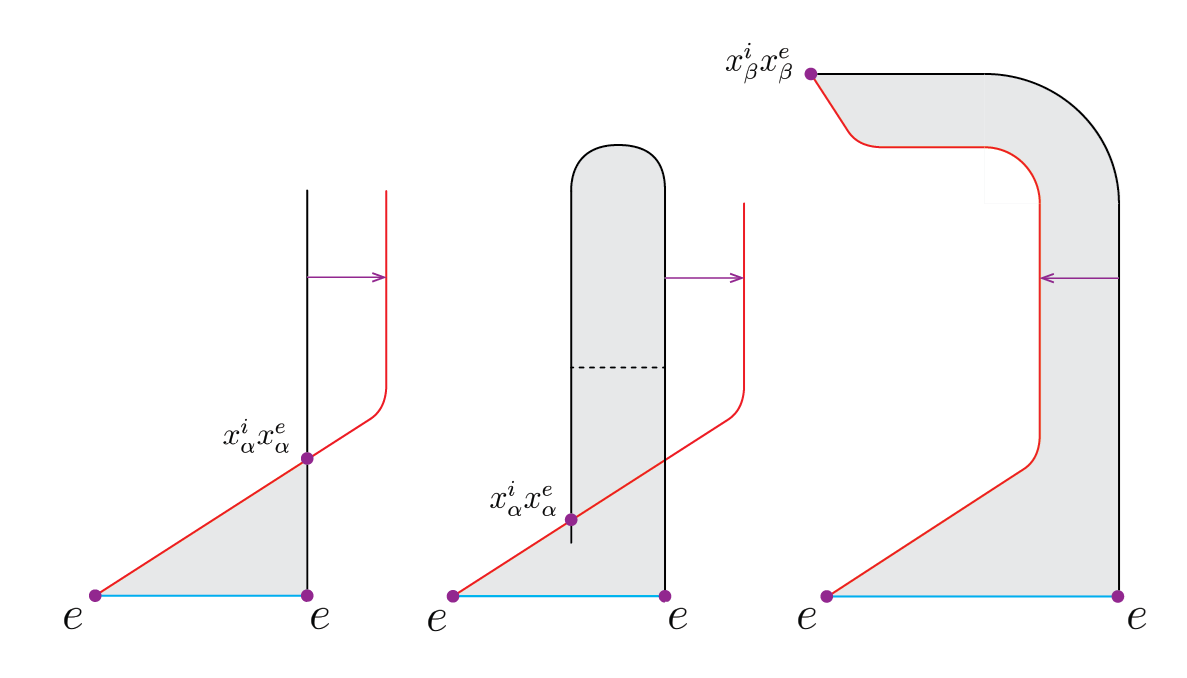}
        \caption{Three different types of maps in Notation~\ref{not:map-u-fibertorus}. Each arrow indicates the part corresponds to its $x_\alpha$-exponent.}
        \label{fig:arg-constraints-fibertorus}
    \end{figure}
    The proof is the same as Theorem~\ref{thm:disk-bound-arg-triangle}. Figure~\ref{fig:arg-constraints-fibertorus} shows three possible types of such maps. The second type, where the $x_\alpha$-exponent is nonnegative, does not exist by the argument constraint, while maps in the third type have nonpositive $x_\alpha$-exponents.
\end{proof}

Now, as we push $\rho$ to infinity in the $p_\alpha$ direction, the exponent of the (rescaled) weight of each disk in the above theorem increases linearly with factor $2j\pi$. For example, if we consider the cylindrical end of $L_q$ in the direction $\alpha = (-1,0)$, then the theorem implies the following.
\begin{itemize}
    \item Each generator $x^i x^e$ with $i >0$ has a pole at $z_1 = 0$ of order $i$ with coefficient $1$ and no lower negative power of $z_1$.
    \item Generators on the other leg do not have a pole at $z_1 = 0$.
\end{itemize}
Similarly, we can determine the order of poles on each of the four punctures on $D_q$. Since the only meromorphic functions that do not have poles on any of the punctures are constant functions, we can determine the corresponding functions of each generator in Proposition~\ref{prop:distinguished-basis-of-Hw0} up to constant.

\begin{theorem}[HMS of the Lagrangian $L_q$]\label{thm:HMS-for-Lq}
    For $q \in \bC \setminus \bR$, there exist constants $A_i$ and $B_i$ for $i \in \bZ_{>0}$ such that the generators of $HW^0 (L_q, L_q)$ correspond to the following functions on $D_Q$ under the isomorphism (\ref{eqn:HW0-iso-to-qtt}).
    \begin{itemize}
        \item $x^i x^e + b_i y^0 y^e$,\quad for $i>0$,\quad corresponds to $z_1^i + B_i$.
        \item $x^i x^e + b_i y^0 y^e$,\quad for $i<0$,\quad corresponds to $z_1^i$.
        \item $y^i y^e + a_i x^0 x^e$,\quad for $i>0$,\quad corresponds to $z_2^i + A_i$.
        \item $y^i y^e + a_i x^0 x^e$,\quad for $i<0$,\quad corresponds to $z_2^i$.
        \item $x^0 x^e + y^0 y^e$,\quad the cohomological unit,\quad corresponds to $1$.
    \end{itemize}
\end{theorem}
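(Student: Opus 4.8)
The plan is to combine three ingredients that are already available: the distinguished basis of $HW^0(L_q,L_q)$ from Proposition~\ref{prop:distinguished-basis-of-Hw0}, the ring isomorphism $HW^0(L_q,L_q)\cong\bK[z_1^\pm,z_2^\pm]/f_Q$ of Theorem~\ref{thm:HW0-cohomology-group}, and the fiber-torus module action, whose triangle count is controlled by Theorem~\ref{thm:disk-bound-arg-fibertorus}. First I would record the geometry of the mirror: $\overline{D_Q}$ is a smooth $(1,1)$-curve in $\mathbb{P}^1\times\mathbb{P}^1$, hence isomorphic to $\mathbb{P}^1$, and $D_Q=\overline{D_Q}\setminus\{P_1,P_2,P_3,P_4\}$ where the four punctures are the intersections with the toric divisors $\{z_1=0\},\{z_1=\infty\},\{z_2=0\},\{z_2=\infty\}$; along $D_Q$ the coordinate $z_1$ (resp. $z_2$) is a local uniformizer at the two punctures on $\{z_1=0^{\pm\infty}\}$ (resp. $\{z_2=0^{\pm\infty}\}$), and $z_1^j$ for $j>0$ has a pole only at the puncture on $\{z_1=\infty\}$, $z_1^j$ for $j<0$ a pole only at the puncture on $\{z_1=0\}$, and likewise for $z_2$. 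Consequently a regular function on $D_Q$ is determined, up to an additive constant, by its principal parts at $P_1,\dots,P_4$; and by the monomial admissibility of $\widetilde L_{q,R}$, the four toric cylindrical ends of $L_q$ are in canonical bijection with $P_1,\dots,P_4$, the end with normal $\alpha$ being paired with the puncture cut out by the monomial $z^\alpha$.

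Next I would compute the principal part of the mirror function of each basis element. Fix a cylindrical end $Z_{\alpha,r}^+$ and a fiber torus $F_\rho$ with base point $p\in Z_{\alpha,r}^+$ pushed far out the leg, so that by Proposition~\ref{prop:val-is-2pip} the point $\rho$ lies over the puncture $P_\alpha$ with $p_\alpha\to\infty$ measuring the depth near it. By Assumptions~\ref{ass:fib-torus-is-skyscraper} and the identification of the mirror divisor as $D_Q$ (Remark~\ref{rem:fiber-torus-support}), the module map $HW^0(L_q,L_q)\otimes HF^\bullet(F_\rho,L_q)\to HF^\bullet(F_\rho,L_q)$ is point evaluation at $\rho$, as in (\ref{eqn:fiber-torus-x*e-is-f(p)e}), and it is computed by counting the triangles of Notation~\ref{not:map-u-fibertorus}. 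The triangles contained in the cylindrical region are exactly the product triangles with third vertex $x_\alpha^{j}x_\alpha^e$, contributing the weight $T^{2\pi j\,p_\alpha}\xi_\alpha^{j}$, i.e. $z^{j\alpha}$ evaluated at $\rho$; by Theorem~\ref{thm:disk-bound-arg-fibertorus} every triangle not contained in the cylindrical region has nonpositive $x_\alpha$-exponent. Feeding in the distinguished element $x_\alpha^i x_\alpha^e+(\text{correction})$, whose correction is a multiple of an innermost generator living on a different, orthogonal end, this shows its mirror function $g$ has at $P_\alpha$ a pole of order exactly $i$ with leading coefficient $1$, plus possibly a constant, and no lower-order principal part. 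Applying Lemma~\ref{lem:arg-constraints-different-cylinders} together with the same fiber-torus analysis at the other three ends shows $g$ has no pole at any $P_\beta$ with $\beta\neq\alpha$ (the correction term, being innermost, contributes nothing there). By the uniformization remark above, $g-z^{i\alpha}$ extends to a global regular function on $\overline{D_Q}$, hence a constant: $g=z^{i\alpha}+c$.

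It remains to pin the constant $c$ and to identify the unit. On the two ends over which $\widetilde L_{q,R}$ carries an innermost generator, the refined form of the argument constraint (the ``equality can be dropped'' clause of Theorem~\ref{thm:disk-bound-arg-fibertorus}, parallel to Theorem~\ref{thm:disk-bound-arg-triangle}) upgrades ``nonpositive $x_\alpha$-exponent'' to ``negative'', which excludes the $x_\alpha$-exponent-$0$ contribution and forces $c=0$; by the discussion at the end of Section~\ref{subsect:distinguished-basis} (with the choice $a_1<0<a_2$) these are precisely the two ends whose punctures lie on $\{z_1=0\}$ and $\{z_2=0\}$, yielding the exact functions $z_1^i$ and $z_2^i$ with $i<0$. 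For the remaining two ends only the non-strict bound is available, so a possibly nonzero constant survives, and these are the $i>0$ cases, giving $z_1^i+B_i$ and $z_2^i+A_i$ with $B_i,A_i\in\bK$. Finally, $x^0x^e+y^0y^e$ is the cohomological unit of $HW^0(L_q,L_q)$ by Lemma~\ref{lem:unit-of-Hw0}, and any ring isomorphism sends the unit to $1$, so it corresponds to the constant function $1$.

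\textbf{Main obstacle.} The delicate point is the bookkeeping in the fiber-torus step: one must be sure that the module action genuinely computes point-evaluation of the \emph{correct} function (this is where the identification of the mirror with $D_Q$ and Assumption~\ref{ass:fib-torus-is-skyscraper} really get used), that pushing $F_\rho$ to infinity turns on only the monomials $z^{j\alpha}$ with $j\le 0$ outside the cylindrical region while the in-region product triangles reproduce the full $z^{i\alpha}$ leading term, and—most subtly—that the additive constant is killed exactly on the ends carrying an innermost generator, which needs the strict refinement of Theorem~\ref{thm:disk-bound-arg-fibertorus} together with the verification that the correction $b_i y^0y^e$ (supported on a different leg) cannot reintroduce a pole at $P_\alpha$. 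The rest—the rational-curve uniformization statement and the identification of the unit—is routine.
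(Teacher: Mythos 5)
Your proposal tracks the paper's argument closely up to the last step: the distinguished basis (Proposition~\ref{prop:distinguished-basis-of-Hw0}), the ring isomorphism (\ref{eqn:HW0-iso-to-qtt}), the fiber-torus module action as point evaluation, the contribution $T^{2\pi j p_\alpha}\xi_\alpha^j$ of the cylindrical triangles, the nonpositivity of the $x_\alpha$-exponent for all other triangles (Theorem~\ref{thm:disk-bound-arg-fibertorus}), and the conclusion that each basis element is determined up to an additive constant because a function with no poles at any puncture of $\overline{D_Q}$ is constant. The identification of the unit via Lemma~\ref{lem:unit-of-Hw0} is also as in the paper. All of that is fine.

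The gap is in how you kill the constants for $i<0$. You appeal to an ``equality can be dropped'' clause of Theorem~\ref{thm:disk-bound-arg-fibertorus}, but no such clause exists: that theorem asserts only that non-cylindrical triangles have \emph{nonpositive} $x_\alpha$-exponent, and the innermost-generator refinement is stated and proved only for Theorem~\ref{thm:disk-bound-arg-triangle} (triangles with all corners on one end). Upgrading it to the fiber-torus setting is a genuine claim requiring a proof of the ``no thin strip'' mechanism for these configurations, and — more seriously — even if it held for triangles whose Lagrangian corner is $x_\alpha^j x_\alpha^e$ on the leg in question, you would still have to exclude exponent-zero contributions from the correction term $b_i\,y^0y^e$, which sits on a different leg and is therefore never contained in the cylindrical region of $Z_\alpha$; your parenthetical ``contributes nothing there'' addresses poles, not the valuation-zero (constant) term. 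Note the consistency check that makes this delicate: the unit $x^0x^e+y^0y^e$ must act by $1$ on $HF^\bullet(F_\rho,L_q)$ at \emph{every} end, and at the two ends carrying no innermost generator this value of $1$ can only be produced by exponent-zero non-cylindrical triangles — so such triangles do occur in general, and their absence on the innermost legs is exactly what needs an argument. The paper avoids all of this with a purely algebraic step: it squares $x^ix^e+b_iy^0y^e$ for $i<0$ and uses Corollary~\ref{cor:prod-structure-basic} (with the strict index bound available precisely because the end carries an innermost generator) to see that the product contains no $x^ix^e$ term, whereas $(z_1^i+B_i)^2$ contains $2B_iz_1^i$; comparing forces $B_i=0$. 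You should either supply a proof of the strengthened fiber-torus constraint covering both sources of exponent-zero terms, or replace this step with the squaring argument.
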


\begin{proof}
    The only thing left to show is that there are no constant terms for the generators on the cylindrical end with an innermost generator. To prove that, we note that the squares of these generators are
    $$ (x^{i}x^e + b_{i} y^0 y^e)^2 = x^{2i}x^e + \sum_{i < j \le 0} c_j x^j x^e + b_i^2 y^0 y^e$$
    for some coefficients $c_j$ by Corollary~\ref{cor:prod-structure-basic}. The left-hand side consists of the term $B_i z_1^i$, while the right-hand side only consists of the terms with the lower power of $z_1^{-1}$. This concludes that $B_i = 0$ for $i <0$. The same is true for $A_i$.
\end{proof}

\subsection{Further Example: Pair of Pants}\label{subsect:pair-of-pants}
In this section, we discuss an example where Assumption~\ref{ass:cyl-end-is-not-diagonal} is dropped, i.e. the case when there are cylindrical ends in a diagonal direction. The example we consider is \textit{the complex pair of pants} $D_{\text{pants}}$ and its HyperKähler rotation, \text{the (tropical) pair of pants} $L_\PoP$, defined by the Laurent polynomial
$$f_\PoP(z_1, z_2) := 1 - z_1 - z_2.$$
This Lagrangian submanifold of $(\bC^*)^2$ has three cylindrical ends. Two of them are in the negative $p_1$ and $p_2$ direction (hence satisfy $\abs{\alpha}=1$), and the other end is $Z_{\alpha, r}^+$ with $\alpha = (1, -1)$ and $r = -1$. The corresponding Lagrangian is equipped with the following local system and brane structure.

\begin{notation}\label{not:PoP-brane-structure}
    We equip $L_\PoP$ with a trivial local system, and a brane structure such that when restricted to the $S^1$ factor of each cylindrical end, only the one on the diagonal end has a nontrivial spin structure.
\end{notation}

As in Remark~\ref{rem:fiber-torus-support}, one can find the suitable local system and the brane structure by observing the support of $L$. In particular, for a cylindrical end determined by two terms $c_1 z^{\alpha_1} = \pm c_2 z^{\alpha_2}$ of the Laurent polynomial $f$, the holonomy (from the local system) around the $S^1$ factor is determined by the unitary part of $c_1 c_2^{-1}$, while the brane structure is determined by the sign. Interpreting the equation as $c_1 z^{\alpha_1} = \mp (-c_2) z^{\alpha_2}$ instead is equivalent to using the brane structure with the opposite orientation as well as multiplying the holonomy from the local system by $-1$; see Section 6 in \cite{Cho1}.

Even though the tropical pair of pants $L_{\text{pants}}$ does not satisfy Assumption~\ref{ass:cyl-end-is-not-diagonal}, it satisfies the other two assumptions required to define the wrapped Floer cohomology as in Section~\ref{sect:floer-theory} and Definition~\ref{def:wrapped-Floer-coh}. The smoothness of the tropical polynomial (Assumption~\ref{ass:trop-ftn}) is obvious from the definition. Also, the finiteness assumption (Assumption~\ref{ass:top-bound-for-boundary}) can be proved by a topological argument. Indeed, the image of the boundary map
$$ H_2 ((\bC^*)^2, L_f \cup \{p_\alpha \ge R, \abs{p_\alpha^\perp - \log\abs{r}} \le 1 \}) \xrightarrow{\delta} H_1 (\{p_\alpha \ge R, \abs{p_\alpha^\perp - \log\abs{r}} \le 1 \}) $$
is 1-dimensional, hence there is at most one homology class with a given $ \left [S_{\alpha^\perp}^1 \right ] $-coefficient.

The main obstacle in this case is that there are exactly $\abs{\alpha}^2$ different points labeled as $x_\alpha^j x_\alpha^e$ (and $x_\alpha^j x_\alpha^f$). From the discussion in Remark~\ref{rem:alpha-bigger}, the argument coordinates $(\theta_1 , \theta_2)$ of each point are given by
\begin{equation}\label{eqn:theta_a-and-theta_perp}
    \begin{cases}
        \theta_\alpha = c_1 ,\\ \abs{\alpha}^2 \theta_\alpha^\perp = c_2 + 2k\pi\quad (k=0,1,\cdots,\abs{\alpha}^2 - 1)
    \end{cases}
\end{equation}
for some fixed constants $c_1, c_2 \in \bR$. If we lift the whole configuration over the universal cover of $(\bC^*)^2$ and divide into the $(p_\alpha, \theta_\alpha)$-plane and the $(p_\alpha^\perp, \theta_\alpha^\perp)$-plane, the Morse function on the $S^1$ factor of the cylinder has $\abs{\alpha}^2$ maxima (and minima), each of which corresponds to a degree 1 generator $x_\alpha^j x_\alpha^f$ (and a degree 0 generator $x_\alpha^j x_\alpha^e$, respectively). For simplicity, we use the following notation instead of naming them individually.

\vspace{0.3em}
\begin{figure}[ht]
    \centering
    \includegraphics[width=0.8\textwidth]{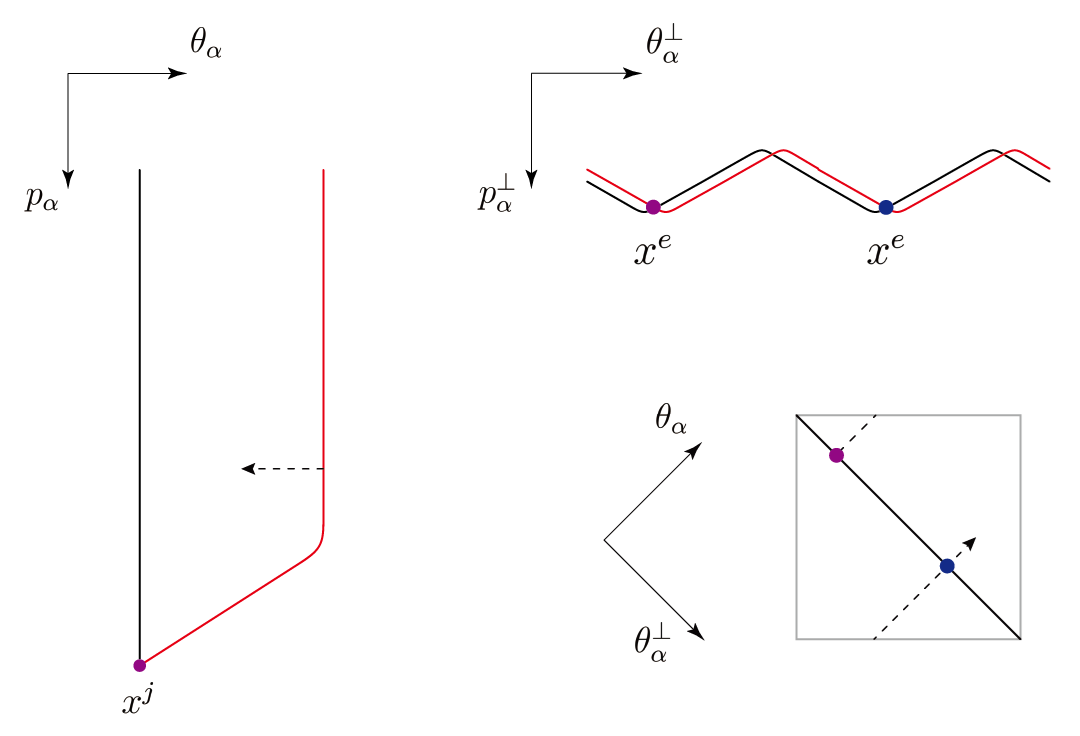}
    \caption{Intersection points of the type $x_\alpha^j x_\alpha^e$, for the case $\abs{\alpha}^2=2$. The bottom right part of the figure shows the $(\theta_1, \theta_2)$ coordinates of two such points. The dashed line shows the direction of the wrapping.}
    \label{fig:alpha-bigger}
\end{figure}
\vspace{0.3em}

\begin{notation}[Notation~\ref{not:def-xjxe}, generalized]\label{not:def-xjxe-alpha-bigger}
    For a Laurent polynomial $f$ and a cylindrical end $Z_{\alpha, r}^+$ of the Lagrangian $L_f$, we call the intersection points between $\widetilde{L}_{f,R}$ and $\widetilde{L}_{f,R}^k$
    \begin{itemize}
        \item of the type $x_\alpha^{j}x_\alpha^{e}$ when $h(p_\alpha) = 2j\pi$ and $
        \abs{\alpha}^2 \theta_\alpha^\perp \equiv \varphi_\alpha + \frac{1}{2} R^{-1} a_\alpha^\perp \pmod{2\pi}$
        \item of the type $x_\alpha^{j}x_\alpha^{f}$ when $h(p_\alpha) = 2j\pi$ and $
        \abs{\alpha}^2 \theta_\alpha^\perp \equiv \varphi_\alpha + \pi + \frac{1}{2} R^{-1} a_\alpha^\perp \pmod{2\pi}$
    \end{itemize}
    where $0 \le j \le k$ are integers, $R>0$ is sufficiently large and $h$ is the function in (\ref{eqn:flow-of-HfR}).
\end{notation}

\begin{remark}\label{rem:indexing-xjxe-diag}
    Each point of the type $x_\alpha^j x_\alpha^e$ on the Lagrangian $\widetilde{L}_{f,R}$ can be indexed by the variable $k$ in the expression (\ref{eqn:theta_a-and-theta_perp}) after fixing the real lift $c_1$ of $\arg r$. Recall that we choose $\varphi_\alpha$ to be a real number in Definition~\ref{def:Lf-tilde}, so one can set $c_2 = \varphi_\alpha + \frac{1}{2} R^{-1} a_\alpha^\perp$. Indexing the generators of the same type in $CF^\bullet (\widetilde{L}_{f,R}^{k_0}, \widetilde{L}_{f,R}^{k_1})$ is, however, more complicated since one need to consider the lifts of both Lagrangians. Namely, two such Lagrangians intersecting at a point of the type $x_\alpha^j x_\alpha^e$ are separated by $2j\pi$ in $\theta_\alpha$-direction on the universal cover, so it represents different points (in $\widetilde{L}_{f,R}$) on each lift unless $j$ is the multiple of $\abs{\alpha}^2$. For example, when $\abs{\alpha}^2 = 2$ (see Figure~\ref{fig:alpha-bigger}), there are two different generators of the type $x_\alpha^e$ along the $S^1$ part of the cylinder, and wrapping once makes one such generator on one lift coincide with the other generator on the other lift.
\end{remark}

Due to the remark above, it is more reasonable to fix $\theta_\alpha^\perp$ and vary $\theta_\alpha$ when indexing the generators in $Z_{\alpha, r}^+$. This gives a better description of the triangles counted in the product structure and the module structure (\ref{eqn:module-structure-fib-torus}), since those contained in the cylindrical region have their $\theta_\alpha^\perp$-coordinates vary by at most $O(R^{-1})$.

\begin{notation}\label{not:indexing-xjxe-diag}
    Let $Z_{\alpha,r}^+$ be a cylindrical end of $L_f$, and fix a real lift of $\arg{r}$. Then, for any $k_0, k_1$ and sufficiently large $R>0$, we denote an intersection point $X \in \widetilde{L}_{f,R}^{k_0} \cap \widetilde{L}_{f,R}^{k_1}$ on the cylindrical region $Z_{\alpha,r}^+$ by $x_\alpha^{j_0, j_1} x_\alpha^e$ if there is a lift $\widetilde{X}$ of $X$ to the universal cover of $(\bC^*)^2$ such that
    \begin{itemize}
        \item The $\theta_\alpha^\perp$-coordinate of $\widetilde{X}$ is $\varphi_\alpha + O(R^{-1})$;
        \item The lift of $\widetilde{L}_{f,R}^{k_0}$ and $\widetilde{L}_{f,R}^{k_1}$ containing $\widetilde{X}$ have $\theta_\alpha$-values (before wrapping, i.e. on the region $p_\alpha \le R^2+3R$) equal to $\arg{r} + 2j_0\pi +O(R^{-1})$ and $\arg{r} + 2j_1\pi +O(R^{-1})$, respectively.
    \end{itemize}
\end{notation}

Remark~\ref{rem:indexing-xjxe-diag} explains that the point $x_\alpha^{j_0, j_1} x_\alpha^e$ is of the type $x_\alpha^{j_1 - j_0} x_\alpha^e$. Also, $x_\alpha^{i_0, i_1} x_\alpha^e$ and $x_\alpha^{j_0, j_1} x_\alpha^e$ represent the same point if and only if 
$i_1 - i_0 = j_1 - j_0$ and $ i_0 \equiv j_0 \pmod{\abs{\alpha}^2}$. We will use these notations when finding the precise expression for elements in the wrapped Floer cohomology and their corresponding functions.


\begin{remark}\label{rem:diag-end-local-system}
    When computing the weight of a holomorphic disk $u$ coming from the holonomy along $\round u$, one needs to fix an identification between two fibers for each intersection point of the Lagrangians. For the generators of the type $x_\alpha^j x_\alpha^e$ where $j$ is the multiple of $\abs{\alpha}^2$, such identification can be chosen canonically because the generator corresponds to the same point in both Lagrangians (before the Hamiltonian perturbation). For a general $j$, we fix an identification between fibres over the points $(\theta_\alpha, \theta_\alpha^\perp) = (\arg{r}, \varphi_\alpha + k\pi)$ with different $k$. This assigns a unitary number $\text{hol}(\gamma)$ to each ``$\frac{1}{\abs{\alpha}^2}$-turn'' $\gamma$ compatible with the holonomy along the loops. If the holonomy around the $S^1$ factor of the cylindrical end is $\xi$, we pick one of its $\abs{\alpha}^2$-th roots $\xi_\alpha = \xi^{1/\abs{\alpha}^2}$ and set the holonomy around both each $\frac{1}{\abs{\alpha}^2}$-turn to be $\xi_\alpha$, making it consistent with the translation in $\theta_\alpha$-direction.
\end{remark}

Most of the arguments in previous sections can be extended and proved if we replace the terms $x_\alpha^j x_\alpha^e$ and $x_\alpha^j x_\alpha^f$ by ``a point of the type $x_\alpha^j x_\alpha^e$'' and ``a point of the type $x_\alpha^j x_\alpha^f$'', respectively. In particular, the argument constraints for strips (Theorem~\ref{thm:disk-bound-arg-strip}) show that three types of strips contribute to the differential of a point of the type $x_\alpha^i x_\alpha^e$.

\begin{enumerate}
    \item Holomorphic strips contained in the cylindrical region $\{p_\alpha \ge R^2 + 3R\}$. These are the strips from a point of the type $x_\alpha^i x_\alpha^e$ to the two neighboring points of the type $x_\alpha^i x_\alpha^f$.
    \item Holomorphic strips from $x_\alpha^i x_\alpha^e$ to $x_\alpha^j x_\alpha^f$ with $j < i$.
    \item Holomorphic strips from $x_\alpha^i x_\alpha^e$ to an interior point.
\end{enumerate}

The first type of strips contributes to the highest order terms (i.e. the terms with the highest $x_\alpha$-exponent) in the differential. All such strips have the same symplectic area, and under a suitable spin structure, they cancel out with each other when we take a differential of the sum of the points of the type $x_\alpha^i x_\alpha^e$. Also, under the identification given in Remark~\ref{rem:diag-end-local-system}, the weights of the strips from the local system are all equal to $1$, as the holonomy along each half-turn is identical. Therefore, if a degree 0 element in the Floer chain complex has zero differential, it should include all the highest order generators. 

\begin{lemma}\label{lem:Pop-highest-is-sum}
    Suppose that the Laurent polynomial $f$ and a cylindrical end $Z_{\alpha,r}^+$ of $L_f$ satisfy all the assumptions for Theorem~\ref{thm:disk-bound-arg-strip}. Then, the term with the highest $x_\alpha$-exponent of any closed degree 0 element of the wrapped Floer chain complex is a scalar multiple of the sum of all points of the type $x_\alpha^j x_\alpha^e$.
\end{lemma}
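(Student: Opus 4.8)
The plan is to adapt the differential computation from Section~\ref{subsect:distinguished-basis} to the diagonal end $Z_{\alpha,r}^+$ with $|\alpha|>1$, using the three types of strips just enumerated. Let $\xi = \sum_j c_j\, x_\alpha^j x_\alpha^e + (\text{lower-exponent terms and terms on other ends})$ be a closed degree~$0$ element, where $j$ ranges over the (finitely many) highest value of the $x_\alpha$-exponent appearing, and the $x_\alpha^j x_\alpha^e$ here are the $|\alpha|^2$ distinct points of that type in the cylinder (indexed as in Notation~\ref{not:def-xjxe-alpha-bigger}). First I would observe that the only strips emanating from a point of type $x_\alpha^j x_\alpha^e$ that can produce a \emph{degree~$1$ generator of the same (highest) $x_\alpha$-exponent} are the two strips of the first type: the type~(2) strips strictly lower the exponent by the argument constraint, and type~(3) strips land on interior generators. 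So in $\mu^1(\xi)$ the coefficient of each point of type $x_\alpha^j x_\alpha^f$ (for the top exponent) comes purely from counting the two cylindrical strips connecting it to its two neighbouring points of type $x_\alpha^j x_\alpha^e$ on the $S^1$-factor.

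Next I would carry out the local model on the $(p_\alpha^\perp,\theta_\alpha^\perp)$-plane. After lifting to the universal cover $\bC_\alpha \times \bC_{\alpha^\perp}$ and using that in the region $\{p_\alpha \ge R^2+3R\}$ the Lagrangian and its perturbation are products of a line in $\bC_\alpha$ with the graph of $d\mu_{\varphi_\alpha}$ (resp.\ $d\mu_{\varphi_\alpha + kR^{-1}a_\alpha^\perp}$) in $\bC_{\alpha^\perp}$, the $u_\alpha$-component is constant (Remark~\ref{rem:strips-in-cyl}) and the strip is governed entirely by the $\bC_{\alpha^\perp}$-component: a Floer strip of the standard Morse model for $-\cos$ on $S^1$, from a minimum of $\mu$ to an adjacent maximum. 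The minima and maxima are equally spaced (spacing $2\pi/|\alpha|^2$ in $\theta_\alpha^\perp$), the two strips adjacent to a given maximum have equal symplectic area (by Theorem~\ref{thm:action-ftnal}, since the two $\bC_\alpha$-lifts and the two $g$-function values match, the area depends only on the $\mu$-model and is the same for every pair of neighbours), and after rescaling (Notation~\ref{not:action-rescale}) these $T$-weights are all equal. The holonomy weights are all equal to $1$: by Remark~\ref{rem:diag-end-local-system} the identification of fibres is chosen so that each ``$\tfrac{1}{|\alpha|^2}$-turn'' carries holonomy $\xi_\alpha$, and each of these two strips sweeps exactly one such turn, contributing the same $\xi_\alpha$. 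The Pin/spin structure on the $S^1$-factor (Notation~\ref{not:PoP-brane-structure}) is chosen precisely so that the two strips abutting each maximum carry \emph{opposite signs}.

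Therefore, writing $P_j := x_\alpha^j x_\alpha^e$ for the $|\alpha|^2$ points of type $x_\alpha^j x_\alpha^e$ cyclically and $Q_j := x_\alpha^j x_\alpha^f$ for the maxima, the top-exponent part of $\mu^1$ acts (up to a common nonzero scalar coming from the equal weights) by $P_j \mapsto Q_{j} - Q_{j-1}$ in the cyclic indexing; equivalently the matrix of this part is the ``incidence matrix of a cycle'', whose kernel is one-dimensional, spanned by $\sum_j P_j$. Hence for $\mu^1(\xi)$ to have vanishing top-exponent component the coefficients $c_j$ must all be equal, i.e.\ the highest-exponent part of $\xi$ is a scalar multiple of $\sum_j x_\alpha^j x_\alpha^e$, which is the claim. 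I expect the main obstacle to be the sign bookkeeping: verifying that the chosen brane/Pin structure genuinely makes the two strips at each maximum cancel (rather than, say, contributing $Q_j + Q_{j-1}$, whose incidence matrix can be invertible when $|\alpha|^2$ is odd); this is where the specific spin structure in Notation~\ref{not:PoP-brane-structure} and the computation in Section~6 of \cite{Cho1} must be invoked carefully, and it is also the step that uses $|\alpha|=1$ implicitly in the earlier non-diagonal arguments.
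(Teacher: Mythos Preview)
Your proposal is correct and follows the same approach as the paper: the paper's proof is just the paragraph immediately preceding the lemma (the lemma itself is marked $\square$ without further argument), and you have faithfully expanded that sketch---isolating the top-exponent contribution to $\mu^1$ via the three strip types, reducing to the cyclic Morse model on the $S^1$-factor, and reading off the one-dimensional kernel. Your caveat about the sign bookkeeping is apt and matches the paper's own level of detail, which simply asserts ``under a suitable spin structure, they cancel out'' without carrying out the verification.
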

$\hfill\square$

Consider the filtration $\left\{ W_{f,k} \right\}_k$ of $HW^\bullet (L_f, L_f)$ given by the subspace $W_{f,k}$ consisting of closed elements generated by $x_\alpha^j x_\alpha^e$ with $j \le k$. There is a natural isomorphism
$$ W_{f,k} \xleftarrow{\sim} HF^0 (\widetilde{L}_{f,R}^k , \widetilde{L}_{f,R}^0)$$ 
for sufficiently large $R>0$ by Proposition~\ref{prop:htpy-mtd-q-iso} and Proposition~\ref{prop:HW-is-limit-HfR}. Also, the lemma above implies that the quotient $W_{f, k} / W_{f, k-1}$ is contained in a space generated by the sum of the points of the type $x_\alpha^k x_\alpha^e$ for each $\alpha$. In other words, increasing the wrapping number $k$ by 1 would still add at most one generator on each leg.

Returning to $L_\PoP$, we choose $(a_1, a_2)$ in Definition~\ref{def:Hf-tilde} to be a pair of distinct negative numbers, so that the innermost generators $x_\alpha^0 x_\alpha^e$ only appear in the diagonal cylindrical end. Lemma~\ref{lem:Riemann-translation-intersection} implies that there is $2g(L_\PoP) + b(L_\PoP) - 2 = 0 + 3 - 2 = 1$ intersection point in the interior region. Also, the argument in Section~\ref{subsect:distinguished-basis} relates the Floer theory of $L_\PoP$ to the corresponding tropical Lagrangian constructed in \cite{Hic1}. The Lagrangians $\widetilde{L}_{f}^k$ are now mirror to coherent sheaves on the toric variety $\mathbb{P}^2$, and wrapping once from $\widetilde{L}_{f}^{k-1}$ to $\widetilde{L}_{f}^k$ is equivalent to the twisting $- \otimes \mathcal{L}(D)$ with its canonical toric divisor $D$.

\begin{theorem}[Theorem~\ref{thm:HW0-cohomology-group}, for Pair of Pants]
    If we set $f = f_\PoP$ and $\bK = \Lambda$, the following is true for large enough $R > 0$.
    \begin{itemize}
        \item Under the quasi-isomorphism (\ref{eqn:mono-adm-mirror-symmetry}), the Lagrangian $L_{f,R}$ is mirror to a structure sheaf $\mathcal{O}_{D_{\text{pants}}}$. In particular, there exists a ring isomorphism
        \begin{equation}\label{eqn:PoP-HW0-iso-to-qtt}
            HW^0 (L_\PoP, L_\PoP) \cong \mathbb{K}[z_1^{\pm}, z_2^{\pm}] / f.
        \end{equation}
        \item For any integer $k$, the Floer cohomology group $HF^0 (\widetilde{L}_{f,R}^k , \widetilde{L}_{f,R}^0)$ is isomorphic to the subgroup of $\mathbb{K}[z_1^{\pm}, z_2^{\pm}] / f$ generated by the monomials $z_1^{m_1} z_2^{m_2}$ with
        $$ m_1 \ge -k, \quad m_2 \ge -k, \quad m_1 + m_2 \le k. $$
    \end{itemize}
\end{theorem}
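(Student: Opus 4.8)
The plan is to mimic the proof of Theorem~\ref{thm:HW0-cohomology-group}, reducing both bullets to the mirror symmetry inputs of Section~\ref{subsect:distinguished-basis} together with a computation on the rational curve $\overline{D_\PoP}\subseteq\mathbb{P}^2$. First I would invoke the three comparison facts recalled there: $\widetilde{L}_{\PoP,R}$ is Hamiltonian isotopic to Hicks' tropical Lagrangian $L(\phi_\PoP)$; it is monomially admissible for a monomial division for a fan of $\mathbb{P}^2$; and wrapping once from $\widetilde{L}_{\PoP,R}^{k-1}$ to $\widetilde{L}_{\PoP,R}^k$ corresponds to the twist $-\otimes\mathcal{L}(D)$ by the toric anticanonical divisor $D$ of $\mathbb{P}^2$ (the sum of its three toric lines). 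Since $L_\PoP$ has only three cylindrical ends, there are no cycles in $H_1(L_\PoP)$ that are nullhomotopic in $(\bC^*)^2$ (Lemma~\ref{lem:unobstructed-Lf} and the remark following it), so $L_\PoP$ and all the $\widetilde{L}_{\PoP,R}^k$ are unobstructed and, exactly as in the $L_q$ discussion, the mirror equation carries no Novikov-valuation twist, because the coefficients $1,-1,-1$ of $f_\PoP$ all have absolute value $1$. Applying Theorem~\ref{thm:L-phi-is-O-D'} under Assumption~\ref{ass:mono-adm-non-exact} then gives that $\widetilde{L}_{\PoP,R}$ is mirror to $\mathcal{O}_{D'}$ for a line $D'$ on $\mathbb{P}^2$ rationally equivalent to $D_\PoP$; to upgrade this to $D'=D_\PoP$ I would run the fiber-torus/family-Floer argument of Remark~\ref{rem:fiber-torus-support} under Assumption~\ref{ass:fib-torus-is-skyscraper}, computing the valuation support of $\widetilde{L}_{\PoP,R}^k$ along the three ends: this forces the mirror equation to have one term on each ray of the fan with coefficient of valuation $0$, and the brane structure and trivial local system of Notation~\ref{not:PoP-brane-structure}, chosen precisely to reproduce the signs of $f_\PoP$, pin the coefficients to $1,-1,-1$, so $D'=D_\PoP$ up to the $(\bK^*)^2$-rescaling action. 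Then Proposition~\ref{prop:HW-is-limit-HfR} identifies $HW^0(L_\PoP,L_\PoP)$ with $\colim{k\to\infty}\mathrm{Hom}(\mathcal{O}_{D_\PoP},\mathcal{O}_{D_\PoP}\otimes\mathcal{L}(kD))$, i.e. with the ring of regular functions on $D_\PoP$ (viewed in $(\bK^*)^2$), which is $\bK[z_1^{\pm},z_2^{\pm}]/f_\PoP$; this is the first bullet and the isomorphism (\ref{eqn:PoP-HW0-iso-to-qtt}).

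For the second bullet I would make the finite-level identification explicit on the mirror side. By the third comparison fact, $HF^0(\widetilde{L}_{\PoP,R}^k,\widetilde{L}_{\PoP,R}^0)$ corresponds to $H^0(\overline{D_\PoP},\mathcal{O}(kD)|_{\overline{D_\PoP}})$, where $\overline{D_\PoP}\cong\mathbb{P}^1$ is the line $\{z_0=z_1+z_2\}$, which meets each of the three toric lines of $\mathbb{P}^2$ transversally in one point; call these $p_0,p_1,p_2$, the three punctures, lying on the ends in the diagonal, $(-1,0)$, and $(0,-1)$ directions. Thus the group is the space of rational functions on $\mathbb{P}^1$ with a pole of order at most $k$ at each $p_i$, of dimension $3k+1$. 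Parametrizing $\overline{D_\PoP}$ by $z_1=t$, $z_2=1-t$ with $p_1,p_2,p_0$ at $t=0,1,\infty$, the monomial $z_1^{m_1}z_2^{m_2}=t^{m_1}(1-t)^{m_2}$ has poles of orders $\max(0,-m_1)$, $\max(0,-m_2)$, $\max(0,m_1+m_2)$ at $p_1,p_2,p_0$; hence the monomials lying in this space are exactly those with $m_1\ge -k$, $m_2\ge -k$, $m_1+m_2\le k$, and a partial-fractions count shows their span is all of the $(3k+1)$-dimensional space. Finally I would cross-check against the symplectic side: $HF^0(\widetilde{L}_{\PoP,R}^k,\widetilde{L}_{\PoP,R}^0)\cong W_{f_\PoP,k}$ for large $R$ by Proposition~\ref{prop:htpy-mtd-q-iso} and Proposition~\ref{prop:HW-is-limit-HfR}; here $W_{f_\PoP,0}$ is one-dimensional, spanned by the cohomological unit (since $L_\PoP$ is mirror to $\mathcal{O}_{D_\PoP}$ and $\mathrm{Hom}(\mathcal{O}_{D_\PoP},\mathcal{O}_{D_\PoP})=\bK$), while Lemma~\ref{lem:Pop-highest-is-sum} gives $\dim W_{f_\PoP,k}/W_{f_\PoP,k-1}\le 3$ (at most one new highest-order generator per cylindrical end, the one on the diagonal end being the sum of its two points of type $x_\alpha^k x_\alpha^e$). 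This yields $\dim W_{f_\PoP,k}\le 3k+1$, which together with the mirror identification forces equality and the stated description.

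The main obstacle I expect is the diagonal end, where $|\alpha|^2=2$ and Assumption~\ref{ass:cyl-end-is-not-diagonal} is dropped. All the argument-constraint arguments (Theorem~\ref{thm:disk-bound-arg-strip} and its triangle and fiber-torus variants) must be rerun in the generalized indexing of Notation~\ref{not:def-xjxe-alpha-bigger} and Notation~\ref{not:indexing-xjxe-diag}, keeping careful track of the holonomy bookkeeping $\xi_\alpha=\xi^{1/|\alpha|^2}$ of Remark~\ref{rem:diag-end-local-system} so that the strips contained in the cylindrical region cancel in pairs; and one must verify that the two points of type $x_\alpha^j x_\alpha^e$ on that end jointly contribute a single generator (their sum) to $HF^0$, matching the single monomial of pole order $j$ at $p_0$ on the $\mathbb{P}^1$ side. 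Pinning the exact mirror divisor $D'$ via the fiber-torus computation at the diagonal end, rather than merely up to rational equivalence, is the most delicate point, since there the Floer cohomology with a fiber torus reduces to two circles in the $(p_\alpha^\perp,\theta_\alpha^\perp)$-plane carrying a nontrivial spin structure. Once the diagonal end is under control, the remaining steps, namely the dimension count $3k+1$, the partial-fractions identification of the monomial span, and the filtration estimate, are routine.
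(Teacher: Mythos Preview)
Your proposal is correct and follows the same route the paper takes. In fact the paper states this theorem without an explicit proof, relying on the preceding discussion in Section~\ref{subsect:distinguished-basis} (the three comparison facts identifying $\widetilde{L}_{\PoP,R}$ with Hicks' $L(\phi_\PoP)$, monomial admissibility for the fan of $\mathbb{P}^2$, and wrapping corresponding to twisting by the toric anticanonical divisor) together with Theorems~\ref{thm:mono-adm-mirror-symmetry} and~\ref{thm:L-phi-is-O-D'}; your write-up supplies the details the paper omits, including the explicit $H^0(\overline{D_\PoP},\mathcal{O}(kD))$ computation on $\mathbb{P}^1$ and the dimension cross-check via Lemma~\ref{lem:Pop-highest-is-sum}.
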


If we denote the subgroup in the theorem by $W_{f,k}'$, one can check that $W_{f,k}' / W_{f,k-1}'$ is 3-dimensional and generated by $z_1^{-k}, z_2^{-k}$, and $z_1^k$ (or any monomial of the form $z_1^j z_2^{k-j}$ with $0 \le j \le k$). Comparing this with the filtration $\{W_{f,k}\}$, this is the case when the dimension of $W_{f,k} / W_{f,k-1}$ is maximal and generated by the sum of points of the type $x_\alpha^k x_\alpha^e$ for each $\alpha$.

\begin{proposition} [Distinguished Basis of $HW^0(L_\PoP, L_\PoP)$] \label{prop:distinguished-basis-of-Hw0-PoP}
    For $f = f_\PoP$, the following is true for $HF^0 (\widetilde{L}_{f,R}^k , \widetilde{L}_{f,R}^0)$ and $HW^0 (L_f, L_f)$ for sufficiently large $R > 0$.

    \begin{enumerate}
        \item The sum of points of the type $x_{\alpha_0}^0 x_{\alpha_0}^e$, where $\alpha_0 = (1,-1)$ is the orthogonal direction of the diagonal cylindrical end, is the cohomological unit of $HW^0 (L_f, L_f)$.
        \item \label{item:prop:distinguished-basis-of-Hw0-PoP} For each cylindrical end $Z_{\alpha, r}^+$ of $L_f$ and a positive integer $j$, there exists a closed element $X_\alpha^j$ of $HF^0 (\widetilde{L}_{f,R}^k , \widetilde{L}_{f,R}^0)$ such that
        \begin{itemize}
            \item $X_\alpha^j$ is generated by the points of the type $x_\alpha^i x_\alpha^e$ with $i \le j$, and
            \item the term in $X_\alpha^j$ with the highest $x_\alpha$-exponent is equal to the sum of points of the type $x_\alpha^j x_\alpha^e$.
        \end{itemize}
        \item For any choice of closed elements $X_\alpha^j$ in (\ref{item:prop:distinguished-basis-of-Hw0-PoP}) for each positive integer $j$, the cohomological unit and the elements $X_\alpha^j$ form a basis of $HW^0(L_f, L_f)$.
    \end{enumerate}

\end{proposition}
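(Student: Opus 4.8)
The plan is to run the filtration-and-leading-term argument from the proof of Proposition~\ref{prop:distinguished-basis-of-Hw0}, bookkeeping ``points of the type $x_\alpha^j x_\alpha^e$'' in place of individual intersection points. I will use throughout that $CF^0(\widetilde{L}_{f,R}^k,\widetilde{L}_{f,R}^0)$ consists only of cylindrical generators (the interior generators have degree $1$), so $HF^0=\ker\mu^1\subseteq CF^0$ with no quotient; that $W_{f,k}\cong HF^0(\widetilde{L}_{f,R}^k,\widetilde{L}_{f,R}^0)$ for $R$ large, as established just above the proposition; and the dimension data recorded there: $\dim W_{f,0}=1$ and $\dim(W_{f,k}/W_{f,k-1})=3$ for $k\ge 1$, with the quotient spanned by the classes of the chains $S_\alpha^k:=\sum(\text{points of the type }x_\alpha^k x_\alpha^e)$ over the three cylindrical ends $\alpha$. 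I also use that Corollary~\ref{cor:prod-structure-basic} and Lemma~\ref{lem:Pop-highest-is-sum} hold verbatim at the diagonal end once single generators are replaced by the sums $S_\alpha^j$; in particular the two innermost generators on the diagonal end are orthogonal idempotents, so $(S_{\alpha_0}^0)^2=S_{\alpha_0}^0$.

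For (1): the unit of $HW^0(L_\PoP,L_\PoP)$ is the image of the identity of $HF^0(\widetilde{L}_{f,R}^0,\widetilde{L}_{f,R}^0)\cong H^0(L_\PoP)=W_{f,0}$. The only degree-$0$ generators at wrapping level $0$ are the innermost generators, which by our choice of $(a_1,a_2)$ as a pair of distinct negative numbers lie only on the diagonal end $Z_{\alpha_0,r}^+$, and Lemma~\ref{lem:Pop-highest-is-sum} forces the one-dimensional kernel of $\mu^1$ there to be spanned by $S_{\alpha_0}^0$. Hence the unit equals $c\,S_{\alpha_0}^0$ for some scalar $c$, and $c\,S_{\alpha_0}^0=(c\,S_{\alpha_0}^0)^2=c^2 S_{\alpha_0}^0$ gives $c=1$.

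For (2): I argue by induction on $j$. The map sending a closed element of $W_{f,j}$ to its tuple of exponent-$j$ leading parts is well defined with values in $\bigoplus_\alpha \bK\,S_\alpha^j$ by Lemma~\ref{lem:Pop-highest-is-sum}, and its kernel is exactly $W_{f,j-1}$ (a closed element all of whose terms have $x_\alpha$-exponent $\le j-1$ lies in $W_{f,j-1}$); since both $W_{f,j}/W_{f,j-1}$ and the target are three-dimensional, it induces an isomorphism. Pulling back the evident basis produces, for each end $\alpha$, a closed element with exponent-$j$ part equal to $S_\alpha^j$ on $Z_\alpha$ and no exponent-$j$ terms on the other ends; subtracting suitable multiples of the previously constructed $X_\beta^i$ with $\beta\neq\alpha$ and $i\le j-1$ (which involve only the end $Z_\beta$), working downward in $i$, clears all terms on the ends $\beta\neq\alpha$ without disturbing $Z_\alpha$. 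The resulting element is the desired $X_\alpha^j$.

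For (3): at each level $k$ the $1+3k$ elements $e$ and $\{X_\alpha^j:1\le j\le k\}$ are linearly independent — inspect the largest exponent $m$ occurring with a nonzero coefficient; on each end $\alpha$ only $X_\alpha^m$ contributes an exponent-$m$ term, equal to $c_{\alpha,m}S_\alpha^m$, forcing $c_{\alpha,m}=0$, a contradiction — and since $\dim W_{f,k}=1+3k$ they form a basis of $W_{f,k}$; as $HW^0=\bigcup_k W_{f,k}$ with these bases nested, $e$ together with all $X_\alpha^j$ ($j\ge 1$) is a basis of $HW^0(L_\PoP,L_\PoP)$. Any other collection satisfying (2) differs from this one by elements of strictly lower filtration, so the transition matrix is unitriangular and the new collection is again a basis. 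The step I expect to require the most care is the claim in the setup that Corollary~\ref{cor:prod-structure-basic}, Lemma~\ref{lem:Pop-highest-is-sum}, and the cancellation of cylindrical strips and triangles really survive at the diagonal end: one must verify that the correct invariant objects are the sums $S_\alpha^j$ over the $|\alpha|^2$ points of a given type, and that the $|\alpha|^2$-th-root holonomies together with the chosen spin structure (Notation~\ref{not:PoP-brane-structure}, Remark~\ref{rem:diag-end-local-system}) conspire so that these sums multiply like the corresponding monomials and so that $\mu^1(S_\alpha^j)$ carries no exponent-$j$ component. The surrounding discussion reduces this to computations already sketched there.
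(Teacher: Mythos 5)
Your proposal is correct and follows essentially the same route as the paper: the paper's own proof concentrates on part (2), running exactly your induction — take a closed class whose leading term is $S_\alpha^j$, decompose it by cylindrical end, and use Lemma~\ref{lem:Pop-highest-is-sum} to peel off multiples of the previously constructed $X_\beta^i$ until only the $Z_\alpha$-supported part remains — while treating (1) and (3) as consequences of the surrounding discussion (the dimension count $\dim W_{f,k}/W_{f,k-1}=3$ and the unit argument parallel to Lemma~\ref{lem:unit-of-Hw0}), which you fill in with the same ideas.
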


\begin{proof}
    We only need to show that $X_\alpha^j$ can be formed by points on the same cylindrical end, and we show this by induction on $j$. Assume that the conclusion is true for every $i < j$, and choose any closed element $X$ such that the highest $x_\alpha$-exponent is equal to the sum of points of the type $x_\alpha^j x_\alpha^e$, and the exponents of the other two ends are less than $j$. Now, consider the decomposition
    $$ X = X_\alpha + X_{\alpha_1} + X_{\alpha_2} $$
    where the terms on the right are generated by the points on $Z_{\alpha, r}^+$ and the other two legs $Z_{\alpha_1, r_1}^+$, $Z_{\alpha_2, r_2}^+$, respectively. Lemma~\ref{lem:Pop-highest-is-sum} implies that the highest term of $X_{\alpha_1}$ is a scalar multiple of the sum of all points of the type $x_{\alpha_1}^i x_{\alpha_1}^e$ with $i < j$, so we can subtract a multiple of $X_{\alpha_1}^i$ from $X_{\alpha_1}$ to reduce the highest exponent. Repeating this process on $X_{\alpha_1}$ and $X_{\alpha_2}$ results in only $X_\alpha$ being left, which concludes that $X_\alpha$ is closed and it is the element $X_\alpha^j$ we want.
\end{proof}


The generators $X_\alpha^j$ are not uniquely determined but are determined up to generators with a lower $x_\alpha$-exponent. However, for the two cylindrical ends with $\abs{\alpha}=1$, one can follow the arguments in the previous sections and get similar results.

\begin{corollary}[HMS of the Pair of Pants, Part 1]\label{cor:HMS-for-LPoP-nondiag}
    For the cylindrical ends $Z_{\beta, s}^+$ of $L_\PoP$ in the negative $x$- and $y$- direction,
    \begin{enumerate}
        \item The generator $x_\beta^i x_\beta^e$ is closed for $i > 0$.
        \item Under the isomorphism (\ref{eqn:PoP-HW0-iso-to-qtt}), $x_\beta^i x_\beta^e$ corresponds to the function $z_1^{-i}$ and $z_2^{-i}$, respectively.
    \end{enumerate}
\end{corollary}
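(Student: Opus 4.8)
The plan is to transport the analysis of Sections~\ref{subsect:distinguished-basis}--\ref{subsect:basis-functions} to the two non-diagonal ends of $L_\PoP$. These ends have $\abs{\beta}=1$, so Theorems~\ref{thm:disk-bound-arg-strip}, \ref{thm:disk-bound-arg-triangle}, \ref{thm:disk-bound-arg-fibertorus}, Lemma~\ref{lem:arg-constraints-different-cylinders} and Corollary~\ref{cor:prod-structure-basic} apply to them verbatim, while the diagonal end and the single interior generator of $L_\PoP$ enter only as spectator punctures on $D_\PoP$.

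For part (1), I would prove closedness of $x_\beta^i x_\beta^e$ by induction on $i\ge 1$ from Proposition~\ref{prop:distinguished-basis-of-Hw0-PoP}. Since $\abs{\beta}=1$, the cocycle $X_\beta^i$ produced there is supported entirely on $Z_\beta^+$ and has the form $x_\beta^i x_\beta^e+\sum_{1\le l<i}c_l\,x_\beta^l x_\beta^e$ (on a non-diagonal end the $x_\beta$-exponents start at $1$, so there is no innermost term to correct with). The base case $i=1$ forces $X_\beta^1=x_\beta^1 x_\beta^e$, which is therefore closed; and if $x_\beta^l x_\beta^e$ is closed for every $l<i$, then $\sum_{l<i}c_l\,x_\beta^l x_\beta^e$ is closed, hence so is $x_\beta^i x_\beta^e=X_\beta^i-\sum_{l<i}c_l\,x_\beta^l x_\beta^e$.

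For part (2), I would identify the image of the cocycle $x_\beta^i x_\beta^e$ under (\ref{eqn:PoP-HW0-iso-to-qtt}) via the fiber-torus module structure (\ref{eqn:module-structure-fib-torus}), as in the proof of Theorem~\ref{thm:HMS-for-Lq}. Pushing $F_\rho$ to infinity along $Z_\beta^+$ and invoking Theorem~\ref{thm:disk-bound-arg-fibertorus}, the only disk contained in the cylindrical region contributing to $x_\beta^i x_\beta^e\cdot e_\rho$ has $x_\beta$-exponent $i$ and, after the rescaling of Notation~\ref{not:action-rescale}, weight $T^{2\pi i\,p_\beta}$ times the (here trivial) holonomy, i.e. the monomial $z_1^{-i}$ (resp. $z_2^{-i}$) with coefficient $1$; every other contributing disk has nonpositive $x_\beta$-exponent and hence contributes only nonnegative powers of $z_1$ (resp. $z_2$). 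So the principal part of the corresponding function at the puncture dual to $Z_\beta^+$ is exactly $z_1^{-i}$ (resp. $z_2^{-i}$). Running the same argument with $F_\rho$ pushed out along each of the other two ends of $L_\PoP$ — using a maximum-principle argument in the spirit of Theorem~\ref{thm:Gromov-compactness} and Lemma~\ref{lem:arg-constraints-different-cylinders}, and the fact that $p_\gamma$ is constant on any fiber torus, to exclude disks joining a generator on $Z_\beta^+$ to a fiber torus far out along a different end — shows the function has no pole at the remaining two punctures, including the diagonal one. Since $D_\PoP$ is a thrice-punctured sphere, a function with no poles is constant, so $x_\beta^i x_\beta^e$ corresponds to $z_1^{-i}$ (resp. $z_2^{-i}$) up to an additive constant.

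Finally I would eliminate that constant by the product-structure computation that removes $B_i$ in the proof of Theorem~\ref{thm:HMS-for-Lq}: squaring the cocycle $x_\beta^i x_\beta^e$, using Corollary~\ref{cor:prod-structure-basic}(1) together with the inductive hypothesis $x_\beta^l x_\beta^e\mapsto z_1^{-l}$ for $l<i$, and matching coefficients. Two points will require extra care. First, a non-diagonal end of $L_\PoP$ carries no innermost generator, so Corollary~\ref{cor:prod-structure-basic}(3) does not apply and one cannot simply drop the top index in the product expansion; one must feed in additional products such as $\mu^2(x_\beta^1 x_\beta^e, x_\beta^{i} x_\beta^e)$ to close the bookkeeping. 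Second — and this is the step I expect to be the real obstacle — the vanishing of the pole at the diagonal puncture: excluding the disks connecting a generator on the horizontal or vertical end $Z_\beta^+$ to a fiber torus far out along the diagonal end $Z_{\alpha_0}^+$ must be carried out with the $\abs{\alpha_0}^2$-to-one coordinate bookkeeping of Remark~\ref{rem:alpha-bigger}, Notation~\ref{not:indexing-xjxe-diag} and the local-system conventions of Remark~\ref{rem:diag-end-local-system}, checking in particular that the holonomy attached to each $\tfrac{1}{\abs{\alpha_0}^2}$-turn does not spoil the count. Everything else is a verbatim translation of the $\abs{\alpha}=1$ arguments already established for $L_q$.
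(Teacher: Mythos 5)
Your part (1) and the pole-structure half of part (2) follow the paper's proof essentially verbatim: the paper likewise deduces closedness of $x_\beta^i x_\beta^e$ by induction from Proposition~\ref{prop:distinguished-basis-of-Hw0-PoP} (starting from $X_\beta^1 = x_\beta^1 x_\beta^e$), and likewise invokes the fiber-torus module structure together with the generalized version of Theorem~\ref{thm:disk-bound-arg-fibertorus} to get the principal part $z_1^{-i}$ (resp.\ $z_2^{-i}$) at the mirror puncture and to exclude poles at the other two punctures, including the diagonal one. The $\abs{\alpha}^2$-to-one bookkeeping you flag as ``the real obstacle'' is exactly the machinery the paper has already set up in Section~\ref{subsect:pair-of-pants} (Remarks~\ref{rem:alpha-bigger} and \ref{rem:diag-end-local-system}, Notation~\ref{not:indexing-xjxe-diag}), so that step is routine rather than the sticking point.

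The genuine gap is in your elimination of the additive constant. The paper explicitly does \emph{not} run the squaring argument here: it notes that this part of the proof of Theorem~\ref{thm:HMS-for-Lq} is dropped precisely because the non-diagonal ends carry no innermost generators. You correctly observe that Corollary~\ref{cor:prod-structure-basic}(3) is unavailable, but the proposed repair — feeding in products such as $\mu^2(x_\beta^1 x_\beta^e, x_\beta^{i} x_\beta^e)$ — provably cannot close the bookkeeping. Write $x_\beta^i x_\beta^e \mapsto z_1^{-i} + c_i$; Corollary~\ref{cor:prod-structure-basic}(1) only gives $\mu^2(x_\beta^1 x_\beta^e, x_\beta^i x_\beta^e) = x_\beta^{i+1}x_\beta^e + \sum_{1 \le l \le i} a_l\, x_\beta^l x_\beta^e$ with \emph{unknown} $a_l$, and matching coefficients yields $a_i = c_1$, $a_1 = c_i$ (and $a_1 = 2c_1$ when $i=1$), together with $c_{i+1} = -c_1 c_i$. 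This system is solvable for \emph{every} value of $c_1$, with the top-index coefficient $a_{\max(i,j)}$ — exactly the term you cannot drop without an innermost generator — absorbing the ambiguity. To pin down $c_1 = 0$ you need an input from outside the single end: either show directly that the ``thin'' triangles responsible for the top-index term in $\mu^2(x_\beta^1x_\beta^e, x_\beta^1x_\beta^e)$ do not contribute (for $L_\PoP$ this is plausible via the sharpened argument constraint coming from the vanishing of $\delta: H_2((\bC^*)^2, L_\PoP) \to H_1(L_\PoP)$, as exploited at the end of Section~\ref{subsect:pair-of-pants}), or compute the wrapping-number-zero contributions to the fiber-torus module structure at $Z_{\beta,s}^+$ itself. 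As written, your argument establishes the correspondence only up to an additive constant.
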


\begin{proof}
    The proof is the same as Proposition~\ref{prop:distinguished-basis-of-Hw0} and Theorem~\ref{thm:HMS-for-Lq}. Proposition~\ref{prop:distinguished-basis-of-Hw0-PoP} shows that $X_\beta^1 = x_\beta^1 x_\beta^e$ is closed, and we can prove that $x_\beta^j x_\beta^e$ is closed by induction on $i$ and the fact that each $X_\beta^j$ is closed. One needs to use the generalized version of Theorem~\ref{thm:disk-bound-arg-fibertorus} to show that the corresponding function does not have a pole at the puncture in the diagonal cylindrical end. The constant term is dropped from the proof of Theorem~\ref{thm:HMS-for-Lq} since neither cylindrical ends contain innermost generators.
\end{proof}

For the remainder of this section, consider only the case when $Z_{\alpha,r}^+$ is the diagonal end of $L_\PoP$. One can find the function corresponding to the generator $X_\alpha^j$ under (\ref{eqn:PoP-HW0-iso-to-qtt}) by observing the module structure (\ref{eqn:module-structure-fib-torus}). Let $Z_{\alpha,r}^+$ be the diagonal cylindrical end of $L_\PoP$. Each fiber torus $F_\rho$ on the cylindrical region is perturbed by the function $\mu(\theta_1, \theta_2) = \mu_{\varphi_\alpha} (\abs{\alpha}^2 \theta_\alpha^\perp)$, so that there are $2 \abs{\alpha}^2 = 4$ intersection points with $\widetilde{L}_{f,R}$ when $\val{\rho} > R^2 + 4R$. Half of these points have degree 0 and are located $O(R^{-1})$-close to $\abs{\alpha}^2$ different points satisfying $\theta_\alpha \equiv \arg r \pmod{2\pi}$ and $\abs{\alpha}^2 \theta_\alpha^\perp \equiv \varphi_\alpha \pmod{2\pi}$. The other half have degree 0 and are located $O(R^{-1})$-close to $\abs{\alpha}^2$ different points satisfying $\theta_\alpha \equiv \arg r \pmod{2\pi}$ and $\abs{\alpha}^2 \theta_\alpha^\perp \equiv \varphi_\alpha \pmod{2\pi}$. Following the notation in the previous section, we call the former \textit{of the type} $e$ and the latter \textit{of the type} $f$. Additionally, let us index the points of the type $e$ for a detailed calculation.
\begin{notation}\label{not:indexing-typee-diag}
    We denote the intersection point $E \in \widetilde{L}_{f,R} \cap F_\rho$ of type $e$ by $e_j$ for an integer $j$ (modulo $\abs{\alpha}^2$) if there is a lift $\widetilde{E}$ of $E$ to the the universal cover of $(\bC^*)^2$ such that
    \begin{itemize}
        \item The $\theta_\alpha^\perp$-coordinate of $\widetilde{E}$ is $\varphi_\alpha + O(R^{-1})$;
        \item The $\theta_\alpha$-coordinate of $\widetilde{E}$ is $\arg{r} + 2j\pi$.
    \end{itemize}
    Here, we use the same real lift of $\arg{r}$ as in Notation~\ref{not:action-rescale}.
\end{notation}

The argument constraint implies that two types of strips contribute to the differential of $CF^\bullet(F_\rho, L_f)$. First, the $S^1$ factors of $L_f$ and $F_\rho$ in the $(p_\alpha^\perp, \theta_\alpha^\perp)$-plane have $2 \abs{\alpha}^2$ strips between them. These are the only strips bounded by these Lagrangians lying inside the cylindrical region, and they are equipped with a spin structure in a way that the differential of the sum of all points of the type $e$, without holonomy, cancels out all the corresponding terms. Also, some strips pass through the interior region, and their Novikov valuation diverges to infinity as $R \to \infty$. Therefore, if we equip the fiber torus with suitable holonomy so that $HF^\bullet(F_\rho, L_f)$ is nonzero, which corresponds to the case when $f(\rho)=0$, then we get the following by observing the terms with the lowest Novikov valuation.

\begin{lemma}\label{lem:generator-of-Frho-Lf}
    Fix a sufficiently large $R>0$ and let $\rho \in (\Lambda^*)^2$ be a point such that $f_\PoP(\rho) = 0$, and that $\val(\rho) = (-2\pi p_1, -2\pi p_2)$ satisfy $p_\alpha > R^2 + 4R$. Then,
    \begin{enumerate}
        \item $HF^\bullet(F_\rho, L_f)$ is isomorphic to $H^\bullet(S^1)$ as a graded algebra.
        \item Any degree 0 generator of $HF^\bullet(F_\rho, L_f)$ is of the form $c_1 e_1 + c_2 e_2$, where $c_1, c_2 \in \bK$ are nonzero elements in the Novikov field with the same lowest valuation term.
    \end{enumerate}
\end{lemma}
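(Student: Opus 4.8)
The plan is to read both statements off the Floer complex $CF^\bullet(F_\rho,L_f)$, whose differential is pinned down very tightly by the argument constraints together with one energy estimate. First I would set up the complex. Since $\val(\rho)=(-2\pi p_1,-2\pi p_2)$ has $p_\alpha>R^2+4R$, the torus $F_\rho$ meets $L_f$ only in the cylindrical region of the diagonal end $Z_{\alpha,r}^+$, where $|\alpha|^2=2$; after perturbing the $S^1$-factor of $F_\rho$ by $\mu_{\varphi_\alpha}(|\alpha|^2\theta_\alpha^\perp)$ there are exactly $2|\alpha|^2=4$ transverse intersections — two of degree $0$, the points $e_1,e_2$ of Notation~\ref{not:indexing-typee-diag}, and two of degree $1$, say $f_1,f_2$. (That these are all of them, and that no strip escapes to $p_\alpha=+\infty$, follows as in Theorem~\ref{thm:Gromov-compactness}, applying the maximum principle to $p_\alpha\circ u$ and using that $p_\alpha$ is constant along $F_\rho$.) Thus $CF^\bullet(F_\rho,L_f)=\bK e_1\oplus\bK e_2$ in degree $0$ and $\bK f_1\oplus\bK f_2$ in degree $1$, so the Euler characteristic is $0$ and $\dim HF^0=\dim HF^1$.

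Next I would describe $\mu^1\colon CF^0\to CF^1$. By the argument constraints (the analogue of Theorem~\ref{thm:disk-bound-arg-strip} for strips between $F_\rho$ and $L_f$, as in the discussion preceding this lemma), a strip counted in $\mu^1(e_i)$ is either (i) contained in the cylindrical region — then it is one of the $2|\alpha|^2$ Morse gradient strips between the two circles in the $(p_\alpha^\perp,\theta_\alpha^\perp)$-plane, all of the same symplectic area $A=O(R^{-3})$ and, by Remark~\ref{rem:diag-end-local-system}, of local-system weight $1$ — or (ii) a strip running through the interior region, whose boundary must traverse the stretching neck over a $p_\alpha$-interval of length $\ge p_\alpha-b_2M>R^2+4R-b_2M$ with width a nonzero multiple of $2\pi$, hence (by the energy formula, Theorem~\ref{thm:action-ftnal}, with Proposition~\ref{prop:trop-approx}) contributing a term of Novikov valuation at least $A'$ with $A'\ge cR^2$. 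With the spin structure of the statement, the type-(i) contribution is exactly the circle Morse differential, so up to relabeling $\mu^1(e_1)=T^A(f_1-f_2)+(\text{valuation}\ge A')$ and $\mu^1(e_2)=-T^A(f_1-f_2)+(\text{valuation}\ge A')$, with $A'-A\to\infty$ as $R\to\infty$. In particular $\mu^1(e_1)\neq 0$, so $\operatorname{rank}\mu^1\ge 1$.

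To finish part (1): Assumption~\ref{ass:fib-torus-is-skyscraper} and Theorem~\ref{thm:L-phi-is-O-D'} identify $F_\rho$ and $L_f$ with $\mathcal{O}_\rho$ and $\mathcal{O}_{\overline D}$ (where $\overline D\subset\mathbb{P}^2$ compactifies $D_\PoP=\{f_\PoP=0\}$), and since $f_\PoP(\rho)=0$ means $\rho\in\overline D$, the relevant $\mathrm{Ext}$-group — computed from $0\to\mathcal{O}(-1)\to\mathcal{O}\to\mathcal{O}_{\overline D}\to 0$ — is nonzero; hence $HF^\bullet(F_\rho,L_f)\neq 0$ and $\dim HF^0=\dim HF^1\ge 1$. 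Combined with $\operatorname{rank}\mu^1\ge 1$ this forces $\dim HF^0=\dim HF^1=1$ and vanishing in other degrees, so $HF^\bullet(F_\rho,L_f)$ has the graded dimensions of $H^\bullet(S^1)$; the graded-algebra structure is then the only possible one. For part (2), a degree-$0$ generator is a nonzero element $c_1e_1+c_2e_2$ of $\ker\mu^1$; projecting $\mu^1(c_1e_1+c_2e_2)=0$ onto $f_1$ gives $T^A(c_1-c_2)=-(\text{terms of valuation}\ge A'+\min(\val c_1,\val c_2))$, so $\val(c_1-c_2)\ge (A'-A)+\min(\val c_1,\val c_2)>\min(\val c_1,\val c_2)$ for $R$ large. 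Therefore $c_1$ and $c_2$ have equal valuation and equal lowest-valuation term, and both are nonzero — if $c_1=0$ then $\val(c_1-c_2)=\val c_2=\min(\val c_1,\val c_2)$, contradicting the strict inequality unless $c_2=0$ as well.

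The step I expect to be the main obstacle is the estimate used for the type-(ii) strips — that any strip in $\mu^1$ which leaves $\{p_\alpha>R^2+4R\}$ contributes Novikov valuation $\gg A$. This is where the energy bounds of Section~\ref{subsect:energy} together with Proposition~\ref{prop:trop-approx} do the real work: such a strip must run down the neck, and the lower bound on its width (a nonzero multiple of $2\pi$, as for the strips treated in Lemma~\ref{lem:u2-strip-is-small}) forces its area to grow at least like $R^2$, producing the needed separation of scales. Everything after that is the formal leading-order bookkeeping over $\Lambda$ carried out above; the only additional care required is matching the Floer conventions (variance and grading shift) in the mirror-symmetry input used for part (1).
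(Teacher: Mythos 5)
Your argument follows the paper's own route: you identify the same four cylindrical-region generators, split the differential into the leading-order circle Morse bigons and interior strips of divergent Novikov valuation, obtain nonvanishing from the mirror identification of $L_\PoP$ with $\mathcal{O}_{D_{\text{pants}}}$ when $f_\PoP(\rho)=0$, and read off from the leading-order kernel condition that $c_1$ and $c_2$ are nonzero with equal lowest-valuation term. The one imprecision is assigning the cylindrical bigons local-system weight $1$ via Remark~\ref{rem:diag-end-local-system}: that remark concerns the Lagrangian (whose local system on $L_\PoP$ is trivial anyway), whereas the relevant unitary holonomies come from the local system on $F_\rho$; these have valuation zero and are absorbed by the choice of fiber identifications discussed immediately after the lemma, so your leading-order bookkeeping and the conclusion are unaffected.
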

$\hfill\square$

Note that to realize the holonomy from the local system as an element in the Novikov field, one needs to fix an identification between the fibers over $e_j$. Choosing a different identification is equivalent to rescaling the generators $e_1$ and $e_2$, changing the coefficients $c_1$ and $c_2$ of the generator $c_1 e_1 + c_2 e_2 \in HF^0 (F_\rho, L_f)$ accordingly.

The analogous version of Theorem~\ref{thm:disk-bound-arg-fibertorus} determines the pole part of the corresponding function at $p_1, p_2 \to \infty$. Here, \textit{the pole part at $p_1, p_2 \to \infty$} of a Laurent polynomial $g(\rho) = g(z_1, z_2)$ is defined as a part generated by the monomials $z_1^{m_1} z_2^{m_2}$ with $m_1 + m_2 > 0$. Similarly, the pole part of the module structure (\ref{eqn:module-structure-fib-torus}) means the pole part of a coefficient $g$ in the relation
$$X_{\alpha}^j \cdot E = g(\rho) E $$
where $E = c_1 e_1 + c_2 e_2$ is the generator of $HF^0 (F_\rho, L_f)$.

On the chain level, the weight of each holomorphic triangle that contributes to the module structure has the same valuation as $T^{-2\pi w p_\alpha}$, where $w$ is the wrapping number (or the ``width'') of the boundary curve (see Figure~\ref{fig:arg-constraints-fibertorus} in the proof of Theorem~\ref{thm:disk-bound-arg-fibertorus}). Since $p_1 = p_2 = p_\alpha$ along the diagonal cylindrical end, $w$ is equal to the order $m_1 + m_2$ of the monomial $z_1^{m_1} z_2^{m_2}$ on the cohomology level. Accordingly, the pole part of the module structure on the chain level is determined by the strips with positive wrapping number $w$.




\begin{proposition}\label{prop:PoP-module-structure}
    The following holds for the generators on the diagonal cylindrical end $Z_{\alpha, r}^+$ of $L_\PoP$.
    \begin{enumerate}
        \item There are two generators of the type $x_\alpha^j x_\alpha^e$ for any nonnegative integer $j$, which are $x_\alpha^{k, j+k} x_\alpha^e$ for any positive even number or any positive odd number, respectively.
        \item\label{prop-sub:PoP-module-structure-functions} For a fiber torus $F_\rho$ lying on the cylindrical end $Z_{\alpha, r}^+$ with nonzero $HF^\bullet(F_\rho, L_f)$, the pole parts of the module structure (\ref{eqn:module-structure-fib-torus}) for generators of the type $x_\alpha^j x_\alpha^e$ and the type $e$ coming from holomorphic triangles are zero except for the relation
        \begin{equation}\label{eqn:fiber-torus-x*e-is-f(p)e-alpha}
            x_\alpha^{j_0, j_1} x_\alpha^e \cdot e_{j_1} = g^{j_0, j_1}(\rho) e_{j_0}
        \end{equation}
        (modulo the terms with $T^{-2\pi w p_\alpha}$, $w \le 0$) after a suitable rescaling of the generators. Here, the coefficient $g^{j_0, j_1}(\rho)$ is the following monomial on $\rho = (z_1, z_2)$:
        \vspace{0.1in}

        \begin{tabularx}{0.8\textwidth} { >{\centering\arraybackslash}X | >{\centering\arraybackslash}X >{\centering\arraybackslash}X }
             & $g^{\text{even},\,\, \text{even}+k} (\rho)$ & $g^{\text{odd},\,\, \text{odd}+k} (\rho)$ \\
            \hline\hline
            $k = 2l$ & $z_1^l z_2^l$ & $ z_1^l z_2^l$ \\
            $k = 2l+1$ & $z_1^{l+1} z_2^l$ & $- z_1^l z_2^{l+1}$
        \end{tabularx}
        \vspace{0.1in}

        \item\label{prop-sub:PoP-module-structure-generators} Under the same rescaling, $e_1 + e_2$ is the generator of $HF^0(F_\rho, L_\PoP)$.
    \end{enumerate}
\end{proposition}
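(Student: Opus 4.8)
The plan is to adapt, essentially verbatim, the lifting-and-cutting machinery from the proofs of Theorem~\ref{thm:disk-bound-arg-strip}, Theorem~\ref{thm:disk-bound-arg-triangle}, and Theorem~\ref{thm:disk-bound-arg-fibertorus}, now for the diagonal end with $|\alpha|^2 = 2$, and to extract from it the precise monomial weights. First I would establish part (1): by Notation~\ref{not:indexing-xjxe-diag} a point of the type $x_\alpha^j x_\alpha^e$ in $\widetilde{L}_{f,R}^{k_0}\cap\widetilde{L}_{f,R}^{k_1}$ is $x_\alpha^{j_0,j_1}x_\alpha^e$ with $j_1 - j_0 = j$, and two such data give the same point iff $j_1-j_0$ agree and $j_0$ agree modulo $|\alpha|^2 = 2$; taking $k_0 = 0$, $k_1 = k$, so $j_0 = 0$ and $j_1 = j$ is forced up to adding multiples of $2$, gives exactly two points, indexed by the parity of the lift, i.e.\ $x_\alpha^{k,j+k}x_\alpha^e$ for even $k$ and for odd $k$ as stated. (One must check the indexing is consistent with Notation~\ref{not:indexing-typee-diag}, which fixes $\theta_\alpha$-values; this is where the parity bookkeeping lives.)

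For part (2), the geometric input is Theorem~\ref{thm:disk-bound-arg-fibertorus} in its generalized form (as flagged in the proof of Corollary~\ref{cor:HMS-for-LPoP-nondiag}): for a generic choice of $\{\varphi_\alpha\}$ and $R$ large, any triangle $u$ as in Notation~\ref{not:map-u-fibertorus} not contained in the cylindrical region has nonpositive $x_\alpha$-exponent, hence contributes a weight $T^{-2\pi w p_\alpha}$ with $w \le 0$ and so does not touch the pole part. The triangles contained in $\{p_\alpha \ge R^2 + 4R\}$ split as a product of a triangle in $\bC_\alpha$ and a triangle in $\bC_{\alpha^\perp}$, exactly as in case (\ref{tri:i-plus-j}) of the strip/triangle analysis; the $\bC_{\alpha^\perp}$-factor forces the input and output fiber-torus generators to be the two intersection points near $\varphi_\alpha$, i.e.\ of the type $e$, and the $\bC_\alpha$-factor is the flat triangle between the lifts $l_{s,k_0}$, $l_{s,k_1}$ and the fiber, whose area (after rescaling the generators by Notation~\ref{not:action-rescale}) is $2\pi j\pi$ in the variable $p_\alpha$ — giving weight $T^{2\pi(j_1-j_0)p_\alpha}$ times the holonomy contribution. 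The holonomy around a $\tfrac{1}{|\alpha|^2}$-turn is $\xi_\alpha = \xi^{1/|\alpha|^2}$ by Remark~\ref{rem:diag-end-local-system}, and each unit of $j_1-j_0$ together with $p_\alpha$-translation assembles into $z_1^{a}z_2^{b}$ with $a+b = j_1-j_0$; the split between $a$ and $b$ is dictated by which lift $e_{j_0}$ versus $e_{j_1}$ the triangle connects, i.e.\ by the parity, since along the diagonal a half-turn in $\theta_\alpha$ advances $\theta_1$ and $\theta_2$ unequally. Carefully tracking this (and the sign, which flips for odd $k$ because the brane structure on the diagonal end is the nontrivial spin structure, Notation~\ref{not:PoP-brane-structure}) yields the table: $k = 2l$ gives $z_1^l z_2^l$ in both parity columns, and $k = 2l+1$ gives $z_1^{l+1}z_2^l$ in the even column and $-z_1^l z_2^{l+1}$ in the odd column.

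For part (3), the argument is the same as Lemma~\ref{lem:generator-of-Frho-Lf}(2) together with the rescaling used in part (2): the $2|\alpha|^2 = 4$ small strips between the $S^1$-factors of $L_f$ and $F_\rho$ in the $(p_\alpha^\perp,\theta_\alpha^\perp)$-plane, with the spin structure from Notation~\ref{not:PoP-brane-structure}, are arranged so that $\mu^1(e_1 + e_2) = 0$ modulo strips through the interior region (whose Novikov valuation $\to\infty$); once the generators $e_1, e_2$ have been rescaled so that the module-structure weights take the clean monomial form above, the same identification forces $e_1 + e_2$ to be the degree $0$ generator. The main obstacle I anticipate is \emph{not} the existence/non-existence of disks — that is handled wholesale by the earlier argument-constraint theorems — but the bookkeeping in part (2): pinning down exactly which monomial $z_1^a z_2^b$ (with the correct split of $a+b$ by parity) and which sign attaches to $g^{j_0,j_1}$, since this requires simultaneously tracking the lift of $\theta_\alpha$ versus $\theta_\alpha^\perp$ under the $|\alpha|^2$-to-one identification of Remark~\ref{rem:alpha-bigger}, the choice of $\xi_\alpha = \xi^{1/2}$, and the nontrivial spin structure on the diagonal $S^1$. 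I would organize this by computing the $k=1$ case explicitly (the two triangles with $x_\alpha$-exponent $1$, one landing on $e_{\text{even}}$ and one on $e_{\text{odd}}$), reading off $z_1$ and $-z_2$ respectively, and then bootstrapping general $k$ by multiplicativity of the weights under gluing triangles, which matches $z_1^{l+1}z_2^l = z_1\cdot(z_1z_2)^l$ and $-z_1^lz_2^{l+1} = -z_2\cdot(z_1z_2)^l$.
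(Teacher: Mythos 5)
Your parts (1) and (2) track the paper's own route. Part (1) is the indexing bookkeeping of Remark~\ref{rem:indexing-xjxe-diag} and Notation~\ref{not:indexing-xjxe-diag} (two points per type, distinguished by $j_0 \bmod |\alpha|^2 = 2$), and part (2) is exactly the paper's argument: the generalized argument constraint of Theorem~\ref{thm:disk-bound-arg-fibertorus} removes all pole contributions from triangles that leave the cylindrical region, and the remaining triangles split as products, with weight read off from the area in $\bC_\alpha$ and the identifications $\xi_\alpha^{0,1}=\xi_1$, $\xi_\alpha^{1,2}=\xi_2$ of Remark~\ref{rem:diag-end-local-system}, plus the sign from the nontrivial spin structure on the diagonal $S^1$. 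Your plan to compute $k=1$ and bootstrap by multiplicativity is the same computation organized slightly differently and is consistent with the table ($z_1\cdot(z_1z_2)^l$ and $-z_2\cdot(z_1z_2)^l$). One small check to make: with $\val(z_i)=-2\pi p_i$ and $p_1=p_2=p_\alpha$ on the diagonal end, the cylindrical triangle of exponent $j$ must carry weight of valuation $-2\pi j p_\alpha$, not $+2\pi(j_1-j_0)p_\alpha$ as written.

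Part (3) has a genuine gap. Your argument --- the four small strips cancel in $\mu^1(e_1+e_2)$ ``modulo strips through the interior region'' --- only reproduces Lemma~\ref{lem:generator-of-Frho-Lf}(2): the degree-$0$ generator is $c_1e_1+c_2e_2$ with $c_1$ and $c_2$ agreeing in their \emph{lowest-valuation} term. The interior strips do not disappear; they contribute genuine higher-valuation terms to the differential of $e_1$ and $e_2$ separately, so a priori the generator is $e_1+ce_2$ with $c=1+(\text{higher order})$, while the proposition asserts $c=1$ on the nose. The paper closes this in two steps, neither present in your proposal. First, Lemma~\ref{lem:PoP-holonomy-is-compatible} sharpens the estimate to $c=1+O(t^{-2})$ by exploiting that for $L_\PoP$ the boundary map $H_2((\bC^*)^2,L_\PoP)\to H_1(L_\PoP)$ vanishes, so a disk of wrapping number $1$ has a \emph{unique} forced asymptotic $\theta_\alpha^\perp$-value by Lemma~\ref{def:integral-eta-alpha}; choosing $\varphi_\alpha$ so that this value lies between $\varphi_\alpha$ and $\varphi_\alpha+\pi$ kills one of the two candidate width-$1$ strips. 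Second, the exact value is extracted algebraically from part (2): writing $X_\alpha^1 = x_\alpha^{0,1}+x_\alpha^{1,2}+a_1x_\alpha^{1,1}$, the product $X_\alpha^1\cdot(e_1+ce_2)=(-cz_2+a_1)e_1+z_1e_2$ must be proportional to $e_1+ce_2$, and combining this with $z_1=1-z_2$ on $D_\PoP$ forces $a_1=1$ and $c=1$. Without both inputs your rescaling argument cannot exclude nonzero higher-order corrections to $c$, so part (3) as written is not proved.
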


Similar to the proof of Theorem~\ref{thm:HMS-for-Lq}, one can exclude the existence of holomorphic disks with positive width except for ones lying inside the cylindrical region. For each point $X = x_\alpha^{j_0, j_1} x_\alpha^e$ of type $x_\alpha^j x_\alpha^e$ with $j = j_1 - j_0$, there is exactly one such triangle that has $X$ as an input vertex. After action rescaling (Notation~\ref{not:action-rescale}), the weight of the triangle is equal to $T^{-2j\pi p_\alpha} \xi_\alpha^{j_0, j_1}$, where  $\xi_\alpha^{j_0, j_1}$ is the holonomy of the local system on the fiber torus along the path from $e_{j_0}$ to $e_{j_1}$, fixing $\theta_\alpha^\perp$ and increasing $\theta_\alpha$ by $2j\pi$.

\vspace{0.3em}
\begin{figure}[ht]
    \centering
    \includegraphics[width=0.8\textwidth]{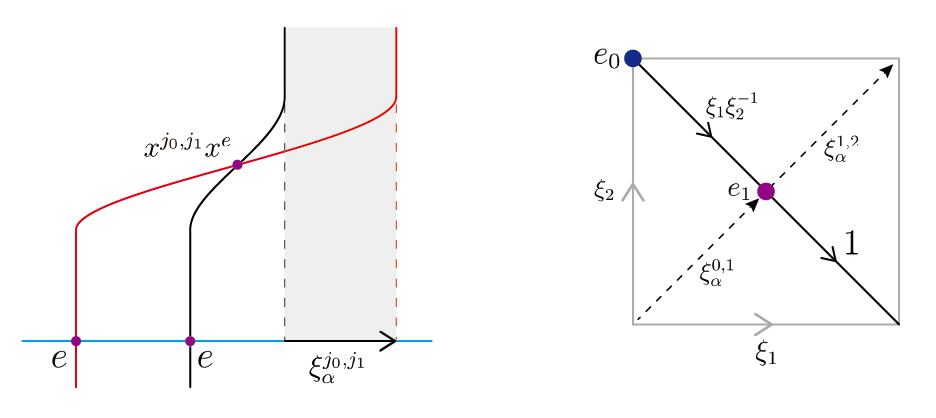}
    \caption{The local system on $F_\rho$. The left figure shows the area and the holonomy counted after action rescaling. The right figure shows the holonomy along several paths on the $S^1$ factor of $L_\PoP$ and the fiber torus. The dashed line represents the direction of the wrapping.}
    \label{fig:alpha-bigger-holonomy}
\end{figure}
\vspace{0.3em}

Next, we express the weight $\xi_\alpha^{j_0, j_1}$ from the local system in terms of the unitary component of $z_1$ and $z_2$. Recall that $\rho$ is of the form
$$ \rho = (z_1, z_2) =  \left( T^{-2\pi p_1} \xi_1,\, T^{-2\pi p_2}\xi_2 \right). $$
$F_\rho$ is the fiber torus over $(p_1, p_2) \in \bR^2$, and unitary elements $\xi_1$ and $\xi_2$ represent the holonomy of the local system on the fiber torus along the $\theta_1$ and $\theta_2$ directions, respectively. Then, we identify the fiber over $e_0$ and $e_1$ so that $\xi_\alpha^{0,1} = \xi_1$ and $\xi_\alpha^{1,2} = \xi_2$; see Figure~\ref{fig:alpha-bigger-holonomy}. Under this identification, the holomorphic triangles have the weight listed in Theorem~\ref{prop:PoP-module-structure}(\ref{prop-sub:PoP-module-structure-functions}).

Finally, there is a sign coming from the brane structure. Since we are only interested in the weight of holomorphic disks lying in the cylindrical region, we can express the Lagrangian as a product of a line and $S^1$ (on the universal cover) and look at each factor separately. In this case, the sign comes from the nontrivial brane structure on the $S^1$ factor, where the weight is determined in a way similar to the one from the local system. Namely, we have an auxiliary real line bundle $\beta_i \to C_i$ on each $S^1$ factor $C_i$, and each intersection point $y \in C_{i_0} \cap C_{i_1}$ is equipped with an element in $\Hom(\beta_{i_0, y}, \beta_{i_1, y})$; see Section 13 in \cite{Sei1} for a detailed argument. One can then realize the weight as a real number by fixing an identification between different fibers which assigns a holonomy along the ``half-turns'' (in a way similar to Remark~\ref{rem:diag-end-local-system}).

Nontriviality of the brane structure means that the line bundle $\beta$ is nontrivial (i.e. has a holonomy $-1$). This creates a inconsistency for generators of the type $x_\alpha^j x_\alpha^e$ with odd $j$, so we add additional negative sign for one of the generators, $x_\alpha^{\text{odd}, {\text{odd} + j}} x_\alpha^e$, to make it consistent; see Figure~\ref{fig:alpha-bigger-brane}. This also gives a negative sign on the corresponding coefficient $g^{\text{odd},\,\, \text{odd}+k} (\rho)$.

\vspace{0.3em}
\begin{figure}[ht]
    \centering
    \includegraphics[width=0.4\textwidth]{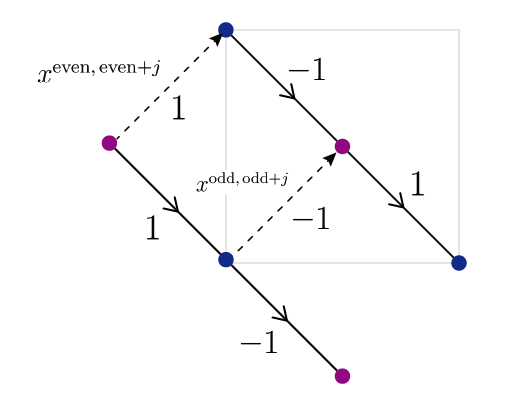}
    \caption{The brane structure on $L_\PoP$. The number represents the weight assigned to each half-turn and the generator $x_\alpha^{j_0, j_1} x_\alpha^e$ when $j = j_1 - j_0$ is odd. The dashed line represents the direction of the wrapping.}
    \label{fig:alpha-bigger-brane}
\end{figure}
\vspace{0.3em}

Applying Theorem~\ref{thm:disk-bound-arg-fibertorus} to the other two cylindrical ends implies that the function corresponding to $X_\alpha^j$ does not have the term $z_1^{m_1} z_2^{m_2}$ with negative $m_1$ or negative $m_2$. Therefore, Proposition~\ref{prop:PoP-module-structure}, together with the equation (\ref{eqn:fiber-torus-x*e-is-f(p)e}), gives the explicit expression of $X_\alpha^j$ as well as the corresponding function.

\begin{theorem}[HMS of the Lagrangian $L_\PoP$, Part 2]\label{thm:HMS-for-LPoP-diag}
    The generators of the Floer cohomology $HW^0(L_\PoP, L_\PoP)$ on the diagonal cylindrical end and their corresponding functions are as follows.

    \vspace{0.1in}

    \centering
    \begin{tabularx}{0.8\textwidth} { >{\centering\arraybackslash\hsize=.5\hsize}X | >{\centering\arraybackslash}X >{\centering\arraybackslash\hsize=.6\hsize}X }
        & generators $X_\alpha^k$ & corresponding functions on $D_\PoP$ \\
        \hline\hline
        \\[-1.2em]
        $k = 2l$ & $x_\alpha^{0,k} x_\alpha^e + x_\alpha^{1,k+1} x_\alpha^e$ & $ z_1^l z_2^l$ \\
        \\[-0.8em]
        $k = 2l+1$ & $x_\alpha^{0,k} x_\alpha^e + x_\alpha^{1,k+1} x_\alpha^e + x_\alpha^{1,k} x_\alpha^e $ & \shortstack[c]{$z_1^{l+1} z_2^l$ \\$(= -z_1^{l} z_2^{l+1} + z_1^{l} z_2^{l})$}
    \end{tabularx}
    \vspace{0.1in}
\end{theorem}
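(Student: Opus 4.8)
The plan is to first identify a chain-level representative of $X_\alpha^k$ supported on the diagonal end and then read off the corresponding function from the module structure, following the template of the proof of Theorem~\ref{thm:HMS-for-Lq} while keeping careful track of the $\abs{\alpha}^2=2$ indexing (Notations~\ref{not:indexing-xjxe-diag} and \ref{not:indexing-typee-diag}) and of the brane signs. By Proposition~\ref{prop:distinguished-basis-of-Hw0-PoP} and Lemma~\ref{lem:Pop-highest-is-sum}, the element $X_\alpha^k$ may be taken to be supported on the diagonal end, with top $x_\alpha$-exponent term the sum of the two points of type $x_\alpha^k x_\alpha^e$ --- which by Proposition~\ref{prop:PoP-module-structure}(1) are $x_\alpha^{0,k}x_\alpha^e$ and $x_\alpha^{1,k+1}x_\alpha^e$ --- and it is determined only up to diagonal generators of strictly smaller $x_\alpha$-exponent together with the cohomological unit.

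To pin down the lower-order terms I would exploit that the module product $X_\alpha^k\cdot E$ is automatically a cocycle. Take $E=e_1+e_2=e_0+e_1$ as in Proposition~\ref{prop:PoP-module-structure}(3); by Theorem~\ref{thm:disk-bound-arg-fibertorus} applied to the diagonal end, together with Proposition~\ref{prop:PoP-module-structure}(2), the pole part of $X_\alpha^k\cdot E$ is the sum, over the generators $x_\alpha^{j_0,j_1}x_\alpha^e$ occurring in $X_\alpha^k$, of $g^{j_0,j_1}(\rho)\,e_{j_0}$, where the $g^{j_0,j_1}$ are the monomials in the table of Proposition~\ref{prop:PoP-module-structure}. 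For $k=2l$ the two leading contributions are $z_1^lz_2^l\,e_0$ and $z_1^lz_2^l\,e_1$, which already assemble into $z_1^lz_2^l\,E$, so no lower corrector is needed and $X_\alpha^k=x_\alpha^{0,k}x_\alpha^e+x_\alpha^{1,k+1}x_\alpha^e$. For $k=2l+1$ the leading contributions are $z_1^{l+1}z_2^l\,e_0$ and $-z_1^lz_2^{l+1}\,e_1$, and since these two monomials differ by $z_1^lz_2^l$ modulo $f_\PoP=1-z_1-z_2$, the combination is not a cocycle; adding the single generator $x_\alpha^{1,k}x_\alpha^e$, whose triangle (of width $k-1$, odd column of the table) contributes $z_1^lz_2^l\,e_1$, restores the cocycle condition and gives $X_\alpha^k=x_\alpha^{0,k}x_\alpha^e+x_\alpha^{1,k+1}x_\alpha^e+x_\alpha^{1,k}x_\alpha^e$. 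In either case the common $e_0$/$e_1$ coefficient computes the pole part of the function $g_k$ attached to $X_\alpha^k$, namely $z_1^lz_2^l$, respectively $z_1^{l+1}z_2^l=-z_1^lz_2^{l+1}+z_1^lz_2^l$.

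Finally, Theorem~\ref{thm:disk-bound-arg-fibertorus} applied to the two non-diagonal ends shows $g_k$ has no pole at the remaining punctures of $D_\PoP$, so it agrees with the claimed monomial up to an additive constant; as in the last paragraph of the proof of Theorem~\ref{thm:HMS-for-Lq}, squaring $X_\alpha^k$ and using that the diagonal end carries an innermost generator (so the summation-index equality in Corollary~\ref{cor:prod-structure-basic} drops) forces a gap in the $x_\alpha$-exponents of $(X_\alpha^k)^2$ that is incompatible with a nonzero constant, hence the constant vanishes. I expect the main obstacle to be the brane-sign bookkeeping underlying all of this: one must verify that the nontrivial spin structure on the diagonal $S^1$-factor (Notation~\ref{not:PoP-brane-structure}) really produces the minus sign in the odd column of Proposition~\ref{prop:PoP-module-structure}(2), and hence that the odd-$k$ corrector can be taken to be the single point $x_\alpha^{1,k}x_\alpha^e$ --- this is a genuine choice, since Proposition~\ref{prop:distinguished-basis-of-Hw0-PoP}(3) permits any representative and other single-generator correctors (for instance $-x_\alpha^{0,k-1}x_\alpha^e$) differ from it by $X_\alpha^{k-1}$.
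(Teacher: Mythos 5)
Your proposal is correct and follows essentially the same route as the paper: the explicit expression and the corresponding monomial are read off from the module structure of Proposition~\ref{prop:PoP-module-structure} by requiring $X_\alpha^k\cdot E$ to be a multiple of $E$, with Theorem~\ref{thm:disk-bound-arg-fibertorus} on the non-diagonal ends excluding negative powers. The only elaboration beyond what the paper writes out is your explicit squaring argument for the vanishing of the additive constant, which the paper leaves implicit here but carries out in the same way in the proof of Theorem~\ref{thm:HMS-for-Lq}.
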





We end this section proving Proposition~\ref{prop:PoP-module-structure}(\ref{prop-sub:PoP-module-structure-generators}). Recall that the main step to prove Theorem~\ref{thm:disk-bound-arg-strip} (the argument constraint) is to divide the holomorphic strip into the interior and cylindrical part and compare the asymptotic $\theta_\alpha^\perp$-value on each side. All three different versions of the theorem assume that the cylindrical part of the strip is sufficiently close to a point on the $(p_\alpha, \theta_\alpha)$-plane so that there is a unique ``expected'' value of $\theta_\alpha^\perp$. 

For the pair of pants, we have fewer possibilities for the interior part of the disk, which gives a better version of these theorems. Indeed, there is a unique homology class $[\gamma]_\alpha \in H_1 (L) / [S_\alpha^1]$ with a given wrapping number $w_\alpha(\gamma)$ since the boundary map 
$$\delta: H_2 ((\bC^*)^2, L_\PoP) \to H_1 (L_\PoP)$$
in the long exact sequence of the pair $((\bC^*)^2, L_\PoP)$ is the zero map. Lemma~\ref{def:integral-eta-alpha} then tells us that there is a unique value of $\theta_\alpha^\perp$ (after fixing the value of $\theta_\alpha = \arg{r}$) from a disk with wrapping number $1$. In particular, we can show the non-existence of holomorphic disks following the proof of argument constraints if the cylindrical part of the disk (strips from $x^e$ to $x^f$ or from $e$ to $f$) does not pass through the corresponding $\theta_\alpha^\perp$ value. After action rescaling, such holomorphic strips have weight $T^{-2\pi w_\alpha(\gamma) p_\alpha} = T^{-2\pi p_\alpha}$ times a bounded number independent of $p$. Therefore, if we pick $\varphi_\alpha$ so that the value of $\theta_\alpha^\perp$ lies between $\varphi_\alpha$ and $\varphi_\alpha + \pi$ (i.e. on the top-left diagonal line in Figure~\ref{fig:alpha-bigger-holonomy}), then there is no strip of width $1$ to the point of type $f$ between $\varphi_\alpha + \pi$ and $\varphi_\alpha + 2\pi$ (i.e. one on the bottom-right diagonal line in Figure~\ref{fig:alpha-bigger-holonomy}), which implies the following.

\begin{lemma}\label{lem:PoP-holonomy-is-compatible}
    Let $t = T^{2\pi p_\alpha}$. Under the identification of local systems given in Figure~\ref{fig:alpha-bigger-holonomy}, there exists a number $c = 1 + O(t^{-2})$ such that $e_1 + c e_2$ is the generator of $HF^\bullet(F_\rho, L_\PoP)$.
\end{lemma}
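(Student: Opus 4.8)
The plan is to read off the generator of $HF^0(F_\rho,L_\PoP)$ from an explicit four-generator model of the Floer differential near the diagonal cylindrical end, starting from the naive guess $e_1+e_2$ and correcting it order by order in the Novikov filtration. For $\val(\rho)=(-2\pi p_\alpha,-2\pi p_\alpha)$ with $p_\alpha>R^2+4R$, the complex $CF^\bullet(F_\rho,L_\PoP)$ near $Z_{\alpha,r}^+$ has the degree $0$ generators $e_1,e_2$ and two degree $1$ generators of type $f$, and by Lemma~\ref{lem:generator-of-Frho-Lf} the hypothesis $f_\PoP(\rho)=0$ gives $HF^\bullet(F_\rho,L_\PoP)\cong H^\bullet(S^1)$; hence $\mu^1\colon CF^0\to CF^1$ has rank one and its one-dimensional kernel is exactly what must be computed. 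Write $\mu^1=\mu^1_{\mathrm{cyl}}+\mu^1_{\mathrm{int}}$ according to whether a contributing strip stays in $\{p_\alpha\ge R^2+3R\}$ or passes through the interior. The generalized argument constraint of Section~\ref{subsect:pair-of-pants} (Theorem~\ref{thm:disk-bound-arg-strip}, together with the vanishing of $\delta\colon H_2((\bC^*)^2,L_\PoP)\to H_1(L_\PoP)$, which makes each wrapping number support a unique disk class) shows that $\mu^1_{\mathrm{cyl}}$ comes from the $2|\alpha|^2=4$ small strips between the two circles in the $(p_\alpha^\perp,\theta_\alpha^\perp)$-plane --- after the rescaling of Notation~\ref{not:action-rescale} these carry only a unitary holonomy and a brane sign and no power of $T$ --- while $\mu^1_{\mathrm{int}}$ is a sum over wrapping numbers $w\ge 1$ of terms of rescaled weight $t^{-w}$ times a bounded unitary factor.

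From the brane and local-system bookkeeping recorded in Figures~\ref{fig:alpha-bigger-holonomy} and \ref{fig:alpha-bigger-brane}, the small strips are arranged so that $\mu^1_{\mathrm{cyl}}$ restricted to $\langle e_1-e_2\rangle$ is injective with invertible leading coefficients, while $\mu^1_{\mathrm{cyl}}(e_1+e_2)$ vanishes as soon as the leading unitary parts satisfy $\xi_1+\xi_2=0$. Since $\val(z_1)=\val(z_2)=-2\pi p_\alpha$, the equation $f_\PoP(\rho)=1-z_1-z_2=0$ forces exactly $\xi_1+\xi_2=0$ (the valuation-$0$ part of $z_1+z_2=1$ must vanish), so $\mu^1_{\mathrm{cyl}}(e_1+e_2)=0$. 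For $\mu^1_{\mathrm{int}}$, the position of $\varphi_\alpha$ chosen before the statement (on the ``top-left diagonal'') kills every width-$1$ strip running to the type-$f$ point on the opposite diagonal, leaving a single family of wrapping-$1$ strips which enters $\mu^1(e_1)$ and $\mu^1(e_2)$ with the two half-turn holonomies $\xi_1$ and $\xi_2$; by $\xi_1+\xi_2=0$ these $t^{-1}$ contributions cancel in $\mu^1(e_1+e_2)$. Hence $\mu^1(e_1+e_2)=\mu^1_{\mathrm{int}}(e_1+e_2)$ has all coefficients of valuation $\le -4\pi p_\alpha$, i.e.\ it is $O(t^{-2})$.

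It remains to correct $e_1+e_2$ into the kernel. Run the $T$-adic Newton iteration: set $E_0=e_1+e_2$ and, given $E_n$ with $\mu^1(E_n)=O(t^{-2n})$, use that $\ker\mu^1$ is one-dimensional and that $\mu^1_{\mathrm{cyl}}|_{\langle e_1-e_2\rangle}$ has invertible coefficients to pick the unique $\delta_n\in\bK$ with $\val(\delta_n)\ge 4\pi n p_\alpha$ for which $\mu^1(E_n-\delta_n e_2)$ has strictly larger valuation, and put $E_{n+1}=E_n-\delta_n e_2$. The $E_n$ converge $T$-adically to $E_\infty=e_1+c e_2$ with $c=1+\sum_{n\ge 1}\delta_n$; since already $\val(\delta_1)\ge 4\pi p_\alpha$ we obtain $c=1+O(t^{-2})$. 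As $E_\infty$ spans the one-dimensional kernel $HF^0(F_\rho,L_\PoP)$, it is the asserted generator.

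The main obstacle is the middle step: one must verify that the wrapping-$1$ interior strips contribute to $\mu^1(e_1)$ and $\mu^1(e_2)$ in a way that \emph{cancels} in the sum $e_1+e_2$, since a surviving $t^{-1}$ term would only give $c=1+O(t^{-1})$. This is precisely where both the position of $\varphi_\alpha$ (eliminating the opposite-diagonal width-$1$ strip, as in the paragraph preceding the lemma) and the relation $\xi_1=-\xi_2$ forced by $f_\PoP(\rho)=0$ enter, and the half-turn holonomies together with the sign carried by the odd-index generators in Figure~\ref{fig:alpha-bigger-brane} must be tracked carefully for the cancellation to be visible.
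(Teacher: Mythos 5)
Your overall strategy differs from the paper's, and the difference matters. The paper never tries to show that $\mu^1(e_1+e_2)$ is small in \emph{both} components of $CF^1$. Its proof is the paragraph preceding the lemma: because $\delta\colon H_2((\bC^*)^2,L_\PoP)\to H_1(L_\PoP)$ vanishes, there is a unique homology class with wrapping number $1$, hence (by Lemma~\ref{def:integral-eta-alpha}) a unique admissible $\theta_\alpha^\perp$-value for the cut of a width-$1$ strip; choosing $\varphi_\alpha$ so that this value lies between $\varphi_\alpha$ and $\varphi_\alpha+\pi$ kills every width-$1$ strip ending at the type-$f$ generator on the opposite diagonal. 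The component of the closedness equation corresponding to \emph{that} $f$-generator therefore involves only the two small cylindrical strips plus width-$\ge 2$ corrections, and under the identification of Figure~\ref{fig:alpha-bigger-holonomy} it reads $c=1+O(t^{-2})$ directly. No count of the width-$1$ interior strips is ever needed. You instead work with the full vector $\mu^1(e_1+e_2)$, which forces you to control the component that \emph{does} receive width-$1$ contributions.

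That is where the gap sits. Both of your cancellations rest on the exact identity $\xi_1+\xi_2=0$, but this is false: since $z_i=T^{-2\pi p_i}\xi_i$ with $\val(\xi_i)=0$ and $p_1=p_2=p_\alpha$, the equation $1-z_1-z_2=0$ gives $\xi_1+\xi_2=T^{2\pi p_\alpha}$ — only the valuation-zero leading coefficients cancel, and the residue is a term of \emph{exactly} the order ($t^{-1}$, in the paper's bookkeeping where $z_2=\xi_2 t$) that the lemma is trying to beat. Consequently $\mu^1_{\mathrm{cyl}}(e_1+e_2)$ is $O(t^{-1})$ rather than $0$, and the width-$1$ interior contributions to $\mu^1(e_1)$ and $\mu^1(e_2)$ likewise fail to cancel by an $O(t^{-1})$ amount. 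The truth of the lemma is equivalent to these two $O(t^{-1})$ defects cancelling \emph{each other} — the cylindrical defect proportional to $\xi_1+\xi_2=T^{2\pi p_\alpha}$ against the genuine width-$1$ strip count (the paper only exhibits such a strip, via the cylinder $\{z_1=z_2\}$, in the closing remark). Your proposal neither states nor proves this cross-cancellation, so the claimed bound $\mu^1(e_1+e_2)=O(t^{-2})$ is not established; the subsequent Newton iteration is fine but moot without it. To repair the argument along your lines you would have to compute the width-$1$ moduli space and its holonomy precisely; to avoid that computation, project onto the $f$-generator that width-$1$ strips cannot reach, as the paper does.
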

$\hfill\square$

This estimation, together with the module structure computed in Proposition~\ref{prop:PoP-module-structure}(\ref{prop-sub:PoP-module-structure-functions}), is enough to find the exact coefficient of $e_1$ and $e_2$.

\begin{proof}[Proof of Proposition~\ref{prop:PoP-module-structure}(\ref{prop-sub:PoP-module-structure-generators})] 
    We only need to show that the coefficient $c$ in Lemma~\ref{lem:PoP-holonomy-is-compatible} has no higher-order term. To prove this, we look at $X_\alpha^1$ and its corresponding function. Let us omit $x_\alpha^e$ for simplicity, and set
    $$ X_\alpha^1 = x_\alpha^{0,1} + x_\alpha^{1,2} + a_1 x_\alpha^{1,1} + a_2 x_\alpha^{2,2} $$
    for some Novikov coefficients $a_1$ and $a_2$. We may assume that $a_2 = 0$ by subtracting a multiple of $X_\alpha^0 = x_\alpha^{1,1} + x_\alpha^{2,2}$.
    Proposition~\ref{prop:PoP-module-structure}(\ref{prop-sub:PoP-module-structure-functions}) then implies that 
    $$X_\alpha^1 \cdot (e_1 + ce_2) = (-c z_2 + a_1) e_1 + z_1 e_2$$ 
    is a multiple of $e_1 + ce_2$, so we have
    $$c \cdot (-c z_2 + a_1) =  z_1 = - z_2 + 1.$$
    $z_2 = \xi_2 t$ is of order $1$ with respect to $t$, so $O(t^{-2})$ term of $c$ does not affect the constant and $t$ term on both sides of the equation. Looking at these two terms gives 
    $$- z_2 + a_1 =  - z_2 + 1,$$
    so $a_1 = 1$ and $c = 1$.
\end{proof}

\begin{remark}
    A holomorphic disk with wrapping number $1$ exists and is counted in the above calculation. Namely, the holomorphic cylinder $\{z_1 = z_2\}$ intersects with $L_\PoP$ cleanly along the curve
    $$\{z_1 = z_2, \quad \Rea z_1 = -1/2\}$$
    and gives a holomorphic disk with one infinite end along the diagonal $Z_{\alpha, r}^+$. On the universal cover, both $\theta_1$ and $\theta_2$ values along the boundary increases from $-\frac{\pi}{2}$ to $\frac{\pi}{2}$, which means that the value of $\theta_\alpha^\perp = \frac{1}{2}(\theta_1 - \theta_2)$ is fixed at $0 $, while the value of $\theta_\alpha = \theta_1 + \theta_2$ increases by $2 \pi$. 
\end{remark}


\printbibliography

\end{document}